\font\tenrsf=rsfs10
\font\sevenrsf=rsfs7 at 8pt \font\fiversf=rsfs5 at 6pt
\newtheorem{theo}{Theorem}[section]
\newtheorem{lemme}[theo]{Lemma}
\newtheorem{propo}[theo]{Proposition}
\newtheorem{cor}[theo]{Corollary}
\newtheorem{hyp}[theo]{Assumption}
\newtheorem{hyp-not}[theo]{Assumption-Notations}
\newtheorem{defi}[theo]{Definition}
\newtheorem{exe}[theo]{Example}
\newtheorem{nb}[theo]{Remark}
\font\teneuf=eufm10 at 12pt \font\seveneuf=eufm7 at 8pt
\font\fiveeuf=eufm5 at 6pt
\newfont{\secgoth}{eufm10 at 16pt}
\def \bq {\begin{equation}}
\def \eq {\end{equation}}
\def \leq {\leqslant}
\def \geq {\geqslant}
\def \v {v}
\def \N {\mathbb{N}}
\def \ind {\mathbf{1}}
\numberwithin{equation}{section}
\def\lp {L^1_+}
\def\lm {L^1_-}
\def \d {\mathrm{d}}
\def \D {\mathscr{D}}
\def \Rs {\mathcal{R}}
\def \ml {\mathsf{M}_{\lambda}}
\def \U {\mathcal{U}}
\def \dU {\mathds{U}}
\def \cU {\mathds{V}}
\def \R {\mathbb{R}}
\def \pO {\partial\Omega}
\def \M {\mathcal{}}
\def \X {\mathbb{X}}
\def \Y {\mathbb{Y}}
\def \l+ {L^1_+}
\def \l- {L^1_-}
\renewcommand{\epsilon}{\varepsilon}
\def \ds {\displaystyle}
\def \l {\lambda}
\def \T {\mathsf{T}}
\def \bxi {\bm{\xi}}
\def \M {\mathcal{M}}
\def\Prob{\operatorname{Prob}}
\def \ge {\geq}
\begin{document}
\title[Collisionless kinetic semigroups]{Invariant density and time asymptotics for collisionless kinetic equations with partly diffuse boundary operators}

 \author{B. Lods}

 \address{Universit\`{a} degli
 Studi di Torino \& Collegio Carlo Alberto, Department of Economics and Statistics, Corso Unione Sovietica, 218/bis, 10134 Torino, Italy.}\email{bertrand.lods@unito.it}

 \author{M. Mokhtar-Kharroubi}

 \address{Universit\'e de Bourgogne Franche-Comt\'e, Laboratoire de Math\'ematiques, CNRS UMR 6623, 16, route de Gray, 25030 Besan\c con Cedex, France
}
\email{mustapha.mokhtar-kharroubi@univ-fcomte.fr}

\author{R. Rudnicki}

\address{Institute of Mathematics, Polish Academy of Sciences, Bankowa 14, 40-007 Katowice, Poland}
\email{rudnicki@us.edu.pl}

\thanks{This paper was partially supported by the Polish National
Science Centre Grant No. 2017/27/B/ST1/00100 (RR). This work was written
while R.R. was a visitor to Universit\'e de Franche-Comt\'e and the authors
thank Universit\'e de Franche-Comt\'e for financial support of this visit.}
\keywords{Kinetic equation, Stochastic semigroup, convergence to
equilibrium}
\subjclass[2010]{Primary: 82C40; Secondary: 35F15, 47D06.}

\maketitle

\begin{abstract}This paper deals with collisionless transport equations
in bounded open domains $\Omega \subset \R^{d}$ $(d\geq 2)$ with $\mathcal{C}^{1}$ boundary $\partial \Omega $, orthogonally
invariant velocity measure $\bm{m}(\d v)$ with support $V\subset \R^{d}$ and stochastic partly diffuse
 boundary operators $\mathsf{H}$  relating the outgoing and
incoming fluxes. Under very general conditions, such equations are governed
by stochastic $C_{0}$-semigroups $\left( U_{\mathsf{H}}(t)\right) _{t\geq 0}$ on $%
L^{1}(\Omega \times V,\d x \otimes \bm{m}(\d v)).$ We give a general criterion of irreducibility of $%
\left( U_{\mathsf{H}}(t)\right) _{t\geq 0}$ and we show that, under very natural assumptions, if an invariant density
exists then $\left( U_{\mathsf{H}}(t)\right) _{t\geq 0}$ converges strongly (not
simply in Cesar\`o means) to its ergodic projection. We show also that if no
invariant density exists then $\left( U_{\mathsf{H}}(t)\right) _{t\geq 0}$ is
\emph{sweeping} in the sense that, for any density $\varphi $, the total mass of $%
U_{\mathsf{H}}(t)\varphi $ concentrates near suitable sets of zero measure as $%
t\rightarrow +\infty .$  We show also a general weak compactness
theorem of interest for the existence of invariant densities. This theorem is based on several results on smoothness and transversality of the dynamical flow associated to $\left( U_{\mathsf{H}}(t)\right) _{t\geq
0}.$%
\end{abstract}
%
 \section{Introduction }

{Kinetic transport equations in bounded geometry is an important field of investigation which can be traced back to the seminal work \cite{bardos} where absorbing boundary conditions have been considered. For more general boundary conditions, relating the incoming and outgoing fluxes at the boundary of the physical domain, the well-posedness of associated transport equations with general force terms -- including Vlasov-like equations -- have been considered in \cite{beals, mjm1,mjm2} while a thorough analysis of the free transport equation with abstract boundary conditions on general domains have been performed in \cite{voigt} (see also \cite[Appendix of $\S$ 2, p. 249]{dautray}). Notice that, for a nonlinear and collisional kinetic equation  such as Boltzmann equation, taking into account general boundary conditions induces notoriously additional difficulties; we just mention here the works \cite{guo03} (dealing with close-to-equilibrium solutions) and \cite{mischler} (for renormalized solutions) and the references therein. 

We aim to emphasize right now that, even though the present contribution is dealing with collisionless  kinetic equations, we hope that the tools developed in the paper will be of some interest in nonlinear kinetic theory with general partly diffuse boundary conditions, especially in the study of the regularity up to the boundary for both the linearized and nonlinear Boltzmann equation as in \cite{chen,kim}.

The object of this paper is to build a general theory of time asymptotics $(t\to\infty)$ for multi-dimensional collisionless kinetic semigroups with partly diffuse boundary operators. Our construction is twofold: 
\begin{enumerate}
\item On the one hand, we continue previous functional analytic works \cite{AL05, AL11, Mkst,voigt} on substochastic semigroups governing collisionless transport equations with conservative boundary operators in $L^{1}$-spaces   and combine them to recent developments on the asymptotics of stochastic partially integral semigroups in $L^{1}$-spaces motivated by piecewise deterministic processes \cite{RT-K-k}.
\item On the other hand, we investigate the problem of the existence of invariant densities for collisionless transport equations. Such existence theory depends heavily on our understanding of compactness properties induced by the diffuse parts of the boundary operators. These compactness properties rely on the fine knowledge of smoothness and transversality properties of the dynamical flow induced by the semigroup.
\end{enumerate}
More precisely, we consider transport equations of the form
\begin{subequations}\label{1}
\begin{equation}\label{1a}
\partial_{t}\psi(x,v,t) + v \cdot \nabla_{x}\psi(x,v,t)=0, \qquad (x,v) \in \Omega \times V, \qquad t \geq 0
\end{equation} 
with initial data
\begin{equation}\label{1c}\psi(x,v,0)=\psi_0(x,v), \qquad \qquad (x,v) \in \Omega \times V,\end{equation}
under abstract ({conservative}) boundary conditions
\begin{equation}\label{1b}
\psi_{|\Gamma_-}=\mathsf{H}(\psi_{|\Gamma_+}),
\end{equation}\end{subequations}
where 
$$\Gamma _{\pm }=\left\{ (x,v)\in \partial \Omega \times V;\ \pm
v \cdot n(x)>0\right\}$$
($n(x)$ {being} the outward unit normal at $v\in \partial \Omega$, see Figure \ref{p:col-1})  and $\mathsf{H}$  {is
a linear 
boundary operator relating the
outgoing and incoming fluxes $\psi _{\mid \Gamma _{+}}$ and $\psi _{\mid
\Gamma _{-}}$ and is \textit{bounded}  on
the trace spaces
$$L^{1}_{\pm}=L^{1}(\Gamma_{\pm}\,;\,|v\cdot n(x)| \pi(\d  x)\otimes \bm{m}(\d v))=L^{1}(\Gamma_{\pm},\d\mu_{\pm}(x,v))$$
where $\pi$ denotes the Lebesgue surface measure on $\partial \Omega.$  We will focus our attention to the case of
\textit{nonnegative} and \textit{conservative} boundary conditions,
i.e.
\begin{equation}\label{eq1}
\mathsf{H}\psi \geq 0 \quad \text{ and } \qquad \|\mathsf{H}\psi\|_{\lm}=\|\psi\|_{\lp}, \quad
\text{ for any nonnegative } \psi \in \lp.\end{equation} }
Here 
$$\Omega
\subset \R^{d}\ \ (d\geq 2) \text{ is an open subset with $\mathcal{C}^{1}$ boundary $\partial \Omega 
$}$$
and our analysis takes place in the functional space%
$$X=L^{1}\left( \Omega \times V;\d x\otimes \bm{m}(\d v)\right)$$
{where $V \subset \R^{d}$ is the support of a nonnegative Borel measure $\bm{m}$  which is orthogonally invariant (i.e. invariant under the action of the orthogonal group of matrices in $\R^{d}$).} Such a measure covers the Lebesgue measure on $\R^{d}$, the surface Lebesgue measure on spheres (one speed or multi-group models) or even combinations of them.

 \begin{figure}
\centerline{\includegraphics{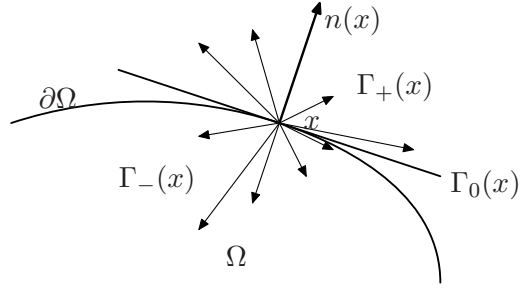}}
\centerline{
\begin{picture}(0,0)(0,0)
\put(84,48){$\Gamma_0(x)$}
\put(29,72){$x$}
\put(37,110){$n(x)$}
\put(49,85){$\Gamma_+(x)$}
\put(-40,50){$\Gamma_-(x)$}
\put(-70,80){$\partial\Omega$}
\put(0,20){$\Omega$}
\end{picture}
}
\caption{$x\in\partial\Omega$; $\Gamma_0(x)$ -- the tangent space to
 $\partial \Omega$ at $x$; $\Gamma_+(x)$ --  outward velocities;
$\Gamma_-(x)$ --  inward velocities.}     
\label{p:col-1}
\end{figure}

Very precise one-dimensional results corresponding to slab geometry have
been obtained in \cite{M-KR}.  Their extension to
multi-dimensional geometries $(d\geq 2)$ is {far from being elementary} and
is completely open. It is the main concern of the present work to provide
such a generalization.

Let
$$W=\left\{ \varphi \in X;\ v \cdot \nabla _{x}\varphi \in X,\ \varphi_{|\Gamma_{\pm}} \in L^{1}\left( \Gamma _{\pm }\right) \right\}$$
where $v \cdot \nabla _{x}\varphi $ is meant in a distributional sense, (see
Section \ref{sec:funct} below for a {reminder of the} trace theory) and let 
$$\mathsf{T}_{\mathsf{H}}\::\:\D(\mathsf{T}_{\mathsf{H}})\subset X \rightarrow X$$
be defined by%
$$\mathsf{T}_{\mathsf{H}}\varphi =-v \cdot \nabla _{x}\varphi ,\ \ \D(\mathsf{T}_{\mathsf{H}}){=\{\varphi \in W\;;\;\varphi_{|\Gamma_{-}}=\mathsf{H}(\varphi_{|\Gamma_{+}})\}.}$$ 
In contrast to the one-dimensional case  \cite{M-KR}, in
general, $\mathsf{T}_{\mathsf{H}}$ need \textit{not} be a generator. However, there exists a
unique \textit{extension} 
\[
A\supset \mathsf{T}_{\mathsf{H}} 
\]%
which generates a positive contraction $C_{0}$-semigroup $\left(
U_{\mathsf{H}}(t)\right) _{t\geq 0}$, see \cite{AL05,Mkst,voigt}. Notice that $\left( U_{\mathsf{H}}(t)\right) _{t\geq 0}$
need \textit{not} be stochastic, i.e. mass-preserving on the positive cone $X_{+}$ {of $X$}. Actually $\left( U_{\mathsf{H}}(t)\right)
_{t\geq 0}$ is stochastic if and only if 
\begin{equation}
A=\overline{\mathsf{T}_{\mathsf{H}}}  \label{le cas stochastique}
\end{equation}%
and different characterizations of this property are also available \cite{AL05,Mkst}. A general sufficient condition for $\left(
U_{\mathsf{H}}(t)\right) _{t\geq 0}$ to be stochastic is given in Proposition \ref{propo:honest}
below. 

Let us {briefly describe} the main contributions of this paper. We restrict ourselves to
the stochastic case (\ref{le cas stochastique}). \ A very important role is
played here \ by the irreducibility of $\left( U_{\mathsf{H}}(t)\right) _{t\geq 0}$
(see Definition \ref{defi:irred} below). \ When $\mathsf{T}_{\mathsf{H}}$ is not a generator, it is not
possible to handle easily its closure $A=\overline{\mathsf{T}_{\mathsf{H}}}.$ Despite this
fact, the resolvent of $A$ is given by an
"explicit" series converging strongly, see \eqref{eq:reso} below. By exploiting this series one
can derive a very general sufficient criterion of irreduciblity of $\left(
U_{\mathsf{H}}(t)\right) _{t\geq 0}$ in terms of properties of the stochastic
boundary operator $H,\ $see Proposition \ref{lem:irre} below. \ It is well known (see \cite{davies}) that if the kernel of the generator of an
irreducible stochastic $C_{0}$-semigroup \ is \textit{not trivial} (and
consequently one-dimensional) then the semigroup is \textit{ergodic} and
converges strongly in \textit{Cesar\`o means} to its one-dimensional
(positive) ergodic projection  (as $t\rightarrow +\infty $). Thus the
existence of an invariant density of $\left( U_{\mathsf{H}}(t)\right) _{t\geq 0}$ is
a \emph{cornerstone} of this construction {and is a fundamental problem for the understanding of the long-time behaviour of \eqref{1}}.

We {mainly consider} (local in space) stochastic boundary\ operators $\mathsf{H}\::\:\lp \to \lm$
which are (locally in space) convex combinations of {reflection} and diffuse operators of the
form
\begin{equation*}\begin{split}
\mathsf{H}\varphi(x,v)&=\alpha(x)\mathsf{R}\varphi(x,v) + (1-\alpha(x))\mathsf{K}\varphi(x,v)\\
&=\alpha (x)\varphi (x,\mathcal{V}(x,v))+(1-\alpha
(x))\int_{\Gamma _{+}(x)}k(x,v,v^{\prime })\varphi (x,v^{\prime })\bm{\mu}
_{x}(\d v^{\prime })\end{split}\end{equation*}
where 
$$\Gamma _{-}\ni (x,v)\longmapsto\left( x,\mathcal{V}(x,v)\right) \in \Gamma
_{+}$$
is a general $\ \mu $-preserving reflection law, $\bm{\mu}_{x}(\d v)=\left\vert
v\cdot n(x)\right\vert \bm{m}(\d v)$ and 
$$\int_{\Gamma _{-}(x)}k(x,v,v^{\prime })\bm{\mu}_{x}(\d v)=1,\ \quad (x,v')\in \Gamma
_{+}$$
where $\alpha\::\:x\in \pO \longmapsto \alpha (x)\in \left[ 0,1%
\right] $ is a measurable function.

{Regarding the long-time behaviour of the solution to \eqref{1}},  when 
\begin{equation}\label{eq:alpha1}
\mathrm{ess}\!\sup_{x\in \partial \Omega }\alpha (x)<1,\end{equation} we show
under quite general assumptions on the kernel $k(x,v,v^{\prime })$ that $%
\left( U_{\mathsf{H}}(t)\right) _{t\geq 0}$ is \textit{partially integral} (i.e. for
each $t>0$, $U_{\mathsf{H}}(t)$ dominates a non trivial integral operator). It
follows that if $\left( U_{\mathsf{H}}(t)\right) _{t\geq 0}$ has an invariant density 
$\Psi _{\mathsf{H}}$ then $\left( U_{\mathsf{H}}(t)\right) _{t\geq 0}$ is \textit{%
asymptotically stable}, i.e.%
\[
\lim_{t\rightarrow +\infty }\left\Vert U_{\mathsf{H}}(t)f -\Psi _{\mathsf{H}}\right\Vert
=0
\]%
for any density $f$; see Theorem \ref{th:asym-stab} for a precise statement. This
result provides us with a much more precise result than the mere Cesar\`o
convergence given by the general theory. Converse results are also given; indeed we show that if $\left( U_{\mathsf{H}}(t)\right) _{t\geq 0}$ has \emph{no
invariant density} then $\left( U_{\mathsf{H}}(t)\right) _{t\geq 0}$ is \emph{%
sweeping} with respect to suitable sets. {In a more precise way}, the total mass of
any trajectory of \eqref{1}
$$t\geq 0\longmapsto U_{\mathsf{H}}(t)\psi_{0}$$ concentrates {for large time $t \to \infty$} near small
(or large) velocities or near the boundary {$\pO \times V$}, see Theorem \ref{prop:sweep} for a precise statement. Such
asymptotics follow from general results on partially integral stochastic
semigroups \cite{PR,PR-JMMA2016,R-b95} which we
recall in Appendix \ref{sec:appB} of the paper. These general theorems on asymptotic
stability or sweeping of stochastic collisionless kinetic semigroups $\left(
U_{\mathsf{H}}(t)\right) _{t\geq 0}$ (and also some related results) are the\textit{\ 
}first object of this paper. Our second object is to deal with the \textit{%
existence} of an invariant density for stochastic collisionless kinetic
semigroups $\left( U_{\mathsf{H}}(t)\right) _{t\geq 0}$. \ As far as we know, the
existence of an invariant density is known \textit{only} for the classical
Maxwell diffuse model (see Example \ref{exe:maxwell} below) for which it is known that $%
\left( U_{\mathsf{H}}(t)\right) _{t\geq 0}$ is asymptotically stable \cite{AN}.

Thus our second object is to provide an {existence} theory of invariant density
for such kinetic models. \ We show first, for general stochastic boundary
operators $\mathsf{H}$, that $0$ is an eigenvalue \ of $\mathsf{T}_{\mathsf{H}}$ associated to a
nonnegative eigenfunction if and only if there exists {a nonnegative solution} $\varphi \in
\lp$ to the eigenvalue problem
\begin{equation}\label{Boundary spectral problem}
\mathsf{M}_{0}\mathsf{H}\varphi =\varphi , \end{equation}
which satisfies the additional condition%
\begin{equation}\label{Additional condition}
\int_{\Gamma _{+}}\varphi (x,v)|v|^{-1}\d\mu _{+}(x,v) <+\infty \end{equation}
where 
$$\mathsf{M_{0}}\::\:\lm \rightarrow \lp$$
is the \emph{stochastic} operator defined by 
$$\left(\mathsf{M}_{0}\varphi \right) (x,v)=\varphi (x-\tau _{-}(x,v)v,v);\quad (x,v)\in
\Gamma _{+}, \qquad {\varphi \in \lm}$$
where $\tau _{-}(x,v)$ is the exit time function (see the definition in
Section \ref{sec:funct} below).

To study the existence of an invariant density, we introduce the sub-class
of \textit{regular} partly diffuse boundary operators such that the diffuse
part is "\textit{weakly compact with respect to velocities}" (see Definition
\ref{defi:regul} below) which enjoys nice approximation properties. The part of the paper
concerned with the existence of an invariant density is very involved and is
based on a series of highly technical results culminating in  {a key spectral
 result} 
\begin{equation}\label{eq:stability}
r_{\mathrm{ess}}(\mathsf{M}_{0}\mathsf{H}) <1
\end{equation}
(see Theorem \ref{theo:ressM0}) where $r_{\mathrm{ess}}$ refers to the essential spectral radius.  {Inequality \eqref{eq:stability} is shown to be true under some smallness assumption on the oscillations of the diffuse parameter $\beta(\cdot)=1-\alpha(\cdot)$
\begin{equation*}\label{eq:oscil}
\big(\|1-\beta(\cdot)\|_{L^{\infty}(\pO)}+\|\beta(\cdot)\|_{L^{\infty}(\pO)}\big)^{2}-\|\beta(\cdot)\|_{L^{\infty}(\pO)}^{2} <1\end{equation*}
 (see Theorem \ref{theo:ressM0}). However, we believe such an assumption to be purely
technical, (see   Remark \ref{nb:weaknb}). }
It is then straightforward to check that the spectral problem (\ref{Boundary
spectral problem}) has a solution {under \eqref{eq:stability}}. 
 If the corresponding eigenfunction satisfies the additional
condition (\ref{Additional condition}) then $\left( U_{\mathsf{H}}(t)\right) _{t\geq
0}$ is asymptotically stable. If \textit{not} we show a more precise
sweeping behaviour: the total mass of any trajectory $t\geq 0\longmapsto
U_{\mathsf{H}}(t)\psi_{0}$ of \eqref{1} concentrates near the zero velocity as $t\rightarrow +\infty $, see Theorem \ref{Foguel-alternative}.

The above spectral  {inequality \eqref{eq:stability}}  is a consequence of a  key weak
compactness theorem namely: for any  {{regular} diffuse operator $\mathsf{K}$} 
$$\mathsf{K} \mathsf{M}_{0}\mathsf{K} \::\:\lp  \rightarrow \lm \quad \text{ is weakly compact}.$$ The proof of this important result (Theorem \ref{propo:weakcompact}), using the
Dunford-Pettis criterion, is highly technical and is given in numerous
steps. {Roughly speaking, the main difficulty lies in the fact that $\mathsf{K}$ induces compactness only in the velocity variables and several iterations and changes of variables are necessary to produce the missing compactness in the space variable $x \in \pO$. Such changes of variables are non trivial and have to be carefully justified. To do this,} we take advantage of the stochastic character of the
various operators involved and we show (see Corollary \ref{cor:ba})  \textit{smoothness properties} of the  ballistic flow
$$\bxi\::\:(x,v)\in \Gamma _{+}\longmapsto \bxi(x,v)=(x-\tau
_{-}(x,v)v,v).$$
and its inverse 
$$\bxi^{-1}\::\:(x,\omega) \in  {\Gamma}_{-} \longmapsto \bxi^{-1}(x,\omega)=(x+\tau_{+}(x,\omega)\omega,\omega) \in \Gamma_{+}.$$
We prove in particular  the following property: for any $x \in \pO$, there exists a set $S(x) \subset \mathbb{S}^{d-1}$ of zero surface Lebesgue measure such that the differential of the mapping
$$\omega \in {\Gamma}_{-}(x) \setminus S(x) \mapsto x+\tau_{+}(x,\omega)\omega$$
has maximal rank (see Proposition \ref{prop:diff}).
These non trivial smoothness and transversality results involve intrinsic tools from differential geometry and are postponed in  Appendix \ref{app:ballistic} for the simplicity of reading but we wish to point out 
that our analysis of the flow induced by $\left( U_{\mathsf{H}}(t)\right) _{t\geq 0}$ is
new (even if results similar to some of ours appear e.g. in \cite{guo03}, see Remark \ref{nb:guo}) and has its own interest independently of
the main motivation of this paper.  \  

As far as we know, most of our results are new and appear here for the first
time. Finally, we note that the assumption that $\partial \Omega $ is of
class $\mathcal{C}^{1}$ plays a role only for the results on smoothness and
transversality of the flow stated in Appendix \ref{app:ballistic}; it is likely that the
results stated there remain valid for $\pO$ which is only
\emph{piecewise of class} $\mathcal{C}^{1}.$

{The paper is organized as follows: in Section \ref{sec:funct}, we introduce the mathematical framework and notations used in the rest of the paper and  establish several properties of the various operators involved in our subsequent analysis. In Section \ref{sec:BC} we introduce and analyse the general class of boundary operators we investigate in the rest of the paper. Section \ref{sec:general} is devoted to general criteria for the ergodic convergence of the semigroup $(U_{\mathsf{H}}(t))_{t\geq 0}$ (see Theorem \ref{theo:irred}) which is related to the  study of the eigenvalue problem \eqref{Boundary spectral problem}  as well as the irreducibility property of $(U_{\mathsf{H}}(t))_{t\geq 0}$. In Section \ref{sec:comp} we establish the main technical result of the paper (Theorem \ref{propo:weakcompact}) as well of its consequence on the essential radius \eqref{eq:stability}, see Theorem \ref{theo:ressM0}. Section \ref{ss:existence-in-dens} is devoted to the main existence result for an invariant density, Theorem \ref{theo:density}. The question of the asymptotic stability of $(U_{\mathsf{H}}(t))_{t\geq0}$ is then discussed in Section \ref{s:semigroups} while the \emph{sweeping properties} of $(U_{\mathsf{H}}(t))_{t\geq0}$, when no invariant density exists, are given in Section \ref{ss:sweeping}. As already mentioned, the paper ends with two Appendices. A first one, Appendix \ref{app:ballistic} contains all the technical results regarding the smoothness and transversality of the ballistic flow while Appendix \ref{sec:appB} recall several important results about partially integral semigroup and sweeping properties used in Section \ref{s:semigroups} and \ref{ss:sweeping}.
 }
 
{We end this Introduction by mentioning that a} related work dealing with rates of convergence to equilibrium, in the spirit of \cite{aoki,liu}, is now in
preparation \cite{L-MK} {extending the results of} \cite{MK-seifert} devoted to slab geometry. Moreover, we hope also to take advantage
of the tools developed here to revisit some {important works} (see e.g. \cite{CPSV, evans}
and references therein) on {stochastic billiards} \cite{L-MK-R}. 
 
\section{Mathematical setting and useful formulae}\label{sec:funct}

\subsection{Functional setting} We introduce the partial Sobolev space 
$$W_1=\{\psi \in X\,;\,v
\cdot \nabla_x \psi \in X\}.$$ It is known \cite{ces1,ces2,dautray}
that any $\psi \in W_1$ admits traces $\psi_{|\Gamma_{\pm}}$ on
$\Gamma_{\pm}$ such that 
$$\psi_{|\Gamma_{\pm}} \in
L^1_{\mathrm{loc}}(\Gamma_{\pm}\,;\,\d \mu_{\pm}(x,v))$$ where
$$\d \mu_{\pm}(x,v)=|v \cdot n(x)|\pi(\d x) \otimes \bm{m}(\d v),$$
denotes the "natural" measure on $\Gamma_{\pm}.$ Notice that, since $\d\mu_{+}$ and $\d\mu_{-}$ share the same expression, we will often simply denote them by 
$$\d \mu(x,v)=|v \cdot n(x)|\pi(\d x) \otimes \bm{m}(\d v),$$
the fact that it acts on $\Gamma_{-}$ or $\Gamma_{+}$ being clear from the context.  Note that
$$\partial
\Omega \times V:=\Gamma_- \cup \Gamma_+ \cup \Gamma_0,$$ where
$$\Gamma_0:=\{(x,v) \in \partial \Omega \times V\,;\,v \cdot
n(x)=0\}.$$
We introduce the space
$$W=\left\{\psi \in W_1\,;\,\psi_{|\Gamma_{\pm}} \in L^1_{{\pm}}\right\}.$$
One can show \cite{ces1,ces2} that $W=\left\{\psi \in
W_1\,;\,\psi_{|\Gamma_+} \in \lp\right\} =\left\{\psi \in
W_1\,;\,\psi_{|\Gamma_-} \in \lm\right\}.$ Then, the \textit{trace
operators} $\mathsf{B}^{\pm}$:
\begin{equation*}\begin{cases}
\mathsf{B}^{\pm}: \:&W_1 \subset X \to L^1_{\mathrm{loc}}(\Gamma_{\pm}\,;\,\d \mu_{\pm})\\
&\psi \longmapsto \mathsf{B}^{\pm}\psi=\psi_{|\Gamma_{\pm}},
\end{cases}\end{equation*}
are such that $\mathsf{B}^{\pm}(W)\subseteq L^1_{\pm}$. Let us define the
{\it maximal transport operator } $\mathsf{T}_{\mathrm{max}}$  as follows:
\begin{equation*}\begin{cases} \mathsf{T}_{\mathrm{max}} :\:& \D(\mathsf{T}_{\mathrm{max}}) \subset X \to X\\
&\psi \mapsto \mathsf{T}_{\mathrm{max}}\psi(x,v)=-v \cdot \nabla_x
\psi(x,v),
\end{cases}\end{equation*}
with domain $\D(\mathsf{T}_{\mathrm{max}})=W_1.$
 Now, for any \textit{
bounded boundary operator} $\mathsf{H} \in\mathscr{B}(L^1_+,L^1_-)$, define
$\mathsf{T}_{\mathsf{H}}$ as
$$\mathsf{T}_{\mathsf{H}}\varphi=\mathsf{T}_{\mathrm{max}}\varphi \qquad \text{ for any }
\varphi \in \D(\mathsf{T}_{\mathsf{H}}),$$ where 
$$\D(\mathsf{T}_{\mathsf{H}})=\{\psi \in
W\,;\,\psi_{|\Gamma_-}=\mathsf{H}(\psi_{|\Gamma_+})\}.$$ In particular, the
transport operator with absorbing conditions (i.e. corresponding
to $\mathsf{H}=0$) will be denoted by $\mathsf{T}_0$. We recall here that there exists a \emph{unique} minimal extension $(A,\D(A))$ of $(\mathsf{T}_{\mathsf{H}},\D(\mathsf{T}_{\mathsf{H}}))$ which generates a nonnegative $C_{0}$-semigroup $(U_{\mathsf{H}}(t))_{t\geq 0}$ in $X$. We note that $\D(A) \subset W_{1}$ and $A\varphi=-v \cdot \nabla_{x}\varphi=\mathsf{T}_{\mathsf{max}}\varphi$ for any $\varphi \in \D(A)$ but the traces $\mathsf{B}^{\pm}\varphi$ need not to belong to $L^{1}(\Gamma_{\pm},\d\mu_{\pm})$. The resolvent of $A$ is given by
\begin{equation}\label{eq:reso}\Rs(\lambda,A)f=\mathsf{R}_{\lambda}f+\sum_{n=0}^{\infty}\mathsf{\Xi}_{\lambda}\mathsf{H}\left(\mathsf{M}_{\lambda}\mathsf{H}\right)^{n}\mathsf{G}_{\lambda}f, \qquad \forall f \in X\,,\,\lambda >0\end{equation}
where the series is \emph{strongly} converging in $X$. See \cite[Theorem 2.8]{AL05} for details. Moreover, $(U_{\mathsf{H}}(t))_{t\geq0}$ is a \emph{stochastic} $C_{0}$-semigroup, i.e.
$$\|U_{\mathsf{H}}(t)f\|_{X}=\|f\|_{X} \qquad \forall f \in X_{+}\;;\;t \geq 0$$
if and only if 
$$A=\overline{\mathsf{T}_{\mathsf{H}}}.$$
Actually, under suitable assumptions on $\mathsf{H}$ (see Prop. \ref{propo:honest}), $A=\overline{\mathsf{T}_{\mathsf{H}}}$ so that $(U_{\mathsf{H}}(t))_{t\geq0}$ is stochastic.

\subsection{Exit time and integration formula} 
Let us now introduce the \textit{exit time} of particles in $\Omega$ (with the notations of \cite{mjm1}),
defined as:
\begin{defi}\label{tempsdevol}
For any $(x,v) \in \overline{\Omega} \times V,$ define
\begin{equation*}
t_{\pm}(x,v)=\inf\{\,s > 0\,;\,x\pm sv \notin \Omega\}.
\end{equation*}
To avoid confusion, we will set $\tau_{\pm}(x,v):=t_{\pm}(x,v)$  if $(x,v) \in
\partial \Omega \times V.$
\end{defi}
 
With the notations of \cite{guo03}, $t_{-}$ is the \emph{backward exit time} $t_{\mathbf{b}}$.
From a heuristic viewpoint, $t_{-}(x,v)$ is the time needed by a
particle having the position $x \in \Omega$ and the velocity $-v
\in V$ to reach the boundary $\partial\Omega$. One can prove
\cite[Lemma 1.5]{voigt} that $t_{\pm}(\cdot,\cdot)$ is measurable on
$\Omega \times V$. Moreover $\tau_{\pm}(x,v)=0 \text{ for any } (x,v)
\in \Gamma_{\pm}$ whereas $\tau_{\mp}(x,v)> 0$ on $\Gamma_{\pm}.$ It holds
$$(x,v) \in \Gamma_{\pm} \Longleftrightarrow \exists y \in \Omega \quad \text{ with } \quad t_{\pm}(y,v) < \infty \quad \text{ and }\quad x=y\pm t_{\pm}(y,v)v.$$
In that case, $\tau_{\mp}(x,v)=t_{\pm}(y,v).$  Notice also that,
\begin{equation}\label{eq:scale}
t_{\pm}(x,v)|v|=t_{\pm}\left(x,\omega\right), \qquad \forall (x,v) \in \overline{\Omega} \times V, \:v \neq 0, \:\omega=|v|^{-1}\,v \in \mathbb{S}^{d-1}.\end{equation}
We have the following integration formulae from \cite{mjm1}.
\begin{propo} For any $h \in X$, it holds
\begin{equation}\label{10.47}
\int_{\Omega \times V}h(x,v)\d x \otimes \bm{m}(\d v)
=\int_{\Gamma_\pm}\d\mu_{\pm}(z,v)\int_0^{\tau_{\mp}(z,v)}h\left(z\mp\,sv,v\right)\d s,
\end{equation}
and
for any $\psi \in L^1(\Gamma_-,\d\mu_{-})$,
\begin{equation}\label{10.51}
\int_{\Gamma_-}\psi(z,v)\d\mu_{-}(z,v)=\int_{\Gamma_+}\psi(x-\tau_{-}(x,v)v,v)\d\mu_{+}(x,v).\end{equation}
\end{propo}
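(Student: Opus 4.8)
The plan is to prove both identities by disintegrating the left–hand integral in the velocity variable, then slicing $\Omega$ by the straight lines parallel to $v$ and identifying the relevant cross–section with a piece of $\pO$; this is precisely the circle of ideas behind the integration formulae of \cite{mjm1}, which I reconstruct. Since all integrands may be taken nonnegative (the general $h\in X$ case then follows by linearity and Tonelli), it suffices to establish, for each fixed $v\in V$ with $v\neq 0$, the fibered identity
\[
\int_{\Omega}h(x,v)\,\d x=\int_{\pO,\ v\cdot n(z)>0}|v\cdot n(z)|\,\pi(\d z)\int_{0}^{\tau_{-}(z,v)}h(z-sv,v)\,\d s ,
\]
and then to integrate against $\bm m(\d v)$; recall that $\bm m(\{0\})=0$ in all cases of interest (Lebesgue measure, surface measure on spheres, and their combinations), so the fiber $\{v=0\}$ is $\bm m$–negligible and is discarded.

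Fix $v\neq 0$, set $\omega=v/|v|\in\mathbb{S}^{d-1}$, and after an orthogonal rotation assume $\omega=e_{d}$. By Fubini, $\int_{\Omega}h(x,v)\,\d x=\int_{\omega^{\perp}}\d y\int_{\{t:\ y+t\omega\in\Omega\}}h(y+t\omega,v)\,\d t$. Since $\Omega$ is open, for each $y$ the slice $\{t:\ y+t\omega\in\Omega\}$ is a countable disjoint union of open intervals $(a_{j},b_{j})$; for Lebesgue–a.e.\ $y$ each endpoint $y+b_{j}\omega$ is a point $z_{j}\in\pO$ at which $\ell_{y}$ crosses $\pO$ transversally, and by definition of the exit time $b_{j}-a_{j}=\tau_{-}(z_{j},\omega)$. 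The substitution $t=b_{j}-r$ turns $\int_{a_{j}}^{b_{j}}h(y+t\omega,v)\,\d t$ into $\int_{0}^{\tau_{-}(z_{j},\omega)}h(z_{j}-r\omega,v)\,\d r$, and then $r=|v|s$ together with the scaling \eqref{eq:scale} (which gives $\tau_{-}(z_{j},\omega)=|v|\,\tau_{-}(z_{j},v)$) rewrites this as $|v|\int_{0}^{\tau_{-}(z_{j},v)}h(z_{j}-sv,v)\,\d s$. It remains to recognize $\int_{\omega^{\perp}}\d y\sum_{j}\phi(z_{j}(y))$ as a surface integral: the orthogonal projection $p\colon z\mapsto z-(z\cdot\omega)\omega$ maps the $C^{1}$ hypersurface piece $\{z\in\pO:\ n(z)\cdot\omega>0\}$ onto $\omega^{\perp}$ with (a.e.\ defined) fiber over $y$ exactly $\{z_{j}(y)\}_{j}$, and the area formula gives $\int_{\omega^{\perp}}\sum_{j}\phi(z_{j}(y))\,\d y=\int_{\pO,\ n(z)\cdot\omega>0}\phi(z)\,|n(z)\cdot\omega|\,\pi(\d z)$, the Jacobian $|n(z)\cdot\omega|$ being the cosine between the surface element and the screen $\omega^{\perp}$. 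Collecting the factors $|v|$ (with $|n\cdot\omega|=|v\cdot n|/|v|$) yields the displayed fibered identity, hence \eqref{10.47} with the upper sign after integrating in $v$; the lower sign follows identically using the entry points $y+a_{j}\omega$, or by replacing $v$ with $-v$ and invoking the orthogonal invariance of $\bm m$.

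For \eqref{10.51} I would keep the same slicing but track the pairing of the two endpoints of each component interval: on $\ell_{y}$ the $j$-th component has entry point $z'_{j}=y+a_{j}\omega\in\Gamma_{-}(v)$ and exit point $z_{j}=y+b_{j}\omega=z'_{j}+\tau_{+}(z'_{j},v)v\in\Gamma_{+}(v)$, i.e.\ $z'_{j}=\bxi(z_{j},v)$ with $\bxi(x,v)=(x-\tau_{-}(x,v)v,v)$. Applying the area formula as above to \emph{both} hypersurface pieces $\{v\cdot n>0\}$ and $\{v\cdot n<0\}$, one finds that for every nonnegative measurable $\phi$ on $\Gamma_{-}(v)$ each of $\int_{\Gamma_{+}(v)}\phi(\bxi(z,v))\,|v\cdot n(z)|\,\pi(\d z)$ and $\int_{\Gamma_{-}(v)}\phi(z')\,|v\cdot n(z')|\,\pi(\d z')$ equals $|v|\int_{\omega^{\perp}}\sum_{j}\phi(z'_{j}(y))\,\d y$; integrating this equality against $\bm m(\d v)$ gives \eqref{10.51}. (Alternatively \eqref{10.51} can be deduced from \eqref{10.47} applied to $h(x,v)=\varepsilon^{-1}\mathbf{1}_{\{t_{-}(x,v)<\varepsilon\}}\,\psi(x-t_{-}(x,v)v,v)$ as $\varepsilon\to0^{+}$, but the direct argument is cleaner.)

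The main obstacle is the measure–theoretic bookkeeping concealed in "by the area formula": one must verify that $\{z\in\pO:\ \pm v\cdot n(z)>0\}$ decomposes, up to a $\pi$–null set, into countably many $C^{1}$ graphs over $\omega^{\perp}$ to which Federer's area/coarea formula applies with the stated Jacobian; that the set of $y$ for which $\ell_{y}$ is tangent to $\pO$ or meets it in a non–discrete set is Lebesgue–null (a Sard–type statement, and harmless in any case since $|v\cdot n|$ vanishes there); and that all the maps involved, in particular $(x,v)\mapsto t_{\pm}(x,v)$, are jointly measurable, which is \cite[Lemma 1.5]{voigt}. Once these routine but slightly delicate points are in place the computation closes, and the passage from nonnegative $h$ (resp.\ $\psi$) to arbitrary $h\in X$ (resp.\ $\psi\in L^{1}(\Gamma_{-},\d\mu_{-})$) is immediate by linearity and dominated convergence; we note that the whole Proposition is already contained in \cite{mjm1}, to which one may simply refer.
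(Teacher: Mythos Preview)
The paper does not prove this proposition at all; it simply cites it from \cite{mjm1}, and you yourself note this at the end of your proposal. Your reconstruction via slicing $\Omega$ by lines parallel to $v$, identifying the endpoints of each slice component with boundary points via the area/coarea formula, and pairing entry and exit points for \eqref{10.51} is the standard argument (and is essentially what \cite{mjm1} carries out), so your sketch is correct and aligned with the source the paper defers to.
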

\begin{nb} Notice that with the notations introduced in \cite{mjm1}, 
$$\Gamma_{\pm\infty}=\{(x,v) \in \Gamma_{\pm}\;;\;\tau_{\mp}(x,v)=\infty\}=\{(x,v) \in \Gamma_{\pm}\;;v=0\}$$
so that $\mu_{\pm}(\Gamma_{\pm\infty})=0.$ This explains why the above integration formulae do not involve the sets $\Gamma_{\pm\infty}$. 
Moreover, because $\mu_{-}(\Gamma_{0})=\mu_{+}(\Gamma_{0})=0$, we can extend the above identity \eqref{10.51} as follows: for any $\psi \in L^{1}(\Gamma_{-} \cup \Gamma_{0},\d\mu_{-})$ it holds
\begin{equation}\label{10.52}
\int_{\Gamma_{-}\cup \Gamma_{0}}\psi(z,v)\d\mu_{-}(z,v)=\int_{\Gamma_{+}\cup\Gamma_{0} }\psi(x-\tau_{-}(x,v)v,v)\d\mu_{+}(x,v).\end{equation}
\end{nb}

\subsection{About the resolvent of $\mathsf{T}_{\mathsf{H}}$}

For any $\lambda \in \mathbb{C}$ such that $\mathrm{Re}\lambda
> 0$, define
\begin{equation*}
\begin{cases}
\mathsf{M}_{\lambda} \::\:&L^1_- \longrightarrow L^1_+\\
&u \longmapsto
\mathsf{M}_{\lambda}u(x,v)=u(x-\tau_{-}(x,v)v,v)e^{-\lambda\tau_{-}(x,v)},\:\:\:(x,v) \in \Gamma_+\;;
\end{cases}
\end{equation*}

\begin{equation*}
\begin{cases}
\mathsf{\Xi}_{\lambda} \::\:&L^1_- \longrightarrow X\\
&u \longmapsto \mathsf{\Xi}_{\lambda}u(x,v)=u(x-t_{-}(x,v)v,v)e^{-\lambda
t_{-}(x,v)}\ind_{\{t_{-}(x,v) < \infty\}},\:\:\:(x,v) \in \Omega \times V\;;
\end{cases}
\end{equation*}

\begin{equation*}
\begin{cases}
\mathsf{G}_{\lambda} \::\:&X \longrightarrow L^1_+\\
&\varphi \longmapsto \mathsf{G}_{\lambda}\varphi(x,v)=\displaystyle
\int_0^{\tau_{-}(x,v)}\varphi(x-sv,v)e^{-\lambda s}\d s,\:\:\:(x,v) \in
\Gamma_+\;;\end{cases}
\end{equation*}
and
\begin{equation*}
\begin{cases}
\mathsf{R}_{\lambda} \::\:&X \longrightarrow X\\
&\varphi \longmapsto \mathsf{R}_{\lambda}\varphi(x,v)=\displaystyle
\int_0^{t_{-}(x,v)}\varphi(x-tv,v)e^{-\lambda t}\d t,\:\:\:(x,v) \in
\Omega\times V;
\end{cases}
\end{equation*}
where $\ind_E$ denotes the charateristic function of the measurable
set $E$. All these operators are
bounded on
their respective spaces. More precisely, for any $\mathrm{Re}\lambda
> 0$
\begin{align*}
\|\mathsf{M}_{\lambda}\| &\leq 1, & \|\mathsf{\Xi}_{\lambda}\| &\leq (\mathrm{Re}
\lambda
)^{-1},\\
\|\mathsf{G}_{\lambda}\| &\leq (\,\mathrm{Re} \lambda )^{-1}, &
\|\mathsf{R}_{\lambda}\| &\leq (\mathrm{Re }\lambda)^{-1}.
\end{align*}
The interest of these operators is related to the resolution of the boundary
value problem:
\begin{equation}\label{BVP1}
\begin{cases}
(\lambda- \mathsf{T}_{\mathrm{max}})f=g,\\
\mathsf{B}^-f=u,
\end{cases}
\end{equation}
where $\lambda > 0$, $g \in X$ and $u$ is a given function over
$\Gamma_-.$ Such a boundary value problem, with $u \in \lm$ can be uniquely solved (see \cite{mjm1})
\begin{theo}\label{Theo4.2} Given  $\lambda >0$, $u \in \lm$ and $g \in X$, the function
$$f=\mathsf{R}_{\lambda}g + \mathsf{\Xi}_{\lambda}u$$ is the \textbf{
unique} solution $f \in \D(\mathsf{T}_{\mathrm{max}})$ of the boundary value
problem \eqref{BVP1}.\end{theo}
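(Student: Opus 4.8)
The plan is to solve the boundary value problem \eqref{BVP1} by the method of characteristics, reducing it, fibrewise in the velocity variable, to a one‑parameter family of elementary linear ODEs along the transport lines $s\mapsto(x+sv,v)$. Fix $v\neq0$; since $\Omega$ is bounded, every such line meets $\Omega$ in a finite open segment $(s_-,s_+)$, the left endpoint $x+s_-v$ being a point of $\partial\Omega$ with incoming velocity, i.e. $(x+s_-v,v)\in\Gamma_-$. If $f\in\D(\mathsf{T}_{\mathrm{max}})$ solves \eqref{BVP1}, then $\phi(s):=f(x+sv,v)$ must satisfy $\phi'(s)+\lambda\phi(s)=g(x+sv,v)$, because $\frac{\d}{\d s}f(x+sv,v)=(v\cdot\nabla_x f)(x+sv,v)=-(\mathsf{T}_{\mathrm{max}}f)(x+sv,v)$, together with $\phi(s_-)=u(x+s_-v,v)$ coming from $\mathsf{B}^-f=u$. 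Integrating this ODE from $s_-$ and performing the change of variables $\sigma\mapsto t=s-\sigma$ in the Duhamel term (using $s-s_-=t_-(x+sv,v)$) one recovers precisely $\phi(s)=\mathsf{R}_\lambda g(x+sv,v)+\mathsf{\Xi}_\lambda u(x+sv,v)$; the degenerate fibre $v=0$, if $\bm{m}$ charges it, reduces to the trivial relation $\lambda f=g$, which the formula also satisfies since then $t_-\equiv\infty$ and $\mathsf{\Xi}_\lambda u\equiv0$. This simultaneously motivates the candidate $f=\mathsf{R}_\lambda g+\mathsf{\Xi}_\lambda u$ and shows it is the only possible one along a.e. characteristic.

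For the existence part I would check the three defining properties separately. First, $f=\mathsf{R}_\lambda g+\mathsf{\Xi}_\lambda u\in X$ follows at once from the operator bounds $\|\mathsf{R}_\lambda\|\le\lambda^{-1}$ and $\|\mathsf{\Xi}_\lambda\|\le\lambda^{-1}$ recalled above. Second, to see that $f\in\D(\mathsf{T}_{\mathrm{max}})=W_1$ with $(\lambda-\mathsf{T}_{\mathrm{max}})f=g$, I would use the characterization (going back to \cite{ces1,ces2,dautray}) of $W_1$ as the space of $f\in X$ which, for $\bm{m}$‑a.e. $v$, are absolutely continuous along a.e. transport line, the distributional field $v\cdot\nabla_xf$ being represented by the classical derivative along the line. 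The explicit formula makes $s\mapsto f(x+sv,v)$ absolutely continuous on $(s_-,s_+)$ with derivative $g(x+sv,v)-\lambda f(x+sv,v)$, which is in $L^1$ along a.e. line; by Fubini and the disintegration formula \eqref{10.47} this derivative belongs to $X$, so $v\cdot\nabla_xf=\lambda f-g\in X$. Alternatively, this step can be carried out "by hand" by testing against $\theta\in\mathcal{C}^1_c(\Omega\times V)$ and integrating by parts along characteristics, the measurability of $t_\pm$ from \cite[Lemma 1.5]{voigt} making the Fubini manipulations legitimate. Third, for the boundary identity $\mathsf{B}^-f=u$: approaching a point $(z,w)\in\Gamma_-$ along its own characteristic one has $t_-(z+sw,w)\to0$, hence $\mathsf{R}_\lambda g\to0$ and $\mathsf{\Xi}_\lambda u\to u(z,w)$ there, and this upgrades to convergence of the $L^1_{\mathrm{loc}}(\Gamma_-,\d\mu_-)$ traces by the trace theory of \cite{ces1,ces2} together with \eqref{10.47} and dominated convergence; since $u\in\lm$ the limit is an admissible trace.

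Uniqueness is the easy part: if $f_1,f_2\in\D(\mathsf{T}_{\mathrm{max}})$ both solve \eqref{BVP1}, then $h:=f_1-f_2\in W_1$ satisfies $(\lambda-\mathsf{T}_{\mathrm{max}})h=0$ and $\mathsf{B}^-h=0$; along $\bm{m}$‑a.e. characteristic, $s\mapsto h(x+sv,v)$ is absolutely continuous and solves $\phi'+\lambda\phi=0$ with $\phi(s_-)=0$, hence vanishes identically, so $h=0$ in $X$. Equivalently, $h$ in fact belongs to $W$ (as $h_{|\Gamma_-}=0\in\lm$), hence to $\D(\mathsf{T}_0)$, and one may invoke that $\mathsf{R}_\lambda=(\lambda-\mathsf{T}_0)^{-1}$ is injective; but the characteristics argument is self‑contained.

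The main obstacle is not any single estimate but the careful bookkeeping underlying the method of characteristics: justifying the changes of variables and the Fubini applications needs the measurability of the exit‑time functions and the disintegration of $\d x\otimes\bm{m}(\d v)$ along transport lines recorded in \eqref{10.47}, while identifying $\mathsf{B}^-f$ with $u$ requires the nontrivial trace theory of \cite{ces1,ces2} in order to know that the pointwise limits along characteristics agree with the intrinsically defined $L^1_{\mathrm{loc}}(\Gamma_-)$ trace $\mathsf{B}^-$. Once these technical ingredients — all available in \cite{mjm1,voigt} — are granted, the verification is routine, which is presumably why the statement is attributed to \cite{mjm1} rather than proved afresh.
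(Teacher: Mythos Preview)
The paper does not give its own proof of this theorem; it is quoted as a known result from \cite{mjm1}. Your sketch via the method of characteristics is the standard argument used there, and the ingredients you identify (measurability of $t_\pm$, the disintegration formula \eqref{10.47}, and the trace theory of \cite{ces1,ces2}) are exactly the right ones, so there is nothing to compare and nothing to correct.
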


\begin{nb}\label{nb:lift} Notice that
$\mathsf{\Xi}_{\lambda}$ is a lifting operator which, to a given $u \in
\lm$, associates a function $f=\mathsf{\Xi}_{\lambda}u \in
\D(\mathsf{T}_{\mathrm{max}})$ whose trace on $\Gamma_-$ is exactly $u$.
More precisely,
\begin{equation}\label{propxil1}\T_\mathrm{max}\mathsf{\Xi}_{\lambda}u=\lambda \mathsf{\Xi}_\lambda u, \qquad
\mathsf{B}^-\mathsf{\Xi}_\lambda u=u,\:\:\mathsf{B}^+\mathsf{\Xi}_\lambda u = \ml u, \qquad  \forall u
\in \lm.
\end{equation}
\end{nb}
We can complement the above result with the following whose proof can be extracted from \cite[Theorem 4.2]{mjm2}:
\begin{propo}\phantomsection\label{propo:resolvante}
If $r_{\sigma}(\mathsf{M}_{\lambda}\mathsf{H})< 1$   $(\lambda >0)$, then $A=\mathsf{T}_{\mathsf{H}}$ and
$$\mathcal{R}(\lambda,\mathsf{T}_{\mathsf{H}})=\mathsf{R}_{\lambda}+\mathsf{\Xi}_{\lambda}\mathsf{H}\Rs(1,\mathsf{M}_{\lambda}\mathsf{H})\mathsf{G}_{\lambda}$$
where the series  converges in $\mathscr{B}(X)$.
\end{propo}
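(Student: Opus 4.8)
The plan is to reduce the resolvent equation for $\mathsf{T}_{\mathsf{H}}$ to an equation posed on the trace space $\lm$, and then to invert the latter by a Neumann series whose \emph{norm} convergence is precisely what the hypothesis $r_{\sigma}(\mathsf{M}_{\lambda}\mathsf{H})<1$ supplies.

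First I would fix $\lambda>0$ and $g\in X$ and look for $f\in\D(\mathsf{T}_{\mathsf{H}})$ solving $(\lambda-\mathsf{T}_{\mathsf{H}})f=g$. Since $\D(\mathsf{T}_{\mathsf{H}})\subset\D(\mathsf{T}_{\mathrm{max}})$ and $\mathsf{T}_{\mathsf{H}}$ coincides with $\mathsf{T}_{\mathrm{max}}$ on its domain, Theorem~\ref{Theo4.2} forces $f=\mathsf{R}_{\lambda}g+\mathsf{\Xi}_{\lambda}u$ with $u:=\mathsf{B}^{-}f\in\lm$. Applying $\mathsf{B}^{+}$ and using the elementary trace identities $\mathsf{B}^{+}\mathsf{R}_{\lambda}=\mathsf{G}_{\lambda}$ (immediate from the definitions of $\mathsf{R}_{\lambda},\mathsf{G}_{\lambda}$ and the fact that $t_{-}$ extends to $\tau_{-}$ on $\Gamma_{+}$) and $\mathsf{B}^{+}\mathsf{\Xi}_{\lambda}=\mathsf{M}_{\lambda}$ from \eqref{propxil1}, one gets $\mathsf{B}^{+}f=\mathsf{G}_{\lambda}g+\mathsf{M}_{\lambda}u$, so the boundary condition $u=\mathsf{H}(\mathsf{B}^{+}f)$ becomes
$$(\mathrm{Id}-\mathsf{H}\mathsf{M}_{\lambda})\,u=\mathsf{H}\mathsf{G}_{\lambda}g\qquad\text{in }\lm .$$

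Next, since $r_{\sigma}(\mathsf{H}\mathsf{M}_{\lambda})=r_{\sigma}(\mathsf{M}_{\lambda}\mathsf{H})<1$, the operator $\mathrm{Id}-\mathsf{H}\mathsf{M}_{\lambda}$ is boundedly invertible on $\lm$, the Neumann series $\sum_{n\ge0}(\mathsf{H}\mathsf{M}_{\lambda})^{n}$ converging in $\mathscr{B}(\lm)$, so the boundary equation has the unique solution
$$u=\sum_{n\ge0}(\mathsf{H}\mathsf{M}_{\lambda})^{n}\mathsf{H}\mathsf{G}_{\lambda}g=\mathsf{H}\Big(\sum_{n\ge0}(\mathsf{M}_{\lambda}\mathsf{H})^{n}\Big)\mathsf{G}_{\lambda}g=\mathsf{H}\,\Rs(1,\mathsf{M}_{\lambda}\mathsf{H})\,\mathsf{G}_{\lambda}g .$$
Putting $f:=\mathsf{R}_{\lambda}g+\mathsf{\Xi}_{\lambda}u$ with this $u$, Theorem~\ref{Theo4.2} gives $f\in\D(\mathsf{T}_{\mathrm{max}})$ and $(\lambda-\mathsf{T}_{\mathrm{max}})f=g$; moreover $\mathsf{B}^{+}f=\mathsf{G}_{\lambda}g+\mathsf{M}_{\lambda}u\in\lp$ because $\mathsf{G}_{\lambda}\in\mathscr{B}(X,\lp)$ and $\mathsf{M}_{\lambda}\in\mathscr{B}(\lm,\lp)$, so $f\in W$, and by construction $\mathsf{B}^{-}f=u=\mathsf{H}(\mathsf{B}^{+}f)$; hence $f\in\D(\mathsf{T}_{\mathsf{H}})$. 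Uniqueness follows by running the same reduction with $g=0$: any $f$ in the kernel of $\lambda-\mathsf{T}_{\mathsf{H}}$ is $\mathsf{\Xi}_{\lambda}u$ with $(\mathrm{Id}-\mathsf{H}\mathsf{M}_{\lambda})u=0$, whence $u=0$ and $f=0$. Thus $\lambda\in\rho(\mathsf{T}_{\mathsf{H}})$ and
$$\Rs(\lambda,\mathsf{T}_{\mathsf{H}})=\mathsf{R}_{\lambda}+\mathsf{\Xi}_{\lambda}\mathsf{H}\,\Rs(1,\mathsf{M}_{\lambda}\mathsf{H})\,\mathsf{G}_{\lambda}=\mathsf{R}_{\lambda}+\sum_{n\ge0}\mathsf{\Xi}_{\lambda}\mathsf{H}(\mathsf{M}_{\lambda}\mathsf{H})^{n}\mathsf{G}_{\lambda},$$
the last series converging in $\mathscr{B}(X)$ since $\|(\mathsf{M}_{\lambda}\mathsf{H})^{n}\|^{1/n}\to r_{\sigma}(\mathsf{M}_{\lambda}\mathsf{H})<1$ while $\mathsf{R}_{\lambda},\mathsf{\Xi}_{\lambda},\mathsf{G}_{\lambda}$ are bounded.

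Finally, to get $A=\mathsf{T}_{\mathsf{H}}$: as $A$ generates a positive contraction $C_{0}$-semigroup, $\lambda\in\rho(A)$; and since $\mathsf{T}_{\mathsf{H}}\subset A$, for every $x\in X$ one has $\Rs(\lambda,\mathsf{T}_{\mathsf{H}})x\in\D(\mathsf{T}_{\mathsf{H}})\subset\D(A)$ and $(\lambda-A)\Rs(\lambda,\mathsf{T}_{\mathsf{H}})x=(\lambda-\mathsf{T}_{\mathsf{H}})\Rs(\lambda,\mathsf{T}_{\mathsf{H}})x=x$, so $\Rs(\lambda,A)=\Rs(\lambda,\mathsf{T}_{\mathsf{H}})$ and hence $\D(A)=\Rs(\lambda,A)X=\Rs(\lambda,\mathsf{T}_{\mathsf{H}})X=\D(\mathsf{T}_{\mathsf{H}})$, i.e. $A=\mathsf{T}_{\mathsf{H}}$. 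There is no serious obstacle here; the only mildly delicate points are the trace identity $\mathsf{B}^{+}\mathsf{R}_{\lambda}=\mathsf{G}_{\lambda}$ and the verification that the traces of $f$ land in the global spaces $L^{1}_{\pm}$ rather than merely $L^{1}_{\mathrm{loc}}$ — both covered by the trace theory recalled above and by \eqref{propxil1} — while the real content is simply that the spectral-radius hypothesis upgrades the strongly convergent series \eqref{eq:reso} to a norm-convergent one.
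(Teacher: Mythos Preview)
Your argument is correct and is essentially the standard route: reduce the resolvent equation to a boundary fixed-point equation via Theorem~\ref{Theo4.2} and the trace identities, invert by the Neumann series that the spectral-radius hypothesis makes norm-convergent, and then identify $A$ with $\mathsf{T}_{\mathsf{H}}$ by comparing resolvents. The paper does not give its own proof here but simply refers to \cite[Theorem 4.2]{mjm2}, whose argument proceeds along exactly these lines; your write-up is a faithful reconstruction of it.
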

 
\subsection{Some auxiliary operators}\label{sec:prelim}

For $\lambda=0$, we can extend the definition of these operators in an obvious way but not all the resulting operators are bounded in their respective spaces. However, we see from  the above integration formula \eqref{10.51}, that 
$$\mathsf{M}_{0} \in \mathscr{B}(\lm,\lp) \qquad \text{ with } \quad \|\mathsf{M}_{0}u\|_{\lp}=\|u\|_{\lm}, \qquad \forall u \in \lm.$$
In the same way, 
one deduces from   \eqref{10.47} that for any nonnegative $\varphi \in X$:
\begin{equation}\label{Eq:G0}
\int_{\Gamma_{+}}\mathsf{G}_{0}\varphi(x,v)\d\mu_{+}(x,v)=\int_{\Gamma_{+}}\d\mu_{+}(x,v)\int_{0}^{\tau_{-}(x,v)}\varphi(x-sv,v)\d s=\int_{\Omega \times V}\varphi(x,v)\d x \otimes \bm{m}(\d v)\end{equation}
which proves that 
$$\mathsf{G}_{0} \in \mathscr{B}(X,\lp) \qquad \text{ with } \quad  \|\mathsf{G}_{0}\varphi\|_{\lp}=\|\varphi\|_{X}, \qquad \forall \varphi\in X.$$
To be able to provide a rigorous definition of the operators $\mathsf{\Xi}_{0}$ and $\mathsf{R}_{0}$ we need the following
\begin{defi} Introduce the function spaces
$$\Y^{\pm}_{1}=L^{1}(\Gamma_{\pm}\,,|v|^{-1} \d\mu_{\pm})$$
with its associated $L^{1}$-norm $\|\cdot\|_{\Y^{\pm}_{1}}$ and
$$\X_{\tau}=L^{1}(\Omega\times V,t_{+}(x,v)\d x \otimes \bm{m}(\d v))$$ 
with the associated $L^{1}$-norm $\|\cdot\|_{\tau}$.\end{defi}
The interest of the above boundary spaces lies in the following:
\begin{lemme}\phantomsection\label{lem:motau} 
For any $u \in \Y^{-}_{1}$  one has $\mathsf{\Xi}_{0}u \in X$ with 
\begin{equation}\label{eq:Xio}
\|\mathsf{\Xi}_{0}u\|_{X}=\int_{\Gamma_{-}}u(x,v)\tau_{+}(x,v)\d\mu_{+}(x,v) \leq D\|u\|_{\Y^{-}_{1}}, \qquad \forall u \in \Y^{-}_{1}\end{equation}
where $D$ is the diameter of $\Omega$, $D=\sup_{x,y \in \pO}|x-y|$.  
Moreover, if $u \in \Y^{-}_{1}$ then $\mathsf{M}_{0}u \in \Y^{+}_{1}$ with
\begin{equation}\label{eq:M0+Y}
\|\mathsf{M}_{0}u\|_{\Y^{+}_{1}}= \|u\|_{\Y^{-}_{1}}.\end{equation}
If $f \in \X_{\tau}$ then $\mathsf{G}_{0}f \in \Y^{-}_{1}$  and $\mathsf{R}_{0} f \in \D(\mathsf{T}_{0}) \subset X$ and $\mathsf{T}_{0}\mathsf{R}_{0}f=-f$.
\end{lemme}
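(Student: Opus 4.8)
The plan is to verify each of the four assertions of Lemma \ref{lem:motau} directly from the integration formulae \eqref{10.47} and \eqref{10.51}, treating the $\lambda=0$ operators as the obvious pointwise extensions of the definitions given for $\mathrm{Re}\,\lambda>0$.

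First I would handle $\mathsf{\Xi}_0$. By definition $\mathsf{\Xi}_0 u(x,v)=u(x-t_-(x,v)v,v)\ind_{\{t_-(x,v)<\infty\}}$, so $\|\mathsf{\Xi}_0 u\|_X=\int_{\Omega\times V}|u(x-t_-(x,v)v,v)|\ind_{\{t_-<\infty\}}\d x\otimes\bm{m}(\d v)$. I would apply \eqref{10.47} with the $\Gamma_-$ choice: writing $x=z-sv$ with $(z,v)\in\Gamma_-$ and $s\in(0,\tau_+(z,v))$, one has $t_-(z-sv,v)=s$ and $z-sv-t_-(z-sv,v)v=z-2sv$... more carefully, the point is that along the backward characteristic through $(x,v)$, the exit point is $z\in\Gamma_-$, so $x-t_-(x,v)v=z$ depends only on the characteristic. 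Hence the integrand is constant $=|u(z,v)|$ along the $s$-fibre, and \eqref{10.47} gives $\|\mathsf{\Xi}_0u\|_X=\int_{\Gamma_-}|u(z,v)|\tau_+(z,v)\d\mu_-(z,v)$, which by \eqref{10.51} equals $\int_{\Gamma_-}|u(x,v)|\tau_+(x,v)\d\mu_+(x,v)$ — this is \eqref{eq:Xio}. The bound $\tau_+(x,v)\le D|v|^{-1}$ follows from the scaling \eqref{eq:scale} together with $\tau_+(x,\omega)\le D$ for $\omega\in\mathbb S^{d-1}$ (the chord length is at most the diameter), giving $\le D\|u\|_{\Y^-_1}$.

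Next, for \eqref{eq:M0+Y} I would simply note $\mathsf{M}_0u(x,v)=u(x-\tau_-(x,v)v,v)$ for $(x,v)\in\Gamma_+$, and since $|v|$ is unchanged by the transformation $(x,v)\mapsto(x-\tau_-(x,v)v,v)$, apply \eqref{10.51} to $\psi(x,v)=|v|^{-1}|u(x,v)|$, which is in $L^1(\Gamma_-,\d\mu_-)$ precisely when $u\in\Y^-_1$; this yields $\|\mathsf{M}_0u\|_{\Y^+_1}=\|u\|_{\Y^-_1}$. For the last sentence, $\mathsf{G}_0f(x,v)=\int_0^{\tau_-(x,v)}f(x-sv,v)\d s$; to show $\mathsf{G}_0f\in\Y^-_1$ I would compute $\int_{\Gamma_+}|v|^{-1}|\mathsf{G}_0f(x,v)|\d\mu_+(x,v)\le\int_{\Gamma_+}|v|^{-1}\d\mu_+(x,v)\int_0^{\tau_-(x,v)}|f(x-sv,v)|\d s$ and recognize, via \eqref{10.47} (with the $\Gamma_+$ choice), that this equals $\int_{\Omega\times V}|v|^{-1}|f(x,v)|\d x\otimes\bm{m}(\d v)$; since along the characteristic $\tau_-(x,v)|v|\le D$ one has $|v|^{-1}\le\tau_-(x,v)/D\le t_+(x,v)/D$ pointwise... actually one should bound $|v|^{-1}$ on the support appropriately — the cleanest route is to note that $\mathsf{G}_0=\mathsf{B}^+\mathsf{R}_0$ and that \eqref{Eq:G0}-type manipulations give $\|\mathsf{G}_0f\|_{\Y^-_1}$ controlled by $\|f\|_\tau$ because the extra $|v|^{-1}$ weight is absorbed by $t_+$ on the relevant fibres. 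Finally, $\mathsf{R}_0f(x,v)=\int_0^{t_-(x,v)}f(x-tv,v)\d t$ is, by Theorem \ref{Theo4.2} read at $\lambda=0$, the unique solution in $\D(\mathsf{T}_{\mathrm{max}})$ of $-v\cdot\nabla_x f=(-f)$ with zero trace on $\Gamma_-$, hence $\mathsf{R}_0f\in\D(\mathsf{T}_0)$ and $\mathsf{T}_0\mathsf{R}_0f=-f$; one checks $\mathsf{R}_0f\in X$ and the trace on $\Gamma_+$ is finite using $\mathsf{G}_0f\in\Y^-_1$ just established, together with $f\in\X_\tau\Rightarrow\mathsf{R}_0f\in X$ via $\int|\mathsf{R}_0f|\le\int t_+(x,v)|f(x,v)|\,\d x\otimes\bm m(\d v)=\|f\|_\tau$ (again by \eqref{10.47}, since the $t$-fibre over a point of length $t_-$ turns into a weight, and reorganizing the double integral produces exactly the $t_+$ weight).

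The main obstacle I expect is the bookkeeping for $\mathsf{G}_0$ and $\mathsf{R}_0$ on the weighted spaces $\Y^-_1$ and $\X_\tau$: one must track where the factor $|v|^{-1}$ (equivalently $t_\pm(x,v)$, via \eqref{eq:scale}) appears after each application of Fubini/\eqref{10.47} and confirm it is exactly matched by the weight defining $\X_\tau$, rather than merely dominated up to a constant. The identities for $\mathsf{\Xi}_0$ and $\mathsf{M}_0$ are essentially immediate from \eqref{10.47} and \eqref{10.51}; the content is in choosing the right orientation ($\Gamma_+$ versus $\Gamma_-$) in \eqref{10.47} for each operator so that the substitution along characteristics collapses the fibre integral in the intended way.
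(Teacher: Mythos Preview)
Your plan coincides with the paper's proof: each assertion is obtained directly from \eqref{10.47} or \eqref{10.51}, and your $\Gamma_-$ orientation for $\mathsf{\Xi}_0$ (with the sign corrected to $x=z+sv$ when $(z,v)\in\Gamma_-$) is in fact slightly more direct than the paper's $\Gamma_+$-then-\eqref{10.51} route. The only step worth fleshing out is $\mathsf{R}_0 f\in X$: rather than invoking Theorem \ref{Theo4.2} at $\lambda=0$, the paper carries out the Fubini computation $\int_{\Omega\times V}\mathsf{R}_0 f=\int_{\Gamma_+}\d\mu_+\int_0^{\tau_-}\!\d s\int_s^{\tau_-}\!f(z-tv,v)\,\d t=\int_{\Gamma_+}\d\mu_+\int_0^{\tau_-}\!t\,f(z-tv,v)\,\d t=\|f\|_\tau$ for $f\ge 0$, which is exactly the ``reorganizing the double integral produces the $t_+$ weight'' you allude to and is where the lemma has its real content.
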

\begin{proof} From \eqref{10.47}, for nonnegative $u \in \lm:$
\begin{equation*}\begin{split}
\int_{\Omega \times V}&\mathsf{\Xi}_{0}u(x,v)\d x \otimes \bm{m}(\d v)=\int_{\Omega \times V}\mathsf{\Xi}_{0}u(x,v)\d x \otimes \bm{m}(\d v)\\
&=\int_{\Gamma_{+}}\d\mu_{+}(z,v)\int_{0}^{\tau_{-}(z,v)}u(z-sv-t_{-}(z-sv,v)v,v)\ind_{\{t_{-}(z-sv,v)<\infty\}}\d s\\
&=\int_{\Gamma_{+}}u(z-\tau_{-}(z,v)v,v)\tau_{-}(z,v)\d\mu_{+}(z,v)
\end{split}\end{equation*}
which, using now \eqref{10.51} yields \eqref{eq:Xio}.
If now $u \in \Y^{-}_{1}$, then
$$\int_{\Gamma_{+}}\mathsf{M}_{0}u(x,v)\,|v|^{-1}\d\mu_{+}(x,v)=\int_{\Gamma_{+}}u(x-\tau_{-}(x,v)v,v)|v|^{-1}\d\mu_{+}(x,v)$$
and  we deduce from \eqref{10.51} that
$$\int_{\Gamma_{+}}\mathsf{M}_{0}u(x,v)|v|^{-1}\d\mu_{+}(x,v)=\int_{\Gamma_{-}}u(z,v)|v|^{-1}\d\mu_{-}(z,v)$$
which is \eqref{eq:M0+Y}. If now $f \in \X_{\tau}$ is nonnegative, one has directly from \eqref{10.47} that
$$\int_{\Gamma_{+}}\,|v|^{-1}\mathsf{G}_{0}f(x,v)\d \mu_{+}(x,v)=\|\,|v|^{-1}f\,\|_{X} \leq D\|f\|_{\tau} < \infty$$
which proves that $\mathsf{G}_{0}f \in X.$ Moreover, using \eqref{10.47},
\begin{equation*}\begin{split}
\int_{\Omega \times V}\mathsf{R}_{0}f(x,v)\d x \otimes \bm{m}(\d v)&=\int_{\Gamma_{+}}\d\mu_{+}(z,v)\int_{0}^{\tau_{-}(z,v)}[\mathsf{R}_{0}f](z-sv,v)\d s\\
&=\int_{\Gamma_{+}}\d\mu_{+}(z,v)\int_{0}^{\tau_{-}(z,v)}\d s \int_{0}^{t_{-}(z-sv,v)}f(z-sv-tv,v)\d t
\end{split}\end{equation*}
and, since $t_{-}(z-sv,v)=\tau_{-}(z,v)-s$ for all $(z,v) \in\Gamma_{+}$ and all $0<s<\tau_{-}(z,v)$ we get
\begin{equation*}\begin{split}
\int_{\Omega \times V}\mathsf{R}_{0}f(x,v)\d x \otimes \bm{m}(\d v)&=\int_{\Gamma_{+}}\d\mu_{+}(z,v)\int_{0}^{\tau_{-}(z,v)}\d s\int_{s}^{\tau_{-}(z,v)}f(z-tv,v)\d t\\
&=\int_{\Gamma_{+}}\d\mu_{+}(z,v)\int_{0}^{\tau_{-}(z,v)}t\,f(z-tv,v)\d t\end{split}\end{equation*}
Now, since $t_{+}(z-tv,v)=t$ for any  $(z,v) \in \Gamma_{+}$, the above reads
$$\int_{\Omega \times V}\mathsf{R}_{0}f(x,v)\d x \otimes \bm{m}(\d v)=\int_{\Gamma_{+}}\d\mu_{+}(z,v)\int_{0}^{\tau_{-}(z,v)}t_{+}(z-tv,v)\,f(z-tv,v)\d t$$
and, using again \eqref{10.47}, one gets 
$$\int_{\Omega \times V}\mathsf{R}_{0}f(x,v)\d x \otimes \bm{m}(\d v)=\int_{\Omega \times V}t_{+}(x,v)f(x,v)\d x \otimes \bm{m}(\d v).$$
This proves that $\mathsf{R}_{0}f \in X$. Now, it is easy to see that actually $g=\mathsf{R}_{0}f$ satisfies $\mathsf{T}_{\mathrm{max}}g=-f$ and $\mathsf{B}^{-}g=0$, i.e. $g \in \D(\mathsf{T}_{0})$ with $\mathsf{T}_{0}g=-f.$
\end{proof}
\begin{nb} Notice that, for any nonnegative $u \in \lp$, 
$$\int_{\Gamma_{+}}\mathsf{M}_{0}u(x,v)\tau_{-}(x,v)\d\mu_{+}(x,v)=\int_{\Gamma_{+}}u(x-\tau_{-}(x,v)v,v)\tau_{-}(x,v)\d\mu_{+}(x,v)$$
and, since $\tau_{+}(x-\tau_{-}(x,v)v,v)=\tau_{-}(x,v)$ for any $(x,v) \in\Gamma_{-}$, we deduce from \eqref{10.51} that
$$\int_{\Gamma_{+}}\mathsf{M}_{0}u(x,v)\tau_{-}(x,v)\d\mu_{+}(x,v)=\int_{\Gamma_{-}}u(z,v)\tau_{+}(z,v)\d\mu_{-}(z,v)$$
This shows that, in \eqref{eq:M0+Y}, we can replace $\Y^{\pm}_{1}$ with $L^{1}(\Gamma_{\pm},\tau_{\mp}(x,v)\d\mu_{\pm}(x,v)).$ In the same way, one see that, for $g \in \X_{\tau}$ it holds $\|\mathsf{G}_{0}g\|_{L^{1}(\Gamma_{-},\tau_{+}\d\mu_{-})}=\|g\|_{\tau}.$\end{nb}


One has the following  result:
\begin{propo}\phantomsection\label{propBV0} Let $g \in\X_{\tau}$ be given and $u \in L^{1}(\Gamma_{-},\tau_{+}(x,v)\d\mu_{-})$. The boundary value problem \begin{equation}\label{BV0}
\begin{cases}
-\mathsf{T}_{\mathrm{max}}f=g\\
\mathsf{B}^{-}f=u \end{cases}\end{equation}
admits a unique solution $f \in X$ given by $f=\mathsf{R}_{0}g+\mathsf{\Xi}_{0}u.$
\end{propo}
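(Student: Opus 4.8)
The statement asserts existence and uniqueness for the boundary value problem \eqref{BV0} with $g \in \X_{\tau}$ and $u \in L^{1}(\Gamma_{-},\tau_{+}\d\mu_{-})$, the solution being $f=\mathsf{R}_{0}g+\mathsf{\Xi}_{0}u$. The plan is to piggyback on the already-established Theorem \ref{Theo4.2} (the $\lambda>0$ version) together with Lemma \ref{lem:motau}, which has already done the bulk of the analytic work: it tells us that $\mathsf{R}_{0}g \in \D(\mathsf{T}_{0}) \subset X$ with $\mathsf{T}_{0}\mathsf{R}_{0}g=-g$ (in particular $\mathsf{T}_{\mathrm{max}}\mathsf{R}_{0}g=-g$ and $\mathsf{B}^{-}\mathsf{R}_{0}g=0$), and that $\mathsf{\Xi}_{0}u \in X$ whenever $u \in \Y^{-}_{1}$. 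The first remark to make is purely measure-theoretic: the hypothesis $u \in L^{1}(\Gamma_{-},\tau_{+}\d\mu_{-})$ is exactly the condition $u \in \Y^{-}_{1}$ rephrased via the change of variables in \eqref{10.51}, since by the Remark following Lemma \ref{lem:motau} one may replace $\Y^{\pm}_{1}$ by $L^{1}(\Gamma_{\pm},\tau_{\mp}\d\mu_{\pm})$ (concretely, $\|\mathsf{\Xi}_{0}u\|_{X}=\int_{\Gamma_{-}}u(x,v)\tau_{+}(x,v)\d\mu_{+}(x,v)$ is the content of \eqref{eq:Xio}). So $\mathsf{\Xi}_{0}u$ and $\mathsf{R}_{0}g$ are both well-defined elements of $X$ and $f=\mathsf{R}_{0}g+\mathsf{\Xi}_{0}u \in X$.

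\textbf{Existence.} Next I would verify that this $f$ solves \eqref{BV0}. For the transport equation: $\mathsf{T}_{\mathrm{max}}\mathsf{R}_{0}g=-g$ is given by Lemma \ref{lem:motau}, and $\mathsf{T}_{\mathrm{max}}\mathsf{\Xi}_{0}u=0$ — this is the $\lambda=0$ analogue of the first identity in \eqref{propxil1}; it holds because $\mathsf{\Xi}_{0}u(x,v)=u(x-t_{-}(x,v)v,v)\ind_{\{t_{-}<\infty\}}$ is constant along the characteristic curves $s\mapsto x-sv$, hence $v\cdot\nabla_{x}(\mathsf{\Xi}_{0}u)=0$ in the distributional sense (one checks this by testing against $\mathcal{C}^{1}_{c}$ functions and using the integration formula \eqref{10.47}, exactly as in the $\lambda>0$ case). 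Summing, $\mathsf{T}_{\mathrm{max}}f=-g$, i.e. $-\mathsf{T}_{\mathrm{max}}f=g$. For the trace: $\mathsf{B}^{-}\mathsf{R}_{0}g=0$ (Lemma \ref{lem:motau}), while $\mathsf{B}^{-}\mathsf{\Xi}_{0}u=u$ — this is the $\lambda=0$ version of $\mathsf{B}^{-}\mathsf{\Xi}_{\lambda}u=u$ and follows directly from the definition of $\mathsf{\Xi}_{0}$ together with $\tau_{-}(x,v)=0$ and $t_{-}(x,v)=0$ on $\Gamma_{-}$ (so $\mathsf{\Xi}_{0}u$ restricted to $\Gamma_{-}$ is just $u$). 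Hence $\mathsf{B}^{-}f=u$, and $f$ solves \eqref{BV0}.

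\textbf{Uniqueness.} Finally, suppose $f_{1},f_{2} \in X$ both solve \eqref{BV0}; then $w:=f_{1}-f_{2}$ satisfies $\mathsf{T}_{\mathrm{max}}w=0$ and $\mathsf{B}^{-}w=0$, i.e. $w \in \D(\mathsf{T}_{0})$ with $\mathsf{T}_{0}w=0$. Since $\mathsf{T}_{0}$ (absorbing boundary conditions, $\mathsf{H}=0$) is the generator of a strongly continuous contraction semigroup, $0 \in \rho(\mathsf{T}_{0})$ — indeed $r_{\sigma}(\mathsf{M}_{0}\cdot 0)=0<1$, so Proposition \ref{propo:resolvante} gives $\mathcal{R}(0,\mathsf{T}_{0})=\mathsf{R}_{0}$ (one can also argue directly: for $\varphi \in \D(\mathsf{T}_{0})$ one has, by integrating along characteristics using \eqref{10.47}, $\|\varphi\|_{X}=\|\mathsf{R}_{0}(-\mathsf{T}_{0}\varphi)\|_{X}\leq D\|\mathsf{T}_{0}\varphi\|_{\tau}$-type bounds which force $\varphi=0$ when $\mathsf{T}_{0}\varphi=0$) — hence $w=0$.

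\textbf{Main obstacle.} The only genuinely delicate point is the rigorous justification that $\mathsf{T}_{\mathrm{max}}\mathsf{\Xi}_{0}u=0$ in the distributional sense when $u$ is merely in $\Y^{-}_{1}$ (rather than $\lm$, where the smooth $\lambda>0$ theory of \cite{mjm1} applies verbatim); one must check that $\mathsf{\Xi}_{0}u \in W_{1}=\D(\mathsf{T}_{\mathrm{max}})$ and that its distributional derivative vanishes, which requires an approximation argument (approximate $u$ by bounded compactly-supported data, or equivalently take $\lambda\downarrow 0$ in $\mathsf{\Xi}_{\lambda}u$ and pass to the limit using \eqref{propxil1} and the norm bound \eqref{eq:Xio}). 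Everything else is an immediate assembly of Theorem \ref{Theo4.2}, Lemma \ref{lem:motau}, and the remarks surrounding them.
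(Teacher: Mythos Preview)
Your proposal is correct and follows essentially the same approach as the paper: verify that $f=\mathsf{R}_{0}g+\mathsf{\Xi}_{0}u$ solves \eqref{BV0} using Lemma~\ref{lem:motau} for $\mathsf{R}_{0}g$ and the $\lambda=0$ analogue of \eqref{propxil1} for $\mathsf{\Xi}_{0}u$, then reduce uniqueness to $\mathrm{Ker}(\mathsf{T}_{0})=\{0\}$. One small imprecision: $L^{1}(\Gamma_{-},\tau_{+}\d\mu_{-})$ and $\Y^{-}_{1}$ are \emph{not} the same space (only $\Y^{-}_{1}\subset L^{1}(\Gamma_{-},\tau_{+}\d\mu_{-})$ via $\tau_{+}\leq D|v|^{-1}$), but the \emph{identity} in \eqref{eq:Xio} shows $\mathsf{\Xi}_{0}u\in X$ precisely when $u\in L^{1}(\Gamma_{-},\tau_{+}\d\mu_{-})$, which is all you need; also, for uniqueness one only needs injectivity of $\mathsf{T}_{0}$ (trivial from integration along characteristics), not $0\in\rho(\mathsf{T}_{0})$, and Proposition~\ref{propo:resolvante} is stated only for $\lambda>0$.
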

\begin{proof} Let $u \in L^{1}(\Gamma_{-},\tau_{+}(x,v)\d\mu_{-})$ and $g \in\X_{\tau}$, since $\mathsf{\Xi}_{0} u \in \D(\mathsf{T}_{\mathrm{max}})$
with  $\mathsf{T}_{\mathrm{max}}\mathsf{\Xi}_{0} u = 0,$ $\mathsf{B}^{+}\mathsf{\Xi}_{0} u =\mathsf{M}_{0}u$
and $\mathsf{B}^{-}\mathsf{\Xi}_{0}u=u$ one sees that $f \in\D(\mathsf{T}_{\mathrm{max}})$ with 
$$\mathsf{T}_{\mathrm{max}}f=\mathsf{T}_{\mathrm{max}}\mathsf{R}_{0}g+\mathsf{T}_{\mathrm{max}}\mathsf{\Xi}_{0}u=\mathsf{T}_{0}\mathsf{R}_{0}g=-g$$
while $\mathsf{B}^{-}f=\mathsf{B}^{-}\mathsf{R}_{0}g+\mathsf{B}^{-}\mathsf{\Xi}_{0}u=\mathsf{B}^{-}\mathsf{\Xi}_{0}u=u.$ This shows that $f=\mathsf{R}_{0}g+\mathsf{\Xi}_{0}u$ is a solution to \eqref{BV0}. To prove the uniqueness, it suffices to assume that $g=u=0$ but then \eqref{BV0} reads $\mathsf{T}_{0}f=0$ which admits the unique solution $f=0$.  \end{proof}

\section{General stochastic partly diffuse boundary conditions}\label{sec:BC}

Let us explicit here the general class of boundary conditions we aim to deal with.  Typical  boundary operators arising in the kinetic theory of gases are {\it local with respect to} $x \in
\partial \Omega$.  
In order to exploit this local nature of the boundary conditions, we introduce the following notations. For any $x \in \partial \Omega$, we define
$$\Gamma_{\pm}(x)=\{v \in V\;;\; \pm v \cdot n(x) > 0\}, \qquad \Gamma_{0}(x)=\{v \in V\;;\; v \cdot n(x)=0\}$$
and we define the measure $\bm{\mu}_{x}(\d v)$ on $\Gamma_{\pm}(x)$ given by
$$\bm{\mu}_{x}(\d v)=|v\cdot n(x)|\bm{m}(\d v).$$
This allows to define the $L^{1}$-space $L^{1}(\Gamma_{\pm}(x),\d\bm{\mu}_{x})$ in an obvious way. We shall denote the $L^{1}(\Gamma_{\pm}(x),\bm{\mu}_{x})$ norm by $\|\cdot\|_{L^{1}(\Gamma_{\pm}(x))}.$ 
Since, for any $\varphi \in L^{1}(\Gamma_{\pm},\mu_{\pm})$ one has
$$\|\varphi\|_{L^{1}_{\pm}}=\int_{\partial\Omega}\left[\int_{\Gamma_{\pm}(x)}|\varphi(x,v)|\bm{\mu}_{x}(\d v)\right]\pi(\d x)=\int_{\partial\Omega}\|\varphi(x,\cdot)\|_{L^{1}(\Gamma_{\pm}(x))}\pi(\d x)$$
we can identify isometrically any $\varphi \in L^{1}_{\pm}$ to the \emph{field}
\begin{equation}\label{eq:field}
x \in \partial \Omega \longmapsto \varphi(x,\cdot) \in L^{1}(\Gamma_{\pm}(x)).\end{equation}

\subsection{Reflection boundary operators}

We begin with the following definition of pure reflection boundary conditions (see \cite[Definition 6.1, p.104]{voigt}):

\begin{defi}\phantomsection\label{defi:reflec} One says that $\mathsf{R} \in \mathscr{B}(\lp,\lm)$ is a pure reflection
boundary operator if 
$$\mathsf{R}(\varphi)(x,\v)=\varphi(x,\mathcal{V}(x,\v)) \qquad
\qquad \forall (x,\v) \in \Gamma_-, \:\varphi \in \lp$$
where $\mathcal{V}\::\:x \in \partial \Omega \mapsto \mathcal{V}(x,\cdot)$ 
is a field of bijective bi-measurable and $\bm{\mu}_{x}$-preserving mappings
$$\mathcal{V}(x,\cdot)\::\:\Gamma_{-}(x) \cup \Gamma_{0}(x) \to \Gamma_{+}(x) \cup \Gamma_{0}(x)$$
such that
\begin{enumerate}[i)]
\item $|\mathcal{V}(x,\v)|=|\v|$ for any $(x,\v) \in \Gamma_-$.
\item If $(x,v) \in \Gamma_{0}$ then $(x,\mathcal{V}(x,v)) \in \Gamma_{0}$, i.e. $\mathcal{V}(x,\cdot)$ maps $\Gamma_{0}(x)$ in $\Gamma_{0}(x).$
\item The mapping 
$$(x,\v) \in \Gamma_- \mapsto (x,\mathcal{V}(x,\v)) \in
\Gamma_+$$
is a $\mathcal{C}^{1}$ diffeomorphism.
\end{enumerate}
\end{defi}
\begin{nb} This last regularity property on $\mathcal{V}$ may require additional regularity of $\pO$ as seen in Example \ref{exe:specu}.\end{nb}

Note that 
$$[\mathsf{R}\varphi](x,v)=[\mathsf{R}(x)\varphi(x,\cdot)](v), \qquad \forall \varphi \in \lp,\:\:(x,v) \in \Gamma_{-}$$
where we identify (isometrically) $\varphi \in \lp$ to the integrable field \eqref{eq:field} and 
$$x \in \partial\Omega \mapsto \mathsf{R}(x) \in \mathscr{B}(L^{1}(\Gamma_{+}(x),\Gamma_{-}(x))$$
is the field of operators defined by
$$\mathsf{R}(x)\psi(v)=\psi(\mathcal{V}(x,v)), \qquad \psi \in L^{1}(\Gamma_{+}(x)), \quad v \in \Gamma_{-}(x).$$
It holds
$$\|\mathsf{R}(x)\psi\|_{L^{1}(\Gamma_{-}(x))}=\|\psi\|_{L^{1}(\Gamma_{+}(x))}, \qquad \forall x \in \partial\Omega, \quad \psi \in L^{1}(\Gamma_{+}(x)),$$
therefore, $\mathsf{R} \in \mathscr{B}(\lp,\lm)$ is stochastic since 
$$\|\mathsf{R}\varphi\|_{\lm}=\int_{\partial\Omega}\left\|\mathsf{R}(x)\varphi(x,\cdot)\right\|_{L^{1}(\Gamma_{-}(x))}\pi(\d x)=\int_{\partial\Omega}\|\varphi(x,\cdot)\|_{L^{1}(\Gamma_{+}(x))}\pi(\d x)=\|\varphi\|_{\lp}.$$
Notice that this last identity is equivalent to the property that the mapping 
$$(x,v) \in \Gamma_{-} \longmapsto (x,\mathcal{V}(x,v)) \in \Gamma_{+}$$ is $\mu$-preserving.

\begin{exe}\phantomsection\label{exe:specu} In practical situations, the most frequently used
pure reflection conditions are
\begin{enumerate}[(a)]
\item the {\it specular reflection boundary
conditions} which corresponds to the case in which $V$ and $\bm{m}$ are invariant under the orthogonal group and $$\mathcal{V}(x,\v)=\v-2(\v \cdot
n(x))\,n(x) \qquad \qquad (x,\v) \in \Gamma_-.$$ 
Notice that, for $\mathcal{V}$ to be a $\mathcal{C}^{1}$ diffeormorphism, we need $\pO$ to be of class $\mathcal{C}^{2}.$
\item The {\it bounce--back
reflection conditions} for which $\mathcal{V}(x,\v)=-\v$, $(x,\v)
\in \Gamma_-$.\end{enumerate}\end{exe}


\begin{figure}
\centerline{\includegraphics{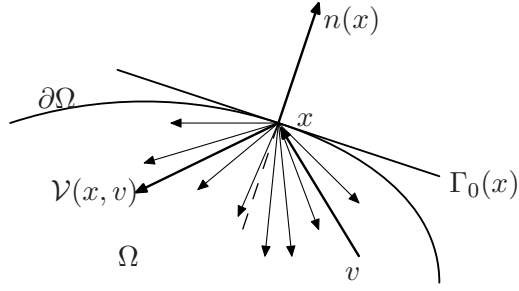}}
\centerline{
\begin{picture}(0,0)(0,0)
\put(84,48){$\Gamma_0(x)$}
\put(27,73){$x$}
\put(37,110){$n(x)$}
\put(-70,80){$\partial\Omega$}
\put(-40,20){$\Omega$}
\put(45,15){$v$}
\put(-65,45){$\mathcal{V}(x,v)$}
\end{picture}
}
\caption{Regular and diffuse reflection: 
$v$ -- an outward vector, $\mathcal{V}(x,v)$ -- the specular reflection, thin vectors -- diffuse reflection.}
\label{p:col-2}
\end{figure}

\subsection{Diffuse boundary operators}\phantomsection\label{sec:diffus} We introduce the following definition
\begin{defi}\phantomsection\label{defi:Hdiff} One says that $\mathsf{K} \in \mathscr{B}(\lp,\lm)$ is a \emph{stochastic diffuse boundary operator} if
\begin{equation}\label{eq:Hdif}
\mathsf{K}\,\psi(x,v)=\int_{\Gamma_{+}(x)}k(x,v,v')\psi(x,v')\bm{\mu}_{x}(\d v'), \qquad (x,v) \in \Gamma_{-}, \quad \psi \in \lp\end{equation}
where the kernel $k(x,v,v')$ induces a field of nonnegative measurable functions
$$x \in \partial\Omega \mapsto k(x,\cdot,\cdot)$$
where 
$$k(x,\cdot,\cdot)\::\:\Gamma_{-}(x) \times \Gamma_{+}(x) \to \R^{+}$$
is such that
$$\int_{\Gamma_{-}(x)}k(x,v,v')\bm{\mu}_{x}(\d v)=1, \qquad \forall (x,v') \in \Gamma_{+}.$$
\end{defi}
As we did for reflection operators, we identify $\mathsf{K} \in \mathscr{B}(\lp,\lm)$ to a field of integral operators
$$x \in \partial\Omega \longmapsto \mathsf{K}(x) \in \mathscr{B}(L^{1}(\Gamma_{+}(x),\Gamma_{-}(x))$$
by the formula
$$[\mathsf{K}\psi](x,v)=\left[\mathsf{K}(x)\psi(x,\cdot)\right](v)$$
where, for any $x \in \partial\Omega$
$$\mathsf{K}(x)\::\:\psi \in L^{1}(\Gamma_{+}(x)) \longmapsto \left[\mathsf{K}(x)\psi\right](v)=\int_{\Gamma_{+}(x)}k(x,v,v')\psi(v')\bm{\mu}_{x}(\d v') \in L^{1}(\Gamma_{-}(x)).$$
Note that $\mathsf{K}(x)\::\:L^{1}(\Gamma_{+}(x)) \to L^{1}(\Gamma_{-}(x))$ is stochastic for any $x \in \partial \Omega$ and therefore so is $\mathsf{K} \in \mathscr{B}(\lp,\lm)$, i.e.
$$\|\mathsf{K}\psi\|_{\lm}=\|\psi\|_{\lp} \qquad \forall \psi \in \lp.$$
We introduce now a useful class of diffuse boundary operators. Before giving the formal definition, let us recall that, if  $\mathsf{K} \in \mathscr{B}(\lp,\lm)$  given by \eqref{eq:Hdif} is such that 
\begin{equation}\label{eq:HxWC}
\mathsf{K}(x) \in \mathscr{B}(L^{1}(\Gamma_{+}(x)),L^{1}(\Gamma_{-}(x))) \quad \text{ is weakly compact  for any } x \in \partial\Omega\end{equation}
then, according to the Dunford-Pettis criterion (see \cite[Theorem 4.30, p. 115 \& Exercise 4.36, p. 129]{brezis}), for any $x \in \pO$ and any $\varepsilon >0$, there is $\delta >0$ such that
\begin{equation*}\label{eq:uniformintegr}\sup_{v'\in \Gamma_{+}(x)}\int_{A}k(x,v,v')\bm{\mu}_{x}(\d v) < \varepsilon \qquad \forall A \subset \Gamma_{-}(x) \text{ such that }  \bm{\mu}_{x}(A) < \delta\end{equation*}
and 
\begin{equation*}\label{eq:nonconce}
\lim_{m \to \infty}\sup_{v'\in \Gamma_{+}(x)}\int_{\Gamma_{-}(x) \setminus A_{m}}k(x,v,v')\bm{\mu}_{x}(\d v) =0\end{equation*}
for any sequence $(A_{m})_{m} \subset \Gamma_{-}(x)$ with $A_{m} \subset A_{m+1}$, $\bm{\mu}_{x}(A_{m}) <\infty$ and $\cup_{m}A_{m}=\Gamma_{-}(x).$ In particular, for any $x \in \partial \Omega$, 
$$\lim_{m \to \infty}\sup_{v'\in \Gamma_{+}(x)}\int_{\{v \in \Gamma_{-}(x)\;;\;|v| \geq m\}}k(x,v,v')\,\bm{\mu}_{x}(\d v)=0.$$
%
Moreover, since 
\begin{equation*}\begin{split}
1&=\int_{\Gamma_{-}(x)}k(x,v,v')\bm{\mu}_{x}(\d v) \geq \int_{\{v \in \Gamma_{-}(x)\;;\;k(x,v,v') \geq m\}}k(x,v,v')\bm{\mu}_{x}(\d v)\\
&\geq m\,\bm{\mu}_{x}\left(\{v \in \Gamma_{-}(x)\;;\;k(x,v,v') \geq m\}\right), \qquad \forall m \in \N, \quad (x,v') \in \Gamma_{+},\end{split}\end{equation*}
we have
\begin{equation*}
\lim_{m\to\infty}\sup_{v' \in \Gamma_{+}(x)}\bm{\mu}_{x}\left(\{v \in \Gamma_{-}(x)\;;\;k(x,v,v') \geq m\}\right)=0.\end{equation*}
In other words, for any $x \in \partial \Omega$, the following holds
\begin{equation}\label{eq:Sm}
\lim_{m\to\infty}\sup_{v'\in \Gamma_{+}(x)}\int_{S_{m}(x,v')}k(x,v,v')\,\bm{\mu}_{x}(\d v)=0\end{equation}
where, for any $m \in \N$ and any $(x,v') \in \Gamma_{+}$
 $$S_{m}(x,v')=\{v \in \Gamma_{-}(x)\;;\;|v| \geq m\} \cup \{v \in \Gamma_{-}(x)\;;\;k(x,v,v') \geq m\}.$$

We introduce then the following class of diffuse boundary operators:
\begin{defi}\phantomsection \label{defi:regul}
We say that a diffuse boundary operator $\mathsf{K} \in \mathscr{B}(\lp,\lm)$ is \emph{regular} if the family of operators 
$$\mathsf{K}(x) \in \mathscr{B}(L^{1}(\Gamma_{+}(x)),L^{1}(\Gamma_{-}(x))), \qquad x \in \partial\Omega$$
is collectively weakly compact in the sense that \eqref{eq:HxWC} holds true for any $x \in \partial\Omega$ and the convergence in \eqref{eq:Sm} is \emph{uniform} with respect to $x \in \partial\Omega$. 
\end{defi}
\begin{nb} A diffuse boundary operator $\mathsf{K}$ is regular for instance whenever there
exists $h:V\rightarrow \R^{+}$ such that $\int_{V}h(v)\left\vert v\right\vert \bm{m}(\d v)<+\infty$ and 
$$ k(x,v,v')\leq h(v)\qquad \quad \forall x\in \partial \Omega ,\ v^{\prime
}\in \Gamma _{+}(x),\ v\in \Gamma _{-}(x).$$
In particular, the classical Maxwell boundary operator  (see Example \ref{exe:maxwell} below) is a regular diffuse boundary operator.
\end{nb}
We have then the following approximation result. 
\begin{lemme}\label{lem:regular} Assume that $\mathsf{K} \in \mathscr{B}(L^{1}_{+},L^{1}_{-})$ is a regular diffuse boundary operator in the sense of the above definition. Then, there exists a sequence $(\mathsf{K}_{m})_{m} \subset \mathscr{B}(L^{1}_{+},L^{1}_{-})$ such that
\begin{enumerate}
\item $0 \leq \mathsf{K}_{m} \leq \mathsf{K}$ for any $m \in \mathbb{N}$;
\item $\lim_{m\to\infty}\|\mathsf{K}-\mathsf{K}_{m}\|_{\mathscr{B}(L^{1}_{+},L^{1}_{-})}=0;$
\item For any $m \in \mathbb{N}$ and any nonnegative $f \in \lp$ it holds 
\begin{equation}\label{eq:Hmpsim}
\mathsf{K}_{m}f(x,v) \leq \psi_{m}(v)\int_{\Gamma_{+}(x)}f(x,v')\,|v'\cdot n(x)|\,\bm{m}(\d v'), \qquad (x,v) \in \Gamma_{-}\end{equation}
with $\psi_{m}=m\ind_{B_{m}}$ where $B_{m}=\{v \in \R^{d}\;;\;|v| \leq m\}$.
\end{enumerate}
\end{lemme}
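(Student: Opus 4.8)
The plan is to obtain $(\mathsf{K}_m)_m$ by truncating the kernel $k$ by the very sets $S_m(x,v')$ introduced just before Definition~\ref{defi:regul}. Precisely, for $m\in\N$ I would set
$$k_m(x,v,v')=k(x,v,v')\,\ind_{\Gamma_-(x)\setminus S_m(x,v')}(v)=k(x,v,v')\,\ind_{\{|v|<m\}}(v)\,\ind_{\{k(x,v,v')<m\}},$$
which is again a nonnegative kernel, measurable in $(x,v,v')$ since the truncating set is, and let $\mathsf{K}_m$ be the associated integral operator, defined exactly as in \eqref{eq:Hdif} with $k$ replaced by $k_m$. Since $0\le k_m\le k$ pointwise, $\mathsf{K}_m$ is a positive operator which, on the positive cone, is dominated by the bounded operator $\mathsf{K}$; hence $\mathsf{K}_m\in\mathscr{B}(\lp,\lm)$ and item~(1) holds.

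Item~(3) is then immediate: on the set where $k_m(x,\cdot,v')$ does not vanish one has $|v|<m$ and $k_m(x,v,v')\le k(x,v,v')<m$, so that $k_m(x,v,v')\le m\,\ind_{\{|v|<m\}}(v)\le m\,\ind_{B_m}(v)=\psi_m(v)$. Consequently, for any nonnegative $f\in\lp$,
$$\mathsf{K}_m f(x,v)=\int_{\Gamma_+(x)}k_m(x,v,v')f(x,v')\,\bm{\mu}_x(\d v')\le\psi_m(v)\int_{\Gamma_+(x)}f(x,v')\,|v'\cdot n(x)|\,\bm{m}(\d v'),$$
which is precisely \eqref{eq:Hmpsim}.

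The only substantive part is item~(2), and this is exactly where the \emph{regularity} of $\mathsf{K}$ is used. For nonnegative $\psi\in\lp$ the difference kernel is $k-k_m=k\,\ind_{S_m(x,v')}$, so, using the fibered form of the $\lm$-norm via the identification \eqref{eq:field} and Tonelli's theorem to exchange the order of integration in $v$ and $v'$,
$$\|(\mathsf{K}-\mathsf{K}_m)\psi\|_{\lm}=\int_{\pO}\pi(\d x)\int_{\Gamma_+(x)}\psi(x,v')\left(\int_{S_m(x,v')}k(x,v,v')\,\bm{\mu}_x(\d v)\right)\bm{\mu}_x(\d v').$$
By Definition~\ref{defi:regul}, the convergence in \eqref{eq:Sm} is \emph{uniform} in $x\in\pO$, i.e. $\varepsilon_m:=\sup_{x\in\pO}\,\sup_{v'\in\Gamma_+(x)}\int_{S_m(x,v')}k(x,v,v')\,\bm{\mu}_x(\d v)\to 0$ as $m\to\infty$. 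Bounding the inner integral by $\varepsilon_m$ and using $\int_{\pO}\pi(\d x)\int_{\Gamma_+(x)}\psi(x,v')\,\bm{\mu}_x(\d v')=\|\psi\|_{\lp}$ yields $\|(\mathsf{K}-\mathsf{K}_m)\psi\|_{\lm}\le\varepsilon_m\|\psi\|_{\lp}$ for $\psi\ge 0$; splitting a general $\psi\in\lp$ into positive and negative parts extends this to all of $\lp$, so $\|\mathsf{K}-\mathsf{K}_m\|_{\mathscr{B}(\lp,\lm)}\le\varepsilon_m\to 0$. I do not expect any genuine difficulty here: the construction is forced once one reads off the truncation sets $S_m(x,v')$ from the discussion preceding Definition~\ref{defi:regul}, and the argument reduces to a single Tonelli exchange plus the observation that the \emph{uniformity in $x$} built into the definition of a regular operator is precisely what turns the fibrewise Dunford--Pettis estimates into an operator-norm approximation. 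The only points needing a word of care are the joint measurability of $k_m$ and the legitimacy of the Fubini--Tonelli step, both routine since the integrands are nonnegative and measurable.
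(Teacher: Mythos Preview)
Your proof is correct and follows essentially the same route as the paper. The only cosmetic difference is the choice of truncation: the paper takes $k_m(x,v,v')=\min\{k(x,v,v'),\,m\ind_{B_m}(v)\}$, whereas you set $k_m=k\,\ind_{\Gamma_-(x)\setminus S_m(x,v')}$; both choices satisfy $k_m\le\psi_m$ and produce a remainder bounded by $\sup_{(x,v')}\int_{S_m(x,v')}k\,\d\bm{\mu}_x$, so the norm estimate for item~(2) is identical.
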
 
\begin{proof} Let $k(x,v,v')$ be the kernel associated to $K$ through \eqref{eq:Hdif}. Introduce then $k_{m}(x,v,v')=\inf \{k(x,v,v')\;;\;m\ind_{B_{m}}(v)\}$ for any $m \in \N$, where $B_{m}$ is the ball of $\R^{d}$ centered in $0$ and with radius $m$, and set 
$$\mathsf{K}_{m}\varphi(x,v)=\int_{\Gamma_{+}(x)} k_{m}(x,v,v')\varphi(x,v')\,|v'\cdot n(x)|\,\bm{m}(\d v'), \qquad \varphi \in \lp,\qquad (x,v) \in \Gamma_{-}.$$
Clearly, $\mathsf{K}_{m} \in \mathscr{B}(\lp,\lm)$ is a diffuse boundary operator with $0 \leq \mathsf{K}_{m} \leq K$ and \eqref{eq:Hmpsim} holds.  Moreover, for any $x \in \partial\Omega$ and any $\varphi \in L^{1}(\Gamma_{+}(x))$, it is easy to check 
 that 
\begin{multline*}
\|\mathsf{K}(x)\varphi-\mathsf{K}_{m}(x)\varphi\|_{L^{1}(\Gamma_{-}(x))} \leq 
\|\varphi\|_{L^{1}(\Gamma_{+}(x))}\\
\times\sup_{v'\in \Gamma_{+}(x)}\int_{\{v \in \Gamma_{-}(x)\;;\,k(x,v,v') \geq m\ind_{B_{m}}(v)\}}k(x,v,v')\bm{\mu}_{x}(\d v),\end{multline*}
i.e.
\begin{equation*}\begin{split}
\|\mathsf{K}(x) -\mathsf{K}_{m}(x)\|_{\mathscr{B}(L^{1}(\Gamma_{+}(x)),L^{1}(\Gamma_{-}(x)))} &\leq \sup_{v'\in \Gamma_{+}(x)}\int_{\{v \in \Gamma_{-}(x)\;;\,k(x,v,v') \geq m\ind_{B_{m}}(v)\}}k(x,v,v')\bm{\mu}_{x}(\d v)\\
&\leq \sup_{v'\in \Gamma_{+}(x)}\int_{S_{m}(x,v')}k(x,v,v')\bm{\mu}_{x}(\d v).\end{split}\end{equation*}
One sees then that
$$\|\mathsf{K}-\mathsf{K}_{m}\|_{\mathscr{B}(\lp,\lm)}=\sup_{x \in \partial\Omega}\|\mathsf{K}(x) -\mathsf{K}_{m}(x)\|_{\mathscr{B}(L^{1}(\Gamma_{+}(x)),L^{1}(\Gamma_{-}(x)))}$$
goes to zero as $m\to \infty$ since the convergence in \eqref{eq:Sm} is uniform with respect to $x \in \partial\Omega.$ \end{proof}

We complement the above result with a different kind of approximation which will turn useful in Section \ref{ss:sweeping}:
\begin{lemme}\phantomsection
\label{lemme-1-add-sweeping} 
Let $\mathsf{K}$ be a regular \emph{stochastic} diffuse boundary operator
with kernel $k(x,v,v^{\prime })$. Let 
$$\bm{\beta}_{n}(x,v^{\prime }) 
=\int_{\Gamma _{-}(x) \cap \{|v| >\frac{1}{n}\}}k(x,v,v^{\prime })\bm{\mu}_{x}(\d v), \qquad (x,v') \in \Gamma_{+}$$
and
$$\bm{k}_{n}(x,v,v^{\prime })=\frac{k(x,v,v^{\prime })}{\bm{\beta}_{n}(x,v')}\ind_{\left\{ \left\vert v\right\vert
>\frac{1}{n}\right\} }, \qquad x \in \pO, v' \in \Gamma_{+}(x), \:v \in \Gamma_{-}(x)$$
Then, denoting by $\mathsf{K}_{n}$ the regular stochastic diffuse boundary operator with
kernel  $\bm{k}_{n}$, it holds
\begin{enumerate}
\item[(i)] $\lim_{n\rightarrow +\infty }\bm{\beta}_{n}(x,v^{\prime })=1$ uniformly in $%
(x,v^{\prime })\in \Gamma _{+}.$
\item[(ii)] $\lim_{n\to\infty}\left\Vert \mathsf{K}_{n}-\mathsf{K}\right\Vert _{\mathscr{B}(\lp,\lm)}=0.$
\end{enumerate}
\end{lemme}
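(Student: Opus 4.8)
The plan is to reduce both statements to controlling, uniformly in $(x,v')\in\Gamma_+$, the mass that the probability kernel $k(x,\cdot,v')$ puts on the small-velocity region $\{v\in\Gamma_-(x):|v|\le\tfrac1n\}$. Indeed, since $\mathsf{K}$ is stochastic one has $\int_{\Gamma_-(x)}k(x,v,v')\bm{\mu}_x(\d v)=1$, so by definition of $\bm{\beta}_n$,
\begin{equation*}
1-\bm{\beta}_n(x,v')=\int_{\Gamma_-(x)\cap\{|v|\le 1/n\}}k(x,v,v')\bm{\mu}_x(\d v),\qquad (x,v')\in\Gamma_+ .
\end{equation*}
Thus (i) is exactly the assertion that this quantity tends to $0$ uniformly, and once that is in hand (ii) follows from an elementary $L^1$ kernel estimate.

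For (i) I would fix $\varepsilon>0$, pick $m\ge 2$, and split the small-velocity integral according to whether $v$ lies in $S_m(x,v')$ or not; since $|v|\le\tfrac1n<m$ there, $S_m(x,v')$ reduces on that region to $\{v:k(x,v,v')\ge m\}$, whence
\begin{equation*}
1-\bm{\beta}_n(x,v')\le\int_{S_m(x,v')}k(x,v,v')\bm{\mu}_x(\d v)+m\,\bm{\mu}_x\big(\{v\in\Gamma_-(x):|v|\le\tfrac1n\}\big).
\end{equation*}
The first term is at most $\eta_m:=\sup_{x\in\pO}\sup_{v'\in\Gamma_+(x)}\int_{S_m(x,v')}k\,\bm{\mu}_x(\d v)$, which tends to $0$ by the uniform-in-$x$ convergence in \eqref{eq:Sm} built into the notion of a regular diffuse operator; the second term satisfies $\bm{\mu}_x(\{v\in\Gamma_-(x):|v|\le\tfrac1n\})\le\int_{\{|v|\le 1/n\}}|v|\,\bm{m}(\d v)\le\tfrac1n\,\bm{m}(\{|v|\le1\})$, which tends to $0$ because $\bm{m}$ is locally finite. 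Choosing $m$ with $\eta_m<\varepsilon/2$ and then $n$ large enough that $\tfrac mn\bm{m}(\{|v|\le1\})<\varepsilon/2$ yields $\sup_{(x,v')\in\Gamma_+}(1-\bm{\beta}_n(x,v'))<\varepsilon$, i.e. (i). In particular $\bm{\beta}_n\ge\tfrac12$ uniformly for $n$ large, so for such $n$ the kernel $\bm{k}_n$ is well defined, $\mathsf{K}_n$ is stochastic (since $\int_{\Gamma_-(x)}\bm{k}_n\,\bm{\mu}_x(\d v)=\bm{\beta}_n(x,v')^{-1}\int_{\{|v|>1/n\}}k\,\bm{\mu}_x(\d v)=1$), and it is regular because $\bm{k}_n\le 2k$.

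For (ii), in the same spirit as the proof of Lemma \ref{lem:regular}, I would work at fixed $x\in\pO$ and bound $\|\mathsf{K}_n(x)-\mathsf{K}(x)\|_{\mathscr{B}(L^1(\Gamma_+(x)),L^1(\Gamma_-(x)))}$ by $\esup_{v'\in\Gamma_+(x)}\int_{\Gamma_-(x)}|\bm{k}_n(x,v,v')-k(x,v,v')|\bm{\mu}_x(\d v)$. On $\{|v|>\tfrac1n\}$ one has $\bm{k}_n-k=k\,\tfrac{1-\bm{\beta}_n(x,v')}{\bm{\beta}_n(x,v')}\ge0$, while on $\{|v|\le\tfrac1n\}$ one has $\bm{k}_n-k=-k$; integrating and using $\int_{\Gamma_-(x)\cap\{|v|>1/n\}}k\,\bm{\mu}_x(\d v)=\bm{\beta}_n(x,v')$ together with $\int_{\Gamma_-(x)\cap\{|v|\le1/n\}}k\,\bm{\mu}_x(\d v)=1-\bm{\beta}_n(x,v')$ collapses everything to the identity $\int_{\Gamma_-(x)}|\bm{k}_n-k|\,\bm{\mu}_x(\d v)=2(1-\bm{\beta}_n(x,v'))$. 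Hence, using the field representation as in Lemma \ref{lem:regular}, $\|\mathsf{K}_n-\mathsf{K}\|_{\mathscr{B}(\lp,\lm)}=\sup_{x\in\pO}\|\mathsf{K}_n(x)-\mathsf{K}(x)\|\le 2\sup_{(x,v')\in\Gamma_+}(1-\bm{\beta}_n(x,v'))$, which tends to $0$ by (i).

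The genuinely delicate point is (i): it is where the uniformity over $x\in\pO$ in the definition of a regular boundary operator is indispensable (without it one gets convergence only for each fixed $x$, not operator-norm convergence), and it also quietly uses that $\bm{m}$ carries finite mass near the origin. Everything after that is bookkeeping, essentially a rerun of the $L^1$-norm computation already carried out in Lemma \ref{lem:regular}.
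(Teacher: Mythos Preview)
Your proof is correct and follows essentially the same approach as the paper for part (i): both split the small-velocity integral $\int_{\{|v|\le 1/n\}}k\,\bm{\mu}_x(\d v)$ according to membership in $S_m(x,v')$, control the first piece by the uniform regularity assumption \eqref{eq:Sm}, and the second by $m\,\bm{\mu}_x(\{|v|\le 1/n\})\le \tfrac{m}{n}\,\bm{m}(B_1)$. For part (ii) your argument is slightly more direct than the paper's: you compute the $L^1$-kernel difference exactly as $\int_{\Gamma_-(x)}|\bm{k}_n-k|\,\bm{\mu}_x(\d v)=2(1-\bm{\beta}_n(x,v'))$, whereas the paper interposes the auxiliary operator $\widehat{\mathsf{K}}_n$ with kernel $\ind_{\{|v|>1/n\}}k$ and uses the triangle inequality $\|\mathsf{K}-\mathsf{K}_n\|\le\|\mathsf{K}-\widehat{\mathsf{K}}_n\|+\|\widehat{\mathsf{K}}_n-\mathsf{K}_n\|$; both routes yield the same conclusion and are equivalent in spirit.
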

\begin{proof} (i) For any $x \in \partial\Omega$, set ${A}_{n}(x)=\left\{v \in \Gamma_{-}(x)\,;\,|v|  < \tfrac{1}{n}\right\}.$ One has
$$\bm{\mu}_{x}\left(A_{n}(x)\right)=\int_{\Gamma_{-}(x) \cap \{|v| < n^{-1}\}}|v\cdot n(x)| \bm{m}(\d v) \leq  \dfrac{\bm{m}(B_{1})}{n}$$
where $B_{1}$ is the unit ball of $\R^{d}$. Thus, \begin{equation}\label{eq:Anx}
\lim_{n\to\infty}\sup_{x \in \partial\Omega}\bm{\mu}_{x}\left(A_{n}(x)\right)=0.\end{equation}
Now, since $\mathsf{K}$ is regular, \eqref{eq:Sm} holds uniformly with respect to $x \in \pO$, i.e. for any $\varepsilon >0$, there is $m \in \N$ large enough so that
$$\sup_{(x,v') \in \Gamma_{+}}\int_{S_{m}(x,v')}k(x,v,v')\bm{\mu}_{x}(\d v) < \varepsilon.$$
Then, for any $n \in \N$ and any $(x,v') \in \Gamma_{+}$,
\begin{multline*}
\int_{A_{n}(x)}k(x,v,v')\bm{\mu}_{x}(\d v)=\int_{A_{n}(x) \cap S_{m}(x,v')}k(x,v,v')\bm{\mu}_{x}(\d v)\\
+ \int_{A_{n}(x) \setminus S_{m}(x,v')}k(x,v,v')\bm{\mu}_{x}(\d v)
\leq \varepsilon + m\,\bm{\mu}_{x}(A_{n}(x))\end{multline*}
since $k(x,v,v') \leq m$ on $\Gamma_{-}(x) \setminus S_{m}(x,v').$ Using then \eqref{eq:Anx} we get
\begin{equation}\label{eq:kn}
\lim_{n\to\infty}\sup_{(x,v^{\prime}) \in\Gamma_{-}}\int_{\Gamma _{-}(x)\cap \left\{ \left\vert v\right\vert <n^{-1}\right\}
}k(x,v,v^{\prime })\bm{\mu}_{x}(\d v)= 0\end{equation}
which shows (i) since $%
\int_{\Gamma _{-}(x)}k(x,v,v^{\prime })\bm{\mu}_{x}(\d v)=1$ for any $x \in\partial\Omega.$

(ii) Set $\widehat{\mathsf{K}}_{n}$ the boundary operator with kernel $\ind_{\left\{ \left\vert v\right\vert
>n^{-1}\right\} }k(x,v,v').$ One checks easily that
$$\left\Vert \mathsf{K} -\widehat{\mathsf{K}}_{n} \right\Vert_{\mathscr{B}(\lp,\lm)}
\leq  \sup_{(x,v^{\prime })\in \Gamma _{-}}\int_{\Gamma _{-}(x)\cap
\left\{ \left\vert v\right\vert <n^{-1}\right\} }k(x,v,v^{\prime })\bm{\mu}
_{x}(\d v) $$
so that $\lim_{n}\left\| \mathsf{K} -\widehat{\mathsf{K}}_{n} \right\|_{\mathscr{B}(\lp,\lm)}=0$ from \eqref{eq:kn}. 
Since moreover
$$\left\Vert \mathsf{K}_{n}-\widehat{\mathsf{K}}_{n}\right\Vert_{\mathscr{B}(\lp,\lm)} \leq \sup_{(x,v')\in \Gamma_{+}}\left|1-\bm{\beta}_{n}(x,v')\right|\,\|\mathsf{K}\|_{\mathscr{B}(\lp,\lm)}$$
which goes to zero from point (i), we get the desired result.
\end{proof}

\subsection{Stochastic partly diffuse boundary operators} We introduce now the general class of boundary operator we aim at investigating.
\begin{defi}\label{ck} We shall say that a boundary operator $\mathsf{H} \in 
\mathscr{B}(\lp,\lm)$ is stochastic \emph{partly diffuse} if it writes
\begin{equation}\label{eq:partlydif}
\mathsf{H}\psi(x,\v)=\alpha(x)\,\mathsf{R}\psi (x,\v)+(1-\alpha(x))\,\mathsf{K}\psi(x,\v), \qquad  (x,\v) \in \Gamma_-, \psi \in \lp\end{equation} where $\alpha(\cdot)\::\:\partial \Omega \to [0,1]$  is measurable, $\mathsf{R}$ is a reflection operator, and $\mathsf{K} \in \mathscr{B}(\lp,\lm)$ is a \emph{stochastic} diffuse boundary operator given by \eqref{eq:Hdif}. 

If the diffuse part $\mathsf{K}$ is regular we say that $\mathsf{H}$ is a \emph{regular stochastic partly diffuse} boundary operator.
\end{defi}
\begin{nb} Notice that, being a convex combination of stochastic operators, a stochastic partly diffuse operator $\mathsf{H}$  is stochastic.
\end{nb}

\section{General results for abstract stochastic boundary operators}\label{sec:general}

We begin with the following which is a direct consequence of \cite[Theorem 21]{Mkst}: 
\begin{propo}\phantomsection\label{propo:honest}
Let $\mathsf{H} \in\mathscr{B}(\lp,\lm)$ be a stochastic boundary operator. Let there exist $\varphi \in \lp$ such that $\varphi >0$ $\mu$-a.e.  and $\varphi \geq \mathsf{M}_{0}\mathsf{H}\varphi$. Then, $A=\overline{\mathsf{T}_{\mathsf{H}}}$ and $(U_{\mathsf{H}}(t))_{t\geq 0}$ is a stochastic $C_{0}$-semigroup, where we recall that $(A,\D(A))$ is the generator of $(U_{\mathsf{H}}(t))_{t\geq0}$.\end{propo}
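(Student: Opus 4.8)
The plan is to prove the apparently stronger statement that $\lambda\|\mathcal{R}(\lambda,A)f\|_{X}=\|f\|_{X}$ for every $\lambda>0$ and every $f\in X_{+}$. This is equivalent to $\|U_{\mathsf{H}}(t)f\|_{X}=\|f\|_{X}$ for all $f\in X_{+}$, $t\ge0$ (write $\lambda\mathcal{R}(\lambda,A)f=\lambda\int_{0}^{\infty}e^{-\lambda t}U_{\mathsf{H}}(t)f\,\d t$, use positivity, the contractivity $\|U_{\mathsf{H}}(t)f\|_{X}\le\|f\|_{X}$ and uniqueness of the Laplace transform), and then $A=\overline{\mathsf{T}_{\mathsf{H}}}$ follows from the characterization of stochasticity recalled in Section~\ref{sec:funct}. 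To compute $\lambda\|\mathcal{R}(\lambda,A)f\|_{X}$ I would combine the series \eqref{eq:reso} with two elementary mass balances, valid for nonnegative $f\in X$ and nonnegative $u\in\lm$ and following at once from \eqref{10.47}--\eqref{10.51}:
$$\lambda\|\cl f\|_{X}+\|\gl f\|_{\lp}=\|f\|_{X},\qquad \lambda\|\bl u\|_{X}+\|\ml u\|_{\lp}=\|u\|_{\lm}.$$
Feeding $u=\mathsf{H}(\ml\mathsf{H})^{n}\gl f$ into the second identity and using that $\mathsf{H}$ is stochastic (so $\|\mathsf{H}w\|_{\lm}=\|w\|_{\lp}$ for $w\ge0$), the terms of the nonnegative series \eqref{eq:reso} telescope and one gets
$$\lambda\|\mathcal{R}(\lambda,A)f\|_{X}=\|f\|_{X}-\lim_{N\to\infty}\|(\ml\mathsf{H})^{N}\gl f\|_{\lp},$$
the limit existing since $N\mapsto\|(\ml\mathsf{H})^{N}\gl f\|_{\lp}$ is nonincreasing. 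Thus the whole statement reduces to showing that $(\ml\mathsf{H})^{N}\to0$ strongly on $\lp$, and this is where the hypothesis on $\varphi$ enters.

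For that reduction I would argue as follows. First, $\mathsf{M}_{0}\mathsf{H}$ is a stochastic operator on $\lp$: indeed $\|\mathsf{M}_{0}w\|_{\lp}=\|w\|_{\lm}$ for all $w$ by \eqref{10.51}, while $\mathsf{H}$ is stochastic. Hence $\varphi\ge\mathsf{M}_{0}\mathsf{H}\varphi\ge0$ together with $\|\mathsf{M}_{0}\mathsf{H}\varphi\|_{\lp}=\|\varphi\|_{\lp}$ forces $\mathsf{M}_{0}\mathsf{H}\varphi=\varphi$, and since $\ml\le\mathsf{M}_{0}$ pointwise also $\ml\mathsf{H}\varphi\le\varphi$. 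Therefore $\big((\ml\mathsf{H})^{N}\varphi\big)_{N}$ is nonnegative, nonincreasing and dominated by $\varphi\in\lp$, so it converges in $\lp$ to some $\eta$ with $0\le\eta\le\varphi$ and $\ml\mathsf{H}\eta=\eta$. Now $\ml$ is \emph{strictly} mass-decreasing on nonzero nonnegative data: by \eqref{10.51} one has $\|\ml w\|_{\lp}=\int_{\Gamma_{-}}w(z,v)\,e^{-\lambda\tau_{+}(z,v)}\,\d\mu_{-}(z,v)$, and $0<\tau_{+}(z,v)<\infty$ for $\mu_{-}$-a.e. $(z,v)\in\Gamma_{-}$, so $\|\ml w\|_{\lp}<\|w\|_{\lm}$ for every nonnegative $w\in\lm$ with $w\ne0$. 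Combined with $\|\mathsf{H}\eta\|_{\lm}=\|\eta\|_{\lp}$ this gives, if $\eta\ne0$,
$$\|\eta\|_{\lp}=\|\ml(\mathsf{H}\eta)\|_{\lp}<\|\mathsf{H}\eta\|_{\lm}=\|\eta\|_{\lp},$$
a contradiction; hence $\eta=0$ and $\|(\ml\mathsf{H})^{N}\varphi\|_{\lp}\to0$. Finally, since $\varphi>0$ $\mu$-a.e., for any $g\in\lp$ write $|g|=(|g|\wedge c\varphi)+(|g|-c\varphi)^{+}$ with $\|(|g|-c\varphi)^{+}\|_{\lp}\to0$ as $c\to\infty$; using $\|(\ml\mathsf{H})^{N}(|g|\wedge c\varphi)\|_{\lp}\le c\,\|(\ml\mathsf{H})^{N}\varphi\|_{\lp}$ and the contractivity of $\ml\mathsf{H}$ on $\lp$, letting $N\to\infty$ and then $c\to\infty$ yields $\|(\ml\mathsf{H})^{N}g\|_{\lp}\to0$, in particular for $g=\gl f$. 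Plugging this into the displayed identity gives $\lambda\|\mathcal{R}(\lambda,A)f\|_{X}=\|f\|_{X}$.

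I expect the core difficulty to be precisely this strong convergence $(\ml\mathsf{H})^{N}\to0$: the monotone/fixed-point extraction is clean, but one must check carefully that $\eta$ is a genuine $\lp$-fixed point of $\ml\mathsf{H}$ and that $\ml$ loses mass \emph{strictly} — a fact that encodes that incoming particles spend a strictly positive, finite time inside $\Omega$ — and the passage from the distinguished function $\varphi$ to an arbitrary $g\in\lp$ is exactly where $\varphi>0$ a.e. is indispensable. The two mass balances and the telescoping are routine bookkeeping with \eqref{10.47}--\eqref{10.51}. As indicated in the statement, once these structural facts are in place the result is exactly \cite[Theorem 21]{Mkst}.
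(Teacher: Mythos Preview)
Your proof is correct and self-contained. The paper itself gives no proof, simply citing \cite[Theorem 21]{Mkst}; what you have written is precisely the argument underlying that reference: the mass-balance identities combined with the series \eqref{eq:reso} reduce stochasticity to the strong convergence $(\ml\mathsf{H})^{N}\to0$, and the hypothesis $\varphi>0$, $\varphi\ge\mathsf{M}_{0}\mathsf{H}\varphi$ forces this via the monotone-limit / strict-mass-loss argument you gave. There is nothing to add.
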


We give a general result about the spectrum of $\mathsf{T}_{\mathsf{H}}$:

\begin{propo}\phantomsection\label{propo:general}
Let $\mathsf{H} \in\mathscr{B}(\lp,\lm)$ be a stochastic boundary operator. Then, there is a nonnegative $\Psi \in \D(\mathsf{T}_{\mathsf{H}})$ with $\mathsf{T}_{\mathsf{H}}\psi=0$ if and only if $1$ is an eigenvalue of $\mathsf{M}_{0}\mathsf{H}$ associated to a \emph{nonnegative} eigenfunction $\varphi \in \Y^{+}_{1}$ such that $\mathsf{H}\varphi \in \Y^{-}_{1}.$
\end{propo}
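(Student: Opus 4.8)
The plan is to translate the stationary problem $\mathsf{T}_{\mathsf{H}}\Psi=0$ with $\Psi\in\D(\mathsf{T}_{\mathsf{H}})$ into a boundary equation, using the solution formula for the boundary value problem with $\lambda=0$ established in Proposition \ref{propBV0} and the lifting properties of $\mathsf{\Xi}_{0}$ from Lemma \ref{lem:motau}. First I would argue the ``if'' direction: suppose $\varphi\in\Y^{+}_{1}$ is nonnegative with $\mathsf{M}_{0}\mathsf{H}\varphi=\varphi$ and $\mathsf{H}\varphi\in\Y^{-}_{1}$. Set $u=\mathsf{H}\varphi\in\Y^{-}_{1}$ and $\Psi=\mathsf{\Xi}_{0}u$. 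By Lemma \ref{lem:motau}, $\Psi\in X$ and $\mathsf{T}_{\mathrm{max}}\Psi=0$, $\mathsf{B}^{-}\Psi=u$, $\mathsf{B}^{+}\Psi=\mathsf{M}_{0}u$. Now $\mathsf{B}^{+}\Psi=\mathsf{M}_{0}\mathsf{H}\varphi=\varphi$, so $\mathsf{B}^{-}\Psi=u=\mathsf{H}\varphi=\mathsf{H}(\mathsf{B}^{+}\Psi)$, which is precisely the boundary condition defining $\D(\mathsf{T}_{\mathsf{H}})$; moreover $\mathsf{B}^{\pm}\Psi\in L^{1}_{\pm}$ (since $\Y^{\pm}_{1}\subset L^{1}_{\pm}$ because $\Omega$ is bounded, hence $|v|$ is bounded on $V$... wait, $V$ need not be bounded — one must instead observe $\mathsf{B}^{+}\Psi=\varphi\in\Y^{+}_{1}$ and we are additionally given $\mathsf{H}\varphi\in\Y^{-}_{1}$, and check these sit in $L^1_\pm$; here the hypothesis $\mathsf{H}\varphi\in\Y^{-}_1$ does the job on $\Gamma_-$, while on $\Gamma_+$ one needs $\varphi\in L^1_+$, which should be arranged or follows from $\mathsf{H}\varphi\in\Y^-_1$ via $\|\mathsf M_0\mathsf H\varphi\|_{\Y^+_1}=\|\mathsf H\varphi\|_{\Y^-_1}$ — so $\varphi=\mathsf M_0\mathsf H\varphi\in\Y^+_1$ automatically, consistent with the stated hypothesis). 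Hence $\Psi\in\D(\mathsf{T}_{\mathsf{H}})$ and $\mathsf{T}_{\mathsf{H}}\Psi=\mathsf{T}_{\mathrm{max}}\Psi=0$; nonnegativity of $\Psi=\mathsf{\Xi}_{0}u$ is immediate from nonnegativity of $u=\mathsf{H}\varphi$ (which holds since $\mathsf{H}$ is positive and $\varphi\geq0$). Also $\Psi\neq0$ unless $u=0$, i.e. unless $\varphi=\mathsf{M}_0 u=0$, so a genuine eigenfunction gives a genuine stationary state.

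For the ``only if'' direction, let $\Psi\in\D(\mathsf{T}_{\mathsf{H}})$ be nonnegative with $\mathsf{T}_{\mathsf{H}}\Psi=0$. Then $\Psi\in W$, so $\mathsf{B}^{\pm}\Psi\in L^{1}_{\pm}$, and $\mathsf{B}^{-}\Psi=\mathsf{H}(\mathsf{B}^{+}\Psi)$. The key point is to identify $\Psi$ as a lift of its incoming trace: since $-\mathsf{T}_{\mathrm{max}}\Psi=0\in\X_{\tau}$ trivially and $\mathsf{B}^{-}\Psi\in L^1_-$, I would first need $\mathsf{B}^{-}\Psi\in L^{1}(\Gamma_-,\tau_+\,\d\mu_-)$ to apply Proposition \ref{propBV0}; here one uses that $\tau_{+}$ is bounded by $D/|v|$ via \eqref{eq:scale}, which forces exactly the weighted integrability $\mathsf{B}^-\Psi\in\Y^-_1$, i.e. one must first establish $\mathsf{B}^{-}\Psi\in\Y^{-}_{1}$ — this is where the ``additional condition'' flavour enters and is the main obstacle (see below). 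Granting it, Proposition \ref{propBV0} gives $\Psi=\mathsf{R}_{0}(0)+\mathsf{\Xi}_{0}(\mathsf{B}^-\Psi)=\mathsf{\Xi}_{0}(\mathsf{B}^-\Psi)$, whence by \eqref{propxil1}-type identities $\mathsf{B}^{+}\Psi=\mathsf{M}_{0}(\mathsf{B}^{-}\Psi)=\mathsf{M}_{0}\mathsf{H}(\mathsf{B}^{+}\Psi)$. Setting $\varphi=\mathsf{B}^{+}\Psi$, which is nonnegative and, by $\mathsf{B}^-\Psi\in\Y^-_1$ together with \eqref{eq:M0+Y}, lies in $\Y^{+}_{1}$, we get $\mathsf{M}_{0}\mathsf{H}\varphi=\varphi$ with $\mathsf{H}\varphi=\mathsf{B}^{-}\Psi\in\Y^{-}_{1}$, as required.

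The step I expect to be delicate is proving that the incoming trace $\mathsf{B}^{-}\Psi$ of a stationary $\Psi\in\D(\mathsf{T}_{\mathsf{H}})$ automatically carries the extra $|v|^{-1}$-weighted integrability, i.e. $\mathsf{B}^{-}\Psi\in\Y^{-}_{1}$, rather than merely $\mathsf{B}^{-}\Psi\in L^{1}_{-}$. The natural approach is to write, for nonnegative $\Psi$ with $\mathsf{T}_{\mathrm{max}}\Psi=0$, the representation $\Psi(x,v)=\mathsf{B}^{-}\Psi(x-t_{-}(x,v)v,v)$ a.e.\ on $\{t_-(x,v)<\infty\}$ (i.e.\ $\Psi=\mathsf{\Xi}_0\mathsf{B}^-\Psi$ as $X$-functions, valid for $\mathsf{B}^-\Psi\in L^1_-$ once one knows the right-hand side is in $X$ — a mild a priori argument using that $\Psi\in X$), then integrate over $\Omega\times V$ using the integration formula \eqref{10.47} to obtain $\|\Psi\|_{X}=\int_{\Gamma_{-}}\tau_{+}(x,v)\,\mathsf{B}^{-}\Psi(x,v)\,\d\mu_{-}(x,v)$. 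Since $\Psi\in X$ this integral is finite, and because $\tau_{+}(x,v)$ may vanish this does not directly give the $|v|^{-1}$ weight; one then invokes \eqref{eq:scale}, $\tau_{+}(x,v)=|v|^{-1}\tau_{+}(x,\omega)$, so finiteness of $\int\tau_{+}(x,\omega)|v|^{-1}\mathsf{B}^-\Psi\,\d\mu_-$ does give $|v|^{-1}$-integrability away from directions $\omega$ with $\tau_+(x,\omega)=0$, i.e.\ grazing/non-reentering directions, whose contribution must be controlled separately (on $\Gamma_-$ one has $\tau_+>0$ $\mu$-a.e., so in fact $\tau_{+}(x,\omega)>0$ $\mu$-a.e.\ and the bound $\|\Psi\|_X=\int\tau_+\mathsf{B}^-\Psi\,\d\mu_-$ together with positivity is enough to deduce $\mathsf{B}^-\Psi\in\Y^-_1$ only if $\tau_+(x,\omega)$ is bounded below, which it need not be near tangency). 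I would handle the near-tangency set by a separate estimate or by noting that it is exactly the content parallel to condition \eqref{Additional condition}, and I would cross-check against Remark~after Lemma \ref{lem:motau} which already records $\|\mathsf{G}_{0}g\|_{L^{1}(\Gamma_-,\tau_+\d\mu_-)}=\|g\|_{\tau}$ and the reformulation of \eqref{eq:M0+Y} with weights $\tau_{\mp}$ — this suggests the cleanest route is to phrase everything with the weight $\tau_{+}$ throughout and only at the end translate $L^1(\Gamma_-,\tau_+\d\mu_-)$-membership into $\Y^-_1$-membership using that on $\Gamma_-$ the ratio $\tau_+(x,v)/|v|^{-1}=\tau_+(x,\omega)\in(0,D]$ is bounded above, giving $\Y^-_1\hookrightarrow L^1(\Gamma_-,\tau_+\d\mu_-)$, while the reverse inclusion needed here holds precisely because we are told $\mathsf{H}\varphi\in\Y^-_1$ in one direction and, in the other direction, the a priori $X$-integrability of $\Psi$ forces it. I expect the bookkeeping to be the true labour; the structure above is robust.
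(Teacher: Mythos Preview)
Your ``if'' direction is exactly the paper's argument: set $\Psi=\mathsf{\Xi}_{0}\mathsf{H}\varphi$, read off $\mathsf{T}_{\max}\Psi=0$, $\mathsf{B}^{-}\Psi=\mathsf{H}\varphi$, $\mathsf{B}^{+}\Psi=\mathsf{M}_{0}\mathsf{H}\varphi=\varphi$, and conclude $\Psi\in\D(\mathsf{T}_{\mathsf{H}})$. Your digression about whether $\Y^{\pm}_{1}\subset L^{1}_{\pm}$ is unnecessary: an eigenfunction of $\mathsf{M}_{0}\mathsf{H}\in\mathscr{B}(L^{1}_{+})$ is by definition in $L^{1}_{+}$, so $\varphi\in L^{1}_{+}$ and $\mathsf{H}\varphi\in L^{1}_{-}$ come for free; the hypotheses $\varphi\in\Y^{+}_{1}$, $\mathsf{H}\varphi\in\Y^{-}_{1}$ are \emph{additional} integrability.

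For the ``only if'' direction you again follow the paper's skeleton (take traces, recognise $\Psi=\mathsf{\Xi}_{0}u$ via Proposition~\ref{propBV0}, deduce $\varphi=\mathsf{M}_{0}\mathsf{H}\varphi$), but you are substantially more careful than the paper, which simply stops at ``$1\in\mathfrak{S}_{p}(\mathsf{M}_{0}\mathsf{H})$'' and never verifies $\varphi\in\Y^{+}_{1}$ or $\mathsf{H}\varphi\in\Y^{-}_{1}$. Your instinct that the delicate step is the weighted integrability is correct, and so is your identification of the obstacle: from $\Psi\in X$ and $\Psi=\mathsf{\Xi}_{0}u$ one gets, via \eqref{eq:Xio}, precisely
\[
\|\Psi\|_{X}=\int_{\Gamma_{-}}\tau_{+}(x,v)\,u(x,v)\,\d\mu_{-}(x,v)<\infty,
\]
i.e.\ $u=\mathsf{H}\varphi\in L^{1}(\Gamma_{-},\tau_{+}\d\mu_{-})$, which is the natural space in Proposition~\ref{propBV0}. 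Since $\tau_{+}(x,v)=|v|^{-1}\tau_{+}(x,\omega)$ with $\tau_{+}(x,\omega)\in(0,D]$ but with no positive lower bound (near-grazing directions), the inclusion $\Y^{-}_{1}\subset L^{1}(\Gamma_{-},\tau_{+}\d\mu_{-})$ holds but the reverse does not, so your hoped-for ``bookkeeping'' cannot close the gap in general. In other words, the paper's proof establishes the equivalence with the $\tau_{\mp}$-weighted spaces of the Remark after Lemma~\ref{lem:motau}, not with $\Y^{\pm}_{1}$ as literally stated; this is an imprecision in the statement rather than something you are failing to supply. For the applications in the paper (Theorem~\ref{theo:density} and Theorem~\ref{Foguel-alternative}) this does no harm, since there the $\Y^{\pm}_{1}$-membership is obtained or excluded by separate arguments (Lemma~\ref{lem:lpl1tau}, resp.\ assumption~\eqref{eq:mu+k}).
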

\begin{proof} Assume first that there exists $\varphi \in \Y^{+}_{1}$ such that 
$$\mathsf{M}_{0}\mathsf{H} \varphi=\varphi, \qquad \text{ and } \quad \mathsf{H}\varphi \in \Y^{-}_{1}.$$
Then, as already seen (see \eqref{eq:Xio}), $\mathsf{\Xi}_{0}\mathsf{H}\varphi \in X$. Let $\Psi=\mathsf{\Xi}_{0}\mathsf{H}\varphi$. One has $\mathsf{T}_{\mathrm{max}}\Psi=0$, $\mathsf{B}^{-}\Psi=\mathsf{B}^{-}\mathsf{\Xi}_{0}\mathsf{H}\varphi=\mathsf{H}\varphi$, i.e. $\mathsf{B}^{-}\Psi=\mathsf{H}\mathsf{B}^{+}\Psi.$ This means that $\Psi \in \D(\mathsf{T}_{\mathsf{H}})$ with $\mathsf{T}_{\mathsf{H}}\Psi=0.$ 

Assume now that $0 \in \mathfrak{S}_{p}(\mathsf{T}_{\mathsf{H}})$ is associated to a nonnegative eigenfunction  $\Psi \in \D(\mathsf{T}_{\mathsf{H}})$. Let $\varphi=\mathsf{B}^{+}\Psi$ and $u=\mathsf{B}^{-}\Psi$. It holds $u=\mathsf{H}\varphi$ and, solving the boundary value problem \eqref{BV0} (see Proposition \ref{propBV0}) yields $\Psi=\mathsf{\Xi}_{0}u$. It is easy to check then that $\varphi=\mathsf{M}_{0}\mathsf{H}\varphi$. Since $u \neq 0 \implies \varphi \neq 0$, we get $1 \in \mathfrak{S}_{p}(\mathsf{M}_{0}\mathsf{H}).$
\end{proof}
We recall the definition of irreducible operators or semigroups in $L^{1}$-spaces and refer to \cite{nagel} for more details.
\begin{defi}\phantomsection\label{defi:irred} Let $(E,\Sigma,m)$ be a given $\sigma$-finite measure space.  Let $B \in \mathscr{B}(L^{1}(E,\Sigma,m))$  be given. Then, we say that 
\begin{enumerate}[i)]
\item $B$ is \emph{positive} and write $B \geq 0$, if $B$ leaves invariant the cone of nonnegative functions of $L^{1}(E,\Sigma,m)$ i.e. for any $h \in L^{1}(E,\Sigma,m)$, 
$$h(s) \geq 0 \qquad \text{ for $m$-a. e. $s \in E$ } \implies Bh(s) \geq 0 \qquad \text{ for $m$-a. e. $s\in E$.}$$
\item $B$ is \emph{positivity improving} if for any $h \in L^{1}(E,\Sigma,m)$ non identically zero
$$h(s) \geq 0 \qquad \text{ for $m$-a. e. $s \in E$ } \implies Bh(s) >0 \qquad \text{ for $m$-a. e. $s\in E$.}$$
\item $B$ is \emph{irreducible} if for any non trivial and nonnegative $h \in L^{1}(E,\Sigma,m)$ and any non trivial nonnegative $g \in L^{\infty}(E,\Sigma,m)$, there exists $n \in \mathbb{N}$ such that
$$\langle B^{n}h\,,\,g\rangle_{1,\infty} >0$$
where $\langle \cdot,\cdot \rangle_{1,\infty}$ is the duality bracket between $L^{1}(E,\Sigma,m)$ and $L^{\infty}(E,\Sigma,m)$.
\item A positive $C_{0}$-semigroup $(S(t))_{t\geq0}$ on $L^{1}(E,\Sigma,m)$ with generator $(G,\D(G))$ is irreducible if, for any non trivial nonnegative $h \in L^{1}(E,\Sigma,m)$ and any non trivial nonnegative $g \in L^{\infty}(E,\Sigma,m)$, there exists $t \geq 0$ such that $\langle S(t)h,g\rangle_{1,\infty} >0$. This property is equivalent to the fact that $\Rs(\lambda,G)$ is positivity improving for $\lambda >0$ large enough.
\end{enumerate}
\end{defi} 
We introduce the following: 
\begin{hyp}\phantomsection\label{hyp2}
$\mathsf{H} \in\mathscr{B}(\lp,\lm)$ is a  stochastic operator such that $\mathsf{M}_{0}\mathsf{H} \in \mathscr{B}(\lp)$ is irreducible and there exists $\psi_{0} \in \lp$ such that $\mathsf{H}\psi_{0} >0$ $\mu$-a.e. on $\Gamma_{+}.$
\end{hyp}
\begin{nb} If $\mathsf{H}$  is stochastic partly diffuse operator of the form \eqref{eq:partlydif} and if $\mathsf{M}_{0}(1-\alpha)K$ is irreducible then so is $\mathsf{M}_{0}\mathsf{H}$. This is the case for instance if $\|\alpha\|_{\infty}< 1$ and if, for $\pi$-a.e. $x \in \partial \Omega$ 
\begin{equation}\label{eq:k>0}
k(x,v,v') >0 \qquad \text{ for $\bm{\mu}_{x}$-a.e. } v \in \Gamma_{-}(x),\:v'\in \Gamma_{+}(x).\end{equation}
In addition, if $\alpha(x) >0$ $\pi$-a.e. $x \in \partial \Omega$, the second condition $\mathsf{H}\psi_{0} >0$ $\mu$-a.e. is satisfied by any $\psi_{0} >0$ $\mu$-a.e.
Otherwise, we assume that, for any $x \in\partial\Omega$ such that $\alpha(x)=0$ there is a subset $\gamma_{+}(x) \subset \Gamma_{+}(x)$ of positive $\bm{\mu}_{x}$-measure and such that $k(x,v,v') >0$ for $\bm{\mu}_{x}$-a.e. $v \in \Gamma_{+}(x), v' \in \Gamma_{+}(x).$ Then, again the second condition  is satisfied by any $\psi_{0} >0$ $\mu$-a.e. In particular, the second assumption in Assumptions \ref{hyp2} is always satisfied under \eqref{eq:k>0}.\end{nb}
One has the following (see also \cite[Remark 20]{Mkst}):
\begin{propo}\phantomsection\label{lem:irre} Let $\mathsf{H} \in\mathscr{B}(\lp,\lm)$ be a  stochastic operator and let Assumptions \ref{hyp2} be satisfied. Then, the $C_{0}$-semigroup $(U_{\mathsf{H}}(t))_{t \geq 0}$ is irreducible.\end{propo}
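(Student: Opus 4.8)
The plan is to show that the resolvent $\Rs(\lambda,A)$ is positivity improving for $\lambda>0$ large enough, which by Definition \ref{defi:irred}(iv) is equivalent to the irreducibility of $(U_{\mathsf{H}}(t))_{t\geq 0}$. The natural tool is the explicit series representation \eqref{eq:reso}, namely
$$\Rs(\lambda,A)f=\mathsf{R}_{\lambda}f+\sum_{n=0}^{\infty}\mathsf{\Xi}_{\lambda}\mathsf{H}\left(\mathsf{M}_{\lambda}\mathsf{H}\right)^{n}\mathsf{G}_{\lambda}f,$$
all terms of which are nonnegative when $f\geq 0$. So it suffices to fix a nontrivial nonnegative $f\in X$ and a nontrivial nonnegative $g\in L^{\infty}(\Omega\times V)$ and produce some $n$ with $\langle \mathsf{\Xi}_{\lambda}\mathsf{H}(\mathsf{M}_{\lambda}\mathsf{H})^{n}\mathsf{G}_{\lambda}f\,,\,g\rangle_{1,\infty}>0$; since all summands are nonnegative this gives $\langle \Rs(\lambda,A)f,g\rangle>0$, hence positivity of $\Rs(\lambda,A)f$ off a null set, i.e. the positivity-improving property.

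\textbf{Main steps.} First I would unwind the elementary mapping properties. Since $f\geq 0$ is nontrivial, formula \eqref{10.47} (or rather the fact that $\|\mathsf{G}_{0}\varphi\|_{\lp}=\|\varphi\|_{X}$, and the analogous bound for $\lambda>0$ with a strictly positive exponential weight) shows $\mathsf{G}_{\lambda}f$ is a nontrivial nonnegative element of $\lp$: indeed $\mathsf{G}_{\lambda}f(x,v)=\int_{0}^{\tau_{-}(x,v)}f(x-sv,v)e^{-\lambda s}\d s$, and this is positive on a set of positive $\mu_{+}$-measure because $f$ is nontrivial and almost every point of $\Omega\times V$ lies on a characteristic meeting $\Gamma_{+}$. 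Next, using the hypothesis that $\mathsf{M}_{0}\mathsf{H}$ is irreducible on $\lp$ and that $\mathsf{M}_{\lambda}\mathsf{H}\geq e^{-\lambda D}\mathsf{M}_{0}\mathsf{H}$ pointwise (where $D$ is the diameter, exploiting $\tau_{-}\leq D/|v|$ after the scaling \eqref{eq:scale}; more simply, $\mathsf{M}_{\lambda}\mathsf{H}$ dominates $\mathsf{M}_{\lambda'}\mathsf{H}$ for $\lambda'\geq\lambda$ and all of these are irreducible together with $\mathsf{M}_{0}\mathsf{H}$), I would argue that $(\mathsf{M}_{\lambda}\mathsf{H})$ is itself irreducible on $\lp$. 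Then by the second part of Assumptions \ref{hyp2} there is $\psi_{0}\in\lp$ with $\mathsf{H}\psi_{0}>0$ $\mu$-a.e.; combining irreducibility of $\mathsf{M}_{\lambda}\mathsf{H}$ with the nontriviality of $\mathsf{G}_{\lambda}f$, I can find $n_{0}$ such that $\langle (\mathsf{M}_{\lambda}\mathsf{H})^{n_{0}}\mathsf{G}_{\lambda}f\,,\,\psi_{0}^{*}\rangle>0$ for a suitable positive functional, and more to the point such that $(\mathsf{M}_{\lambda}\mathsf{H})^{n_{0}}\mathsf{G}_{\lambda}f$ has full support, or at least charges the region where $\mathsf{H}$ spreads mass everywhere. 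The cleanest route: irreducibility of $\mathsf{M}_{\lambda}\mathsf{H}$ gives $n_{0}$ with $\varphi_{0}:=(\mathsf{M}_{\lambda}\mathsf{H})^{n_{0}}\mathsf{G}_{\lambda}f$ not orthogonal to $\psi_{0}$ in the sense that $\langle\varphi_{0},g\rangle>0$ where $g$ is the indicator of a set on which $\mathsf{H}\psi_{0}>0$; then $\mathsf{H}\varphi_{0}>0$ on a set of positive $\mu_{-}$-measure. Finally, I would feed $\mathsf{H}\varphi_{0}$ into $\mathsf{\Xi}_{\lambda}$: the operator $\mathsf{\Xi}_{\lambda}u(x,v)=u(x-t_{-}(x,v)v,v)e^{-\lambda t_{-}(x,v)}\ind_{\{t_{-}<\infty\}}$ transports boundary data into $\Omega\times V$ along characteristics, and since $\mathsf{H}\varphi_{0}$ is positive on a positive-measure subset of $\Gamma_{-}$, the function $\mathsf{\Xi}_{\lambda}\mathsf{H}\varphi_{0}$ is positive on a positive-measure subset of $\Omega\times V$; hence $\langle\mathsf{\Xi}_{\lambda}\mathsf{H}\varphi_{0}\,,\,g\rangle>0$ for our given $g$ (using that $g$ is nontrivial — here one actually needs a little care, see below).

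\textbf{The main obstacle.} The subtle point is that Definition \ref{defi:irred}(ii)–(iv) requires positivity \emph{a.e.}, i.e. one really wants $\Rs(\lambda,A)f>0$ a.e. on $\Omega\times V$, not merely on some positive-measure set — the duality-bracket formulation in (iii)–(iv) is equivalent, but to get it one cannot just pick one $n$ and stop; one genuinely needs the \emph{full strength} of irreducibility of $\mathsf{M}_{\lambda}\mathsf{H}$ propagated all the way into the interior. The argument should instead run: (a) $\mathsf{G}_{\lambda}f$ nontrivial nonnegative in $\lp$; (b) by irreducibility there is $n_{0}$ with $(\mathsf{M}_{\lambda}\mathsf{H})^{n_{0}}\mathsf{G}_{\lambda}f$ not orthogonal to the trace $\mathsf{B}^{+}[\mathsf{\Xi}_{\lambda}^{*}\text{-lift of }g]$ — or, packaging it differently, apply irreducibility to the nontrivial nonnegative interior data and the interior test function $g$ after reducing, via the identity $\langle\mathsf{\Xi}_{\lambda}\mathsf{H}\varphi,g\rangle_{1,\infty}=\langle\mathsf{H}\varphi,\mathsf{\Xi}_{\lambda}^{*}g\rangle$, where $\mathsf{\Xi}_{\lambda}^{*}g$ is a nonnegative function on $\Gamma_{-}$ that is nontrivial precisely because $g$ is nontrivial and almost every characteristic reaching a point where $g>0$ originates at $\Gamma_{-}$. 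So the chain is: $g$ nontrivial $\Rightarrow \mathsf{\Xi}_{\lambda}^{*}g$ nontrivial on $\Gamma_{-}$ $\Rightarrow$ $\mathsf{H}^{*}\mathsf{\Xi}_{\lambda}^{*}g$ nontrivial on $\Gamma_{+}$ (using $\mathsf{H}\psi_{0}>0$, i.e. $\mathsf{H}^{*}$ has full support in an appropriate sense) $\Rightarrow$ by irreducibility of $(\mathsf{M}_{\lambda}\mathsf{H})^{*}$ there is $n_{0}$ with $\langle\mathsf{G}_{\lambda}f\,,\,(\mathsf{M}_{\lambda}\mathsf{H}^{*})^{n_{0}}\mathsf{H}^{*}\mathsf{\Xi}_{\lambda}^{*}g\rangle>0$, i.e. $\langle\mathsf{\Xi}_{\lambda}\mathsf{H}(\mathsf{M}_{\lambda}\mathsf{H})^{n_{0}}\mathsf{G}_{\lambda}f\,,\,g\rangle>0$, which is exactly the $(n_{0})$-th term of \eqref{eq:reso} tested against $g$. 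The one piece that must be handled with genuine care (and is the real heart of the matter) is showing $\mathsf{\Xi}_{\lambda}^{*}g$ is nontrivial — equivalently, that $\mu_{-}$-almost no relevant boundary set is "invisible" to interior observations — which follows from the integration formula \eqref{10.47} applied with $\Gamma_{-}$: any positive-measure subset of $\Omega\times V$ pulls back to a positive-measure family of characteristics emanating from $\Gamma_{-}$. I expect the bookkeeping of adjoints and the verification that the "$\psi_{0}$ with $\mathsf{H}\psi_{0}>0$" hypothesis is exactly what is needed to bridge $\Gamma_{-}$ to $\Gamma_{+}$ to be where all the actual work lies.
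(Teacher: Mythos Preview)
Your proposal is correct and, after your self-correction under ``The main obstacle,'' arrives at essentially the paper's own proof: dualize the series \eqref{eq:reso} to $\langle \Rs(\lambda,A)f,g\rangle=\langle \mathsf{R}_{\lambda}f,g\rangle+\sum_{n}\langle (\mathsf{M}_{\lambda}\mathsf{H})^{n}\mathsf{G}_{\lambda}f,\mathsf{H}^{*}\mathsf{\Xi}_{\lambda}^{*}g\rangle$, observe that $\mathsf{\Xi}_{\lambda}^{*}g$ is nontrivial, that $\mathsf{H}^{*}\mathsf{\Xi}_{\lambda}^{*}g$ is nontrivial via $\langle\psi_{0},\mathsf{H}^{*}\mathsf{\Xi}_{\lambda}^{*}g\rangle=\langle\mathsf{H}\psi_{0},\mathsf{\Xi}_{\lambda}^{*}g\rangle>0$, and then invoke irreducibility of $\mathsf{M}_{\lambda}\mathsf{H}$ (equivalent to that of $\mathsf{M}_{0}\mathsf{H}$) together with nontriviality of $\mathsf{G}_{\lambda}f$. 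The paper treats the nontriviality of $\mathsf{\Xi}_{\lambda}^{*}g$ as evident rather than spelling out the integration-formula justification you sketch, but the skeleton is identical.
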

\begin{proof} Let $h \in X$ and $g \in L^{\infty}(\Omega \times V,\d x \otimes \bm{m}(\d v))$ be nonnegative and non trivial. Denoting the duality parity between $X$ and its dual simply by $\langle \cdot,\cdot \rangle$, we have for any $\lambda >0$
\begin{multline*}
\langle \Rs(\lambda,A)h,g\rangle=\langle \mathsf{R}_{\lambda}h,g\rangle + \sum_{k=0}^{\infty}\langle \mathsf{\Xi}_{\lambda}\mathsf{H}\left(\mathsf{M}_{\lambda}\mathsf{H}\right)^{k}\mathsf{G}_{\lambda}g,g\rangle
=\langle \mathsf{R}_{\lambda}h,g\rangle + \sum_{k=0}^{\infty}\langle \left(\mathsf{M}_{\lambda}\mathsf{H}\right)^{k}\mathsf{G}_{\lambda}h,\mathsf{H}^{\star}\mathsf{\Xi}_{\lambda}^{\star}g\rangle\end{multline*}
where $\mathsf{H}^{\star}$ and $\mathsf{\Xi}_{\lambda}^{\star}$ denote the dual operator of $\mathsf{H}$  and $\mathsf{\Xi}_{\lambda}$ respectively. Notice that $g_{\star}=\mathsf{\Xi}_{\lambda}^{\star}g$ is nonnegative and nontrivial and the same holds for $\mathsf{H}^{\star}\mathsf{\Xi}_{\lambda}^{\star}=\mathsf{H}^{\star}g_{\star}$ since, under Assumption \ref{hyp2}
$$\langle \psi_{0},\mathsf{H}^{\star}g_{\star}\rangle=\langle \mathsf{H}\psi_{0},g_{\star} \rangle >0.$$
Now, since the irreducibility of $\mathsf{M}_{0}\mathsf{H}$ is equivalent to that of $\mathsf{M}_{\lambda}\mathsf{H}$ for any $\lambda >0$, we deduce that $\langle \Rs(\lambda,A)h,g\rangle >0$ for any nonnegative and nontrivial $g \in L^{\infty}(\Omega \times V,\d x \otimes \bm{m}(\d v))$ which proves that $\Rs(\lambda,A)h$ is positive a. e. on $\Omega \times V$ and $\Rs(\lambda,A)$ is positivity improving.\end{proof}

The main result of this section is then the following:
\begin{theo}\phantomsection\label{theo:irred} Let $\mathsf{H} \in\mathscr{B}(\lp,\lm)$ be a  stochastic boundary operator and let Assumptions \ref{hyp2} be satisfied. Assume there exists 
$\Psi_{\mathsf{H}} \in \D(\mathsf{T}_{\mathsf{H}})$ (with unit norm) such that
$$\mathsf{T}_{\mathsf{H}}\Psi_{\mathsf{H}}=0 \qquad \text{ and } \quad \Psi_{\mathsf{H}} \geq 0 \quad \text{ a.e. on } \Omega \times V.$$
Then, $\overline{\mathsf{T}_{\mathsf{H}}}$ generates a irreducible and stochastic $C_{0}$-semigroup $(U_{\mathsf{H}}(t))_{t \geq 0}$ on $X$ and $\Psi_{\mathsf{H}}$ is the unique invariant density of $(U_{\mathsf{H}}(t))_{t\geq 0}$. Moreover, $(U_{\mathsf{H}}(t))_{t\geq 0}$ is ergodic with ergodic projection
$$\mathbb{P}f=\varrho_{f}\,{\Psi}_{\mathsf{H}}, \qquad \text{ where } \quad\varrho_{f}=\displaystyle\int_{\Omega\times\R^{d}}f(x,v)\d x \otimes \bm{m}(\d v), \qquad f \in X,$$
i.e. 
\begin{equation}\label{eq:ergodic}
\lim_{t \to \infty}\left\|\frac{1}{t}\int_{0}^{t}U_{\mathsf{H}}(s)f\d s-\varrho_{f}{\Psi}_{\mathsf{H}}\right\|_{X}=0, \qquad  \forall f \in X\end{equation}
and $X=\mathrm{Ker}(\overline{\mathsf{T}_{\mathsf{H}}}) \oplus \overline{\mathrm{Range}(\overline{\mathsf{T}_{\mathsf{H}}})}.$ 
\end{theo}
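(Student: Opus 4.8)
The plan is to upgrade the hypothesis into two structural facts — that $(U_{\mathsf{H}}(t))_{t\geq0}$ is stochastic with $A=\overline{\mathsf{T}_{\mathsf{H}}}$, and that $\mathrm{Ker}(A)$ is nontrivial — and then to invoke the classical ergodic theory of irreducible stochastic $C_{0}$-semigroups. To obtain the first fact, I would first translate $\Psi_{\mathsf{H}}$ into a boundary eigenfunction: since $\Psi_{\mathsf{H}}\in\D(\mathsf{T}_{\mathsf{H}})\subset W$ is nonnegative, nontrivial and satisfies $\mathsf{T}_{\mathsf{H}}\Psi_{\mathsf{H}}=0$, Proposition \ref{propo:general} yields $\varphi_{\mathsf{H}}:=\mathsf{B}^{+}\Psi_{\mathsf{H}}\in\lp$ (the trace of an element of $W$), nonnegative, with $\mathsf{M}_{0}\mathsf{H}\varphi_{\mathsf{H}}=\varphi_{\mathsf{H}}$; moreover $\varphi_{\mathsf{H}}\neq0$, since otherwise $\Psi_{\mathsf{H}}=\mathsf{\Xi}_{0}\mathsf{H}\varphi_{\mathsf{H}}=0$ (cf. the proof of Proposition \ref{propo:general}).

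Next I would use the irreducibility of $\mathsf{M}_{0}\mathsf{H}$ on $\lp$ (Assumption \ref{hyp2}): from $\varphi_{\mathsf{H}}=(\mathsf{M}_{0}\mathsf{H})^{n}\varphi_{\mathsf{H}}$ for every $n$, testing against $\ind_{A}$ for an arbitrary measurable $A\subset\Gamma_{+}$ with $\mu_{+}(A)>0$ forces $\int_{A}\varphi_{\mathsf{H}}\,\d\mu_{+}>0$, hence $\varphi_{\mathsf{H}}>0$ $\mu$-a.e. Since $\varphi_{\mathsf{H}}$ then satisfies $\varphi_{\mathsf{H}}\geq\mathsf{M}_{0}\mathsf{H}\varphi_{\mathsf{H}}$ (with equality), Proposition \ref{propo:honest} applies and gives $A=\overline{\mathsf{T}_{\mathsf{H}}}$ together with the stochasticity of $(U_{\mathsf{H}}(t))_{t\geq0}$. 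Consequently $\Psi_{\mathsf{H}}\in\D(\mathsf{T}_{\mathsf{H}})\subset\D(A)$ with $A\Psi_{\mathsf{H}}=\mathsf{T}_{\mathsf{H}}\Psi_{\mathsf{H}}=0$, so $\Psi_{\mathsf{H}}$ is an invariant density and $\mathrm{Ker}(A)\neq\{0\}$; irreducibility of $(U_{\mathsf{H}}(t))_{t\geq0}$ itself is exactly Proposition \ref{lem:irre}, whose assumptions coincide with Assumption \ref{hyp2}.

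Finally I would appeal to the abstract theory (see \cite{davies}, and also \cite{nagel}): for an irreducible stochastic $C_{0}$-semigroup whose generator has nontrivial kernel, the kernel is in fact one-dimensional and spanned by a strictly positive density. Hence $\mathrm{Ker}(A)=\mathbb{R}\Psi_{\mathsf{H}}$, which gives the uniqueness of the invariant density, and the semigroup is mean ergodic: the Ces\`aro averages $t^{-1}\int_{0}^{t}U_{\mathsf{H}}(s)\,\d s$ converge strongly as $t\to\infty$ to the positive ergodic projection $\mathbb{P}$ onto $\mathrm{Ker}(A)$, which is equivalent to the splitting $X=\mathrm{Ker}(\overline{\mathsf{T}_{\mathsf{H}}})\oplus\overline{\mathrm{Range}(\overline{\mathsf{T}_{\mathsf{H}}})}$. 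To identify $\mathbb{P}$, write $\mathbb{P}f=c(f)\,\Psi_{\mathsf{H}}$ for a bounded linear functional $c$; integrating over $\Omega\times V$, using that each $U_{\mathsf{H}}(s)$ preserves $\int_{\Omega\times V}(\cdot)$ (being stochastic) and that $\int_{\Omega\times V}\Psi_{\mathsf{H}}=\|\Psi_{\mathsf{H}}\|_{X}=1$, one gets $c(f)=\varrho_{f}$, which is \eqref{eq:ergodic}.

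The only genuinely non-routine ingredient is this last appeal to the Perron--Frobenius/ergodic machinery for irreducible stochastic semigroups — the implication ``nontrivial kernel $+$ irreducibility $\Rightarrow$ one-dimensional kernel, strictly positive fixed vector, and mean ergodicity (not merely boundedness of the Ces\`aro averages)''. All the remaining steps are routine bookkeeping with the operators $\mathsf{M}_{0}$, $\mathsf{\Xi}_{0}$, $\mathsf{G}_{0}$ and $\mathsf{H}$ set up in Sections \ref{sec:funct}--\ref{sec:general}.
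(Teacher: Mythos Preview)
Your proof is correct and follows essentially the same route as the paper: extract a nonnegative fixed point $\varphi$ of $\mathsf{M}_{0}\mathsf{H}$ from $\Psi_{\mathsf{H}}$ via Proposition~\ref{propo:general}, upgrade it to strictly positive using the irreducibility of $\mathsf{M}_{0}\mathsf{H}$, apply Proposition~\ref{propo:honest} to get $A=\overline{\mathsf{T}_{\mathsf{H}}}$ and stochasticity, invoke Proposition~\ref{lem:irre} for irreducibility of the semigroup, and then cite the abstract ergodic theory in \cite{davies}. Your write-up is in fact more explicit than the paper's in justifying why $\varphi>0$ a.e.\ and in identifying the ergodic projection as $f\mapsto\varrho_{f}\Psi_{\mathsf{H}}$, but these are just details the paper leaves implicit.
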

\begin{proof} According to Proposition \ref{propo:honest}, there exists $\varphi \in \lp$ such that $\varphi \geq 0$ $\mu$-a.e. and $\varphi=\mathsf{M}_{0}\mathsf{H}\varphi.$ Since $\mathsf{M}_{0}\mathsf{H}$ is irreducible, $\varphi >0$ $\mu$-a.e. and   we deduce from Proposition \ref{propo:honest} that $A=\overline{\mathsf{T}_{\mathsf{H}}}.$ Moreover, Lemma \ref{lem:irre} ensures that $(U_{\mathsf{H}}(t))_{t\geq0}$ is irreducible. Since $\mathrm{Ker}(\overline{\mathsf{T}_{\mathsf{H}}}) \neq 0$, the ergodicity of $(U_{\mathsf{H}}(t))_{t\geq0}$ follows from \cite[Theorem 7.3, p. 174 and Theorem 5.1 p. 123]{davies}.\end{proof}

\section{Weak compactness result and existence of an invariant density}\label{sec:comp}

\subsection{Weak compactness result} We prove here the main compactness result of the paper. The proof of the result is based on a series of important geometrical results regarding regularity and transversality of the ballistic flow 
$$\bxi\::\:(x,v) \in \Gamma_{+} \mapsto \bxi(x,v)=(x-\tau_{-}(x,v)v,v) \in \pO \times V.$$
Such  technical results have been postponed to Appendix \ref{app:ballistic} for the clarity of the reading and will be used repeatedly in the proof of the following. 
\begin{theo}\phantomsection\label{propo:weakcompact}
Let $\mathsf{K} \in \mathscr{B}(\lp,\lm)$ be regular diffuse boundary operator. Then, one has
$$\mathsf{K}\mathsf{M}_{0}\mathsf{K} \in \mathscr{B}(\lp,\lm)$$
is weakly compact.
\end{theo}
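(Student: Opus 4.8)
The goal is to show that $\mathsf{K}\mathsf{M}_{0}\mathsf{K}\in\mathscr{B}(\lp,\lm)$ is weakly compact. The strategy is to reduce, via the approximation Lemma \ref{lem:regular}, to the case where $\mathsf{K}$ has a kernel dominated by $\psi_{m}(v)\,|v'\cdot n(x)|$ with $\psi_{m}=m\ind_{B_{m}}$ bounded and compactly supported in velocity; since the set of weakly compact operators is closed in operator norm and $\mathsf{K}_{m}\mathsf{M}_{0}\mathsf{K}_{m}\to \mathsf{K}\mathsf{M}_{0}\mathsf{K}$ in $\mathscr{B}(\lp,\lm)$, it suffices to prove weak compactness of $\mathsf{K}_{m}\mathsf{M}_{0}\mathsf{K}_{m}$ for each fixed $m$. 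First I would write out $\mathsf{K}\mathsf{M}_{0}\mathsf{K}\psi(x,v)$ explicitly: the inner $\mathsf{K}$ maps $\psi$ on $\Gamma_{+}$ to a function on $\Gamma_{-}$, then $\mathsf{M}_{0}$ transports it along the backward flow from $\Gamma_{-}$ to $\Gamma_{+}$ via $u\mapsto u(x-\tau_{-}(x,v)v,v)$ (here I must be careful that $\mathsf{M}_{0}$ as stated maps $\lm\to\lp$, so composing needs the bookkeeping $\mathsf{K}:\lp\to\lm$, $\mathsf{M}_{0}:\lm\to\lp$, $\mathsf{K}:\lp\to\lm$), and the outer $\mathsf{K}$ integrates again over the post-collisional velocities. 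The composite kernel, as an operator $\lp\to\lm$, will be an integral operator in the full variables $(x,v)\in\Gamma_{-}$ against $(y,w)\in\Gamma_{+}$ whose kernel involves $k(x,v,v'')$, the transport relation $x''=x-\tau_{-}(x,v'')v''$ tying $x''$ to the base point, and $k(x'',v'',w)$ — the key point being that the intermediate spatial point $x''$ now genuinely varies as $v''$ ranges over $\Gamma_{+}(x)$, which is the mechanism by which smoothing in velocity through two collisions produces smoothing in the space variable.

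\textbf{The Dunford--Pettis verification.} To conclude weak compactness I would verify the Dunford--Pettis criterion for the composite kernel: (a) no concentration on small sets, i.e. uniform integrability, and (b) no mass escaping to infinity (handled for free once $\mathsf{K}$ is replaced by $\mathsf{K}_{m}$, since all relevant velocities lie in $B_{m}$ and $\Omega$ is bounded). For (a) the central device is a change of variables turning the integration over the intermediate velocity $v''\in\Gamma_{+}(x)$ (or equivalently over the direction $\omega''=v''/|v''|$) into an integration over the resulting base point on $\pO$: this is exactly where Proposition \ref{prop:diff} and Corollary \ref{cor:ba} enter, guaranteeing that off a set $S(x)$ of zero surface measure the map $\omega\mapsto x+\tau_{+}(x,\omega)\omega$ has a differential of maximal rank, hence a well-defined Jacobian, so that the push-forward of $\bm{\mu}_{x}$-type measure under the ballistic flow is absolutely continuous with a controlled density. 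Combining this Jacobian bound with the uniform-in-$x$ weak compactness of the $\mathsf{K}(x)$ (Definition \ref{defi:regul}, i.e. the uniform version of \eqref{eq:Sm}) and the scaling relation \eqref{eq:scale}, one obtains that for any $\varepsilon>0$ there is $\delta>0$ such that the composite kernel integrated over any set $A\subset\Gamma_{-}$ with $\mu(A)<\delta$ is $<\varepsilon$, uniformly in the test point $(y,w)\in\Gamma_{+}$.

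\textbf{Main obstacle.} The hard part will be the rigorous justification of the change of variables that converts $\d v''$ (or $\d\omega''$) on a sphere-section into $\pi(\d x'')$ on the boundary: this requires (i) invoking the transversality/maximal-rank statement to exclude the bad directions $S(x)$, which has zero measure and therefore does not affect the integrals; (ii) controlling the Jacobian of $\omega\mapsto x+\tau_{+}(x,\omega)\omega$ — this Jacobian degenerates near grazing directions and near $\omega$ for which $\tau_{+}$ is large or where $\pO$ curves away, so the estimate must be made in a way that is integrable rather than pointwise bounded; and (iii) doing all of this uniformly in the remaining free variables so that the Dunford--Pettis bounds are genuinely uniform. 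A secondary technical nuisance is that several iterations and intermediate localizations (splitting $\Gamma_{-}(x)$ into the good part and the small sets $S_{m}(x,v')$, and likewise splitting off grazing cones at the boundary) are needed before the clean change of variables applies; organizing these splittings so the error terms telescope into something controlled by the uniform convergence in \eqref{eq:Sm} is where most of the bookkeeping lives. Once the uniform integrability and tightness are in hand, the Dunford--Pettis theorem gives weak compactness of $\mathsf{K}_{m}\mathsf{M}_{0}\mathsf{K}_{m}$, and passing to the limit $m\to\infty$ in operator norm finishes the proof.
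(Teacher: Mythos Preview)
Your plan is correct and follows essentially the same route as the paper: reduce via Lemma~\ref{lem:regular} to the dominated kernel $\psi_{m}(v)\int_{\Gamma_{+}(x)}\varphi(x,v')\bm{\mu}_{x}(\d v')$, then verify Dunford--Pettis by rewriting $\int_{\mathcal{A}_{j}}\mathsf{K}\mathsf{M}_{0}\mathsf{K}\varphi\,\d\mu$ as $\int_{\Gamma_{+}}B_{j}(y)\varphi(y,w_{1})\,\d\mu$ and showing $\sup_{y}B_{j}(y)\to 0$ via the change of variables $\varpi\mapsto \mathbf{Y}(y,\varpi)$ justified by Proposition~\ref{prop:diff}. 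One small calibration: the paper does not rely on the Jacobian being merely integrable near the degenerate directions; instead it excises the set $S_{\varepsilon}(y)$ of Remark~\ref{nb:Sepsi} (whose surface measure is $\leq \varepsilon\pi^{d-2}$ uniformly in $y$), uses continuity and compactness to get a \emph{uniform pointwise lower bound} $J_{\mathbf{Y}}(y,\varpi)\geq C_{\varepsilon}>0$ on the complement, and then bounds the remaining integral by $C_{\varepsilon}^{-1}\tilde\mu(\mathcal{A}_{j})$---so your anticipated difficulty with Jacobian degeneracy is resolved by a good-set/bad-set splitting rather than an integrability argument.
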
 
\begin{proof} Let $k,\ell \geq 1$ be fixed. Let $(\mathsf{K}_{m})_{m} \subset \mathscr{B}(L^{1}_{+},L^{1}_{-})$ be the sequence of approximation obtained from Lemma \ref{lem:regular},
 which is such that $\lim_{m}\|\mathsf{K}-\mathsf{K}_{m}\|_{\mathscr{B}(L^{1}_{+},L^{1}_{-})}=0$. It is then enough to prove the weak-compactness of $\mathsf{K}_{n}\mathsf{M}_{0}\mathsf{K}_{m}$ for any $n,m\in \N$. Still using the notations of Lemma \ref{lem:regular}, introduce 
$$\overline{\mathsf{K}}_{m}f(x,v)=\psi_{m}(v)\int_{v'\cdot n(x) >0}f(x,v')\bm{\mu}_{x}(\d v'), \qquad f \in L^{1}_{+}, \qquad (x,v) \in \Gamma_{-}, \quad m \in \N$$
where $\psi_{m}(v)=m\ind_{B_{m}}(v).$ Given $n,m \in \N$, using \eqref{eq:Hmpsim} and a domination argument, the weak-compactness of $\overline{\mathsf{K}}_{n} \mathsf{M}_{0}\overline{\mathsf{K}}_{m}$ would imply the result. To avoid too heavy notations, and setting for instance $F(v)=\max(\psi_{n},\psi_{m})$, it suffices to prove that  $\mathsf{K}\mathsf{M}_{0}\mathsf{K}$ is weakly-compact for 
\begin{equation}\label{eq:Kprof}
\mathsf{K}\varphi(x,v)=F(v)\int_{\Gamma_{+}(x)}\varphi(x,v')\bm{\mu}_{x}(\d v').\end{equation}
Since $F$ is compactly supported and bounded we can assume without loss of generality that 
\begin{equation}\label{eq:FB1}
F=\ind_{B_{1}}\end{equation} which amounts to consider only velocities $|v| \leq 1.$ Recall that, thanks to Corollary \ref{cor:ba}, 
$$\bxi^{-1}\::\:(x,\omega) \in \widehat{\Gamma}_{-} \longmapsto \bxi^{-1}(x,\omega)=(x+\tau_{+}(x,\omega)\omega,\omega) \in \Gamma_{+}$$
is a $\mathcal{C}^{1}$ diffeomorphism from $\widehat{\Gamma}_{-}$ onto its image. Denoting here for simplicity
$$\Gamma_{-}^{(0)}=\Gamma_{-}\setminus \widehat{\Gamma}_{-}, \qquad \Gamma_{+}^{(0)}=\bxi^{-1}(\Gamma_{-}^{(0)})$$
one has $\mu_{-}(\Gamma_{-}^{(0)})=\mu_{+}(\Gamma_{+}^{(0)})=0$ (see for instance \eqref{eq:delta}) and 
we may make the  identification
\begin{equation}\label{eq:ide}
\lp=L^{1}(\Gamma^{(1)}_{+},\d\mu), \qquad \qquad \lm=L^{1}(\Gamma^{(0)}_{-},\d\mu)\end{equation}
so that we only have to prove the weak-compactness of 
$$\mathsf{K}\mathsf{M}_{0}\mathsf{K}\::\:L^{1}(\Gamma_{+}^{(0)},\d\mu) \rightarrow L^{1}(\Gamma^{(0)}_{-},\d\mu).$$
 Notice that, by \eqref{eq:Kprof} and \eqref{eq:FB1}, the range of $\mathsf{K}\mathsf{M}_{0}\mathsf{K}$ can be rather considered as $L^{1}(\Gamma_{-}^{(0)},\d\tilde{\mu})$ where
\begin{equation}\label{eq:mutilde}
\d\tilde{\mu}(x,v)=F(v)\d\mu(x,v)=\ind_{B_{1}}(v)|v\cdot n(x)| \pi(\d x)\otimes \bm{m}(\d v)\end{equation}
is nothing but the restriction of $\mu_{-}$ to $\pO \times B_{1}.$ In particular, $\tilde{\mu}$ is a \emph{finite} measure. 
From a simple consequence of the Dunford-Pettis criterion (see \cite[Corollary 4.7.21, p. 288]{bogachev}), we need to prove that, for \emph{any} nonincreasing sequence of measurable subsets $(\mathcal{A}_{j})_{j} \subset \Gamma_{-}^{(0)}$ with $\bigcap_{j}\mathcal{A}_{j}=\varnothing$, it holds
\begin{equation} 
\label{eq:weakcop}
\lim_{j \to \infty}\sup_{\|\varphi\|_{\lp} \leq 1}\int_{\mathcal{A}_{j}}\left|\mathsf{K}\mathsf{M}_{0}\mathsf{K}\varphi(x,v)\right|\d\mu(x,v)=0.\end{equation}
Since $\mathsf{K}$ and $\mathsf{M}_{0}$ are nonnegative operators, it suffices of course to consider nonnegative $\varphi\in \lp$ in \eqref{eq:weakcop}. Let us fix a sequence $(\mathcal{A}_{j})_{j} \subset \Gamma_{-}^{(0)}$ with $\bigcap_{j}\mathcal{A}_{j}=\varnothing$ and consider a nonnegative $\varphi \in \lp$.  We set 
$$I_{j}(\varphi)=\int_{\mathcal{A}_{j}} \mathsf{K}\mathsf{M}_{0}\mathsf{K}\varphi(x,v)\d\mu(x,v).$$
Introduce then  the $\mu$-preserving change of variables
$$(x,v) \in \Gamma_{+}  {\longmapsto} (y,w)=\bxi (x,v)=\left(\bxi_{s}(x,v),v\right)=(x-\tau_{-}(x,v)v,v) \in \Gamma_{-},$$ and denote   the position component of the
inverse  of $\bxi$ by $\mathbf{Y}$, i.e. given   $(y,w) \in \Gamma_{-}$, 
$$\mathbf{Y}(y,w)=\bxi_{s}^{-1}(y,w) \in \pO\qquad \text{ and } \qquad \bxi^{-1}(y,w)=(\mathbf{Y}(y,w),w) \in \Gamma_{+}.$$
Notice that here and everywhere in the text, $\bxi_{s}^{-1}$ denotes the position component of the inverse $\bxi^{-1}$, i.e. $\bxi_{s}^{-1}=(\bxi^{-1})_{s}$.
Defining, for any $y \in \partial \Omega$,
\begin{equation*}\label{eq:BjkGj}
B_{j }(y)=\int_{\Gamma_{-}(y)}G_{j}(\mathbf{Y}(y,w))F(w)\bm{\mu}_{y}(\d w), \qquad G_{j}(x)=\int_{\Gamma_{-}(x)}\ind_{\mathcal{A}_{j}}(x,v)F(v)\bm{\mu}_{x}(\d v),\end{equation*}
one can show (see the Lemma \ref{lemma:Ijphi} hereafter) that 
\begin{equation*}I_{j}(\varphi)=\int_{\Gamma_{+}}B_{j}(y)\varphi(y,w_{1})\d\mu(y,w_{1}).\end{equation*}
It is clear then that \eqref{eq:weakcop} will hold true if 
\begin{equation}\label{eq:limjfunc}
\lim_{j\to\infty}\sup_{y\in \partial\Omega}B_{j}(y)=0.\end{equation}
The proof of this property is given in the next Lemma yielding the desired weak-compactness.\end{proof}
\begin{lemme}\label{lem:final}
With the notations of the proof of Theorem \ref{propo:weakcompact}, given $y \in \partial \Omega$, introduce
$$B_{j}(y)=\int_{\Gamma_{-}(y)}G_{j}(\mathbf{Y}(y,w))F(w)\bm{\mu}_{y}(\d w).$$
Then, $\underset{j \to \infty}{\lim}\underset{y \in \pO}{\sup}B_{j}(y)=0.$
\end{lemme}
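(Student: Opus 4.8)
The plan is to establish the uniform convergence $\sup_{y\in\pO}B_j(y)\to 0$ by exploiting the regularity of $\mathsf{K}$ twice — once to control the $w$-integral near small/large velocities (through the uniform decay \eqref{eq:Sm}) and once, crucially, to transfer the uniform integrability in the velocity variable into uniform integrability in the \emph{space} variable via the change of variables $\mathbf{Y}(y,\cdot)$. First I would recall that $F=\ind_{B_1}$ so that $\bm{\mu}_y(\d w)$ restricted to $\Gamma_-(y)$ is a probability measure on $\{|w|\le 1\}$, and that $0\le G_j(x)\le 1$ for every $x\in\pO$ and every $j$, with $G_j$ nonincreasing in $j$ (since $\mathcal{A}_j$ is nonincreasing). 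The key elementary observation is that $G_j(x)=\int_{\Gamma_-(x)}\ind_{\mathcal{A}_j}(x,v)F(v)\bm{\mu}_x(\d v)$ is exactly the $x$-slice mass of $\mathcal{A}_j$ with respect to $\tilde\mu$; since $\bigcap_j\mathcal{A}_j=\varnothing$ and $\tilde\mu$ is finite, dominated convergence gives $\int_{\pO}G_j(x)\pi(\d x)\to 0$, i.e. $G_j\to 0$ in $L^1(\pO,\pi)$.

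Next I would use the ballistic flow structure. Writing $\mathbf{Y}(y,w)=y+\tau_+(y,w)w$ for $(y,w)\in\widehat\Gamma_-$, Proposition~\ref{prop:diff} (and Corollary~\ref{cor:ba}) tells us that for each fixed $y$ the map $w\mapsto\mathbf{Y}(y,w)$, restricted to $\Gamma_-(y)\setminus S(y)$ with $|w|\le 1$, is a $\mathcal{C}^1$ immersion onto a piece of $\pO$, and that $\bxi^{-1}$ is a $\mathcal{C}^1$ diffeomorphism on $\widehat\Gamma_-$. The plan is to split the $w$-integral defining $B_j(y)$ as $\int_{S_m(y,\cdot)}+\int_{\Gamma_-(y)\setminus S_m(y,\cdot)}$; on $S_m$ the integrand is bounded by $\ind_{S_m}F$ and, since $G_j\le 1$, the contribution is at most $\bm{\mu}_y(S_m(y,\cdot)\cap B_1)$, which by the \emph{uniform} version of \eqref{eq:Sm} (regularity of $\mathsf{K}$) can be made $<\varepsilon$ uniformly in $y$ by choosing $m$ large. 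On the complement, the integrand is bounded by $m\,G_j(\mathbf{Y}(y,w))\ind_{B_1}(w)\bm{\mu}_y(\d w)$, and here I would perform the change of variables $w\mapsto x=\mathbf{Y}(y,w)$, whose Jacobian is controlled below and above by constants depending only on $\Omega$ and the bound $|w|\le 1$ (this is precisely where the transversality/maximal-rank statement of Proposition~\ref{prop:diff} and the diameter bound enter, via \eqref{eq:scale} and Corollary~\ref{cor:ba}), to bound this term by $C\,m\int_{\pO}G_j(x)\pi(\d x)$, uniformly in $y$.

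Combining the two pieces, $\sup_{y\in\pO}B_j(y)\le \varepsilon + C\,m\,\|G_j\|_{L^1(\pO,\pi)}$; letting $j\to\infty$ with $m$ fixed makes the second term vanish, and then letting $\varepsilon\to 0$ finishes the proof. The main obstacle — and the step requiring genuine care rather than routine estimation — is the change of variables $w\mapsto\mathbf{Y}(y,w)$: one must justify that it is a valid (piecewise) $\mathcal{C}^1$ coordinate change on $\Gamma_-(y)\setminus S(y)$, that the excised set $S(y)$ is $\bm{\mu}_y$-null uniformly enough to be harmless, and, most importantly, that the associated Jacobian factor is bounded \emph{uniformly in $y\in\pO$} on the region $|w|\le 1$ (where $\tau_+(y,w)$ is bounded by the diameter $D$ but could degenerate for grazing directions — hence the need to have already absorbed the grazing/small-velocity part into the $S_m$ term). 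All the differential-geometric input needed for this is exactly what is assembled in Appendix~\ref{app:ballistic} (Proposition~\ref{prop:diff}, Corollary~\ref{cor:ba}), so the argument here is a matter of correctly invoking those results and bookkeeping the uniformity in $y$.
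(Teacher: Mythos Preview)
Your overall architecture --- excise a small set, then change variables $w\mapsto \mathbf{Y}(y,w)$ to bound the remainder by $\int_{\pO}G_j\,\d\pi=\tilde\mu(\mathcal{A}_j)\to 0$ --- is the same as the paper's. But two concrete steps are mis-specified and, as written, the argument does not go through.

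First, the change of variables you propose is ill-posed as stated: $w$ ranges over the $d$-dimensional set $\Gamma_-(y)\cap B_1$, while $\mathbf{Y}(y,w)\in\pO$ is $(d-1)$-dimensional. In fact $\mathbf{Y}(y,\lambda w)=\mathbf{Y}(y,w)$ for all $\lambda>0$ by \eqref{eq:scale}, so $\mathbf{Y}$ depends only on the direction $\varpi=w/|w|$. The paper deals with this by first applying the polar decomposition of Lemma~\ref{lem:polar} to strip off the radial variable $\varrho\in(0,1)$, and only then performing the change of variables $\varpi\mapsto \mathbf{Y}(y,\varpi)$ on the sphere. You need this step.

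Second --- and this is the real gap --- you excise the wrong set. The sets $S_m(y,\cdot)$ from \eqref{eq:Sm} encode the regularity of the diffuse kernel $k$; that information was already fully spent upstream (Lemma~\ref{lem:regular}) in reducing to the case $F=\ind_{B_1}$, and it plays no further role here. In the reduced setting the ``kernel'' is $\ind_{B_1}$, so $S_m$ is essentially $\{|v|\ge m\}$, which is disjoint from the support of $F$ and tells you nothing about the Jacobian. The degeneracy you must excise is purely geometric: the Jacobian $J_{\mathbf{Y}}(y,\varpi)$ of $\varpi\mapsto\mathbf{Y}(y,\varpi)$ vanishes on the set $S(y)$ of Proposition~\ref{prop:diff} (where the spherical parametrization degenerates), and is \emph{not} bounded below on $\Gamma_-(y)\setminus S_m$. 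The paper instead excises the quantitative neighbourhood $S_\varepsilon(y)$ of Remark~\ref{nb:Sepsi}, uses $\sup_y\sigma(S_\varepsilon(y))\to 0$ for the small piece (here $\|G_j\|_\infty$ suffices), and then obtains a uniform lower bound $J_{\mathbf{Y}}(y,\varpi)\ge C_\varepsilon>0$ on the compact set $\{(\bm\theta,y):\bm\theta\in U\setminus U_\varepsilon,\ y\in\pO\}$ by continuity of $(\bm\theta,y)\mapsto J_{\mathbf{Y}}$. It is this compactness argument --- not kernel regularity --- that delivers the uniform-in-$y$ Jacobian bound you correctly identify as the crux.
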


The proof of the above will use the the following polar
decomposition theorem (see \cite[Lemma 6.13, p.113]{voigt}):
\begin{lemme}\phantomsection\label{lem:polar} If $\bm{m}$ is a orthogonally invariant Borel measure with support $V \subset \R^{d}$, introduce $\bm{m}_{0}$ as the image of the measure $\bm{m}$ under the transformation $v \in \R^{d} \mapsto |v| \in [0,\infty),$ i.e. $\bm{m}_{0}(I)=\bm{m}\left(\{v \in \R^{d}\;;\;|v| \in I\}\right)$ for any Borel subset $I \subset \R^{+}.$ Then, for any $\psi \in L^{1}(\R^{d},\bm{m})$ it holds
$$\int_{\R^{d}}\psi(v)\bm{m}(\d v)=\frac{1}{|\mathbb{S}^{d-1}|}\int_{0}^{\infty}\bm{m}_{0}(\d\varrho)\int_{\mathbb{S}^{d-1}}\psi(\varrho\,\omega)\sigma(\d\omega)$$
where $\d\sigma$ denotes the Lebesgue measure on $\mathbb{S}^{d-1}$ with total mass $|\mathbb{S}^{d-1}|.$ 
\end{lemme}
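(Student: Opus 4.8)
The plan is to recognize the asserted identity as the disintegration of the orthogonally invariant measure $\bm{m}$ into its radial part $\bm{m}_{0}$ and a spherical part which invariance forces to be the normalized surface measure. Discarding first the possible atom at the origin (on which both sides contribute exactly $\psi(0)\,\bm{m}(\{0\})$, since $\int_{\mathbb{S}^{d-1}}\psi(0)\,\sigma(\d\omega)=|\mathbb{S}^{d-1}|\,\psi(0)$ and $\bm{m}_{0}(\{0\})=\bm{m}(\{0\})$), I would work on $\R^{d}\setminus\{0\}$ through the polar Borel isomorphism $P\colon v\mapsto(|v|,v/|v|)$ onto $(0,\infty)\times\mathbb{S}^{d-1}$, whose inverse is $(\varrho,\omega)\mapsto\varrho\omega$. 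Writing $\nu=P_{\#}\bm{m}$ for the pushforward and using the change-of-variables formula for pushforwards, the claimed formula is exactly equivalent to the factorization
\[
\nu=\bm{m}_{0}\otimes\tfrac{1}{|\mathbb{S}^{d-1}|}\,\sigma .
\]

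First I would identify the two defining properties of $\nu$. By the very definition of $\bm{m}_{0}$ as the image of $\bm{m}$ under $v\mapsto|v|$, the first-coordinate marginal of $\nu$ is $\bm{m}_{0}$. Next, the orthogonal invariance of $\bm{m}$ translates into invariance of $\nu$ under the maps $\mathrm{id}\times R$ for every $R$ in the orthogonal group: indeed $P\circ R=(\mathrm{id}\times R)\circ P$ since $|Rv|=|v|$ and $Rv/|Rv|=R(v/|v|)$, so $(\mathrm{id}\times R)_{\#}\nu=P_{\#}(R_{\#}\bm{m})=P_{\#}\bm{m}=\nu$. I would then invoke the disintegration theorem (legitimate because $\mathbb{S}^{d-1}$ is a compact Polish space and $\bm{m}_{0}$ is $\sigma$-finite) to write $\nu=\int_{(0,\infty)}\lambda_{\varrho}\,\bm{m}_{0}(\d\varrho)$ for an essentially unique family of Borel probability measures $\lambda_{\varrho}$ on $\mathbb{S}^{d-1}$.

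The crux --- and the step I expect to be the main obstacle --- is to upgrade the global invariance of $\nu$ to rotation invariance of almost every fibre measure $\lambda_{\varrho}$. For each fixed $R$, comparing the two disintegrations of $\nu$ and of $(\mathrm{id}\times R)_{\#}\nu$ and invoking the essential uniqueness of disintegration yields $R_{\#}\lambda_{\varrho}=\lambda_{\varrho}$ for $\bm{m}_{0}$-a.e.\ $\varrho$, but with an exceptional null set possibly depending on $R$. To obtain a single null set valid for all $R$ simultaneously, I would fix a countable dense subset $\{R_{k}\}$ of the (separable) orthogonal group, take the union of the corresponding null sets, and then extend the invariance from $\{R_{k}\}$ to all $R$ using the weak continuity of $R\mapsto R_{\#}\lambda_{\varrho}$ tested against bounded continuous functions on the sphere. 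For every $\varrho$ outside this fixed null set, $\lambda_{\varrho}$ is thus a rotation-invariant Borel probability measure on $\mathbb{S}^{d-1}$.

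Finally, by the uniqueness of the normalized rotation-invariant measure on the homogeneous space $\mathbb{S}^{d-1}$ (Haar measure), $\lambda_{\varrho}=\tfrac{1}{|\mathbb{S}^{d-1}|}\sigma$ for $\bm{m}_{0}$-a.e.\ $\varrho$. Substituting this back into the disintegration gives the factorization of $\nu$ displayed above, and unwinding the pushforward change of variables $\int_{\R^{d}}\psi(v)\,\bm{m}(\d v)=\int\psi(\varrho\omega)\,\nu(\d\varrho,\d\omega)$ produces the stated polar decomposition; the passage from nonnegative simple functions to an arbitrary $\psi\in L^{1}(\R^{d},\bm{m})$ is then routine by monotone approximation and linearity.
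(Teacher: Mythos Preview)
Your argument is correct. Note, however, that the paper does not supply its own proof of this lemma: it is quoted verbatim from \cite[Lemma 6.13, p.~113]{voigt} and used as a black box. So there is nothing in the paper to compare against at the level of proof technique; your disintegration argument is a legitimate self-contained justification of a result the authors simply import. The only mild caveat is that the disintegration theorem, as you invoke it, needs $\bm{m}$ (hence $\bm{m}_{0}$) to be $\sigma$-finite; this is implicit in the paper's functional setting but not stated in the lemma itself, so it is worth making that hypothesis explicit. Your handling of the atom at the origin and the passage from a countable dense family of rotations to the full orthogonal group are both sound.
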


\begin{nb}\label{nb:recallSepsi} We shall use in the proof of Lemma \ref{lem:final} that, with the notations of Proposition \ref{prop:diff}, for any $y \in \pO$ we can construct an orthonormal basis $\{\bm{e}_{1}(y),\ldots,\bm{e}_{d}(y)\}$ of $\R^{d}$ depending continuously on $y \in \pO$ with 
$$\bm{e}_{d}(y)=-n(y)$$
and such that, in this basis, any $\varpi \in \mathbb{S}^{d-1}$ can be written as 
$$\varpi=\sum_{i=1}^{d}\omega_{i}\bm{e}_{i}(y)$$ where $\omega=\omega(\bm{\theta})=(\omega_{1},\ldots,\omega_{d})$ is given by \eqref{eq:omeg} in terms of the polar coordinates
$$\bm{\theta}=(\theta_{1},\ldots,\theta_{d-1}) \in U=[0,2\pi] \times [0,\pi]^{d-3}\times \left[0,\frac{\pi}{2}\right].$$
In this case, $\omega$ is independent of $y \in \pO$. We also recall that, according to Remark \ref{nb:Sepsi}, for any $\varepsilon >0$, one can define $S_{\varepsilon}(y)$ as those $\varpi \in \mathbb{S}^{d-1}$ for which 
$$\bm{\theta} \in U_{\varepsilon}:=\left\{(\theta_{1},\ldots,\theta_{d-1}) \in U\;;\; \sin^{d-2}\theta_{d-1}\,\sin^{d-3}\theta_{d-2}\,\ldots\sin\theta_{2} \leq \varepsilon\right\}.$$
and prove that $\lim_{\varepsilon \to 0^{+}}\sup_{y \in \pO}\sigma(S_{\varepsilon}(y))=0.$ See Remark \ref{nb:Sepsi} for more details.\end{nb}

\begin{proof}[Proof of Lemma \ref{lem:final}]
Recall that the identification \eqref{eq:ide} is in force and $(\mathcal{A}_{j})_{j} \subset \Gamma_{-}^{(0)}$ is non-increasing with $\bigcap_{j}\mathcal{A}_{j}=\varnothing$. Introducing the polar coordinates $w=\varrho\,\varpi$,  with $\varrho >0$ and $\varpi  \in \mathbb{S}^{d-1}$ and using Lemma \ref{lem:polar} (recall that $\bm{\mu}_{x}(\d v)=|v\cdot n(x)| \bm{m}(\d v)$) we get
\begin{equation*} 
B_{j}(y)=|\mathbb{S}^{d-1}|^{-1}\int_{0}^{1}\varrho\,\bm{m}_{0}(\d \varrho)\int_{\Gamma_{-}(y)}G_{j}(\mathbf{Y}(y,\varrho\,\varpi))|\varpi \cdot n(y)|\sigma(\d\omega).\end{equation*}
Since $\{0\}$ is not an atom for the measure $\varrho\,\bm{m}_{0}(\d \varrho)$, 
according to the dominated convergence theorem, it is enough to prove that, for any given $\varrho \in (0,1)$, 
\begin{equation}\label{eq:Gjdel}
\lim_{j\to\infty}\sup_{y \in \pO}\int_{\Gamma_{-}(y)}G_{j}(\mathbf{Y}(y,\varrho\,\varpi))|\varpi\cdot n(y)|\sigma(\d \varpi)=0
\end{equation}
and, since $\varrho > 0$, there is no loss of generality in proving the result only for $\varrho=1.$ 
Notice that, for any $\varepsilon >0$ and $y \in \pO$, 
\begin{multline*}
\int_{\Gamma_{-}(y)}G_{j}(\mathbf{Y}(y,\varpi))|\varpi\cdot n(y)|\sigma(\d \varpi)=\int_{\Gamma_{-}(y) \cap S_{\varepsilon}(y)}G_{j}(\mathbf{Y}(y,\varpi))|\varpi\cdot n(y)|\sigma(\d \varpi)\\
+\int_{\Gamma_{-}(y)\setminus S_{\varepsilon}(y)}G_{j}(\mathbf{Y}(y,\varpi))|\varpi\cdot n(y)|\sigma(\d \varpi)
\end{multline*}
where $S_{\varepsilon}(y)$ has been introduced in the above Remark \ref{nb:recallSepsi}. Clearly, since  $\|G_{j}\|_{\infty} \leq \int_{\R^{d}}F(u)|u|\bm{m}(\d u)$, there is $C >0$ independent of $j$  such that
$$\sup_{y \in \pO}\int_{\Gamma_{-}(y) \cap S_{\varepsilon}(y)}G_{j}(\mathbf{Y}(y,\varpi))|\varpi\cdot n(y)|\sigma(\d \varpi) \leq C\,\sup_{y \in \pO}\sigma\left(S_{\varepsilon}(y)\right)$$
which goes to $0$ as $\varepsilon \to 0$ according to Remark \ref{nb:Sepsi}. Therefore, to show \eqref{eq:Gjdel}, we only have to prove that, for any $\varepsilon >0$, 
\begin{equation}\label{eq:Gjfin}\lim_{j\to\infty}\sup_{y \in \pO}\int_{\Gamma_{-}(y)\setminus S_{\varepsilon}(y)}G_{j}(\mathbf{Y}(y,\varpi))\sigma(\d \varpi)=0.\end{equation} Recall that, from Proposition \ref{prop:diff}, for  any $y\in\pO$, the mapping 
\begin{equation}\label{eq:varpiC1}
\varpi \in \widehat{\Gamma}_{-}(y) \setminus S(y) \longmapsto \mathbf{Y}(y,\varpi) \in \pO \quad \text{ is of class $\mathcal{C}^{1}$ with differential of rank $d-1$}\end{equation} 
with moreover  $\sigma(S(y))=0$.
On the other hand, with the notations and parametrization used in Proposition \ref{prop:diff} and recalled in Remark \ref{nb:recallSepsi}, the mapping $(\bm{\theta},y) \in U \times \pO \longmapsto \varpi=\varpi(\bm{\theta},y)$ is continuous while
$$\bm{\theta} \in U \longmapsto \varpi=\varpi(\bm{\theta},y)$$
is of class $\mathcal{C}^{1}$ with a continuous derivative $(\bm{\theta},y) \in A \times \pO \longmapsto \partial_{\bm{\theta}} \varpi(\bm{\theta},y)$. Since the mapping $\bm{\theta} \in U_{\varepsilon} \mapsto \varpi(\bm{\theta},y)$ is a $\mathcal{C}^{1}$ parametrization of $S_{\varepsilon}(y)$, 
by virtue of \eqref{eq:varpiC1} we have that, for any $y \in \pO$,
$$\bm{\theta} \in U \setminus U_{\varepsilon} \longmapsto   \mathbf{Y}(y,\varpi(\bm{\theta},y)) \in \pO$$
is a regular parametrization of 
$$\mathcal{E}_{y}:=\left\{\mathbf{Y}(y,\varpi)\,;\,\varpi \in \Gamma_{-}(y) \setminus S_{\varepsilon}(y)\right\} \subset \pO.$$
Then, according to \cite[Lemma 5.2.11 \& Theorem 5.2.16, pp. 128--131]{stroock}, the Lebesgue surface measure $\pi_{\mathcal{E}_{y}}(\d Y)$ on $\mathcal{E}_{y}$ is given by
$$J_{\mathbf{Y}}(y,\varpi)\d\theta_{1}\ldots\d\theta_{d-1}=J_{\mathbf{Y}}(y,\varpi)\d\bm{\theta}$$
where 
$$J_{\mathbf{Y}}(y,\varpi)=\left[\mathrm{det}\left(\left(\partial_{\theta_{i}}\mathbf{Y}(y,\varpi)\,,\,\partial_{\theta_{\ell}}\mathbf{Y}(y,\varpi)\right)_{1\leq i,\ell \leq d-1}\right)\right]^{1/2} >0$$
on $U \setminus U_{\varepsilon}.$ Since the mapping
$$(\bm{\theta},y) \in U \times \pO \longmapsto \partial_{\theta_{i}}\mathbf{Y}(y,\varpi(\bm{\theta},y))=\d_{\varpi}\mathbf{Y}(y,\varpi(\bm{\theta},y))\partial_{\theta_{i}}\varpi(\bm{\theta},y))$$
is continuous for any $i=1,\ldots,d-1$, then so is the mapping
$$(\bm{\theta},y) \in U \times \pO \longmapsto J_{\mathbf{Y}}(y,\varpi(\bm{\theta},y))$$
and there exists $C_{\varepsilon} >0$ such that 
$$J_{\mathbf{Y}}(y,\varpi(\bm{\theta},y)) \geq C_{\varepsilon}>0, \qquad \forall (\bm{\theta},y) \in (U \setminus U_{\varepsilon}) \times \pO.$$
Hence, for any $y \in \pO$
\begin{multline*}
\int_{\Gamma_{-}(y)\setminus S_{\varepsilon}(y)}G_{j}(\mathbf{Y}(y,\varpi))\sigma(\d \varpi) 
\\
\leq \frac{1}{C_{\varepsilon}}\int_{U \setminus U_{\varepsilon}}G_{j}\left(\mathbf{Y}(y,\varpi(\bm{\theta},y))\right)J_{\mathbf{Y}}(y,\varpi(\bm{\theta},y))\d\bm{\theta}\\
=\frac{1}{C_{\varepsilon}}\int_{\mathcal{E}_{y}}G_{j}(Y){\pi}_{\mathcal{E}_{y}}(\d Y).\end{multline*}
Clearly, recalling the definition of $G_{j}$ -- and because the measures $\pi$ and $\pi_{\mathcal{E}_{y}}$ coincide on $\mathcal{E}_{y}$ -- we get
\begin{equation*}\begin{split}
\int_{\mathcal{E}_{y}}G_{j}(Y){\pi}_{\mathcal{E}_{y}}(\d Y)&=\int_{\mathcal{E}_{y}}\left(\int_{\Gamma_{-}(Y)}F(v)\ind_{\mathcal{A}_{j}}(Y,v)\bm{\mu}_{Y}(\d v)\right){\pi}_{\mathcal{E}_{y}}(\d Y)\\
&\leq \int_{\pO}\left(\int_{\Gamma_{-}(Y)}F(v)\ind_{\mathcal{A}_{j}}(Y,v)\bm{\mu}_{Y}(\d v)\right){\pi}(\d Y)=\tilde{\mu}(\mathcal{A}_{j}),\end{split}\end{equation*}
where $\tilde{\mu}$ is given by \eqref{eq:mutilde}. Thus,
$$\sup_{y \in \pO}\int_{\Gamma_{-}(y)\setminus S_{\varepsilon}(y)}G_{j}(\mathbf{Y}(y,\varpi))\sigma(\d \varpi)  \leq \frac{\tilde{\mu}(\mathcal{A}_{j})}{C_{\varepsilon}} \qquad \forall j \in \N.$$
Since $(\mathcal{A}_{j})_{j}$ is non-increasing with $\bigcap_{j}\mathcal{A}_{j}=\varnothing$ and $\tilde{\mu}$ is a finite measure, we have $\lim_{j}\tilde{\mu}(\mathcal{A}_{j})=0$ which implies \eqref{eq:Gjfin} and proves the Lemma.
\end{proof}

\begin{lemme}\label{lemma:Ijphi} With the notations of the proof of Theorem \ref{propo:weakcompact},  it holds, for any $j \in \N$,
\begin{equation}\label{eq:Ijphi}
I_{j}(\varphi)=\int_{\Gamma_{+}}B_{j}(y)\varphi(y,w_{1})\d\mu(y,w_{1})\end{equation}
where, for any $y \in \partial \Omega$
$$B_{j}(y)=\int_{\Gamma_{-}(y)}G_{j}(\mathbf{Y}(y,w))F(w)\bm{\mu}_{y}(\d w), \qquad G_{j}(x)=\int_{\Gamma_{-}(x)}\ind_{\mathcal{A}_{j}}(x,v)F(v)\bm{\mu}_{x}(\d v).$$
\end{lemme}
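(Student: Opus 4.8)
The plan is to establish \eqref{eq:Ijphi} by directly unfolding the three-fold composition $\mathsf{K}\mathsf{M}_{0}\mathsf{K}$, using Tonelli's theorem to disentangle the nested integrals and the trace change-of-variables formula \eqref{10.51} to convert the $\mathsf{M}_{0}$-transport into an integral over $\Gamma_{+}$. Recall that here $\mathsf{K}\varphi(x,v)=F(v)\int_{\Gamma_{+}(x)}\varphi(x,v')\bm{\mu}_{x}(\d v')$ with $F$ bounded and compactly supported, and $I_{j}(\varphi)=\int_{\mathcal{A}_{j}}\mathsf{K}\mathsf{M}_{0}\mathsf{K}\varphi(x,v)\,\d\mu(x,v)$; since $F$ is bounded and $\mathsf{M}_{0}$ has norm one on the relevant $L^{1}$-spaces, the intermediate functions $\psi_{1}:=\mathsf{K}\varphi\in\lm$ and $\psi_{2}:=\mathsf{M}_{0}\psi_{1}\in\lp$ are well defined, all quantities below are nonnegative, and every Fubini manipulation is legitimate.

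First I would peel off the outermost $\mathsf{K}$ together with the indicator $\ind_{\mathcal{A}_{j}}$: writing $\d\mu(x,v)=\bm{\mu}_{x}(\d v)\,\pi(\d x)$ on $\Gamma_{-}$, inserting the definition of $\mathsf{K}$, and carrying out the $v$-integration over $\Gamma_{-}(x)$ — which produces exactly $G_{j}(x)$ since the remaining factor does not depend on $v$ — yields
$$I_{j}(\varphi)=\int_{\pO}\pi(\d x)\,G_{j}(x)\int_{\Gamma_{+}(x)}\psi_{2}(x,v'')\,\bm{\mu}_{x}(\d v'')=\int_{\Gamma_{+}}G_{j}(x)\,\psi_{2}(x,v'')\,\d\mu(x,v''),$$
where in the last expression $\d\mu$ is the natural measure on $\Gamma_{+}$. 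Next, substituting $\psi_{2}(x,v'')=\psi_{1}(x-\tau_{-}(x,v'')v'',v'')$ and applying \eqref{10.51} to the function $(z,w)\mapsto G_{j}(\mathbf{Y}(z,w))\,\psi_{1}(z,w)$ — using that $\mathbf{Y}(x-\tau_{-}(x,v'')v'',v'')=x$ by definition of the position component of $\bxi^{-1}$ — converts the above into $\int_{\Gamma_{-}}G_{j}(\mathbf{Y}(y,w))\,\psi_{1}(y,w)\,\d\mu(y,w)$. Finally I would insert $\psi_{1}=\mathsf{K}\varphi$, split $\d\mu(y,w)=\bm{\mu}_{y}(\d w)\,\pi(\d y)$, and perform the $w$-integration, which assembles precisely $B_{j}(y)=\int_{\Gamma_{-}(y)}G_{j}(\mathbf{Y}(y,w))F(w)\bm{\mu}_{y}(\d w)$; regrouping the remaining $(y,w_{1})$-integral over $\Gamma_{+}$ gives \eqref{eq:Ijphi}.

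The argument is essentially bookkeeping and I do not anticipate a genuine obstacle; the only points demanding care are (i) keeping track of which copy of the natural measure $\mu$ lives on $\Gamma_{+}$ and which on $\Gamma_{-}$ at each stage, and (ii) applying \eqref{10.51} in the correct direction — concretely, checking that after the substitution the position argument of $\psi_{1}$ is exactly $\mathbf{Y}(y,w)$ when $(y,w)=\bxi(x,v'')$, which is immediate from the definitions of $\bxi$ and $\mathbf{Y}$ recalled in the proof of Theorem \ref{propo:weakcompact}. The identification \eqref{eq:ide} is harmless here, since $\Gamma_{\pm}^{(0)}$ and $\Gamma_{\pm}$ differ only by $\mu$-null sets, so none of the integrals above is affected.
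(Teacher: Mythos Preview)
Your proof is correct and follows essentially the same route as the paper: peel off the outer $\mathsf{K}$ to produce $G_{j}$, use the $\mu$-preserving change of variables \eqref{10.51} (equivalently, that $\bxi$ is $\mu$-preserving with $\mathbf{Y}(\bxi(x,v''))=x$) to pass from $\Gamma_{+}$ to $\Gamma_{-}$, and then expand the inner $\mathsf{K}$ to assemble $B_{j}$. The only cosmetic difference is that the paper first proves the identity $\int_{\mathcal{A}_{j}}\mathsf{K}\mathsf{M}_{0}\psi\,\d\mu=\int_{\Gamma_{-}}G_{j}(\mathbf{Y}(y,w))\psi(y,w)\,\d\mu$ for a general $\psi\in\lm$ and then specializes to $\psi=\mathsf{K}\varphi$, whereas you carry $\psi_{1}=\mathsf{K}\varphi$ along from the start.
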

 
\begin{proof}
We use the notations of the above proof. In particular, we assume $\mathsf{K}$ to be given by \eqref{eq:Kprof}. Notice that, for any nonnegative $\psi \in \lm$,
\begin{multline*}
\int_{\mathcal{A}_{j}}\mathsf{K}\mathsf{M}_{0}\psi(x,v)\d\mu_{-}(x,v)=\int_{\mathcal{A}_{j}}F(v)\left(\int_{\Gamma_{+}(x)}\mathsf{M}_{0}\psi(x,v')\bm{\mu}_{x}(\d v')\right)\d\mu_{-}(x,v)\\
=:\int_{\mathcal{A}_{j}}F(v)\Psi_{0}(x)\d\mu_{-}(x,v)
\end{multline*}
where one has
$$\Psi_{0}(x)=\int_{\Gamma_{+}(x)}\mathsf{M}_{0}\psi(x,v')\bm{\mu}_{x}(\d v')=\int_{\Gamma_{+}(x)}\psi(\bxi(x,v'))\bm{\mu}_{x}(\d v'), \qquad x \in \partial \Omega.$$
Simple use of Fubini's theorem yields
$$\int_{\mathcal{A}_{j}}F(v)\Psi_{0}(x)\d\mu_{-}(x,v)=\int_{\partial\Omega}G_{j}(x)\Psi_{0}(x) \pi(\d x)
=\int_{\Gamma_{+}}G_{j}(x)\psi(\bxi(x,v'))\d\mu_{+}(x,v').$$
Introduce then the $\mu$-preserving change of variables 
\begin{equation}\label{eq:YV}
(x,v') \in \Gamma_{+}  {\longmapsto} (y,w)=\bxi (x,v')  \in \Gamma_{-}\end{equation}
and recalling that $x=\mathbf{Y}(y,\omega)$
we have, 
\begin{equation*}
\int_{\mathcal{A}_{j}}\mathsf{K}\mathsf{M}_{0}\psi(x,v)\d\mu_{-}(x,v)=\int_{\Gamma_{-}}G_{j}(\mathbf{Y}(y,w))\psi(y,w)\d\mu_{-}(y,w).\end{equation*}
Applying this with $\psi=\mathsf{K}\varphi$ for some nonnegative $\varphi \in \lp$, we get \begin{multline*}
\int_{\mathcal{A}_{j}}\mathsf{K}\mathsf{M}_{0}\mathsf{K}\varphi(x,v)\d\mu_{-}(x,v)=\int_{\Gamma_{-}}G_{j}(\mathbf{Y}(y,w))\mathsf{K}\varphi(y,w)\d\mu_{-}(y,w)\\
=\int_{\Gamma_{-}}G_{j}(\mathbf{Y}(y,w))F(w)\d\mu_{-}(y,w)\int_{\Gamma_{+}(y)}\varphi(y,w_{1})\bm{\mu}_{y}(\d w_{1})
\end{multline*}
which is the desired result.\end{proof}

 \subsection{About the essential spectral radius of $\mathsf{M}_{0}\mathsf{H}$}

We are ready to show:\begin{theo}\phantomsection\label{theo:ressM0}
Let $\mathsf{H} \in\mathscr{B}(\lp,\lm)$ be a stochastic \emph{regular} partly diffuse boundary operator given by \eqref{eq:partlydif} {and denote for simplicity $\beta(x)=1-\alpha(x)$ for $\pi$-a. e. $x \in \pO$ and $\beta_{\infty}:=\mathrm{ess}\sup_{x \in \pO}\beta(x)$.  If 
\begin{equation}\label{eq:oscill}
\mathrm{ess\,inf}_{x \in \pO}\beta(x) >1+ \beta_{\infty}-\sqrt{1+\beta^{2}_{\infty}}\end{equation}
then $r_{\mathrm{ess}}(\mathsf{M}_{0}\mathsf{H}) <1.$}\end{theo}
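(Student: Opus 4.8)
The plan is to split the boundary operator into its reflection and diffuse parts, $\mathsf{H}=\alpha(\cdot)\mathsf{R}+\beta(\cdot)\mathsf{K}$, and to recover in $\mathsf{M}_{0}\mathsf{H}$ the compactness that is missing at the level of $\mathsf{K}$ alone by \emph{squaring}: the key point is that $\mathsf{K}\mathsf{M}_{0}\mathsf{K}$ \emph{is} weakly compact by Theorem~\ref{propo:weakcompact}, so a ``doubled'' diffuse term will be negligible in the Calkin algebra. Concretely, write on $\lp$
\[
\mathsf{M}_{0}\mathsf{H}=P+Q,\qquad P\varphi=\mathsf{M}_{0}\big(\alpha(\cdot)\mathsf{R}\varphi\big),\qquad Q\varphi=\mathsf{M}_{0}\big(\beta(\cdot)\mathsf{K}\varphi\big).
\]
Since $\mathsf{M}_{0}$, $\mathsf{R}$ and $\mathsf{K}$ are stochastic and multiplication by $\alpha(\cdot)$ (resp.\ $\beta(\cdot)$) on $\lm$ has norm $\|\alpha\|_{L^{\infty}(\pO)}=1-\mathrm{ess\,inf}_{x\in\pO}\beta(x)=:a$ (resp.\ $\beta_{\infty}=:b$), we get $\|P\|_{\mathscr{B}(\lp)}\le a$ and $\|Q\|_{\mathscr{B}(\lp)}\le b$. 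A direct computation shows that the oscillation hypothesis \eqref{eq:oscill} is exactly the inequality $a^{2}+2ab<1$: indeed $\mathrm{ess\,inf}\,\beta>1+\beta_{\infty}-\sqrt{1+\beta_{\infty}^{2}}$ rewrites as $\sqrt{1+b^{2}}>a+b$, which (both sides being nonnegative) squares to $1+b^{2}>(a+b)^{2}$, i.e.\ $1>a^{2}+2ab$. This is the only place where \eqref{eq:oscill} enters.

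First I would show that $Q^{2}$ is weakly compact. Denoting by $M_{\beta}$ the multiplication operator by $\beta(\cdot)$, one has $Q^{2}=(\mathsf{M}_{0}M_{\beta})\,\big(\mathsf{K}\mathsf{M}_{0}M_{\beta}\mathsf{K}\big)$. Because $0\le\beta\le1$, the positive operators satisfy $0\le M_{\beta}\mathsf{K}\le\mathsf{K}$, and composing on the left successively with the positive operators $\mathsf{M}_{0}$ and $\mathsf{K}$ gives
\[
0\le \mathsf{K}\mathsf{M}_{0}M_{\beta}\mathsf{K}\ \le\ \mathsf{K}\mathsf{M}_{0}\mathsf{K}\qquad\text{in }\mathscr{B}(\lp,\lm).
\]
By Theorem~\ref{propo:weakcompact} the right-hand side is weakly compact; since weak compactness of an operator between $L^{1}$-spaces is equivalent to equi-integrability of the image of the unit ball, it is inherited under domination by a positive operator, so $\mathsf{K}\mathsf{M}_{0}M_{\beta}\mathsf{K}$ is weakly compact, and hence so is $Q^{2}$, weakly compact operators forming a closed two-sided operator ideal.

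Let now $\mathscr{W}(\lp)$ denote the ideal of weakly compact operators on $\lp$, and $[\,\cdot\,]$ the canonical image in the Banach quotient algebra $\mathscr{B}(\lp)/\mathscr{W}(\lp)$. From $(\mathsf{M}_{0}\mathsf{H})^{2}=P^{2}+PQ+QP+Q^{2}$ and $[Q^{2}]=0$ we obtain
\[
\big\|[\mathsf{M}_{0}\mathsf{H}]^{2}\big\|=\big\|[P^{2}+PQ+QP]\big\|\ \le\ \|P\|^{2}+2\|P\|\,\|Q\|\ \le\ a^{2}+2ab.
\]
Since the spectral radius in a Banach algebra is bounded by the norm and $[\mathsf{M}_{0}\mathsf{H}]^{2}=[(\mathsf{M}_{0}\mathsf{H})^{2}]$, the essential spectral radius of $\mathsf{M}_{0}\mathsf{H}$ relative to $\mathscr{W}(\lp)$ satisfies $\big(r^{\mathscr{W}}_{\mathrm{ess}}(\mathsf{M}_{0}\mathsf{H})\big)^{2}\le a^{2}+2ab$, which is $<1$ under \eqref{eq:oscill}. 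Finally, since $\lp$ is an $L^{1}$-space it has the Dunford--Pettis property, so the square of any weakly compact operator on $\lp$ is compact; hence such operators are Riesz operators and, by analytic Fredholm theory applied to $\lambda\mapsto \lambda-\mathsf{M}_{0}\mathsf{H}$, $r^{\mathscr{W}}_{\mathrm{ess}}$ coincides with the usual essential spectral radius $r_{\mathrm{ess}}$. Therefore $r_{\mathrm{ess}}(\mathsf{M}_{0}\mathsf{H})<1$.

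The genuine difficulty of the statement is entirely absorbed by Theorem~\ref{propo:weakcompact} (the changes of variables and the transversality of the ballistic flow from Appendix~\ref{app:ballistic}); once that is granted, the argument above is elementary bookkeeping with operator norms in the Calkin algebra. The only other point requiring some care is the identification of $r^{\mathscr{W}}_{\mathrm{ess}}$ with $r_{\mathrm{ess}}$, which rests on the Dunford--Pettis property of $L^{1}$; alternatively, one may simply observe that $r^{\mathscr{W}}_{\mathrm{ess}}(\mathsf{M}_{0}\mathsf{H})<1$ is already enough for the sequel, since it forces the part of $\sigma(\mathsf{M}_{0}\mathsf{H})$ lying outside a disk of radius $<1$ to reduce to finitely many eigenvalues of finite algebraic multiplicity.
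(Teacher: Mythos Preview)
Your proof is correct and follows essentially the same strategy as the paper's: split $\mathsf{M}_{0}\mathsf{H}=P+Q$, use Theorem~\ref{propo:weakcompact} together with domination to see that $Q^{2}$ is weakly compact, expand $(\mathsf{M}_{0}\mathsf{H})^{2}$, discard $Q^{2}$ modulo the ideal of weakly compact operators, and bound the remaining three terms by $a^{2}+2ab<1$, which you correctly identify as equivalent to \eqref{eq:oscill}. The only cosmetic difference is in the last step: the paper invokes Pelczy\'nski's theorem (weakly compact $=$ strictly singular on $L^{1}$) and the stability of essential spectra under strictly singular perturbations \cite{pelci,linden}, whereas you phrase it via the Dunford--Pettis property and Riesz operators; both routes justify that the weakly-compact perturbation $Q^{2}$ leaves $r_{\mathrm{ess}}$ unchanged.
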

\begin{proof} For notation simplicity, we simply denote by $\alpha \mathsf{R}$ the operator $\alpha(\cdot)\mathsf{R}$ and by $\beta\,\mathsf{K}$ the operator $\beta(\cdot)\mathsf{K}$. Note first that $\left(\mathsf{
M_{0}}\beta\mathsf{K}\right) ^{2}\leq \left(\mathsf{M_{0}K}\right) ^{2}$ and Theorem \ref{propo:weakcompact}
imply that $\left[\mathsf{M_{0}}\beta\mathsf{K}\right]^{2}$ is weakly compact.  We recall
that in $L^{1}$ spaces, the ideal of strictly singular operators coincides
with the ideal of weakly compact operators \cite{pelci}. Since
\begin{multline*}
\left[\mathsf{M_{0}H}\right]^{2}=\left[\mathsf{M_{0}}\alpha\mathsf{R}+\mathsf{M_{0}}\beta\mathsf{K}\right]^{2}=\left[\mathsf{M_{0}}\beta\mathsf{K}\right]^{2}
+\left[\mathsf{M_{0}}\alpha\mathsf{R}\right]^{2}+\mathsf{M_{0}}\alpha\mathsf{RM_{0}}\beta\mathsf{K}+\mathsf{M_{0}}\beta\mathsf{KM_{0}}\alpha\mathsf{R}\end{multline*}
then the stability of essential spectra by strictly singular perturbations \cite[Proposition 2.c.10, p.79]{linden} shows that 
$\left[\mathsf{M_{0}H}\right]^{2}$ and $\left[\mathsf{M_{0}}\alpha\mathsf{R}\right]^{2}+\mathsf{M_{0}}\alpha\mathsf{RM_{0}}\beta\mathsf{K}+\mathsf{M_{0}}\beta\mathsf{KM_{0}}\alpha\mathsf{R}$ share the same essential spectrum. In particular,
\begin{equation*}\begin{split}
r_{\mathrm{ess}}\left(\left[\mathsf{M_{0}H}\right]^{2}\right)&=r_{\mathrm{ess}}\left(\left[\mathsf{M_{0}}\alpha\mathsf{R}\right]^{2}+\mathsf{M_{0}}\alpha\mathsf{RM_{0}}\beta\mathsf{K}+\mathsf{M_{0}}\beta\mathsf{KM_{0}}\alpha\mathsf{R}\right)\\
&\leq \left\|\left[\mathsf{M_{0}}\alpha\mathsf{R}\right]^{2}+\mathsf{M_{0}}\alpha\mathsf{RM_{0}}\beta\mathsf{K}+\mathsf{M_{0}}\beta\mathsf{KM_{0}}\alpha\mathsf{R}\right\|_{\mathscr{B}(\lp)}\\
&\leq \|\alpha(\cdot)\|_{L^{\infty}(\pO)}^{2} + 2\|\alpha(\cdot)\|_{L^{\infty}(\pO)}\|\beta(\cdot)\|_{L^{\infty}(\pO)}\\
&=\left(\|\alpha(\cdot)\|_{L^{\infty}(\pO)}
+\|\beta(\cdot)\|_{L^{\infty}(\pO)}\right)^{2}-\|\beta(\cdot)\|^{2}_{L^{\infty}(\pO)}
\end{split}\end{equation*}
Since $\alpha(\cdot)=1-\beta(\cdot)$, this means that
$$r_{\mathrm{ess}}\left(\left[\mathsf{M_{0}H}\right]^{2}\right) \leq \left(1+\mathrm{osc}(\beta)\right)^{2}-\|\beta(\cdot)\|_{L^{\infty}(\pO)}^{2}$$
where $\mathrm{osc}(\beta)=\mathrm{ess}\!\sup_{x\in \pO}\beta(x)-\mathrm{ess}\!\inf_{x\in\pO}\beta(x)$ is the oscillation of $\beta(\cdot).$  Finally, the condition $\left(1+\mathrm{osc}(\beta)\right)^{2}-\|\beta(\cdot)\|_{L^{\infty}(\pO)}^{2}<1$ amounts to 
$$\left(\mathrm{osc}(\beta)\right) ^{2}+2\mathrm{osc}(\beta)<\beta _{\infty }^{2}$$
which is equivalent to \eqref{eq:oscill}. This ends the proof since $r_{\mathrm{ess}}\left(\left[\mathsf{M_{0}H}\right]^{2}\right)=\left(r_{\mathrm{ess}}(\mathsf{M_{0}H})\right)^{2}$ by the spectral mapping theorem.
\end{proof}

\begin{nb} We can view \eqref{eq:oscill} as 
$$\mathrm{osc}(\beta )<\sqrt{1+\beta _{\infty }^{2}}-1$$
which expresses a smallness of the oscillation $\mathrm{osc}(\beta )$ relatively to $\mathrm{ess}\!\sup_{x\in \pO}\beta(x).$ Notice that this condition is always satisfied if $\beta >0$
is a constant.
\end{nb}

\begin{nb}\label{nb:weaknb} We strongly believe that the assumption \eqref{eq:oscill} is purely technical and we conjecture the above result to be true with the sole assumption that $\mathrm{ess}\!\inf_{x \in \pO}\beta(x) > 0$, i.e. when the diffuse reflection is active everywhere on $\partial
\Omega $. 

In a previous version of the paper \cite{MKLR}, we erroneously established the inequality $r_{\mathrm{ess}}(\mathsf{M_{0}H}) <1$ from the stability of the essential spectral radius, proving that
$$r_{\mathrm{ess}}(\mathsf{M_{0}H})=r_{\mathrm{ess}}(\mathrm{M}_{0}\alpha\mathsf{R})$$
without any condition on the oscillation of $\beta (\cdot).$
The key point to establish such a stability result was the following (erroneous) property: for any integers $k,\ell \geq 1$, of the operators
\begin{equation}\label{eq:weakKR}
\mathsf{K}(\mathsf{M}_{0}\mathsf{R})^{k}\mathsf{M}_{0}\mathsf{K}(\mathsf{M}_{0}\mathsf{R})^{\ell}\mathsf{M}_{0}\mathsf{K}\::\:\lp  \rightarrow \lm \qquad \text{ is weakly-compact}.\end{equation}
As pointed out by an anonymous referee, the proof of such a result contained a gap and the result cannot be true whenever $\mathsf{R}$ is associated to bounce-back boundary conditions (see Example \ref{exe:bounce} for details). We however conjecture that the above operators are indeed weakly-compact for any $k,\ell \geq 1$ whenever $\mathsf{R}$ is associated to \emph{specular boundary reflection}, i.e.
$$\mathsf{R}\varphi(x,v)=\varphi(x,v-2(v \cdot n(x))n(x)), \qquad \varphi \in \lp, \qquad (x,v) \in \Gamma_{-}.$$ More generally, it would be interesting to characterize the domains $\Omega$ and the class of reflection boundary operators $\mathsf{R}$ for which the above \eqref{eq:weakKR} holds true for any $k,\ell \geq 1.$\end{nb}

\begin{nb} We point out here that the conclusion in Theorem \ref{theo:ressM0} applies to  any stochastic operator $\mathsf{R}$ and not only to reflection boundary  operators. \end{nb}

\section{Kinetic semigroup for regular partly diffuse boundary operators}
\label{ss:existence-in-dens}

We introduce the following set of Assumptions:

\begin{hyp}\phantomsection\label{hyp1} The \emph{regular} stochastic partly diffuse boundary operator 
$$\mathsf{H}=\alpha(\cdot)\mathsf{R} + (1-\alpha(\cdot))\mathsf{K}$$
is such that 
\begin{enumerate} 
\item[\textbf{A1)}] $\mathrm{Range}(\mathsf{K}) \subset \Y^{-}_{1};$
\item[\textbf{A2)}] $\mathsf{R}(\Y^{+}_{1}) \subset \Y^{-}_{1};$
\item[\textbf{A3)}] {Inequality  \eqref{eq:oscill} is satisfied}.
\end{enumerate}
\end{hyp}
\begin{nb} In the above set of Assumptions, it is possible to replace  $\Y^{\pm}_{1}$ with $L^{1}(\Gamma_{\pm},\tau_{\mp}\d\mu_{\pm})$. However, in this case, Assumption $\textbf{A2)}$ is not necessarily  satisfied for practical examples of boundary conditions (see Example \ref{exe:reflec}). \end{nb}
\begin{nb} {The above Assumption $\textbf{A3)}$ can be replaced with 
$$\textbf{A3')} \qquad r_{\mathrm{ess}}(\mathsf{M_{0}H}) < 1 \qquad \text{ and } \qquad \underset{x \in \pO}{\mathrm{esssup}}\;\alpha(x) < 1.$$
Notice that, as seen from Theorem \ref{theo:ressM0}, $\textbf{A3)} \implies \textbf{A3')}.$ In the rest of the analysis, this is only $\textbf{A3')}$ that will be used.}\end{nb}

\begin{exe}\phantomsection\label{exe:maxwell} Consider the classical Maxwell diffuse boundary condition for which
$$[\mathsf{H}f](x,v)=\frac{\M(v)}{\gamma(x)}\int_{v'\cdot n(x) >0}f(x,v')|v'\cdot n(x)|\d v', \qquad \forall (x,v) \in \Gamma_{-}$$
with
$$\mathcal{M}(v)=\frac{1}{(2\pi\theta)^{d/2}}\exp\left(-\frac{|v|^{2}}{2\theta} \right),  \qquad \text{ and } \quad \gamma(x)=\int_{u\cdot n(x) <0}\M(u)|u\cdot n(x)|\d u, \qquad \forall x \in \partial \Omega$$
for some $\theta >0.$  Notice that, actually, $\gamma$ is independent of $x$ and
$$\gamma(x)=\gamma_{d}:=C_{d}\,\int_{\mathbb{R}^{d}}|v|\M(v)\d v, \qquad \forall x \in \partial \Omega$$
for some universal constant $C_{d}=\frac{|\mathbb{S}^{d-2}|}{|\mathbb{S}^{d-1}|}\ds\int_{0}^{1}t(1-t^{2})^{\tfrac{d-3}{2}}\d t.$ One has then $\mathsf{H}(\lp) \subset \Y^{-}_{1}$. Indeed, for $f \in \lp$ nonnegative, one has
\begin{equation*}\begin{split}
\|\mathsf{H}f\|_{\Y^{-}_{1}}&=\frac{1}{\gamma_{d}}\int_{\partial \Omega}\pi(\d x)\int_{v\cdot n(x)<0}|v|^{-1}\M(v)|v\cdot n(x)|\d v\\
&\phantom{+++++++ ++++}\times\int_{v'\cdot n(x)>0}f(x,v')|v'\cdot n(x)|\d v'\\
&\leq \frac{1}{\gamma_{d}}\int_{\partial \Omega}\pi(\d x)\int_{v'\cdot n(x)>0}f(x,v')|v'\cdot n(x)|\d v'=\frac{1}{2\gamma_{d}}\int_{\Gamma_{+}}f\d\mu\end{split}\end{equation*}
where we used that $\int_{\mathbb{R}^{d}}\M(v)\d v=1$ for the first inequality. This shows that $\mathrm{Range}\,\mathsf{H} \subset \Y^{-}_{1}.$ This result extends easily to the case in which  the temperature $\theta$ depends on $x\in\pO$, i.e. $\theta=\theta(x)$ with $\theta(x) \geq \theta_{0} >0$ for any $x \in \pO.$
\end{exe}
\begin{exe}\label{exe:reflec} Recalling that both $V$ and the measure $\bm{m}$ are invariant under the orthogonal group let us consider the pure reflection boundary operator
$$\mathsf{R}\varphi(x,v)=\varphi(x,v-2(v\cdot n(x))n(x)), \qquad (x,v) \in \Gamma_{-}, \quad \varphi \in \lp.$$
and let $\varphi \in \Y^{+}_{1}$. Then, with the change of variables $w=v-2(v\cdot n(x))n(x)$ such that $|w|=|v|$ and $v=w-2(w\cdot n(x))n(x)$ (which preserves the measure $\d\mu_{\pm}$) we get
\begin{equation*}\begin{split}
\|\mathsf{R}\varphi\|_{\Y^{-}_{1}}&=\int_{\Gamma_{-}}\varphi(x,v-2(v\cdot n(x))n(x))\,|v|^{-1}|v\cdot n(x)|\bm{m}(\d v)\pi(\d x) \\
&=\int_{\Gamma_{+}}\varphi(x,w)|w|^{-1}\,|w\cdot n(x)|\pi(\d x)\bm{m}(\d w),
\end{split}\end{equation*}
i.e. $\mathsf{R}(\Y^{+}_{1}) \subset \Y^{-}_{1}$.

Notice that, in this example, in full generality, we can not replace $\Y^{-}_{1}$ with $L^{1}(\Gamma_{-},\tau_{+}\d\mu_{-})$. Actually, requiring  that
$$\mathsf{R}(L^{1}(\Gamma_{+},\tau_{-}\d\mu_{+})) \subset L^{1}(\Gamma_{-},\tau_{+}\d\mu_{-})$$
amounts to assume that there exists $c >0$ such that $\tau_{+}(x,\mathcal{V}(x,v)) \leq c \tau_{-}(x,v)$ for any $(x,v) \in \Gamma_{+}$ 
which is a geometrical condition not satisfied if $\Omega$ is not strictly convex.\end{exe}


A key point is that, under Assumptions \ref{hyp1}, the following holds:
\begin{lemme}\phantomsection\label{lem:lpl1tau}
Assume $\mathsf{H} \in\mathscr{B}(\lp,\lm)$  satisfies Assumptions \ref{hyp1}. Then, for any $\varphi \in \lp$
$$\psi=\Rs(1,\mathsf{M}_{0}(\alpha\mathsf{R}))\mathsf{M}_{0}((1-\alpha)\mathsf{K})\varphi \in \Y^{+}_{1}$$
so that $\mathsf{H}\psi \in \Y^{-}_{1}.$
\end{lemme}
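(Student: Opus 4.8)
The plan is to exploit the smoothing effect of the diffuse operator $\mathsf{K}$ recorded in Assumption $\textbf{A1)}$ together with the fact that the ``reflection part'' of the resolvent preserves the weighted space $\Y^{+}_{1}$, this last fact coming from Assumption $\textbf{A2)}$ and the isometry property of $\mathsf{M}_{0}$ on $\Y^{\pm}_{1}$ (identity \eqref{eq:M0+Y}). First I would observe that for any nonnegative $\varphi \in \lp$, the element $g:=\mathsf{M}_{0}((1-\alpha)\mathsf{K})\varphi$ already lies in $\Y^{+}_{1}$: indeed $(1-\alpha)\mathsf{K}\varphi \in \mathrm{Range}(\mathsf{K}) \subset \Y^{-}_{1}$ by $\textbf{A1)}$ (using $0\leq 1-\alpha\leq 1$ and a domination argument), and then $\mathsf{M}_{0}$ maps $\Y^{-}_{1}$ into $\Y^{+}_{1}$ isometrically by \eqref{eq:M0+Y}. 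So the real content is that the Neumann series $\Rs(1,\mathsf{M}_{0}(\alpha\mathsf{R}))=\sum_{k\ge 0}(\mathsf{M}_{0}(\alpha\mathsf{R}))^{k}$ maps $\Y^{+}_{1}$ into itself, with control of the norm.

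Next I would check that $\mathsf{M}_{0}(\alpha\mathsf{R})$ is a bounded operator on $\Y^{+}_{1}$ and estimate its norm. For nonnegative $w \in \Y^{+}_{1}$, one has $\alpha\mathsf{R}w = \alpha(\cdot)\mathsf{R}w \le \mathsf{R}w$, and $\mathsf{R}w \in \Y^{-}_{1}$ by $\textbf{A2)}$, with $\|\mathsf{R}w\|_{\Y^{-}_{1}} = \|w\|_{\Y^{+}_{1}}$ because $\mathsf{R}$ is a $\mu$-preserving reflection and $|\mathcal{V}(x,v)| = |v|$, so the weight $|v|^{-1}$ is preserved under the change of variables $(x,v)\mapsto(x,\mathcal{V}(x,v))$ (this is exactly the computation carried out in Example \ref{exe:reflec}). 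Hence $\|\alpha\mathsf{R}w\|_{\Y^{-}_{1}} \le \|w\|_{\Y^{+}_{1}}$, and composing with $\mathsf{M}_{0}$, which is an isometry $\Y^{-}_{1}\to\Y^{+}_{1}$ by \eqref{eq:M0+Y}, gives $\|\mathsf{M}_{0}(\alpha\mathsf{R})w\|_{\Y^{+}_{1}} \le \|w\|_{\Y^{+}_{1}}$. Thus $\mathsf{M}_{0}(\alpha\mathsf{R})$ is a contraction on $\Y^{+}_{1}$; more precisely, replacing $\alpha$ by $\|\alpha\|_{\infty}<1$ (Assumption $\textbf{A3')}$, which is implied by $\textbf{A3)}$) one gets $\|\mathsf{M}_{0}(\alpha\mathsf{R})\|_{\mathscr{B}(\Y^{+}_{1})} \le \|\alpha\|_{L^{\infty}(\pO)} < 1$, so the Neumann series $\sum_{k\ge 0}(\mathsf{M}_{0}(\alpha\mathsf{R}))^{k}$ converges in $\mathscr{B}(\Y^{+}_{1})$.

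Finally, I would assemble the pieces: for arbitrary (not necessarily nonnegative) $\varphi\in\lp$, split $\varphi=\varphi_{+}-\varphi_{-}$ into positive and negative parts, apply the above to each, and conclude that $\psi = \Rs(1,\mathsf{M}_{0}(\alpha\mathsf{R}))\,g$ with $g = \mathsf{M}_{0}((1-\alpha)\mathsf{K})\varphi \in \Y^{+}_{1}$, hence $\psi\in\Y^{+}_{1}$ with $\|\psi\|_{\Y^{+}_{1}}\le (1-\|\alpha\|_{\infty})^{-1}\|g\|_{\Y^{+}_{1}}$. Then $\mathsf{H}\psi = \alpha\mathsf{R}\psi + (1-\alpha)\mathsf{K}\psi$, and both terms lie in $\Y^{-}_{1}$: the first by $\textbf{A2)}$ (and $\alpha\mathsf{R}\psi\le\mathsf{R}\psi$), the second by $\textbf{A1)}$ applied to $\psi\in\lp$ (note $\Y^{+}_{1}\subset\lp$ since $\Omega$ is bounded, $|v|^{-1}$ being bounded below only near $v=0$ — actually one uses that $\mathsf{K}\psi\in\mathrm{Range}(\mathsf{K})\subset\Y^{-}_{1}$ directly from $\psi\in\lp$). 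The main obstacle, and the only genuinely delicate point, is the norm identity $\|\mathsf{M}_{0}(\alpha\mathsf{R})w\|_{\Y^{+}_{1}}\le\|\alpha\|_{\infty}\|w\|_{\Y^{+}_{1}}$: one must be careful that the weight $|v|^{-1}$ is left invariant both by the reflection $\mathcal{V}(x,\cdot)$ (which holds since $|\mathcal{V}(x,v)|=|v|$) and by the map $x\mapsto x-\tau_{-}(x,v)v$ defining $\mathsf{M}_{0}$ (which holds since $\mathsf{M}_{0}$ only transports in $x$, leaving $v$ untouched), so that \eqref{eq:M0+Y} and the reflection change of variables genuinely apply; everything else is bookkeeping with positivity and domination.
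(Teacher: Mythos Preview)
Your argument is correct and follows essentially the same route as the paper: first $(1-\alpha)\mathsf{K}\varphi\in\Y^{-}_{1}$ by \textbf{A1)}, then $g=\mathsf{M}_{0}((1-\alpha)\mathsf{K})\varphi\in\Y^{+}_{1}$ by \eqref{eq:M0+Y}, then $\|\mathsf{M}_{0}(\alpha\mathsf{R})\|_{\mathscr{B}(\Y^{+}_{1})}\le\|\alpha\|_{\infty}<1$ by \textbf{A2)} and \eqref{eq:M0+Y}, so the Neumann series converges in $\Y^{+}_{1}$; finally $\mathsf{H}\psi\in\Y^{-}_{1}$ since $\mathsf{K}$ maps $\lp$ into $\Y^{-}_{1}$ and $\mathsf{R}$ maps $\Y^{+}_{1}$ into $\Y^{-}_{1}$. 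Your extra justification of the norm identity $\|\mathsf{R}w\|_{\Y^{-}_{1}}=\|w\|_{\Y^{+}_{1}}$ via $|\mathcal{V}(x,v)|=|v|$ is exactly the computation of Example~\ref{exe:reflec}; note only that your parenthetical ``$\Y^{+}_{1}\subset\lp$'' is not true in general (large $|v|$), but as you immediately observe this is irrelevant since $\psi\in\lp$ follows already from the convergence of the Neumann series in $\lp$.
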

\begin{proof} Notice that, since $\sup_{x\in \pO} \alpha(x)=\alpha_{0} <1$, one has $\|\mathsf{M}_{0}(\alpha\mathsf{R})\|_{\mathscr{B}(\lp)} \leq \alpha_{0}< 1$ and $\Rs(1,\mathsf{M}_{0}(\alpha\mathsf{R}))$ is well-defined. From Assumption \ref{hyp1} \textbf{A1)},  $(1-\alpha(\cdot))K\varphi \in \Y^{-}_{1}.$ Then, from \eqref{eq:M0+Y}, $g=\mathsf{M}_{0}(1-\alpha)\mathsf{K}\varphi \in \Y^{+}_{1}.$ From Assumption \ref{hyp1} \textbf{A2)}, $\alpha\mathsf{R}g \in \Y^{-}_{1}$ and, from \eqref{eq:M0+Y}, $\mathsf{M}_{0}\alpha\mathsf{R}g \in \Y^{+}_{1}.$ More precisely, $\|\mathsf{M}_{0}\alpha\,\mathsf{R}\|_{\mathscr{B}(\Y^{+}_{1})} \leq \alpha_{0}< 1$
so that
$$\psi=\sum_{n=0}^{\infty}(\mathsf{M}_{0}(\alpha\mathsf{R}))^{n}g  \in \Y^{+}_{1}.$$
 Now, it is clear that $\mathsf{H}\psi \in \Y^{-}_{1}$ since $\mathsf{H}(\Y^{+}_{1}) \subset \Y^{-}_{1}$ (notice that $K$ maps any function in $\Y^{-}_{1}$ while $\mathsf{R}(\Y^{+}_{1})\subset \Y^{-}_{1})$).\end{proof}

We can now state our main existence and uniqueness result about invariant density:
\begin{theo}\phantomsection\label{theo:density} 
Let $\mathsf{H} \in\mathscr{B}(\lp,\lm)$ be a regular stochastic partly diffuse boundary operator and let Assumptions \ref{hyp1} and \ref{hyp2} be satisfied. Then,  $(\mathsf{T}_{\mathsf{H}},\D(\mathsf{T}_{\mathsf{H}}))$ is the generator of a \emph{stochastic} $C_{0}$-semigroup  $(U_{\mathsf{H}}(t))_{t\geq 0}$. Moreover, $(U_{\mathsf{H}}(t))_{t\geq 0}$ is irreducible and has a unique invariant density ${\Psi}_{\mathsf{H}} \in \D(\mathsf{T}_{\mathsf{H}})$ with 
$${\Psi}_{\mathsf{H}}(x,v) >0 \qquad \text{ for a. e. } (x,v) \in \Omega \times \R^{d}, \qquad \|{\Psi}_{\mathsf{H}}\|_{X}=1$$
and $\mathrm{Ker}(\mathsf{T}_{\mathsf{H}})=\mathrm{Span}({\Psi}_{\mathsf{H}}).$ Moreover, $(U_{\mathsf{H}}(t))_{t\geq 0}$ is ergodic, Eq. \eqref{eq:ergodic} holds and 
$X=\mathrm{Ker}({\mathsf{T}_{\mathsf{H}}}) \oplus \overline{\mathrm{Range}(\mathsf{T}_{\mathsf{H}})}.$
\end{theo}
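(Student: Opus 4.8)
The plan is to solve the stationary boundary problem $\mathsf{M}_{0}\mathsf{H}\varphi=\varphi$ with the results already at hand, convert the solution into a stationary density of $\mathsf{T}_{\mathsf{H}}$, and then invoke Theorem~\ref{theo:irred}. Concretely: first I would use $r_{\mathrm{ess}}(\mathsf{M}_{0}\mathsf{H})<1$ together with irreducibility to produce a strictly positive fixed point $\varphi$ of $\mathsf{M}_{0}\mathsf{H}$; then use Lemma~\ref{lem:lpl1tau} to show $\varphi\in\Y^{+}_{1}$ with $\mathsf{H}\varphi\in\Y^{-}_{1}$, so that Proposition~\ref{propo:general} yields $\Psi\in\D(\mathsf{T}_{\mathsf{H}})$ with $\mathsf{T}_{\mathsf{H}}\Psi=0$; then use Proposition~\ref{propo:resolvante} to identify the generator with $\mathsf{T}_{\mathsf{H}}$ itself; and finally apply Theorem~\ref{theo:irred}.

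For the Perron--Frobenius step, I would observe that $\mathsf{M}_{0}\mathsf{H}\in\mathscr{B}(\lp)$ is a stochastic operator (being the composition of the stochastic operators $\mathsf{H}:\lp\to\lm$ and $\mathsf{M}_{0}:\lm\to\lp$), hence $\|\mathsf{M}_{0}\mathsf{H}\|=1$ and, by duality (the adjoint fixes the constant function $1$), $r_{\sigma}(\mathsf{M}_{0}\mathsf{H})=1$. Since Assumption~\textbf{A3')} gives $r_{\mathrm{ess}}(\mathsf{M}_{0}\mathsf{H})<1$, the value $1$ is a pole of the resolvent and an isolated eigenvalue of finite algebraic multiplicity; as $\mathsf{M}_{0}\mathsf{H}$ is irreducible by Assumption~\ref{hyp2}, classical Perron--Frobenius theory for irreducible positive operators (see e.g. \cite{nagel}) shows that $1$ is algebraically simple, with an eigenfunction $\varphi\in\lp$ satisfying $\varphi>0$ $\mu$-a.e. and with a strictly positive eigenfunctional of $(\mathsf{M}_{0}\mathsf{H})^{\star}$. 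Writing $\mathsf{H}=\alpha\mathsf{R}+(1-\alpha)\mathsf{K}$ and using $\|\alpha(\cdot)\|_{L^{\infty}(\pO)}<1$, I would rewrite $\mathsf{M}_{0}\mathsf{H}\varphi=\varphi$ as $\varphi=\Rs(1,\mathsf{M}_{0}(\alpha\mathsf{R}))\,\mathsf{M}_{0}((1-\alpha)\mathsf{K})\varphi$ and apply Lemma~\ref{lem:lpl1tau} to get $\varphi\in\Y^{+}_{1}$, $\mathsf{H}\varphi\in\Y^{-}_{1}$; Proposition~\ref{propo:general} then produces a nonnegative $\Psi\in\D(\mathsf{T}_{\mathsf{H}})$ with $\mathsf{T}_{\mathsf{H}}\Psi=0$, and $\Psi=\mathsf{\Xi}_{0}\mathsf{H}\varphi$ has positive norm because $\tau_{+}>0$ $\mu$-a.e. on $\Gamma_{-}$ and $\|\mathsf{H}\varphi\|_{\lm}=\|\varphi\|_{\lp}>0$. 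I set $\Psi_{\mathsf{H}}=\Psi/\|\Psi\|_{X}$.

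To see that the generator of $(U_{\mathsf{H}}(t))_{t\geq0}$ is $\mathsf{T}_{\mathsf{H}}$ itself, I would invoke Proposition~\ref{propo:resolvante} and so reduce matters to $r_{\sigma}(\mathsf{M}_{\lambda}\mathsf{H})<1$ for some $\lambda>0$. On the one hand, $r_{\mathrm{ess}}(\mathsf{M}_{\lambda}\mathsf{H})<1$: since $\mathsf{M}_{\lambda}(1-\alpha)\mathsf{K}\leq\mathsf{M}_{0}\mathsf{K}$, the operator $(\mathsf{M}_{\lambda}(1-\alpha)\mathsf{K})^{2}\leq(\mathsf{M}_{0}\mathsf{K})^{2}=\mathsf{M}_{0}(\mathsf{K}\mathsf{M}_{0}\mathsf{K})$ is weakly compact by Theorem~\ref{propo:weakcompact}, hence so is $(\mathsf{M}_{\lambda}(1-\alpha)\mathsf{K})^{2}$ by domination in $L^{1}$, and an essential-radius computation paralleling the proof of Theorem~\ref{theo:ressM0} applies with $\mathsf{M}_{0}$ replaced by $\mathsf{M}_{\lambda}$. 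On the other hand, $r_{\sigma}(\mathsf{M}_{\lambda}\mathsf{H})$ cannot equal $1$: otherwise $1$ would be an isolated eigenvalue with a nonnegative eigenfunction $\psi$, and from $\psi=\mathsf{M}_{\lambda}\mathsf{H}\psi\leq\mathsf{M}_{0}\mathsf{H}\psi$, iterating and using the mean-ergodic decomposition of the stochastic operator $\mathsf{M}_{0}\mathsf{H}$ (whose fixed space is $\mathrm{Span}\,\varphi$, $\varphi>0$ a.e.) one would get $\psi=c\varphi$ for some $c>0$, and then $c\varphi=\mathsf{M}_{\lambda}\mathsf{H}(c\varphi)\leq\mathsf{M}_{0}\mathsf{H}(c\varphi)=c\varphi$ with \emph{strict} inequality on the positive-measure set $\{(x,v)\in\Gamma_{+}\;;\;\tau_{-}(x,v)>0,\ (\mathsf{H}\varphi)(x-\tau_{-}(x,v)v,v)>0\}$, a contradiction. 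Hence $r_{\sigma}(\mathsf{M}_{\lambda}\mathsf{H})<1$, so $A=\mathsf{T}_{\mathsf{H}}$ and $(U_{\mathsf{H}}(t))_{t\geq0}$ is stochastic; Theorem~\ref{theo:irred}, applied with Assumption~\ref{hyp2} and the stationary solution $\Psi_{\mathsf{H}}$, then gives that $(U_{\mathsf{H}}(t))_{t\geq0}$ is irreducible, $\Psi_{\mathsf{H}}$ is its unique invariant density, $\mathrm{Ker}(\mathsf{T}_{\mathsf{H}})=\mathrm{Span}(\Psi_{\mathsf{H}})$, ergodicity and \eqref{eq:ergodic} hold, and $X=\mathrm{Ker}(\mathsf{T}_{\mathsf{H}})\oplus\overline{\mathrm{Range}(\mathsf{T}_{\mathsf{H}})}$; the a.e. strict positivity of $\Psi_{\mathsf{H}}$ follows from $\Psi_{\mathsf{H}}=\lambda\Rs(\lambda,A)\Psi_{\mathsf{H}}$ and the fact (Lemma~\ref{lem:irre}) that $\Rs(\lambda,A)$ is positivity improving. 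I expect the main obstacle to be precisely the closedness of $\mathsf{T}_{\mathsf{H}}$ (equivalently $r_{\sigma}(\mathsf{M}_{\lambda}\mathsf{H})<1$): because the trace operators are not continuous on $\D(\mathsf{T}_{\mathrm{max}})=W_{1}$, this is not visible from closedness of $\mathsf{T}_{\mathrm{max}}$ and genuinely requires the norm convergence of the resolvent series; everything else is an orchestration of results established earlier in the paper.
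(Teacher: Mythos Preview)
Your proof is correct and follows the paper's argument closely: Perron--Frobenius on $\mathsf{M}_{0}\mathsf{H}$ (using $r_{\mathrm{ess}}<1$ and irreducibility) to obtain $\varphi>0$ with $\mathsf{M}_{0}\mathsf{H}\varphi=\varphi$, then Lemma~\ref{lem:lpl1tau} and Proposition~\ref{propo:general} to build $\Psi_{\mathsf{H}}$, and finally Theorem~\ref{theo:irred}.

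The only substantive difference is the way you establish $r_{\sigma}(\mathsf{M}_{\lambda}\mathsf{H})<1$ for $\lambda>0$. The paper does this in one stroke via the linear modulus: $|\mathsf{M}_{\lambda}\mathsf{H}|\leq\mathsf{M}_{0}\mathsf{H}$ with $|\mathsf{M}_{\lambda}\mathsf{H}|\neq\mathsf{M}_{0}\mathsf{H}$, so by the strict comparison theorem of Marek \cite[Theorem 4.3]{marek} and $r_{\sigma}(\mathsf{M}_{\lambda}\mathsf{H})\leq r_{\sigma}(|\mathsf{M}_{\lambda}\mathsf{H}|)$ \cite{chacon}, one gets $r_{\sigma}(\mathsf{M}_{\lambda}\mathsf{H})<1$ directly, without re-running any essential-radius estimate. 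Your route (first $r_{\mathrm{ess}}(\mathsf{M}_{\lambda}\mathsf{H})<1$ by domination, then rule out $1$ as an eigenvalue) is also valid, but note that the ``mean-ergodic decomposition'' detour is unnecessary: from $0\leq\psi\leq\mathsf{M}_{0}\mathsf{H}\psi$ and stochasticity of $\mathsf{M}_{0}\mathsf{H}$ you have $\|\psi\|_{\lp}=\|\mathsf{M}_{0}\mathsf{H}\psi\|_{\lp}$, hence $\psi=\mathsf{M}_{0}\mathsf{H}\psi$ immediately, so $\psi\in\mathrm{Span}\,\varphi$ and the contradiction follows. The paper's modulus argument is shorter and avoids redoing Theorem~\ref{theo:ressM0}; yours is more self-contained.
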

\begin{proof} We begin with proving that, under Assumptions \ref{hyp1}, $1 \in \mathfrak{S}_{p}(\mathsf{M}_{0}\mathsf{H})$. Indeed, being both $\mathsf{M}_{0}$ and $\mathsf{H}$  stochastic, the spectral radius of $\mathsf{M}_{0}\mathsf{H}$ is $r_{\sigma}(\mathsf{M}_{0}\mathsf{H})=1 \in \mathfrak{S}(\mathsf{M}_{0}\mathsf{H})$. According to Theorem \ref{theo:ressM0}, one  has 
$$r_{\mathrm{ess}}(\mathsf{M}_{0}\mathsf{H}) < 1=r_{\sigma}(\mathsf{M}_{0}\mathsf{H}).$$
As well-known (see \cite[Theorem 2.1]{marek}), this implies that $1$ is an isolated eigenvalue of $\mathsf{M}_{0}\mathsf{H}$. Moreover, being $\mathsf{M}_{0}\mathsf{H}$ irreducible, we deduce from \cite[Theorem 2.2]{marek} the uniqueness and the strict positivity (almost everywhere) of a nonnegative eigenfunction $\varphi$. 

\  Let us consider now $\lambda \in \mathbb{C}$ with $\mathrm{Re}\lambda >0$. Considering the modulus operator $\left|\mathsf{M}_{\lambda}\mathsf{H}\right|$ (see \cite{chacon} for a precise definition) one has 
$$\left|\mathsf{M}_{\lambda}\mathsf{H}\right| \leq \mathsf{M}_{0}\mathsf{H} \qquad \text{ and } \qquad \left|\mathsf{M}_{\lambda}\mathsf{H}\right| \neq \mathsf{M}_{0}\mathsf{H}.$$
In particular, from \cite[Theorem 4.3]{marek}, $r_{\sigma}\left(\left|\mathsf{M}_{\lambda}\mathsf{H}\right|\right) < r_{\sigma}\left(\mathsf{M}_{0}\mathsf{H}\right)=1.$
Since moreover, $r_{\sigma}(\mathsf{M}_{\lambda}\mathsf{H}) \leq r_{\sigma}(\left|\mathsf{M}_{\lambda}\mathsf{H}\right|)$ according to \cite[Theorem 1]{chacon}, this proves that $r_{\sigma}(\mathsf{M}_{\lambda}\mathsf{H}) <1$, i.e.  $1 \notin \mathfrak{S}(\mathsf{M}_{\lambda}\mathsf{H})$. We conclude that $A={\mathsf{T}_{\mathsf{H}}}$ thanks to \cite[Theorem 4.5]{AL05}. Let us now show that the eigenfunction $\varphi$ lies in $\Y^{+}_{1}.$ Being $\mathsf{M}_{0}\mathsf{H}\varphi=\varphi$, we have $\varphi=\mathsf{M}_{0}(\alpha\mathsf{R})\varphi + \mathsf{M}_{0}(1-\alpha)\mathsf{K}\varphi$
so that, since $1-\mathsf{M}_{0}(\alpha\mathsf{R})$ is invertible, 
$$\varphi=\Rs(1,\mathsf{M}_{0}(\alpha\mathsf{R}))\mathsf{M}_{0}((1-\alpha)\mathsf{K})\varphi.$$
From Lemma \ref{lem:lpl1tau}, we get that $\varphi \in \Y^{+}_{1}$. We deduce then from Proposition \ref{propo:general} that there exists $\Psi_{\mathsf{H}} \in \D(\mathsf{T}_{\mathsf{H}})$ nonnegative and such that $\mathsf{T}_{\mathsf{H}}\Psi_{\mathsf{H}}=0.$ We conclude with Theorem \ref{theo:irred}.\end{proof}
\begin{nb}\phantomsection\label{rem:rem*}
The fact that $\mathsf{T}_{\mathsf{H}}$ is the generator of $(U_{\mathsf{H}}(t))_{t\geq0}$ does not depend on $\mathbf{A1)}$ and $\mathbf{A2)}$ in Assumptions \ref{hyp1}. 
\end{nb}

 \section{Asymptotic stability of collisionless kinetic semigroups}
\label{s:semigroups}

The object of this section is to complement Theorem \ref{theo:irred} and Theorem \ref{theo:density}
where a convergence in \emph{Cesar\`{o} means} of $\left(U_{\mathsf{H}}(t)\right)
_{t\geq 0}$ to its ergodic projection is given. Indeed, under a quite
weak additional assumption on the kernel of $K$ we will show that $\left(
U_{\mathsf{H}}(t)\right)_{t\geq 0}$ is asymptotically stable, i.e. $U_{\mathsf{H}}(t)f$
converges \emph{in norm}  as $t\to +\infty.$ In particular
\begin{equation}
\label{eq:asymp-stab}
\lim_{t \to \infty}
\left\|
U_{\mathsf{H}}(t)f-{\Psi}_{\mathsf{H}}\right\|_{X}=0  
\end{equation}
for any density $f \in X$, i.e. any nonnegative $f$ with $\|f\|_{X}=1$. For the sake of
simplicity, we restrict ourselves to the case in which $\bm{m}(\d v)=\d v$ is the Lebesgue measure  over 
$$V=\left\{v \in \R^{d}\;;\;m \leq |v| \leq M\right\}$$
where $0\leq m < M \leq \infty,$ although the surface Lebesgue measure on the unit sphere can also be dealt with, see Remark \ref{rem:measm} below. 

In order to prove asymptotic stability of $(U_{\mathsf{H}}(t))_{t\geq 0}$ 
we first describe the movement of particles as a piecewise
deterministic Markov process. Then we explain how the stochastic semigroup
$(U_{\mathsf{H}}(t))_{t\geq 0}$ can be defined by this process and finally 
we prove the asymptotic stability of this semigroup.

\subsection{Piecewise deterministic Markov process}
\label{ss:pdmp}
Consider the following stochastic process which
describes the movement of particles.
A particle is moving in the space $\overline{\Omega}$ with a constant velocity  and 
when it strikes the boundary $\partial \Omega$
a new direction is
chosen randomly from the directions that point back into the interior of $\overline{\Omega}$ and
the motion continues. 
We recall that if $x\in \partial \Omega$ and 
$v'\in \Gamma_+(x)$ then the distribution of velocity $v$ after reflection is  given by a probability measure
$P(x,v',\cdot)$ defined on Borel subsets $B$
of $\Gamma_-(x)$ by
\[
P(x,v',B)=\alpha(x)\delta_{\mathcal V(x,v')}(B)+
(1-\alpha(x))\int_B h(x,v,v')\,\bm{m}(\d v).
\]
where $\mathcal V(x,v)$ is the regular reflection law.
From the Assumptions \ref{hyp1} \textbf{A3)} it follows that there exists $\varepsilon_0=1-\sup_{x\in \partial\Omega}\alpha(x)>0$ such that  
$1-\alpha(x)\ge \varepsilon_0$ for all $x\in \partial\Omega$. This implies that
\begin{equation}
\label{est-jump}
P(x,v',\d v)\ge \varepsilon_0 h(x,v',v)\,\bm{m}(\d v).
\end{equation}
Let a particle starts at time $t=0$ from some point $x\in \Omega$ with some initial velocity 
$v\in V\setminus\{0\}$
or from  $x\in \partial \Omega$ with velocity 
$v\in \Gamma_-(x)$. 
Let $x(t)$  be the position and $v(t)$ be the velocity of the particle at time $t$
and let $t_1=t_+(x,v)$.
Then $x(t)=x+vt$ and $v(t)=v$ 
for $t\in [0,t_1)$.
Let $0<t_1<t_2<\dots$ be a sequence of times when a particle hits the boundary $\partial \Omega$. 
Then
\[
\Prob(v(t_n)\in B\,|\,x(t_n^{-})=x,\, v(t_n^{-})=v')=P(x,v',B)
\]
for every Borel subset $B$ of $\Gamma_+(x)$, where
$x(t_n^{-})$ and $v(t_n^{-})$ are the left-hand side limits of
$x(t)$ and $v(t)$, respectively, at the point $t_n$.
Moreover
\[
x(t)=x(t_n)+v(t_n)(t-t_n),\quad v(t)=v(t_n)\quad
\textrm{for $t\in [t_n,t_{n+1})$},
\]
$x(t_n)=x(t_n^{-})$ and $t_{n+1}=t_n+t_+(x(t_n),v(t_n))$ for $n\ge 1$.

It is easy to observe that 
$$\bm{\xi}(t)=(x(t),v(t)),  \qquad t\geq 0,$$
defines a piecewise deterministic Markov process 
\cite{RT-K-k} with values in the space
$$\mathcal E=\left(\Omega\times V\right)\cup\Gamma_-\,.$$
The process
$\left\{\bm{\xi}(t)\right\}_{t\geq0}$  has
\textit{c\`adl\`ag} sample paths, i.e., they are right-continuous with left limits.
Let $\mathcal P(t,x,v,B)$ be the transition probability function for this process, i.e.
$$\mathcal P(t,x,v,B)=\Prob(\bm{\xi}(t)\in B\,|\,\bm{\xi}(0)=(x,v)), $$ 
where $B$ are Borel subsets of $\mathcal E$.
The semigroup $(U_{\mathsf{H}}(t))_{t\geq 0}$ can be uniquely determined  by  
the transition probability function $\mathcal P(t,x,v,B)$ because
the following relation holds
$$\int_B  U_{\mathsf{H}}(t)f(y,w)\,\d y\otimes \bm{m}(\d w)=
\int_{\Omega\times V}
\mathcal P(t,x,v,B) f(x,v)\d x \otimes \bm{m}(\d v)$$
for all $f\in X$, Borel subsets $B$ of  $\Omega\times V$ and $t\ge 0$.

\begin{nb}\phantomsection
\label{str-collision} It should be noted that we do not assume here that 
$\Omega$ is a strictly convex set and it can happen that at some boundary points $x$ some outward or inward vectors belong to the tangent space $\Gamma_0(x)$. In such cases trajectories can be tangent to the boundary $\pO$, especially in the case when we consider the specular reflection (see Fig.~\ref{p:col-5}). 
But there is no need to consider such pathological trajectories because 
the set $\Gamma_0$ has  zero measure for $\mu$  and does not play any role in the definition of the boundary operator $\mathsf{H}$ .
\begin{figure}
\centerline{\includegraphics{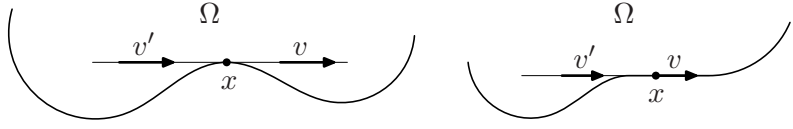}}
\centerline{
\begin{picture}(0,0)(0,0)
\put(-67,25){$x$}
\put(-100,38){$v'$}
\put(-40,38){$v$}
\put(65,33){$v'$}
\put(100,33){$v$}
\put(93,20){$x$}
\put(-75,50){$\Omega$}
\put(80,50){$\Omega$}
\end{picture}
}
\caption{Examples of pathological trajectories in the case of the specular reflection   $v'$ -- outward vector, $v$ -- inward vector.}     
\label{p:col-5}
\end{figure}  
\end{nb}

\subsection{Asymptotic stability}
\label{ss:as-pi}
Now we check that the semigroup $(U_{\mathsf{H}}(t))_{t\geq 0}$
is partially integral (see Appendix \ref{s:piss} for precise definition), i.e. that for some $t>0$ there  
there exists
an integrable function 
$q\colon \Omega\times V\times\Omega\times V\to [0,\infty)$, $q\not\equiv 0$,  
such that 
\begin{equation}
\label{WLD-c-sp}
\mathcal P(t,x,v,B)\ge \int_{B} q(x,v,y,w)\,\d y\otimes \bm{m}(\d w). 
\end{equation}
In order to prove this property we need a rather weak  assumption concerning 
function $h(x,v,v')$.
\begin{defi} \label{defi:wld}
Let $\mathsf{H} \in\mathscr{B}(\lp,\lm)$ be a stochastic partly diffuse boundary operator of the form \eqref{eq:partlydif}. We say that the boundary operator $\mathsf{H}$  is \emph{weakly
locally diffuse} (WLD) if for each point $x\in\partial \Omega$ and 
$v_0'\in \Gamma_+(x)$ there exists a 
$v_0\in \Gamma_-(x)$ and $\delta>0$ such that
\begin{equation}
\label{WLD-c}
k(x,v,v')>0 \quad\text{ for  $\bm{\mu}_{x}$-a. e. } 
v\in \Gamma_{-}(x)\cap B(v_0,\delta), \quad v' \in \Gamma_{+}(x)\cap B(v'_0,\delta).
\end{equation}
If we replace condition (\ref{WLD-c}) by a stronger one:
\begin{equation}
\label{SLD-c}
k(x,v,v')\ge \delta \quad\text{ for  all } 
v\in \Gamma_{-}(x)\cap B(v_0,\delta), \quad v' \in \Gamma_{+}(x)\cap B(v'_0,\delta).
\end{equation}
then the boundary operator $\mathsf{H}$  will be called \emph{strongly locally diffuse} (SLD).\end{defi}

\begin{lemme}
\label{part-integr}
Assume that the operator $\mathsf{H}$  is \emph{weakly
locally diffuse} and satisfies Assumptions \ref{hyp2} and \ref{hyp1}. Then the semigroup $(U_{\mathsf{H}}(t))_{t\geq 0}$
is partially integral. 
\end{lemme}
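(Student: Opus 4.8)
The plan is to show that the transition function $\mathcal{P}(t,x,v,B)$ dominates, for some fixed time $t>0$, a nontrivial integral kernel, by following the trajectory of the piecewise deterministic process through (at least) two boundary hits. The WLD assumption gives positivity of the diffuse kernel $k$ only locally, so the first task is to \emph{localize}: using the irreducibility hypothesis (Assumptions \ref{hyp2}), pick a starting point $(x_*,v_*)$ and a target point $(y_*,w_*)$ in the interior $\Omega\times V$ such that the deterministic flow, after exactly two reflections at well-chosen boundary points, connects a full-dimensional neighborhood of $(x_*,v_*)$ to a full-dimensional neighborhood of $(y_*,w_*)$. Concretely, starting at $(x,v)$ near $(x_*,v_*)$ the particle hits $\partial\Omega$ at $z_1=x+t_+(x,v)v$ with outgoing velocity $v$, then — with probability bounded below by $\varepsilon_0\,h(z_1,v,\cdot)$ via \eqref{est-jump} — picks a new inward velocity $v_1$; it travels to a second boundary point $z_2=z_1+t_+(z_1,v_1)v_1$, reflects again picking $v_2$ (again with density bounded below by $\varepsilon_0 h$), and finally at time $t$ sits at $z_2+v_2(t-t_2)$ with velocity $v_2$. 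The key point is that the three ``free'' parameters produced by the two scattering events — the pair $v_1$ and $v_2$ — together with the incoming data carry enough degrees of freedom to sweep out an open set in the target phase space.

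The second task is the change of variables. One writes
$$
\mathcal{P}(t,x,v,B)\ \ge\ \varepsilon_0^2\int\!\!\int \mathbf{1}_B\bigl(z_2(v,v_1)+v_2(t-t_2),\,v_2\bigr)\,h(z_1,v,v_1)\,h(z_2,v_2,v_1)\,\bm{m}(\d v_1)\,\bm{m}(\d v_2),
$$
(with $z_1=z_1(x,v)$, $z_2=z_2(x,v,v_1)$ and $t_2=t_2(x,v,v_1)$), restricting all integrations to the small balls where the WLD positivity \eqref{WLD-c} applies. One then integrates this against a small bump in the initial variable $(x,v)$ and changes variables from $(x,v,v_1,v_2)$ to the output $(y,w)=(z_2+v_2(t-t_2),v_2)$ plus the remaining ancillary variables. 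The map $v_2\mapsto w$ is the identity, so the nondegeneracy that must be checked reduces to the transversality of the position map — exactly the sort of maximal-rank statement established for the ballistic flow in Appendix \ref{app:ballistic} and Proposition \ref{prop:diff}. Using that the exit-time maps $t_+$ are $\mathcal{C}^1$ and transversal away from a null set, the Jacobian of the composed flow is nonzero on a set of positive measure; off that null set one may invoke the area/coarea formula (as in \cite[Lemma 5.2.11 \& Theorem 5.2.16]{stroock}) to conclude that the resulting measure on $(y,w)$-space is absolutely continuous with a nontrivial density $q$, giving \eqref{WLD-c-sp}.

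The main obstacle is precisely this last transversality bookkeeping: one must ensure that the two-collision flow $\bigl(x,v,v_1,v_2\bigr)\mapsto\bigl(z_2(x,v,v_1)+v_2(t-t_2(x,v,v_1)),v_2\bigr)$ has maximal rank on a subset of positive measure of the localized domain, and that the geometry of $\partial\Omega$ (which is only $\mathcal{C}^1$, and not assumed strictly convex) does not force degeneracy — for instance trajectories grazing the boundary as in Remark \ref{str-collision}. This is handled by the smoothness and transversality results for $\bxi$ and $\bxi^{-1}$ recalled in Section \ref{sec:comp} and proved in Appendix \ref{app:ballistic}: away from the exceptional null sets $S(x)$ the relevant differentials have full rank, and the two free velocity variables supply the one extra degree of freedom needed to reach an open set of final positions. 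The remaining verification — that the resulting integral operator is genuinely nontrivial, i.e. $q\not\equiv0$ — follows because all the lower bounds ($\varepsilon_0^2$, the positivity of $h$ on the chosen balls, the positivity of the Jacobian off a null set) hold on sets of strictly positive measure, by construction of the localization dictated by irreducibility.
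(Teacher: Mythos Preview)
Your overall architecture is right and matches the paper: follow the process through two boundary hits, bound $\mathcal{P}(t,x,v,B)$ below by a double integral in $(v_1,v_2)$ using \eqref{est-jump} and WLD, then change variables from $(v_1,v_2)$ to $(y,w)$. Two points need correction.

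First, some unnecessary detours. Irreducibility plays no role in this lemma: for partial integrality you only need \emph{some} nontrivial kernel, not one connecting prescribed points, so there is no need to ``pick a target $(y_*,w_*)$.'' Likewise, the step ``integrate against a bump in $(x,v)$'' is not needed and obscures the logic: the paper fixes $(x,v)$ (near some $(x_0,v_0)$), so that $x_1=x+t_+(x,v)v$ and $\tau=t-t_1$ are fixed, and works with the map
\[
F\colon (v_1,v_2)\longmapsto \bigl(x_1+t_+(x_1,v_1)v_1+(\tau-t_+(x_1,v_1))v_2,\ v_2\bigr)
\]
from an open set of $V\times V$ to $\Omega\times V$. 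The kernel $q(x,v,\cdot,\cdot)$ is then obtained pointwise in $(x,v)$.

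Second, and this is the substantive gap, the nondegeneracy of $F$ is \emph{not} the transversality of Proposition~\ref{prop:diff}. That proposition concerns the $(d-1)$-to-$(d-1)$ map $\omega\in\mathbb{S}^{d-1}\mapsto x+\tau_+(x,\omega)\omega\in\partial\Omega$; here we need the full $2d$-to-$2d$ Jacobian of $F$. Since $F_2(v_1,v_2)=v_2$, the block structure gives $\det\mathcal{J}_F=\det(\partial F_1/\partial v_1)$, and writing $F_1=x_1+t_+(x_1,v_1)(v_1-v_2)+\tau v_2$ one sees that $\partial F_1/\partial v_1=c\,\mathrm{Id}+u\otimes a$ with $c=t_+(x_1,v_1)$, $u=v_1-v_2$, $a=\nabla_{v_1}t_+(x_1,v_1)$. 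The rank-one perturbation identity $\det(c\,\mathrm{Id}+u\otimes a)=c^{d-1}(c+a\cdot u)$ yields
\[
\det\mathcal{J}_F=t_+^{d-1}(x_1,v_1)\bigl(t_+(x_1,v_1)+\nabla_{v_1}t_+(x_1,v_1)\cdot(v_1-v_2)\bigr),
\]
which is nonzero precisely when condition \eqref{cond-diff} holds. This condition fails only on a $(2d-1)$-dimensional submanifold of $\mathbb{R}^{2d}$, so after a small perturbation of $\bar v_2$ (allowed by WLD, which gives a full ball of admissible $v_2$) it is satisfied. The usual change of variables then produces the kernel $q$. No area/coarea formula, no Appendix~\ref{app:ballistic} machinery, is needed here; the computation is elementary once you write down the rank-one structure.
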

\begin{proof}
Let $(x,v)\in \mathcal{E}_{0}=\Omega \times (V \setminus\{0\})$ be the initial position and velocity of a particle. 
At time $t_1=t_+(x,v)$ it hits the point $x_1=x+t_+(x,v)v$ on the boundary $\partial \Omega$.
Then we choose a new velocity $\bar v_1\in \Gamma_-(x_1)$ and at time $t_1+t_+(x_1,\bar v_1)$ the particle hits 
the boundary for the second time at the point 
$\bar x_2=x_1+t_+(x_1,\bar v_1)\bar v_1$. 
We choose a new velocity $\bar v_2\in \Gamma_-(x_2)$.
Let $t>0$ satisfies inequalities 
\begin{equation}
\label{ineq-t}
t_1+t_+(x_1,\bar v_1)<t<t_1+t_+(x_1,\bar v_1)+t_+(x_2,\bar v_2)
\end{equation}
and let $\tau=t-t_1$.
We will find an neighborhood $U$ of $(\bar v_1,\bar v_2)\in V$ such that 
for $(v_1,v_2)\in U$ we have 
$v_1\in \Gamma_-(x_1)$, $v_2\in \Gamma_-(x_1+t_+(x_1, v_1) v_1) $
and (\ref{ineq-t}) is satisfied for $(v_1,v_2)\in U$.
Then
\[
\begin{aligned}
x(t)&=x_2+(\tau-t_+(x_1,v_1))v_2\\
&=x_1+t_+(x_1,v_1)v_1+
\big(\tau-t_+(x_1,v_1)\big)v_2  
\end{aligned}
\] 
and $v(t)=v_2$ is the position at time $t$ of the particle
if it starts from $x$ with velocity $v$, and after hitting the boundary 
we choose velocities $v_1$ and $v_2$  (see Fig.~\ref{p:col-3}).
\begin{figure}
\centerline{\includegraphics{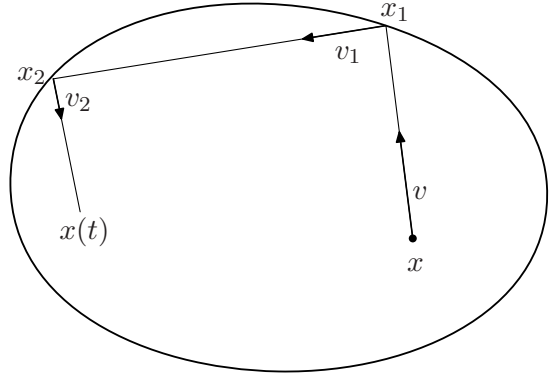}}
\centerline{
\begin{picture}(0,0)(0,0)
\put(48,52){$x$}
\put(50,79){$v$}
\put(38,148){$x_1$}
\put(21,132){$v_1$}
\put(-98,124){$x_2$}
\put(-80,114){$v_2$}
\put(-82,64){$x(t)$}
\end{picture}
}
\caption{Position $x(t)$ of the particle after two collisions with the boundary.} 
\label{p:col-3}
\end{figure}
We define the function $F\colon U\to V$ by   
$$F(v_1,v_2)=\left(x_1+t_+(x_1,v_1)v_1+\big(\tau-t_+(x_1,v_1)\big)v_2,v_2\right)=\left(F_{1}(v_{1},v_{2}),F_{2}(v_{1},v_{2})\right).$$
Now we check that if 
\begin{equation}
\label{cond-diff}
\nabla_{\bar v_1}t_+(x_1,\bar v_1)\cdot \bar v_2\ne
t_+(x_1,\bar v_1)+ \nabla_{\bar v_1}t_+(x_1,\bar v_1)\cdot \bar v_1
\end{equation}
then the function $F$ is a local diffeomorphism in some neighborhood of 
$(\bar v_1,\bar v_2)$. Indeed, let us denote the Jacobian matrix of $F$  by 
$\mathcal{J}_{F}=\big(\,\frac{\partial F}{\partial v_{1}}\,,\,\frac{\partial F}{\partial v_{2}}\big)$. One checks easily that
$$\det \mathcal{J}_{F}(v_{1},v_{2})=\det \left(\,\frac{\partial F_{1}}{\partial v_{1}}\right)=\det \Big(\dfrac{\partial}{\partial v_{1}}\Big(t_+(x_1,v_1)(v_1-v_2)\Big)=:\det M$$
where the matrix $M$ is given by
$$M=c\,\mathrm{Id} + A, \qquad A=a \otimes u=a^t u,$$
where $a$ is the vector $a=\partial_{v_{1}} t_+(x_1,v_1)$, $u=v_{1}-v_{2}$ and $c=t_{+}(x_{1},v_{1})$. We check then that \footnote{Indeed, let $p(z)$ be the characteristic polynomial of $A$, $p(z)=\mathrm{det}(A-z\mathrm{Id})$. Since the rank of $A$ is less or equal to 1, $z=0$ is a root of $p$ with multiplicity at least $d-1$ while 
the trace $\mathrm{tr}(A)=u\cdot a$ should also be a root of $p$.  Therefore,
$$p(z)=(-1)^{d}\,z^{d-1}(z-u\cdot a)$$ and, taking $z=-c$ gives the result.}
$$\det M=c^{d-1}\left(c+ a \cdot u\right)=t^d_+(x_1,v_1)+t^{d-1}_+(x_1,v_1)\nabla_{v_1}t_+(x_1,v_1)\cdot (v_1-v_2)
.$$ 
Consequently, if (\ref{cond-diff}) holds then the Jacobian matrix $\mathcal{J}_{F}(\bar{v}_{1},\bar{v}_{2})$ is non singular and $F$ is a diffeomorphism in some neighborhood of $(\bar v_1,\bar v_2)$.
Observe that condition (\ref{cond-diff})  does not hold
only on a $(2d-1)$-dimensional differentiable manifold in $\mathbb R^{2d}$ 
and we can change equality in (\ref{cond-diff}) to inequality
after a small perturbation of the vector $\bar v_2$. 
We have
\begin{align*}
\mathcal P(t,x,v,B)&=\Prob((x(t),v(t))\in B)\\
&\ge \Prob((v_1,v_2)\in U\colon F(v_1,v_2)\in B)\\
&\ge \int\limits_{F^{-1}(B)}
\varepsilon_0^2 k(x_1,v_1,v)k(x_1+v_1t_+(x_1,v_1),v_2,v_1)\,
\bm{m}(\d v_1)\, \bm{m}(\d v_2).
\end{align*}
Since $x_1=x+vt_+(x,v)$ we can define  
\[
\bm{\kappa}_{(x,v)}(v_1,v_2):=\varepsilon_0^2 k(x_1,v_1,v)k(x_1+v_1t_+(x_1,v_1),v_2,v_1).
\]
Since $k$ satisfies WLD, for each $(x_0,v_0)\in \mathcal E_0$ there exist 
$\delta'>0$ and $\bar v_1\in \Gamma_-(x_1)$, $\bar v_2\in \Gamma_-(x_1+t_+(x_1,\bar v_1)\bar v_1) $ 
such that 
$\bm{\kappa}_{(x,v)}(v_1,v_2)>0$ a. e. for
$(x,v)\in B((x_0,v_0),\delta')$ and  
$(v_1,v_2)\in B((\bar v_1,\bar v_2),\delta')$.
Without lost of generality we can assume that condition (\ref{cond-diff}) holds and $F$ is a diffeomorphism from $U_0=B((\bar v_1,\bar v_2),\delta')$
onto $F(U_0)$. Then
\begin{align*}
\mathcal P(t,x,v,B)
&\ge \int\limits_{F^{-1}(B)} \bm{\kappa}_{(x,v)}(v_1,v_2)\,
\bm{m}(\d v_1)\, \bm{m}(\d v_2)\\
&=
\int\limits_{B\cap F(U_0)} \bm{\kappa}_{(x,v)}(F^{-1}(y,w))
\left|\det \mathcal{J}_{F^{-1}}(y,w)\right|
\,\d y \otimes\bm{m}(\d w)
\end{align*}
where $\mathcal{J}_{F^{-1}}(y,w)=\left(\frac{\partial F^{-1}}{\partial y},\frac{\partial F^{-1}}{\partial w}\right)$ is the Jacobian matrix of $F^{-1}$. 
From the last inequality it follows that 
(\ref{WLD-c-sp}) holds for 
\[
q(x,v,y,w)=\mathbf 1_{F(U_0)}(y,w) \bm{\kappa}_{(x,v)}(F^{-1}(y,w))
\left|\det  \mathcal{J}_{F^{-1}}(y,w)\right| 
\]   
and the semigroup $(U_{\mathsf{H}}(t))_{t\geq 0}$
is partially integral. \end{proof}
\begin{nb} It is very likely that an analytical proof  based upon the Dyson-Phillips-like representation of the semigroup $(U_{\mathsf{H}}(t))_{t \geq 0}$ obtained in \cite{luisa,AL11} may replace the adopted probabilistic proof. Such a proof seems more involved than the probabilistic one given here and we did not investigate further on this point.\end{nb}

Combining Theorem~ \ref{theo:irred}, Theorem~\ref{asym-th2} 
and Lemma~\ref{part-integr} we obtain:
\begin{theo}
\label{th:asym-stab} Let the assumptions of Theorem  \ref{theo:irred} be satisfied. Assume moreover that $\mathsf{H}$  is \emph{weakly locally diffuse}, then the semigroup $(U_{\mathsf{H}}(t))_{t\geq 0}$ is asymptotically stable.
\end{theo}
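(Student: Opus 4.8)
The plan is to \emph{assemble} three ingredients already at our disposal, with no new analysis of the flow or of the boundary operator: the ergodic structure furnished by Theorem~\ref{theo:irred}, the partial integrality furnished by Lemma~\ref{part-integr}, and the general convergence theorem for partially integral stochastic semigroups recalled in Appendix~\ref{sec:appB} (Theorem~\ref{asym-th2}, a form of the Pich\'or--Rudnicki theorem, \cite{PR,R-b95}). The whole content of the statement is that the hypotheses of that general theorem are met here.

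\emph{Step 1 (ergodic structure and strict positivity of the invariant density).} Under the assumptions of Theorem~\ref{theo:irred}, that theorem tells us that $\overline{\mathsf{T}_{\mathsf{H}}}$ generates a \emph{stochastic} and \emph{irreducible} $C_{0}$-semigroup $(U_{\mathsf{H}}(t))_{t\geq 0}$ on $X=L^{1}(\Omega\times V,\d x\otimes\bm{m}(\d v))$, ergodic with one-dimensional ergodic projection $\mathbb{P}f=\varrho_{f}\,\Psi_{\mathsf{H}}$; in particular $\mathrm{Ker}(\overline{\mathsf{T}_{\mathsf{H}}})=\mathrm{Span}(\Psi_{\mathsf{H}})$ and $\Psi_{\mathsf{H}}$ is the unique invariant density. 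Since $(U_{\mathsf{H}}(t))_{t\geq 0}$ is irreducible, its invariant density is automatically strictly positive, $\Psi_{\mathsf{H}}(x,v)>0$ for a.e.\ $(x,v)\in\Omega\times V$. This strict positivity is precisely the input the general theorem requires on the invariant density.

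\emph{Step 2 (partial integrality).} The standing hypotheses here, together with the WLD assumption, place us in the setting of Lemma~\ref{part-integr}; note that the only way the reflection weight enters that lemma is through $\varepsilon_{0}=1-\mathrm{esssup}_{x\in\partial\Omega}\alpha(x)>0$, guaranteed by Assumption~\textbf{A3')}. Hence $(U_{\mathsf{H}}(t))_{t\geq 0}$ is partially integral: there exist $t_{0}>0$ and a nonnegative $q\in L^{1}(\Omega\times V\times\Omega\times V)$, $q\not\equiv 0$, with $\mathcal{P}(t_{0},x,v,B)\geq\int_{B}q(x,v,y,w)\,\d y\otimes\bm{m}(\d w)$ for every Borel $B\subset\Omega\times V$. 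By the identity linking the transition function $\mathcal{P}$ to $(U_{\mathsf{H}}(t))_{t\geq 0}$ recorded in Subsection~\ref{ss:pdmp}, this says exactly that $U_{\mathsf{H}}(t_{0})$ dominates the nonzero integral operator with kernel $q$, i.e.\ $(U_{\mathsf{H}}(t))_{t\geq 0}$ is partially integral in the sense of Appendix~\ref{s:piss}.

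\emph{Step 3 (conclusion).} A stochastic $C_{0}$-semigroup on the $L^{1}$-space of a $\sigma$-finite measure space that is partially integral and admits an a.e.\ strictly positive invariant density is asymptotically stable (Theorem~\ref{asym-th2}). Both hypotheses were verified in Steps~1 and 2, so the theorem applies and gives $\lim_{t\to\infty}\|U_{\mathsf{H}}(t)f-\Psi_{\mathsf{H}}\|_{X}=0$ for every density $f\in X$, which is \eqref{eq:asymp-stab}. There is no genuine obstacle in this argument: the substantive work (the transversality and smoothness of the ballistic flow feeding into Lemma~\ref{part-integr}, and the general ergodic theory of Appendix~\ref{sec:appB}) has already been carried out elsewhere in the paper. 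The only point deserving care is the bookkeeping of hypotheses — checking that "partial integrality" in the probabilistic formulation of Lemma~\ref{part-integr} is the same notion used in Theorem~\ref{asym-th2}, and that irreducibility does deliver the a.e.\ positivity of the invariant density demanded there; should one prefer a version of Theorem~\ref{asym-th2} whose hypothesis is irreducibility itself, Step~1 supplies that directly.
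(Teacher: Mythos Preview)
Your proof is correct and follows exactly the paper's own approach, which is simply the one-line observation that the result follows by combining Theorem~\ref{theo:irred}, Lemma~\ref{part-integr}, and Theorem~\ref{asym-th2}. You have spelled out the bookkeeping (uniqueness and strict positivity of $\Psi_{\mathsf{H}}$ from irreducibility, matching of the two notions of partial integrality) more carefully than the paper does, but the argument is identical.
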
   
\begin{nb} In particular, under the conditions of Theorem \ref{theo:density}, the semigroup $(U_{\mathsf{H}}(t))_{t\geq0}$ is asymptotically stable.\end{nb}

\section{Sweeping properties of collisionless kinetic semigroups}
\label{ss:sweeping}
The asymptotic stability of the semigroup $(V_\mathsf{H}(t))_{t\ge 0}$ is strictly 
connected with the existence of an invariant density which was assumed in Theorem \ref{theo:irred} and proved in Theorem \ref{theo:density}. We investigate here the behaviour of $(U_{\mathsf{H}}(t))_{t\geq 0}$ when this semigroup has no invariant density. A crucial role is played by sweeping 
property (see Appendix \ref{app:sweeping}). We first establish the following which complements Lemma \ref{part-integr}:
\begin{lemme}\phantomsection\label{lem:SLD} If $\mathsf{H}$  is \emph{strongly locally diffuse} (in the sense of Definition \ref{defi:wld}), then, defining $\mathcal{E}_{0}=\Omega \times (V \setminus \{0\})$, for every $(x_0,v_0)\in \mathcal{E}_0$ there exist  $\varepsilon >0$, $t>0$,
and a measurable function 
$\eta_{0}\colon \mathcal{E}_0\to [0,\infty)$ such that 
$$\int_{\mathcal{E}_0} \eta(x,v)\, \d x \otimes \bm{m}(\d v)>0$$ 
and
\begin{equation}
\label{w-eta2}
q(x,v,y,w)\ge
\eta(y,w)\mathbf 1_{B_{0}(\varepsilon)}(x,v)\quad
\textrm{for $(y,w)\in \Omega\times V$},
\end{equation}
where
$B_{0}(\varepsilon)$ is the open ball in $\mathcal{E}_0$ centered at $(x_0,v_0)$
with radius $\varepsilon$. 
\end{lemme}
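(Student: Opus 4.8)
The plan is to build on the partial-integrality analysis already carried out in the proof of Lemma \ref{part-integr}, strengthening the estimates obtained there by using the \emph{strongly locally diffuse} (SLD) hypothesis in place of WLD. Recall from that proof that, fixing a starting point $(x,v)\in\mathcal{E}_{0}$, after two collisions with the boundary and for $t$ in the range \eqref{ineq-t}, the transition probability satisfies
$$\mathcal P(t,x,v,B)\ge \int_{F^{-1}(B)}\bm{\kappa}_{(x,v)}(v_{1},v_{2})\,\bm{m}(\d v_{1})\,\bm{m}(\d v_{2}),$$
and, after the change of variables $(y,w)=F(v_{1},v_{2})$, that \eqref{WLD-c-sp} holds with
$$q(x,v,y,w)=\mathbf 1_{F(U_{0})}(y,w)\,\bm{\kappa}_{(x,v)}\big(F^{-1}(y,w)\big)\,\big|\det\mathcal{J}_{F^{-1}}(y,w)\big|.$$
Here $\bm{\kappa}_{(x,v)}(v_{1},v_{2})=\varepsilon_{0}^{2}\,k(x_{1},v_{1},v)\,k(x_{1}+v_{1}t_{+}(x_{1},v_{1}),v_{2},v_{1})$ with $x_{1}=x+vt_{+}(x,v)$.

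First I would fix $(x_{0},v_{0})\in\mathcal{E}_{0}$, let $x_{1}^{0}=x_{0}+v_{0}t_{+}(x_{0},v_{0})\in\pO$, and invoke SLD at $x_{1}^{0}$ to produce $\bar v_{1}\in\Gamma_{-}(x_{1}^{0})$ and $\delta_{1}>0$ with $k(x,v_{1},v)\ge\delta_{1}$ for all $(x,v)$ near $(x_{1}^{0},v_{0})$ and $v_{1}$ near $\bar v_{1}$; then let $x_{2}^{0}=x_{1}^{0}+\bar v_{1}t_{+}(x_{1}^{0},\bar v_{1})$ and apply SLD again at $x_{2}^{0}$ to get $\bar v_{2}\in\Gamma_{-}(x_{2}^{0})$ and $\delta_{2}>0$ with $k(x,v_{2},v_{1})\ge\delta_{2}$ for $(x,v_{1})$ near $(x_{2}^{0},\bar v_{1})$ and $v_{2}$ near $\bar v_{2}$. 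As in Lemma \ref{part-integr}, after a harmless perturbation of $\bar v_{2}$ we may assume the non-degeneracy condition \eqref{cond-diff} holds, so that $F$ is a $\mathcal C^{1}$ diffeomorphism from a neighbourhood $U_{0}=B((\bar v_{1},\bar v_{2}),\delta')$ onto $F(U_{0})$, and we fix $t$ in the (open, nonempty) range \eqref{ineq-t} evaluated at $(\bar v_{1},\bar v_{2})$. Since all the data $x_{1}$, $t_{+}(x_{1},v_{1})$, $x_{2}$, the interval endpoints in \eqref{ineq-t}, and the diffeomorphism $F$ depend continuously on $(x,v)$ in a neighbourhood of $(x_{0},v_{0})$, there is $\varepsilon>0$ such that for all $(x,v)\in B_{0}(\varepsilon)$ (the ball in $\mathcal{E}_{0}$ around $(x_{0},v_{0})$) the same $t$ lies in the admissible interval, $U_{0}$ maps into the valid velocity ranges, and the SLD lower bounds $\delta_{1},\delta_{2}$ apply.

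The key point — and this is where the SLD strengthening over WLD is used — is that on $B_{0}(\varepsilon)$ we now have the \emph{uniform} lower bound $\bm{\kappa}_{(x,v)}(v_{1},v_{2})\ge\varepsilon_{0}^{2}\delta_{1}\delta_{2}>0$ for $(v_{1},v_{2})\in U_{0}$, with a constant independent of $(x,v)$. Consequently
$$q(x,v,y,w)\ge \varepsilon_{0}^{2}\delta_{1}\delta_{2}\,\mathbf 1_{F(U_{0})}(y,w)\,\inf_{(y,w)\in F(U_{0})}\big|\det\mathcal{J}_{F^{-1}}(y,w)\big|\qquad\text{for }(x,v)\in B_{0}(\varepsilon);$$
here one must check that $F(U_{0})$ and the infimum of the Jacobian can be taken over a common compact set valid for all $(x,v)\in B_{0}(\varepsilon)$, which again follows from the continuous dependence of $F$ on $(x,v)$ and compactness of $\overline{U_{0}}$ — shrinking $\varepsilon$ if necessary. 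Setting
$$\eta(y,w)=\varepsilon_{0}^{2}\delta_{1}\delta_{2}\,\Big(\inf_{(x,v)\in B_{0}(\varepsilon)}\big|\det\mathcal{J}_{F^{-1}}(y,w)\big|\Big)\,\mathbf 1_{\bigcap_{(x,v)\in B_{0}(\varepsilon)}F(U_{0})}(y,w)$$
gives a measurable $\eta\ge 0$ satisfying \eqref{w-eta2}, and since the intersection $\bigcap_{(x,v)}F(U_{0})$ contains an open set (it contains $F_{(x_{0},v_{0})}(U_{0}')$ for a slightly smaller ball $U_{0}'$, by continuity) and $\det\mathcal{J}_{F^{-1}}>0$ there, we get $\int_{\mathcal{E}_{0}}\eta(y,w)\,\d y\otimes\bm{m}(\d w)>0$.

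The main obstacle I anticipate is the bookkeeping needed to make all the neighbourhoods and bounds \emph{simultaneously valid} for every $(x,v)\in B_{0}(\varepsilon)$ rather than merely for the single point $(x_{0},v_{0})$: one must track the joint continuous dependence of $x_{1}(x,v)$, $x_{2}(x,v;v_{1})$, the exit-time functions, the interval \eqref{ineq-t}, and the diffeomorphism $F=F_{(x,v)}$ on the base point, and then shrink $\varepsilon$ enough that a single choice of $t$, $U_{0}$, and common domain of $F(U_{0})$ works uniformly — while also ensuring that the perturbed $\bar v_{2}$ keeps \eqref{cond-diff} open, hence stable, under this shrinking. Once the uniformity is in place, the construction of $\eta$ and the positivity of its integral are routine.
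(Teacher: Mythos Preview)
Your approach is essentially the same as the paper's: both build directly on the construction from Lemma \ref{part-integr}, use the SLD lower bound to make $\bm{\kappa}_{(x,v)}\ge\delta'$ uniform in $(x,v)\in B((x_0,v_0),\delta')$, and then define $\eta$ from the indicator of $F(U_0)$ and the Jacobian factor. The paper's proof is in fact much terser---it simply sets $\eta(y,w)=\mathbf 1_{F(U_0)}(y,w)\,|\det\mathcal{J}_{F^{-1}}(y,w)|$ and declares the verification complete---so the bookkeeping you flag (uniform validity of $t$, $U_0$, $F$ and its image across $B_0(\varepsilon)$) is precisely what the paper leaves implicit; your more careful treatment via intersections and infima is a legitimate way to close that gap.
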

\begin{proof} The proof uses the notations introduced in the proof of Lemma \ref{part-integr}.  Recall that $(U_{\mathsf{H}}(t))_{t\geq0}$ is partially integral with
$$q(x,v,y,w)=\mathbf 1_{F(U_0)}(y,w) \bm{\kappa}_{(x,v)}(F^{-1}(y,w))
\left|\det \mathcal{J}_{F^{-1}}(y,w)\right|.$$
If the operator $\mathsf{H}$  is strongly
locally diffuse then there 
there exist 
$\delta'>0$ and $\bar v_1\in \Gamma_-(x_1)$, $\bar v_2\in \Gamma_-(x_1+t_+(x_1,\bar v_1)\bar v_1) $ 
such that 
$\bm{\kappa}_{(x,v)}(v_1,v_2)\ge \delta' $ for all
$(x,v)\in B((x_0,v_0),\delta')$ and  
$(v_1,v_2)\in B((\bar v_1,\bar v_2),\delta')$.
Now setting 
$$\eta(y,w)=\mathbf 1_{F(U_0)}(y,w)\left|\det  \mathcal{J}_{F^{-1}}(y,w)\right|$$ we check that $\eta$ satisfies the desired properties.
\end{proof}
\begin{nb}\label{rem:measm} We note that Lemma \ref{lem:SLD} and  Lemma \ref{part-integr} are also true when $\bm{m}(\d v)$ is a surface Lebesgue measure on a sphere but the proofs are slightly more technical. Indeed, instead of two reflections at the boundary (see Figure \ref{p:col-3}) we need one more reflection to achieve the property that the semigroup is partially integral. 
\end{nb}
According to Theorem~\ref{c:sweep2} and the  previous Lemma, we  have: 
\begin{theo}\phantomsection\label{prop:sweep} Let us assume that $(U_{\mathsf{H}}(t))_{t\geq 0}$ is stochastic and has no invariant density. If the boundary operator $\mathsf{H}$  is  strongly local diffuse then $(U_{\mathsf{H}}(t))_{t\geq 0}$  is sweeping from all compact subsets of $\mathcal E_0$.
\end{theo}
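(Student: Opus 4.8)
\emph{Overall approach.} The plan is to read the statement off the abstract Foguel-type sweeping criterion recalled in Appendix~\ref{sec:appB} (Theorem~\ref{c:sweep2}), applied to the stochastic semigroup $(U_{\mathsf{H}}(t))_{t\geq 0}$ regarded as a semigroup on $L^{1}(\mathcal{E}_{0},\d x\otimes\bm{m}(\d v))$, where $\mathcal{E}_{0}=\Omega\times(V\setminus\{0\})$. This reduction is legitimate because the complement of $\mathcal{E}_{0}$ in the full state space $\mathcal{E}$ --- namely $\Gamma_{-}$ together with the zero-velocity fibre --- is $(\d x\otimes\bm{m})$-negligible and plays no role in the dynamics nor in the boundary operator $\mathsf{H}$.

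\emph{Verification of the hypotheses.} First I would assemble the structural facts that trigger Theorem~\ref{c:sweep2}. Since a strongly locally diffuse operator is in particular weakly locally diffuse, Lemma~\ref{part-integr} (whose hypotheses include Assumptions~\ref{hyp1} and~\ref{hyp2}) shows that $(U_{\mathsf{H}}(t))_{t\geq 0}$ is partially integral with transition kernel $q(x,v,y,w)$; by assumption it is stochastic and has no invariant density, and its irreducibility is provided by Proposition~\ref{lem:irre} under Assumptions~\ref{hyp2}. The decisive additional input is the strengthened pointwise minorization of Lemma~\ref{lem:SLD}: for every $(x_{0},v_{0})\in\mathcal{E}_{0}$ there are $\varepsilon>0$, $t>0$ and a nonnegative $\eta\in L^{1}(\mathcal{E}_{0})$ with $\int_{\mathcal{E}_{0}}\eta\,\d x\otimes\bm{m}(\d v)>0$ such that
\[
q(x,v,y,w)\ge \eta(y,w)\,\mathbf{1}_{B_{0}(\varepsilon)}(x,v),\qquad (y,w)\in\Omega\times V .
\]
In words: mass starting anywhere in the ball $B_{0}(\varepsilon)$ spreads, at the single time $t$, above a fixed integrable profile; this is exactly the local spreading property on which Theorem~\ref{c:sweep2} is built, and it holds at \emph{every} point of $\mathcal{E}_{0}$.

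\emph{Conclusion and main obstacle.} Given a compact set $\mathcal{K}\subset\mathcal{E}_{0}$, I would cover it by finitely many balls $B^{(i)}_{0}(\varepsilon_{i})$, $i=1,\dots,N$, each supplied by Lemma~\ref{lem:SLD} with its own time $t_{i}$ and profile $\eta_{i}$, and use the irreducibility of $(U_{\mathsf{H}}(t))_{t\geq 0}$ --- equivalently, the positivity-improving property of $\mathcal{R}(\lambda,\overline{\mathsf{T}_{\mathsf{H}}})$ for $\lambda$ large --- to link these data in the manner demanded by the abstract theorem. Theorem~\ref{c:sweep2} then gives the dichotomy: either $(U_{\mathsf{H}}(t))_{t\geq 0}$ possesses an invariant density, or it is sweeping with respect to the family of all compact subsets of $\mathcal{E}_{0}$. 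Since the first alternative is excluded by hypothesis, the second one holds, which is precisely the assertion. I expect the only genuinely delicate point to be this last matching step: checking that the pointwise data furnished by Lemma~\ref{lem:SLD} can be organized into the finite, uniform-over-$\mathcal{K}$ structure required by Theorem~\ref{c:sweep2} without destroying the integrability of the resulting lower bound. Once this bookkeeping is carried out, no further kinetic-specific argument is needed.
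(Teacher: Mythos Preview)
Your approach is the same as the paper's: restrict to $\mathcal{E}_{0}$ (its complement has zero measure), observe that Lemma~\ref{lem:SLD} supplies property~$(K)$, and invoke Theorem~\ref{c:sweep2}. However, you over-engineer the final step and misread what Theorem~\ref{c:sweep2} actually demands.

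Property~$(K)$ is a purely \emph{pointwise} hypothesis: for each $\xi_{0}\in E$ there exist $\varepsilon>0$, $t>0$ and $\eta\ge 0$ with positive integral such that $k(t,\xi,z)\ge\eta(z)\mathbf{1}_{B(\xi_{0},\varepsilon)}(\xi)$. This is \emph{exactly} the statement of Lemma~\ref{lem:SLD}, with no residue. Theorem~\ref{c:sweep2} does not require any uniform-over-$\mathcal{K}$ structure, nor a finite cover, nor irreducibility; the passage from the local minorization to sweeping from all compact sets is entirely absorbed into the abstract result of \cite{PR-JMMA2016}. Your covering argument, the appeal to Proposition~\ref{lem:irre}, and the ``delicate matching step'' you anticipate are all superfluous. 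The paper's proof is accordingly two sentences long: reduce to $L^{1}(\mathcal{E}_{0})$, then note that Lemma~\ref{lem:SLD} is property~$(K)$ verbatim. Likewise, the separate appeal to Lemma~\ref{part-integr} for partial integrality is redundant, since property~$(K)$ already furnishes a nontrivial integral lower bound for the transition kernel.
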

\begin{proof} Since $\left((\Omega\times V)\setminus \mathcal E_0\right)$ is of zero measure for the measure $\d x \otimes \bm{m}(\d v)$, we 
can assume that the semigroup $(U_{\mathsf{H}}(t))_{t\geq 0}$ is defined on the space 
$L^1(\mathcal E_0,\mathcal B(\mathcal E_0),\d x \otimes \bm{m}(\d v))$. Then, on this space, Lemma \ref{lem:SLD} exactly means that $(U_{\mathsf{H}}(t))_{t\geq 0}$ satisfies 
property $(K)$ of Theorem \ref{c:sweep2}.\end{proof}

\begin{nb} For any  $\varepsilon>0$ and $M>\varepsilon$ we define the set
\[
F_{\varepsilon,M}=\{(x,v)\in \Omega\times V\colon  \varepsilon \leq |v|\leq M, \,\, \mathrm{dist}(x,\partial\Omega)\ge \varepsilon\},    
\]
where $\mathrm{dist}(x,\partial\Omega)=\inf\{|x-y|\colon y\in \partial\Omega\}$.
Since the set  $F_{\varepsilon,M}$ is compact in the space $\mathcal E_0$, 
for every $f\in X$  we have
\begin{equation} 
\label{c:sw1-e-M}
\lim_{t\to \infty} \int_{F_{\varepsilon,M}}U_{\mathsf{H}}(t)\,f(x,v)\,\d x \otimes \bm{m}(\d v)=0. 
\end{equation}
This result has the following probabilistic interpretation. If the semigroup $(U_{\mathsf{H}}(t))_{t\geq 0}$ has no invariant density,  
the velocity of almost all particles converges to $0$ or to $\infty$ or particles get close to the boundary $\partial \Omega$
when time goes to infinity. 
\end{nb}
We complement Theorem \ref{prop:sweep} by a more precise sweeping result:\begin{theo}
\label{Foguel-alternative}
Assume that $\mathsf{H} \in\mathscr{B}(\lp,\lm)$ is a \emph{regular} stochastic partly diffuse operator given by \eqref{eq:partlydif} and satisfying Assumptions \ref{hyp2}. Assume moreover that $\mathsf{H}$ is weakly locally diffuse (WLD), $\sup_{x \in \partial\Omega}\alpha(x) <1$ and
\begin{equation}\label{eq:mu+k}
\mu_{+}\left(\left\{ (x,v^{\prime })\in \Gamma _{+};\ \int_{\Gamma
_{-}(x)}k(x,v,v^{\prime })\tau _{+}(x,v)\bm{\mu}_{x}(\d v)=+\infty \right\}\right) >0.\end{equation}
Then
\begin{equation}
\label{c:sweeping-spec}
\lim_{t\to\infty}\int_{\Omega \times V} \ind_{\{|v|\ge \varepsilon\}} U_{\mathsf{H}}(t)f(x,v)\,\d x \otimes \bm{m}(\d v)=0, \qquad \forall \varepsilon >0, \qquad \forall f \in X. 
\end{equation}
\end{theo}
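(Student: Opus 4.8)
The plan is to derive \eqref{c:sweeping-spec} from the Foguel-type alternative for irreducible partially integral stochastic semigroups recalled in Appendix \ref{sec:appB} (see also \cite{PR,PR-JMMA2016,R-b95}, and \cite{M-KR} for the slab case): such a semigroup either possesses an invariant density, or is sweeping from a suitable family $\mathcal{F}$ of measurable sets, and the content of the theorem is that, thanks to \eqref{eq:mu+k}, the first possibility is excluded while $\mathcal{F}$ can be taken to contain every set $\{(x,v)\in\Omega\times V\,;\,|v|\geq\varepsilon\}$, $\varepsilon>0$. Two structural facts are needed first. Since $\sup_{x\in\pO}\alpha(x)<1$ we may set $\varepsilon_{0}:=1-\sup_{x\in\pO}\alpha(x)>0$, so that the jump lower bound \eqref{est-jump} holds and the proof of Lemma \ref{part-integr} applies verbatim (Assumptions \ref{hyp1} entered there only to produce such an $\varepsilon_{0}$): hence $(U_{\mathsf{H}}(t))_{t\geq0}$ — a stochastic semigroup, as in Theorem \ref{prop:sweep} — is partially integral. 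Moreover Assumptions \ref{hyp2} and Proposition \ref{lem:irre} make $(U_{\mathsf{H}}(t))_{t\geq0}$ irreducible, so any invariant density of it would be a.e. strictly positive on $\Omega\times V$.

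The decisive point is that \eqref{eq:mu+k} forbids an invariant density. Suppose $\Psi_{\mathsf{H}}$ were one. By Proposition \ref{propo:general} its outgoing trace $\varphi=\mathsf{B}^{+}\Psi_{\mathsf{H}}$ is a nonnegative fixed point of $\mathsf{M}_{0}\mathsf{H}$ with $\mathsf{H}\varphi\in\Y^{-}_{1}$, and $\varphi>0$ $\mu_{+}$-a.e. by irreducibility of $\mathsf{M}_{0}\mathsf{H}$. Applying the identity $\int_{\Gamma_{-}}u\,\tau_{+}\,\d\mu=\int_{\Gamma_{+}}\mathsf{M}_{0}u\,\tau_{-}\,\d\mu$ (the remark following Lemma \ref{lem:motau}, deduced from \eqref{10.51}) to $u=\mathsf{H}\varphi$, and using $\mathsf{M}_{0}\mathsf{H}\varphi=\varphi$, $\mathsf{H}\varphi\geq(1-\alpha)\mathsf{K}\varphi$ and Fubini's theorem,
\[
\int_{\Gamma_{+}}\varphi\,\tau_{-}\,\d\mu=\int_{\Gamma_{-}}\mathsf{H}\varphi\,\tau_{+}\,\d\mu\geq\int_{\pO}\bigl(1-\alpha(x)\bigr)\int_{\Gamma_{+}(x)}\varphi(x,v')\,\Theta(x,v')\,\bm{\mu}_{x}(\d v')\,\pi(\d x),
\]
where $\Theta(x,v')=\int_{\Gamma_{-}(x)}k(x,v,v')\tau_{+}(x,v)\,\bm{\mu}_{x}(\d v)$ is the inner integral of \eqref{eq:mu+k}. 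Since $1-\alpha\geq\varepsilon_{0}>0$, since $\varphi>0$ a.e., and since $\Theta=+\infty$ on a set of positive $\mu_{+}$-measure by \eqref{eq:mu+k}, the right-hand side equals $+\infty$; hence $\int_{\Gamma_{-}}\mathsf{H}\varphi\,\tau_{+}\,\d\mu=+\infty$. But $\tau_{+}(x,v)\leq D/|v|$ with $D=\mathrm{diam}\,\Omega$ (by \eqref{eq:scale}), so $\|\mathsf{H}\varphi\|_{\Y^{-}_{1}}\geq D^{-1}\int_{\Gamma_{-}}\mathsf{H}\varphi\,\tau_{+}\,\d\mu=+\infty$, contradicting $\mathsf{H}\varphi\in\Y^{-}_{1}$. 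Thus there is no invariant density, and by the alternative of Appendix \ref{sec:appB} the semigroup is sweeping from $\mathcal{F}$.

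It remains to put each $\{(x,v)\in\Omega\times V\,;\,|v|\geq\varepsilon\}$ into $\mathcal{F}$. I would write it as the increasing union of the compacta $\mathcal{C}_{k}=\{(x,v)\,;\,\mathrm{dist}(x,\pO)\geq 1/k,\ \varepsilon\leq|v|\leq k\}\subset\mathcal{E}_{0}=\Omega\times(V\setminus\{0\})$; sweeping of $(U_{\mathsf{H}}(t))_{t\geq0}$ from each $\mathcal{C}_{k}$ follows by arguing on the state space $\mathcal{E}_{0}$ exactly as for Theorem \ref{prop:sweep} (partial integrality and the absence of an invariant density, both now available, being what the argument requires). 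Then \eqref{c:sweeping-spec} reduces to the tightness bound
\[
\lim_{k\to\infty}\ \sup_{t\geq0}\ \int_{\{|v|\geq\varepsilon\}\setminus\mathcal{C}_{k}}U_{\mathsf{H}}(t)f(x,v)\,\d x\otimes\bm{m}(\d v)=0,\qquad f\in X,\ f\geq0,\ \|f\|_{X}=1.
\]
The mass in $\{|v|>k\}$ is controlled by the regularity of $\mathsf{K}$, whose kernel sends uniformly vanishing mass to large velocities by \eqref{eq:Sm}, so that no mass escapes to $|v|=\infty$; the mass in $\{\mathrm{dist}(x,\pO)<1/k,\ \varepsilon\leq|v|\leq k\}$ is estimated via the integration formula \eqref{10.47}, using that a particle carrying $|v|\geq\varepsilon$ along a non-grazing direction stays within distance $1/k$ of $\pO$ for only a fraction $O(1/k)$ of each free flight, grazing directions being negligible by the transversality analysis of Appendix \ref{app:ballistic} (Proposition \ref{prop:diff}, Remark \ref{nb:Sepsi}). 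I expect this last, boundary-layer, estimate to be the main obstacle: showing that the part of $U_{\mathsf{H}}(t)f$ carried by velocities $|v|\geq\varepsilon$ does not accumulate on a thin neighbourhood of $\pO$ uniformly in $t$ is exactly where the smoothness and transversality of the ballistic flow become indispensable, in order to discard the grazing trajectories for which the naive layer bound fails.
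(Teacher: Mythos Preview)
Your argument that \eqref{eq:mu+k} precludes an invariant density is correct and is essentially the paper's First Step. The divergence from the paper occurs in how you then derive the sweeping conclusion, and there your proposal contains two genuine gaps.

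First, you invoke sweeping from compact subsets of $\mathcal{E}_{0}$ ``exactly as for Theorem~\ref{prop:sweep}''. But Theorem~\ref{prop:sweep} rests on Theorem~\ref{c:sweep2}, which requires property $(K)$; in this paper $(K)$ is obtained in Lemma~\ref{lem:SLD} only under the \emph{strongly} locally diffuse assumption (SLD). You have only WLD, which via Lemma~\ref{part-integr} gives partial integrality, a strictly weaker property than $(K)$. So sweeping from compacta is not available to you under the stated hypotheses.

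Second, even granting sweeping from each compact $\mathcal{C}_{k}$, you acknowledge that the passage to $\{|v|\geq\varepsilon\}$ hinges on a uniform-in-$t$ tightness estimate near $\partial\Omega$ (and at $|v|=\infty$) that you do not prove. This is not a detail: controlling the boundary-layer mass of $U_{\mathsf{H}}(t)f$ uniformly in $t$ is precisely the kind of statement one usually cannot obtain without either an invariant density or a subinvariant function, and neither is at hand in your scheme.

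The paper avoids both obstacles by taking a different route, namely Theorem~\ref{KT} (the $\mathbf{(KT)}$ criterion). It uses the spectral information $r_{\mathrm{ess}}(\mathsf{M}_{0}\mathsf{H})<1$ (Theorem~\ref{theo:ressM0}) to produce the unique positive fixed point $\varphi$ of $\mathsf{M}_{0}\mathsf{H}$ and sets $\Psi=\mathsf{\Xi}_{0}\mathsf{H}\varphi$. The same computation you carried out shows $\int_{\Omega\times V}\Psi=+\infty$, while $\int\ind_{\{|v|\geq\varepsilon\}}\Psi\leq D\varepsilon^{-1}\|\varphi\|_{\lp}<\infty$ is immediate from $\tau_{+}\leq D/|v|$. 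The substantial work, which replaces your tightness step entirely, is to prove $U_{\mathsf{H}}(t)\Psi\leq\Psi$. Since $\Psi\notin X$ this cannot be read off directly; the paper truncates small velocities via the approximating operators $\mathsf{H}_{n}$ of Lemma~\ref{lemme-1-add-sweeping}, shows $\varphi_{n}\to\varphi$ by a spectral-projection compactness argument, passes to $(V_{\mathsf{H}_{n}}(t))_{t\geq0}$ by Trotter--Kato, and lets $n\to\infty$ followed by $\varepsilon\to0$ monotonically. With $\mathbf{(KT)}$ verified for each $A=\{|v|\geq\varepsilon\}$, Theorem~\ref{KT} yields \eqref{c:sweeping-spec} directly, with no need for SLD or any tightness estimate.
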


\begin{proof} Note first that $\mathsf{T}_{\mathsf{H}}$ is the generator of $(U_{\mathsf{H}}(t))_{t\geq 0}$ (see Remark \ref{rem:rem*}). By virtue of Theorem \ref{KT}, the proof simply consists in showing that $(U_{\mathsf{H}}(t))_{t\geq0}$ has no invariant density and in constructing a function $\Psi=\Psi(x,v)$ such that
\begin{multline}\label{eq:Psi1}
0 < \Psi(x,v) < \infty \quad \text{a. e. on } \Omega \times V, \quad 
\int_{\Omega \times V}\Psi (x,v)\d x \otimes \bm{m}(\d v)=+\infty,\\ \int_{\Omega \times
V}	\ind_{\{|v|\geq \varepsilon \}}\Psi
(x,v)\d x \otimes \bm{m}(\d v)<+\infty \ \ (\varepsilon >0) 
\end{multline}
and 
\begin{equation}\label{eq:Psi2}
U_{\mathsf{H}}(t)\Psi \leq \Psi, \qquad \forall t \geq 0.\end{equation}

The proof will be given in several steps. First of all, according to Theorem~\ref{theo:ressM0}, there exists $\varphi \in \lp$ such
that 
\begin{equation}\label{eq:varphiM0H}
\mathsf{M}_{0}\mathsf{H}\varphi =\varphi ,\ \ \left\Vert \varphi \right\Vert_{\lp} =1.
\end{equation}
Since $\mathsf{M}_{0}\mathsf{K}$  is irreducible then  this $%
\varphi $ is unique.

\indent $\bullet$ \emph{First step:} The function $\Psi=\mathsf{\Xi}_{0}\mathsf{H}\varphi$ satisfies \eqref{eq:Psi1}. Indeed, one first notices that 
\begin{equation*}\begin{split}
\int_{\Gamma _{-}}\left[\mathsf{K}\varphi \right]
&(x,v)\tau _{+}(x,v)\d\mu _{-}(x,v)\\
&=\int_{\partial \Omega }\d\pi (x)\int_{\Gamma
_{-}(x)}\left( \int_{\Gamma _{+}(x)}k(x,v,v^{\prime })\varphi (x,v^{\prime
})\bm{\mu}_{x}(\d v^{\prime })\right) \tau _{+}(x,v)\bm{\mu}_{x}(\d v) \\
&=\int_{\partial \Omega }\d\pi (x)\int_{\Gamma_{+}(x)}\left( \int_{\Gamma
_{-}(x)}k(x,v,v^{\prime })\tau _{+}(x,v)\bm{\mu}_{x}(\d v)\right) \varphi
(x,v^{\prime })\bm{\mu}_{x}(\d v^{\prime }) \\
&=\int_{\Gamma _{+}}\left( \int_{\Gamma
_{-}(x)}k(x,v,v^{\prime })\tau _{+}(x,v)\bm{\mu}_{x}(\d v)\right) \varphi
(x,v^{\prime })\d\mu _{+}(x,v^{\prime }).\end{split}\end{equation*}
Therefore, under assumption \eqref{eq:mu+k} and 
because $\varphi (x,v^{\prime })>0$ a.e. on $\Gamma_{+}$, we have
$$\int_{\Gamma _{-}}\left[\mathsf{K}\varphi \right](x,v)\tau _{+}(x,v)\d\mu _{-}(x,v)=+\infty.$$ 
Using  Lemma~\ref{lem:motau} -- identity \eqref{eq:Xio} --  and with $\varepsilon_{0}=1-\sup_{x\in\partial\Omega}\alpha(x) >0$
\begin{equation*}\begin{split}
\int_{\Omega \times V}&\Psi (x,v)\d x \otimes \bm{m}(\d v)
=\int_{\Gamma _{-}}\left[\mathsf{H}\varphi \right] (x,v)\tau _{+}(x,v)\d\mu
_{-}(x,v) \\
&\geq \varepsilon_{0}\int_{\Gamma _{-}}\left[\mathsf{K}\varphi \right]
(x,v)\tau _{+}(x,v)\d\mu _{-}(x,v)=+\infty.\end{split}\end{equation*}
Hence, 
$$\int_{\Gamma_{-}}\left[\mathsf{H}\varphi\right](x,v)\,|v|^{-1}\d\mu_{-}(x,v)=\infty$$
since $\tau_{-}(x,v) \leq |v|^{-1}D$ (where we recall that $D$ is the diameter of $\Omega$). Thus, $\mathsf{H}\varphi \notin \Y_{1}^{-}$ and $0$ is not an eigenvalue of $\mathsf{T}_{\mathsf{H}}$ (associated to a nonnegative eigenvalue) according to Proposition \ref{propo:general}. Since $\mathsf{T}_{\mathsf{H}}$ is the generator of $(U_{\mathsf{H}}(t))_{t\geq0}$, this means that $(U_{\mathsf{H}}(t))_{t\geq0}$ \emph{has no invariant density.} Moreover, \begin{equation*}\begin{split}
\int_{\Omega \times V}\ind_{\{|v|\geq \varepsilon\}}\Psi (x,v)\d x \otimes \bm{m}(\d v) &=\int_{\Gamma _{-}}\ind_{\{|v| \geq \varepsilon\}}\left[\mathsf{H}\varphi \right](x,v)\tau
_{+}(x,v)\d\mu _{-}(x,v) \\
&\leq \frac{D}{\varepsilon }\left\Vert \mathsf{H}\varphi \right\Vert _{\lm}=\frac{D}{\varepsilon }\left\Vert \varphi \right\Vert _{\lp}.
\end{split}\end{equation*}%
Using that $\mathsf{M}_{0}\mathsf{H}\varphi =\varphi $, one has
$\varphi (x,v)=\left[ \mathsf{H}\varphi \right] (x-\tau _{-}(x,v)v,v),$ for any $(x,v)\in
\Gamma _{+}$ and, from the irreducibility of $\mathsf{M}_{0}\mathsf{H}$, we get $0<\varphi (x,v)<+\infty $ a.e. on $\Gamma _{+}$ which in turns implies that
$$0<\Psi (x,v)=\mathsf{\Xi}_{0}\mathsf{H}\varphi(x,v) =\left[ \mathsf{H}\varphi \right]
(x-t_{-}(x,v)v,v)<+\infty \quad \text{a.e. on }\Omega \times V.$$
This proves that $\Psi$ satisfies \eqref{eq:Psi1}. 

In order to prove that $\Psi$ satisfies also \eqref{eq:Psi2} we shall resort to Lemma \ref{lemme-1-add-sweeping} and for any $n \in \N$, introduce the regular diffuse operator given by $\mathsf{H}_{n}=\alpha\mathsf{R} + (1-\alpha)\mathsf{K}_{n}$ with $\mathsf{K}_{n}$ is defined as in Lemma \ref{lemme-1-add-sweeping}. As before, for any $n \in \N$, there exists $\varphi _{n}\in \lp$
such that 
$$
\mathsf{M}_{0}\mathsf{H}_{n}\varphi _{n}=\varphi _{n},\ \ \left\Vert \varphi _{n}\right\Vert_{\lp}
=1.$$

\indent $\bullet$ \emph{Second step:} $\lim_{n}\|\varphi_{n}-\varphi\|_{\lp}=0.$

To prove this, we notice that
\begin{equation}\label{eq:Hphin}
\mathsf{M_{0}H}\varphi_{n}=g_{n}+\varphi_{n}, \qquad g_{n}=\mathsf{M_{0}(H-H_{n})}\varphi_{n}\end{equation}
with $\|g_{n}\|_{\lp} \leq \|H-H_{n}\|_{\mathscr{B}(\lp)}$ 
since $\|\varphi_{n}\|_{\lp}=1$. In particular, 
$$\lim_{n \to \infty}\|g_{n}\|_{\lp}=0.$$ Now, denote by $\mathsf{P}$ the spectral projection associated to the (simple) eigenvalue $1$ of $\mathsf{M_{0}H}$ one has
$$\mathsf{M_{0}H}=\mathsf{M_{0}HP}+\mathsf{M_{0}H(I-P)}$$
with $\mathsf{M_{0}HP}$ compact (since $\mathsf{P}$ is of finite rank) and $1 \notin \mathfrak{S}(\mathsf{M_{0}H(I-P)})$. One can then write \eqref{eq:Hphin} as 
$\varphi_{n}-\mathsf{M_{0}H(I-P)}\varphi_{n}=g_{n}+\mathsf{M_{0}HP}\varphi_{n}$
or equivalently,
$$\varphi_{n}=\Rs(1,\mathsf{M_{0}H(I-P)})g_{n} + \mathcal{K}_{0}\varphi_{n}$$
with $\mathcal{K}_{0}=\Rs(1,\mathsf{M_{0}H(I-P)})\mathsf{M_{0}HP}$ compact. The sequence $\left(\mathcal{K}_{0}\varphi_{n}\right)_{n}$ is then relatively compact in $\lp$ and, since 
$$\lim_{n}\left\|\Rs(1,\mathsf{M_{0}H(I-P)})g_{n}\right\|_{\lp}=0,$$ 
the sequence $(\varphi_{n})_{n}$ is also relatively compact. There exists then a subsequence, still denoted $(\varphi_{n})_{n}$, which converges in $\lp$ to some $\psi \in \lp$ with unit norm. One clearly has then
$$\psi=\mathcal{K}_{0}\psi=\Rs(1,\mathsf{M_{0}H(I-P)})\mathsf{M_{0}HP}\psi$$
i.e. $\psi-\mathsf{M_{0}H(I-P)}\psi=\mathsf{M_{0}HP}\psi$ or equivalently 
$$\psi=\mathsf{M_{0}H}\psi.$$ We deduce from this that $%
\psi=\varphi$ by uniqueness. This shows finally that the whole sequence
 $\left( \varphi _{n}\right) _{n}$ converges to $\varphi$ in $\lp.$

 \indent $\bullet$ \emph{Third step:} Introducing the semigroup $(V_{\mathsf{H}_{n}}(t))_{t\geq0}$ associated to the boundary operator $\mathsf{H}_{n}$ $n \in \N$, it holds
 \begin{equation}
 \lim_{n}\left\|V_{\mathsf{H}_{n}}(t)f-U_{\mathsf{H}}(t)f\right\|_{X}=0, \qquad \forall t \geq 0, \quad f \in X.\end{equation}
 Indeed, for any $n \in \N$ the resolvent of the generator $\mathsf{T}_{\mathsf{H}_{n}}$ is given by%
$$\Rs(\lambda,\mathsf{T}_{\mathsf{H}_{n}})=\mathsf{\Xi}_{\lambda }\mathsf{H}_{n}\left( \lambda
-\mathsf{M}_{\lambda }\mathsf{H}_{n}\right) ^{-1}\mathsf{G}_{\lambda }+\mathsf{R}_{\lambda}$$
and it is easy to check, using again Lemma \ref{lemme-1-add-sweeping} and Eq. \eqref{eq:reso} that
$$\lim_{n}\left\|\Rs(\lambda,{\mathsf{T}_{\mathsf{H}_{n}}})f-\Rs(\lambda,A)f\right\|_{X}=0, \qquad \forall \lambda >0, \quad f \in X$$
where we recall that $(A,\D(A))$ is the generator of $(U_{\mathsf{H}}(t))_{t\geq0}.$ 
We deduce the second step from the Trotter-Kato approximation Theorem \cite[Theorem 3.19, p. 83]{davies}.

\indent $\bullet$ \emph{Fourth step.} Introduce then $\Psi_{n}=\mathsf{M}_{0}\mathsf{H}_{n}\varphi_{n}.$ According to Theorem~\ref{theo:density},  
$$V_{\mathsf{H}_{n}}(t)\Psi _{n}=\Psi _{n}, \qquad \forall n \in \N, t \geq 0.$$
On the other hand, since $\lim_{n}\|\mathsf{H}_{n}\varphi _{n}-\mathsf{H}\varphi\|_{\lm}=0$, we have, for any $\varepsilon >0$,
$$\lim_{n}\left\|\ind_{\left\{ \left\vert v\right\vert >\varepsilon \right\} }\mathsf{H}_{n}\varphi
_{n}-\ind_{\left\{ \left\vert v\right\vert >\varepsilon \right\}
}\mathsf{H}\varphi\right\|_{\lm}=0$$
and also
$$\lim_{n}\left\|\mathsf{\Xi}_{0}(\ind_{\left\{ \left\vert v\right\vert >\varepsilon \right\}
}\mathsf{H}_{n}\varphi _{n})-\mathsf{\Xi}_{0}(\ind_{\left\{ \left\vert v\right\vert
>\varepsilon \right\} }\mathsf{H}\varphi )\right\|_{\lp}=0$$
or equivalently%
$$\lim_{n}\left\|\ind_{\left\{ \left\vert v\right\vert >\varepsilon \right\} }\Psi
_{n}-\ind_{\left\{ \left\vert v\right\vert >\varepsilon \right\}
}\Psi\right\|_{\lp} =0.$$
Let then
$$\Psi _{n}^{\varepsilon }=\ind_{\left\{ \left\vert v\right\vert >\varepsilon
\right\} }\Psi _{n},\qquad \Psi ^{\varepsilon }=\ind_{\left\{ \left\vert
v\right\vert >\varepsilon \right\} }\Psi, \qquad n \in \N, \:\varepsilon >0,$$
we note that
$$V_{\mathsf{H}_{n}}(t)\Psi _{n}^{\varepsilon }\leq \Psi _{n} \qquad \text{ and } \quad \ind_{\left\{ \left\vert v\right\vert >\varepsilon \right\} }V_{\mathsf{H}_{n}}(t)\Psi
_{n}^{\varepsilon }\leq \Psi _{n}^{\varepsilon }$$
for any $n \in \N,$ $\varepsilon >0,$ $t >0.$ Using the \emph{Third step}, we can pass to the limit in norm in this inequality as $n\rightarrow
+\infty $ and get
$$\ind_{\left\{ \left\vert v\right\vert >\varepsilon \right\} }U_{\mathsf{H}}(t)\Psi
^{\varepsilon }\leq \Psi ^{\varepsilon }\leq \Psi.$$
Letting $\varepsilon \rightarrow 0$, the monotone convergence theorem
yields to $U_{\mathsf{H}}(t)\Psi \leq \Psi$, i.e. $\Psi$ satisfies \eqref{eq:Psi2} and the proof is concluded.\end{proof}

\appendix

\section{About the ballistic flow}\label{app:ballistic}

We establish in this appendix several important properties of the so-called  \emph{ballistic flow}
$$\bxi\::\:(x,v) \in \Gamma_{+} \mapsto \bxi(x,v)=(x-\tau_{-}(x,v)v,v) \in \pO \times V$$
which are fundamental for the proof of our main weak compactness result Theorem \ref{propo:weakcompact}. For the clarity of exposition, we postponed these results in an Appendix but strongly believe that the results stated here have their own mathematical interest. In this Appendix, we will use the following notations: for any element $\bm{z}=(x,v)$ of the extended phase space $\overline{\Omega} \times V$, we will call $x$ the space component of $\bm{z}$ and $v$ the velocity component of $\bm{z}$, writing $x=\bm{z}_{s}$ and $v=\bm{z}_{v}$.

With the notations of \cite{guo03}, $\bxi=x_{\mathbf{b}}.$ Notice that, as already observed in \cite{guo03,voigt}, in non convex domain this deterministic flow does not avoid the grazing set $\Gamma_{0}$, i.e. in full generality
$$\bxi(x,v) \in \Gamma_{-} \cup \Gamma_{0}$$
and -- as far as the regularity of $\bxi$ is concerned -- the set $\{(x,v) \in \Gamma_{+}\,;\,\bxi(x,v) \in \Gamma_{0}\}$ will be particularly relevant. Notice though that
$$\bm{\xi}\: \in \Gamma_{+} \cup \Gamma_{0} \rightarrow \Gamma_{-} \cup \Gamma_{0}$$
is invertible with inverse
$$\bm{\xi}^{-1}\::\:(x,v) \in \Gamma_{-} \cup \Gamma_{0} \longmapsto \bm{\xi}^{-1}(x,v)=\left(x+\tau_{-}(x,v)v,v\right) \in \Gamma_{+} \cup \Gamma_{0}.$$
Moreover, according to \eqref{10.52} with $\psi=\ind_{\Gamma_{0}}$ we see that
$$\int_{\Gamma_{+} \cup \Gamma_{0}}\ind_{\Gamma_{0}}(\bm{\xi}(x,v))\mu_{+}(\d x,\d v)=\mu_{-}(\Gamma_{0})=0$$
which proves that
\begin{equation}\label{eq:SB}
\mu_{+}\left(\left\{(x,v) \in \Gamma_{+} \cup \Gamma_{0}\,;\,\bm{\xi}(x,v) \in \Gamma_{0}\right\}\right)=0.\end{equation}
We introduce the following where we focus on velocity which are unit vectors (this is no loss of generality by virtue of \eqref{eq:scale}) 
\begin{defi} Let 
$$\widehat{\Gamma}_{\pm}=\left\{(x,\omega) \in \partial\Omega \times \mathbb{S}^{d-1}\,;\,(x,\omega) \in \Gamma_{\pm} \quad \text{and} \quad \bxi(x,\omega) \in \Gamma_{\mp}\right\}$$
and introduce, for any ${x} \in \partial \Omega$ the section
$$\widehat{\Gamma}_{\pm}({x})=\left\{\omega \in \mathbb{S}^{d-1}\,;\,({x},\omega) \in  \widehat{\Gamma}_{\pm}\right\}.$$
\end{defi}

\subsection{Regularity of the travel time}

The main result of this section is the following:
\begin{theo}\label{theo:C1} The set $\widehat{\Gamma}_{\pm}$ are open subsets  of $\:\Gamma_{\pm}$ and 
$$\tau_{\mp}\::\:(x,\omega) \in \widehat{\Gamma}_{\pm} \mapsto \tau_{\mp}(x,\omega) \in \R^{+}$$
is of class $\mathcal{C}^{1}.$\end{theo}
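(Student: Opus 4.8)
The plan is to work with unit velocities throughout (legitimate by the scaling identity \eqref{eq:scale}) and to reduce everything to the implicit function theorem applied to a single scalar equation defining the boundary $\partial\Omega$ near the relevant exit point. First I would fix $(x_{0},\omega_{0})\in\widehat{\Gamma}_{+}$ and set $y_{0}=x_{0}-\tau_{-}(x_{0},\omega_{0})\omega_{0}$, so by definition $y_{0}\in\partial\Omega$ and $(y_{0},\omega_{0})\in\Gamma_{-}$, i.e. $\omega_{0}\cdot n(y_{0})<0$ strictly. Since $\partial\Omega$ is of class $\mathcal{C}^{1}$, near $y_{0}$ the boundary is a level set $\{\,G=0\,\}$ of a $\mathcal{C}^{1}$ function $G$ with $\nabla G(y_{0})\neq 0$ pointing (say) outward, so $\nabla G(y_{0})$ is a positive multiple of $n(y_{0})$, while $G<0$ inside $\Omega$. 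Consider the $\mathcal{C}^{1}$ map $\Phi(x,\omega,s)=G(x-s\omega)$ defined for $(x,\omega)$ near $(x_{0},\omega_{0})$ and $s$ near $\tau_{-}(x_{0},\omega_{0})$; then $\Phi(x_{0},\omega_{0},\tau_{-}(x_{0},\omega_{0}))=0$ and $\partial_{s}\Phi(x_{0},\omega_{0},\tau_{-}(x_{0},\omega_{0}))=-\nabla G(y_{0})\cdot(-\omega_{0})=\nabla G(y_{0})\cdot\omega_{0}$, which is a \emph{negative} multiple of $n(y_{0})\cdot\omega_{0}>0$... wait, one must be careful with signs: $\nabla G(y_0)\cdot \omega_0$ has the sign of $n(y_0)\cdot\omega_0<0$, hence $\partial_s\Phi\neq 0$. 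In all cases the key point is that this partial derivative is \emph{nonzero} precisely because the trajectory is \textbf{non-grazing} at the exit point, which is exactly the hypothesis $\bxi(x_{0},\omega_{0})\in\Gamma_{-}$ (not $\Gamma_{0}$).

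Given the non-vanishing of $\partial_{s}\Phi$, the implicit function theorem produces a $\mathcal{C}^{1}$ function $s=\sigma(x,\omega)$ on a neighborhood $\mathcal{N}$ of $(x_{0},\omega_{0})$ with $\sigma(x_{0},\omega_{0})=\tau_{-}(x_{0},\omega_{0})$ and $G(x-\sigma(x,\omega)\omega)=0$ there. The remaining work, which is the genuinely geometric part, is to show that $\sigma(x,\omega)$ actually \emph{equals} the exit time $\tau_{-}(x,\omega)$ on a (possibly smaller) neighborhood — a priori $x-\sigma(x,\omega)\omega$ is only \emph{some} boundary point reached along the backward ray, not necessarily the \emph{first} one. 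Here I would argue by continuity: the straight segment $\{x_{0}-s\omega_{0}: 0<s<\tau_{-}(x_{0},\omega_{0})\}$ lies in the open set $\Omega$, hence (using compactness of the closed sub-segment and openness of $\Omega$) stays a positive distance from $\partial\Omega$; by uniform continuity this persists for $(x,\omega)$ in a small neighborhood, so $x-s\omega\in\Omega$ for all $s\in(0,\sigma(x,\omega)-\epsilon)$ and thus $\tau_{-}(x,\omega)\geq\sigma(x,\omega)-\epsilon$; combined with $x-\sigma(x,\omega)\omega\in\partial\Omega$ and $\sigma$ being continuous, one gets $\tau_{-}(x,\omega)=\sigma(x,\omega)$ on this neighborhood. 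This simultaneously shows $\widehat{\Gamma}_{+}$ is open (every point of it has a whole neighborhood inside $\Gamma_{+}$ on which the exit trajectory lands in the open set $\Gamma_{-}$, the latter by continuity of $n$ and of the landing point, shrinking the neighborhood once more so that $n(x-\tau_{-}\omega)\cdot\omega$ stays negative) and that $\tau_{-}$ is $\mathcal{C}^{1}$ there, since it coincides locally with the $\mathcal{C}^{1}$ function $\sigma$.

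For $\widehat{\Gamma}_{-}$ and the regularity of $\tau_{+}$ on it, I would run the symmetric argument with the forward ray $x+s\omega$, or simply invoke the already-noted bijection $\bxi^{-1}:(x,\omega)\mapsto(x+\tau_{+}(x,\omega)\omega,\omega)$ and the identity $\tau_{+}(x,\omega)=\tau_{-}(\bxi^{-1}(x,\omega))$ together with the fact that $\bxi^{-1}$ maps $\widehat{\Gamma}_{-}$ to $\widehat{\Gamma}_{+}$; once $\bxi^{-1}$ is shown $\mathcal{C}^{1}$ (which follows from the same IFT setup applied forward), composition gives the claim.

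\textbf{Main obstacle.} The delicate point is not the implicit function theorem itself but the \emph{identification} $\sigma=\tau_{-}$, i.e. ruling out that the backward ray, for nearby data, exits through the boundary \emph{earlier} than near $y_{0}$. In a non-convex domain the backward ray from $x_{0}$ could graze or touch $\partial\Omega$ at some intermediate point even though $x_{0}-s\omega_{0}\in\Omega$ for $0<s<\tau_{-}(x_{0},\omega_{0})$ — but in fact this cannot happen on the \emph{open} sub-segment, which is the crucial use of $\tau_{-}(x_{0},\omega_{0})$ being the \emph{first} exit time and of $\Omega$ being open; the compactness/uniform-continuity argument sketched above is what makes this rigorous, and getting the quantitative neighborhoods right (distance of the sub-segment to $\partial\Omega$, modulus of continuity) is the technical heart of the proof.
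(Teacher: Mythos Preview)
Your argument is correct and takes a genuinely different route from the paper's.

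The paper proceeds in three separate lemmas: first it proves continuity of $\tau_{-}$ and openness of $\widehat{\Gamma}_{+}$ by a sequential compactness argument (Lemma~\ref{lem:cont}); then it parametrizes a neighbourhood of the exit point $z_{0}$ on $\partial\Omega$ by a $\mathcal{C}^{1}$ map $\Psi\colon(-1,1)^{d-1}\to\partial\Omega$, introduces the direction map $H(x,y)=(x-\Psi(y))/|x-\Psi(y)|$, and applies the inverse function theorem to $H$ in the $y$-variable to obtain differentiability in $\omega$ (Lemma~\ref{lem:lem25}); and finally repeats the construction for differentiability in $x$ (Lemma~\ref{lem:26}). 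The non-degeneracy condition needed for the inverse function theorem, namely $\mathrm{Rank}(\d_{y}H(x_{0},0))=d-1$, is shown to be equivalent to $\omega_{0}\cdot n(z_{0})\neq 0$, exactly your non-grazing hypothesis.

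Your approach is more economical: instead of parametrizing $\partial\Omega$ and inverting the direction map, you use a local \emph{defining function} $G$ and apply the implicit function theorem once, to the single scalar equation $G(x-s\omega)=0$, obtaining joint $\mathcal{C}^{1}$ regularity of $\sigma(x,\omega)$ in both variables at one stroke. The price is that you do not automatically obtain the explicit formulae for $\nabla_{x}\tau_{-}$ and $\nabla_{\omega}\tau_{-}$ recorded in Remark~\ref{nb:guo}; however, implicit differentiation of $G(x-\sigma\omega)=0$ yields them just as easily. Your identification $\sigma=\tau_{-}$ replaces the paper's sequential continuity lemma; the only point where your sketch is slightly loose is the claim that the open segment stays at positive distance from $\partial\Omega$ --- of course its closure meets $\partial\Omega$ at both ends, so you must cover a closed interior sub-segment by compactness and treat the two endpoints separately (near $s=0$ using $(x,\omega)\in\Gamma_{+}$, near $s=\sigma$ using the sign of $\partial_{s}G$). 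You correctly flag this as the technical heart, and it goes through without difficulty.
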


We will split the proof of the above in a series of Lemma -- dealing with $\tau_{-}$ but all the results have their counterpart for $\tau_{+}$:
\begin{lemme}\label{lem:cont} The set $\widehat{\Gamma}_{+}$ is an open subset  of $\:\Gamma_{+}$ and $\tau_{-}$ is continuous on $\widehat{\Gamma}_{+}.$
\end{lemme}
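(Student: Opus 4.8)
The plan is to localize near a fixed point $(x_{0},\omega_{0})\in\widehat{\Gamma}_{+}$. Set $\tau_{0}=\tau_{-}(x_{0},\omega_{0})$ and $y_{0}=x_{0}-\tau_{0}\omega_{0}\in\pO$, and recall that by definition of $\widehat{\Gamma}_{+}$ the segment $\{x_{0}-s\omega_{0}\,;\,s\in[0,\tau_{0}]\}$ meets $\pO$ transversally at both ends: $\omega_{0}\cdot n(x_{0})>0$ while $\omega_{0}\cdot n(y_{0})<0$, and $x_{0}-s\omega_{0}\in\Omega$ for all $s\in(0,\tau_{0})$. Since $\pO$ is $\mathcal{C}^{1}$, I would fix local $\mathcal{C}^{1}$ defining functions $\psi$ on a ball around $x_{0}$ and $\phi$ on a ball around $y_{0}$, with $\Omega$ locally $\{\psi<0\}$, $\{\phi<0\}$, with $\pO$ locally $\{\psi=0\}$, $\{\phi=0\}$, and with $\nabla\psi,\nabla\phi$ directed along the outward normal on $\pO$; in particular $\nabla\psi(x_{0})\cdot\omega_{0}>0$ and $\nabla\phi(y_{0})\cdot\omega_{0}<0$.

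The first step is the lower bound $\liminf_{(x,\omega)\to(x_{0},\omega_{0})}\tau_{-}(x,\omega)\ge\tau_{0}$. Fixing small $\epsilon\in(0,\tau_{0})$, I split $(0,\tau_{0}-\epsilon]$ into a ``near--start'' part $(0,\delta]$ and a compact ``middle'' part $[\epsilon,\tau_{0}-\epsilon]$, with $\delta<\tau_{0}-\epsilon$. On the middle part, $\{x_{0}-s\omega_{0}\,;\,s\in[\epsilon,\tau_{0}-\epsilon]\}$ is a compact subset of the open set $\Omega$, hence at positive distance from $\pO$, so $x-s\omega\in\Omega$ for all such $s$ once $(x,\omega)$ is close enough to $(x_{0},\omega_{0})$. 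On the near--start part, shrinking the ball and $\delta$, continuity of $\nabla\psi$ gives $\nabla\psi(x-s\omega)\cdot\omega>0$ uniformly for $(x,\omega)$ near $(x_{0},\omega_{0})$ and $s\in[0,\delta]$; since $\psi(x)=0$ for $x\in\pO$, the map $s\mapsto\psi(x-s\omega)$ is then strictly decreasing on $[0,\delta]$, so $x-s\omega\in\Omega$ for $s\in(0,\delta]$. Combining, $x-s\omega\in\Omega$ for all $s\in(0,\tau_{0}-\epsilon]$, whence $\tau_{-}(x,\omega)\ge\tau_{0}-\epsilon$; letting $\epsilon\to0$ gives the claim.

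The second step is the upper bound $\limsup_{(x,\omega)\to(x_{0},\omega_{0})}\tau_{-}(x,\omega)\le\tau_{0}$. Since $\frac{\d}{\d s}\phi(x_{0}-s\omega_{0})\big|_{s=\tau_{0}}=-\nabla\phi(y_{0})\cdot\omega_{0}>0$ and $\phi(y_{0})=0$, for small $\eta>0$ one has $\phi(x_{0}-(\tau_{0}-\eta)\omega_{0})<0<\phi(x_{0}-(\tau_{0}+\eta)\omega_{0})$; by continuity these strict inequalities persist for $(x,\omega)$ near $(x_{0},\omega_{0})$, so by the intermediate value theorem $s\mapsto\phi(x-s\omega)$ vanishes at some $s^{\ast}(x,\omega)\in(\tau_{0}-\eta,\tau_{0}+\eta)$, i.e. $x-s^{\ast}\omega\in\pO$, hence $x-s^{\ast}\omega\notin\Omega$ and $\tau_{-}(x,\omega)\le s^{\ast}<\tau_{0}+\eta$. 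Letting $\eta\to0$ and combining with the first step shows $\tau_{-}$ is continuous at $(x_{0},\omega_{0})$. Openness of $\widehat{\Gamma}_{+}$ then follows: as $(x,\omega)\to(x_{0},\omega_{0})$ we get $x-\tau_{-}(x,\omega)\omega\to y_{0}$, so by continuity of $n$ and of the relevant scalar products, $\omega\cdot n(x)>0$ and $\omega\cdot n\big(x-\tau_{-}(x,\omega)\omega\big)<0$ for $(x,\omega)$ near $(x_{0},\omega_{0})$, i.e. $(x,\omega)\in\widehat{\Gamma}_{+}$; since $\widehat{\Gamma}_{+}\subset\Gamma_{+}$ and $\Gamma_{+}$ is relatively open in $\pO\times\mathbb{S}^{d-1}$, this exhibits $\widehat{\Gamma}_{+}$ as an open subset of $\Gamma_{+}$.

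I expect the main obstacle to be the lower bound near the \emph{endpoints} of the segment: the interior is controlled cheaply by compactness, but showing that nearby rays do not leave $\Omega$ immediately after $x$ (respectively, just before $y_{0}$) genuinely requires the $\mathcal{C}^{1}$ structure of $\pO$ together with the two transversality conditions built into the definition of $\widehat{\Gamma}_{+}$ — this is precisely where the hypothesis $(x_{0},\omega_{0})\in\widehat{\Gamma}_{+}$, rather than merely $(x_{0},\omega_{0})\in\Gamma_{+}$, is used, and it also explains why grazing exit points must be excluded. Finally, I note that the same local defining functions $\phi,\psi$ are exactly what is needed to set up the implicit function theorem giving the stronger $\mathcal{C}^{1}$ regularity of Theorem~\ref{theo:C1}, so this argument is naturally a first step toward it.
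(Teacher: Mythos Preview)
Your argument is correct, modulo a minor notational slip: the covering of $(0,\tau_{0}-\epsilon]$ by $(0,\delta]\cup[\epsilon,\tau_{0}-\epsilon]$ requires $\delta\ge\epsilon$, which you do not state (you only write $\delta<\tau_{0}-\epsilon$). This is harmless, since $\delta$ is fixed by the local chart at $x_{0}$ and you may simply take $\epsilon<\delta$; but it would be cleaner to write the middle part as $[\delta,\tau_{0}-\epsilon]$.

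Your approach is genuinely different from the paper's. The paper argues by compactness of $\pO$: given a sequence $(x_{n},\omega_{n})\to(x_{0},\omega_{0})$, it extracts a convergent subsequence of the exit points $y_{n}=x_{n}-\tau_{-}(x_{n},\omega_{n})\omega_{n}$, identifies $\lim\tau_{-}(x_{n},\omega_{n})=|y_{0}-x_{0}|=:\tau_{1}$, gets $\tau_{0}\le\tau_{1}$ for free, and then rules out $\tau_{1}>\tau_{0}$ by a short contradiction using the transversality $\omega_{0}\cdot n(\xi_{0})<0$ (the ray must exit $\overline{\Omega}$ just past $\tau_{0}$, which clashes with $x_{n}-t\omega_{n}\in\Omega$ for $t<\tau_{-}(x_{n},\omega_{n})$). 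Openness is then obtained by another sequential contradiction. By contrast, you work directly with local $\mathcal{C}^{1}$ defining functions $\psi,\phi$ at the two endpoints, proving the $\liminf$ bound via monotonicity of $s\mapsto\psi(x-s\omega)$ near $s=0$ plus compactness of the interior segment, and the $\limsup$ bound via the intermediate value theorem for $\phi$. Your route is a bit more constructive and, as you observe, dovetails naturally with the implicit-function-theorem arguments the paper uses in the subsequent Lemmas to upgrade continuity to $\mathcal{C}^{1}$; the paper's route is shorter and more topological but does not set up that machinery.
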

\begin{proof}  Let us fix $x_{0} \in \partial\Omega$ and $\omega_{0}\in \widehat{\Gamma}_{+}(x_{0})$, i.e. $\omega_{0} \cdot n(x_{0}) >0$ and $\omega_{0}\cdot n(\xi_{0})<0$ where $\xi_{0}=x_{0}-\tau_{-}(x_{0},\omega_{0})\omega_{0}$ is the space component of $\bxi(x_{0},\omega_{0}).$ For simplicity, set 
$$\tau_{0}=\tau_{-}(x_{0},\omega_{0}).$$ 
Let $(x_{n},v_{n}) \subset \Gamma_{+}$ be a given sequence such that $\lim_{n}(x_{n},v_{n})=(x_{0},\omega_{0})$. In particular, we can assume that $|v_{n}|\neq 0$ for any $n\in \mathbb{N}$. Set then $\omega_{n}=|v_{n}|^{-1}v_{n}  \in \mathbb{S}^{d-1}$ for any $n\in \mathbb{N}$ and
$$y_{n}=x_{n}-\tau_{-}(x_{n},\omega_{n})\omega_{n} \in \partial\Omega \qquad \forall n \in \mathbb{N}.$$
Taking a subsequence if necessary (recall that $\partial\Omega$ is compact), we may assume that $(y_{n})_{n}$ converges to some $y_{0} \in \partial\Omega.$ Then, since $\tau_{-}(x_{n},\omega_{n})=|y_{n}-x_{n}|$ we get that 
$$\lim_{n}\tau_{-}(x_{n},v_{n})=\lim_{n}\tau_{-}(x_{n},\omega_{n})|v_{n}|^{-1}=\lim_{n}\tau_{-}(x_{n},\omega_{n})=|y_{0}-x_{0}|=:\tau_{1}$$ and, consequently, letting $n$ goes to infinity in the definition of $y_{n}$ yields
$$y_{0}=x_{0}-\tau_{1}\omega_{0} \in \partial\Omega.$$
This in particular shows that $\tau_{0}\leq \tau_{1}$. To prove that $\tau_{1}=\tau_{0}$, let us argue by contradiction and assume that $\tau_{1} > \tau_{0}$. Since both $y_{0}=x_{0}-\tau_{1}\omega_{0}$ and $\xi_{0}=x_{0}-\tau_{0}\omega_{0}$ belong to $\partial\Omega$ and since $\omega_{0}\cdot n(\xi_{0}) <0$, the set 
$$\bigg\{t \in \big(\tau_{0}\,,\,\tau_{1} \big)\,;\,x_{0}-t\omega_{0} \notin \overline{\Omega}\bigg\}$$
is open and not empty. Therefore, there exists $\delta >0$ such that $\tau_{1} >\delta+\tau_{0}$ and
\begin{equation}\label{tau1}
x_{0}-t\omega_{0} \notin \overline{\Omega} \qquad \forall  t \in (\tau_{0}\,,\, \delta+\tau_{0}).\end{equation}
Notice that $x_{n}-t\omega_{n}\in \Omega$ for all $t \in (0,\tau_{-}(x_{n},\omega_{n}))$ and any $n\in\mathbb{N}$. Since $\lim_{n}\tau_{-}(x_{n},\omega_{n})=\tau_{1}$, we get that,  for $n \in \mathbb{N}$ large enough, 
$$x_{n}-t\omega_{n} \in \Omega \qquad \forall t \in (0,\tau_{0}+\delta).$$
Letting then $n$ goes to infinity, we obtain $x_{0}-t\omega_{0} \in \overline{\Omega}$ for any $t \in (0,\tau_{0}+\delta)$ which contradicts \eqref{tau1}. Therefore, $\tau_{1}=\tau_{0}$ which proves the continuity of $\tau_{-}$ on $\widehat{\Gamma}_{+}.$ Let us now show that $\widehat{\Gamma}_{+}$ is open. We keep the previous notations, fixing $(x_{0},\omega_{0}) \in \widehat{\Gamma}_{+}$. Let us assume that there exists a sequence $(x_{n},\omega_{n}) \subset \Gamma_{+}$ such that $\lim_{n}(x_{n},\omega_{n})=(x_{0},\omega_{0})$ where $\omega_{n} \in \mathbb{S}^{d-1}$ for any $n\in \mathbb{N}$ but $(x_{n},\omega_{n}) \notin \widehat{\Gamma}_{+}.$ This means that
$$\omega_{n}\cdot n(x_{n}-\tau_{-}(x_{n},\omega_{n})\omega_{n})=0, \qquad \forall n\in \mathbb{N}.$$
From the previous part of the proof, we know that $\lim_{n}x_{n}-\tau_{-}(x_{n},\omega_{n})\omega_{n}=x_{0}-\tau_{-}(x_{0},\omega_{0})\omega_{0}$ and, since $n(\cdot)$ is continuous, we get 
$$\omega_{0}\cdot n(x_{0}-\tau_{-}(x_{0},\omega_{0})\omega_{0})=0,$$
which contradicts the assumption that $(x_{0},\omega_{0}) \in \widehat{\Gamma}_{+}$. Therefore, no sequence with the above properties can exist and $\widehat{\Gamma}_{+}$ is open.
\end{proof}

\begin{lemme}\phantomsection\label{lem:lem25} For any $x \in \partial \Omega$, the mapping
$$\omega \in \widehat{\Gamma}_{+}(x) \longmapsto \tau_{-}(x,w) \in \R^{+}$$
is differentiable and 
\begin{equation}\label{eq:contipw}
{\nabla_{ \omega}}\tau_{-}\::\:(x,\omega) \in \widehat{\Gamma}_{+} \longmapsto \tau_{-}(x,\omega) \in \R^{+}\end{equation}
is continuous.
\end{lemme}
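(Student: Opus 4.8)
The plan is to work at a fixed point $(x_{0},\omega_{0}) \in \widehat{\Gamma}_{+}$ and realize $\tau_{-}$ locally as the solution of an implicit equation. By definition, $y_{0} := x_{0} - \tau_{-}(x_{0},\omega_{0})\omega_{0} \in \pO$ and $\omega_{0}\cdot n(y_{0}) < 0$. Since $\pO$ is a $\mathcal{C}^{1}$ hypersurface, near $y_{0}$ we can describe it as a level set $\{\Phi = 0\}$ of a $\mathcal{C}^{1}$ function $\Phi$ with $\nabla\Phi(y_{0}) \neq 0$ (parallel to $n(y_{0})$); concretely one may take $\Phi$ to be a signed-distance-type function or a graph representation. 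Define, for $(\omega,t)$ near $(\omega_{0},\tau_{-}(x_{0},\omega_{0}))$, the scalar function $G(\omega,t) = \Phi(x_{0} - t\omega)$. Then $G(\omega_{0},\tau_{-}(x_{0},\omega_{0})) = 0$, and
$$\partial_{t}G(\omega_{0},\tau_{0}) = -\nabla\Phi(y_{0})\cdot\omega_{0},$$
which is a nonzero multiple of $-n(y_{0})\cdot\omega_{0} > 0$, hence nonzero. The implicit function theorem then yields a $\mathcal{C}^{1}$ function $\omega \mapsto t(\omega)$, defined on a neighborhood of $\omega_{0}$ in $\mathbb{S}^{d-1}$, with $G(\omega, t(\omega)) = 0$ and $t(\omega_{0}) = \tau_{-}(x_{0},\omega_{0})$.

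The next step is to identify $t(\omega)$ with $\tau_{-}(x_{0},\omega)$ for $\omega$ in a possibly smaller neighborhood. By Lemma 8.5 (Lemma \ref{lem:cont}), $\widehat{\Gamma}_{+}$ is open and $\tau_{-}$ is continuous there, so $\tau_{-}(x_{0},\omega) \to \tau_{-}(x_{0},\omega_{0})$ as $\omega \to \omega_{0}$; thus for $\omega$ close enough to $\omega_{0}$ the point $x_{0} - \tau_{-}(x_{0},\omega)\omega$ lies in the chart where $\pO = \{\Phi = 0\}$, so it is a zero of $G(\omega,\cdot)$. Since $\partial_{t}G \neq 0$ on this chart (the half-line $t \mapsto x_{0} - t\omega$ crosses $\pO$ transversally near $y_{0}$, by continuity of $n$ and of the crossing point), the zero of $G(\omega,\cdot)$ near $t(\omega_{0})$ is \emph{unique}, forcing $\tau_{-}(x_{0},\omega) = t(\omega)$. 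Hence $\omega \mapsto \tau_{-}(x_{0},\omega)$ is $\mathcal{C}^{1}$ near $\omega_{0}$, and differentiating $G(\omega,\tau_{-}(x_{0},\omega)) = 0$ gives the explicit formula
$$\nabla_{\omega}\tau_{-}(x_{0},\omega) = \frac{t(\omega)\,\big(\nabla\Phi(x_{0} - t(\omega)\omega)\big)^{\mathrm{T}}}{\nabla\Phi(x_{0} - t(\omega)\omega)\cdot\omega}$$
(restricted appropriately to the tangent space of $\mathbb{S}^{d-1}$), with the denominator nonvanishing.

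Finally, for the joint continuity of $(x,\omega) \in \widehat{\Gamma}_{+} \mapsto \nabla_{\omega}\tau_{-}(x,\omega)$, I would run the same implicit-function-theorem construction with $x$ as an additional parameter: set $G(x,\omega,t) = \Phi(x - t\omega)$ on a chart around $y_{0}$, note $\partial_{t}G(x_{0},\omega_{0},\tau_{0}) \neq 0$, and obtain a $\mathcal{C}^{1}$ map $(x,\omega) \mapsto t(x,\omega)$ agreeing with $\tau_{-}$ locally (using Lemma \ref{lem:cont} again to pin down the correct branch). The formula above, now with $x_{0}$ replaced by $x$ and with $\Phi$, $\nabla\Phi$, and the point $x - t(x,\omega)\omega$ all varying continuously, exhibits $\nabla_{\omega}\tau_{-}$ as a composition of continuous functions with nonvanishing denominator, hence continuous on $\widehat{\Gamma}_{+}$. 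The only mild subtlety — and the step I would treat most carefully — is the branch-selection argument: one must use the continuity statement from Lemma \ref{lem:cont} together with the transversality $\omega\cdot n(\bxi_{s}(x,\omega)) < 0$ (which is exactly the defining condition of $\widehat{\Gamma}_{+}$) to guarantee that the implicitly-defined solution is genuinely the first exit time $\tau_{-}$ and not some later crossing of $\pO$.
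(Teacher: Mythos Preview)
Your proof is correct and takes a genuinely different route from the paper. The paper parametrizes $\pO$ near the exit point $z_{0}=\bxi_{s}(x_{0},\omega_{0})$ by a $\mathcal{C}^{1}$ map $\Psi\colon(-1,1)^{d-1}\to\pO$, forms the \emph{direction map} $H(x,y)=\frac{x-\Psi(y)}{|x-\Psi(y)|}$, shows $\d_{y}H(x_{0},0)$ has rank $d-1$ (this is where transversality $\omega_{0}\cdot n(z_{0})<0$ enters), and then inverts $\omega=H(x,y)$ locally to $y=G(x,\omega)$; finally $\tau_{-}(x,\omega)=|x-\Psi(G(x,\omega))|$ is manifestly $\mathcal{C}^{1}$ in $\omega$ with continuous dependence on $(x,\omega)$. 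Your approach instead writes $\pO$ locally as a level set $\{\Phi=0\}$ and applies the implicit function theorem directly to $\Phi(x-t\omega)=0$ in the scalar variable $t$, which is more elementary and gives the gradient formula in one line. Both use the same transversality hypothesis at the same place, and your branch-selection argument via Lemma~\ref{lem:cont} is exactly what is needed. Your formula reduces (after normalizing $\nabla\Phi$) to $\nabla_{\omega}\tau_{-}=\dfrac{\tau_{-}}{\omega\cdot n(\bxi_{s})}\,n(\bxi_{s})$, which the paper records in Remark~\ref{nb:guo} only after the more elaborate computation. The paper's parametrization viewpoint is less direct here but meshes with the rest of Appendix~\ref{app:ballistic}, where explicit local charts of $\pO$ are repeatedly used (e.g.\ in Proposition~\ref{prop:diff}).
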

\begin{proof} As before, let us fix $x_{0} \in \partial\Omega$ and $\omega_{0}\in \widehat{\Gamma}_{+}(x_{0})$. Since the normal vector $n(\cdot)$ is continuous on $\partial \Omega$, we deduce from Lemma \ref{lem:cont} that there exists a radius $r >0$ such that 
\begin{equation}\label{eq:11}
\omega \cdot n(x) >0 \qquad \text{ and } \qquad \omega \cdot n(\bxi_{s}(x,\omega)) < 0 \qquad \forall (x,\omega) \in \dU(x_{0}) \times \cU(\omega_{0}),\end{equation}
where $\dU(x_{0})=\mathds{B}(x_{0},r) \cap \partial \Omega$ is an open neighbourhood of $x_{0}$ and $\cU(\omega_{0})=\mathds{B}(\omega_{0},r) \cap \mathbb{S}^{d-1}$ is an open neighbourhood of $\omega_{0}$. The continuity of $\tau_{-}$ implies that there exists $t_{0} >0$ such that
\begin{equation}\label{eq:t0}
\tau_{-}(x,\omega) \geq t_{0} >0 \qquad \forall (x,\omega) \in \dU(x_{0}) \times \cU(\omega_{0}).\end{equation}
Since the mapping $\bm{\xi}\::\:(x,\omega )\in \widehat{\Gamma }_{+}\mapsto \bm{\xi}(x,\omega)=(x-\tau _{-}(x,\omega
)\omega ,\omega )\in \Gamma _{-}$ is continuous, invertible with  inverse
$$\bm{\xi}^{-1}\::\:
(y,\omega )\in \Gamma _{-}\mapsto \bm{\xi}^{-1}(y,\omega)=(y+\tau _{+}(y,\omega )\omega ,\omega
)\in \widehat{\Gamma }_{+}$$
and since $\bm{\xi}(\widehat{\Gamma}_{+}) \subset \widehat{\Gamma}_{-}$, one has $\bm{\xi}^{-1}$ continuous. In particular
$$
\mathbf{\ }\left\{ (x-\tau _{-}(x,\omega )\omega ,\omega ),\ (x,\omega )\in \dU(x_{0})\times \cU(\omega _{0})\right\}$$%
is an open neighbourhood of $\bm{\xi}(x_{0},\omega_{0})$
and
$$\mathds{W}(z_{0}):=\left\{ x-\tau _{-}(x,\omega )\omega ,\ (x,\omega )\in 
\dU(x_{0})\times \cU(\omega _{0})\right\} $$
is an open neighbourhood of $z_{0}=x_{0}-\tau _{-}(x_{0},\omega _{0})\omega _{0}=\bm{\xi}_{s}(x_{0},\omega_{0})\in \partial \Omega.$  
Since $\partial \Omega $ is of class $\mathcal{C}^{1}$ then (up to choosing a smaller neighbourhood $\mathds{W}(z_{0})$ if necessary), there exists a $\mathcal
{C}^{1}$ bijective mapping%
$${\Psi}\::\:{y} \in (-1,1)^{d-1} \longmapsto  {\Psi}(y)\in \mathds{W}(z_{0})$$
with ${\Psi}(0)=z_{0}$ and such that  
such that the range of the differential $\d{\Psi}({y})$ has dimension $d-1$ for any ${y} \in (-1,1)^{d-1}.$
We introduce open pieces of $\partial \Omega$ indexed by $x \in \pO$ 
$$\mathcal{S}_{x}=\left\{ x-\tau _{-}(x,\omega )\omega ,\ \omega \in \cU(\omega _{0})\right\} \subset \mathds{W}(z_{0}).$$
Define then
$$\mathcal{O}_{x}=\Psi^{-1}(\mathcal{S}_{x}),\ x\in \dU(x_{0}),$$
one sees that, for any $x \in \pO$, the mapping $\Psi\::\:y \in \mathcal{O}_{x} \mapsto \Psi(y) \in \mathcal{S}_{x}$ is a parametrization of $\mathcal{S}_{x}$. Namely, given $(x,\omega) \in \dU(x_{0}) \times \cU(\omega_{0})$, there is a unique $y \in \mathcal{O}_{x}$ such that $x-\tau_{-}(x,\omega)\omega=\Psi(y)$. Thus, 
\begin{equation}\label{eq:tau-omega}
\tau _{-}(x,\omega )=\left\vert x-\Psi(y)\right\vert \qquad \text{ and } \qquad  \omega =\frac{x-\Psi(y)}{\left\vert x-\Psi(y)\right\vert }.\end{equation}
In particular $x_{0}-\tau _{-}(x_{0},\omega _{0})\omega _{0}=z_{0}=\Psi(0),$ and 
$\omega _{0}=\frac{x_{0}-\Psi(0)}{\left\vert x_{0}-\Psi(0)\right\vert }.$
Introduce the $\mathcal{C}^{1}$ mapping
\begin{equation}\label{eq:defH}
H\::\:(x,y) \in \dU(x_{0}) \times (-1,1)^{d-1} \mapsto \mathsf{H}(x,y)=\frac{x-\Psi(y)}{|x-\Psi(y)|} \in \cU(\omega_{0})\end{equation}
and, for any $z \in \R^{d} \setminus \{0\}$, let $\mathds{P}_{z}$ denote the orthogonal projection on the hyperplane orthogonal to $z$, 
$$\mathds{P}_{z}h=h_{z}^{\perp}:=h-\langle h,\,\bar{z}\rangle\,\bar{z}, \qquad \bar{z}=\frac{z}{|z|} \in \mathbb{S}^{d-1}, h \in \R^{d}.$$
Notice that $\mathds{P}_{z}=\mathds{P}_{\bar{z}}$ for any $z \in \R^{d}\setminus \{0\}.$ 
Because the differential of the mapping $z \in \R^{d} \setminus\{0\} \mapsto \frac{z}{|z|}$ is given by
$$h \in \R^{d} \mapsto -|z|^{-1}\mathds{P}_{z}h, \qquad h \in \R^{d}$$
it follows that the differential $\d_{y}{H}(x,y)$ of $H$  is given by
\begin{equation}\label{eq:defdH}
\d_{y}{H}(x,y)\::\:h \in \R^{d} \longmapsto  -\frac{\mathds{P}_{x-\Psi(y)}\left(\d\Psi(y)h\right)}{|x-\Psi(y)|}=-\frac{\mathds{P}_{\omega}\left(\d\Psi(y)h\right)}{|x-\Psi(y)|},\end{equation}
where $\omega={H}(x,y)=|x-\Psi(y)|^{-1}\left(x-\Psi(y)\right)$. 
Note that the differential $\d_{y}{H}(x,y)$ depends \emph{
continuously} on $(x,y)\in \dU(x_{0})\times (-1,1)^{d-1}.$ Let us assume for a while that 
\begin{equation}\label{eq:range}
\mathrm{Rank}\left(\d_{y}{H}(x_{0},0)\right)=d-1.\end{equation}
Then, the dimension of the range of $\d_{y}{H}(x,y)$  remains of dimension $d-1$ for $x$ close enough to $0$. Recalling that $\omega _{0}=|x_{0}-\Psi(0)|^{-1}\left(x_{0}-\Psi(0)\right)$, we deduce from the \emph{local inverse function theorem} that, in some open neighbourhood $\dU'(x_{0}) \times (-\delta,\delta)^{d-1}$ of $(x_{0},0)$ and a neighbourhood $\cU'(\omega_{0})$ of $\omega_{0}$ such that  the equation
$$\omega ={H}(x,y) \qquad (x,y) \in \dU'(x_{0}) \times (-\delta,\delta)^{d-1}$$
is solved uniquely as
$$y=G(x,\omega ), \qquad (x,\omega) \in \dU'(x_{0}) \times\cU'(x_{0})$$
where $G(x,\cdot)$ is a $\mathcal{C}^{1}$ mapping on a neighbourhood $\cU'(\omega_{0})$ of $\omega_{0}$ and the mapping $(x,\omega )\mapsto G(x,\omega )$ is
continuous on $\dU'(x_{0}) \times \cU'(\omega_{0}).$ It follows that, for $x \in \dU'(x_{0})$ the mapping 
$$\omega \in \cU'(\omega_{0}) \longmapsto 
\tau _{-}(x,\omega )=|x-\Psi(y)|=|x-\Psi(G(x,\omega))|$$
is  differentiable with differential $\d_{\omega }\tau _{-}(x,\omega )$ given by%
\begin{equation}\label{eq:domegatau}\d_{\omega}\tau_{-}(x,\omega) \::\:h \in \R^{d} \longmapsto  -\frac{\langle x-\Psi (G(x,\omega )),\d\Psi (G(x,\omega
))\d_{\omega}G(x,\omega )h\rangle}{|x-\Psi
(G(x,\omega ))|}.\end{equation}
Since $\d_{\omega}G(x,\omega)=\left(\d_{y}{H}(x,G(x,\omega))\right)^{-1}$ and the mapping $(x,\omega )\mapsto G(x,\omega )$ is continuous then so is
$$(x,\omega ) \in \dU'(x_{0}) \times \cU'(\omega_{0}) \longmapsto  \d_{\omega }\tau _{-}(x,\omega)$$
which proves the Lemma under assumption \eqref{eq:range}. It only remains to prove \eqref{eq:range}. Notice that 
$$\mathrm{Range}(\d\Psi(0))=\left\{ \d\Psi (0)h;\ h\in
\R^{d-1}\right\}$$
is the $(d-1)$-dimensional tangent space of $\pO$  at $z_{0}=\Psi(0)$ with $x_{0}-\Psi(0)=\tau _{-}(x_{0},\omega _{0})\omega _{0}
$
and%
$$\left\{\mathds{P}_{x_{0}-\Psi(0)}\left(\d\Psi(0)h\right)\,;\,h \in \R^{d-1}\right\}=\mathds{P}_{\omega_{0}}(\mathrm{Range}(\d\Psi(0))$$
is the orthogonal projection of $\mathrm{Range}(\d\Psi(0))$  on the orthogonal hyperplane to $\omega _{0}.$ One sees that 
$$
\omega _{0}\cdot n({\Psi (0)})<0 \Longrightarrow 
\omega _{0}\notin \mathrm{Range}(\d\Psi(0)))
$$%
and consequently $\mathds{P}_{\omega_{0}}(\mathrm{Range}(\d\Psi(0)))$ coincides with the orthogonal hyperplane to $\omega _{0}$. In particular, it has dimension $d-1$ which is exactly \eqref{eq:range}.\end{proof}


\begin{lemme}\label{lem:26} For any $\omega \in \mathbb{S}^{d-1}$, the mapping
$$x \in \widehat{\Gamma}_{+}(\omega) \mapsto \tau_{-}(x,w) \in \R^{+}$$
is differentiable and 
$$\nabla_{x}\tau_{-}\::\:(x,\omega) \in \widehat{\Gamma}_{+} \mapsto \tau_{-}(x,\omega) \in \R^{+}$$
is continuous.
\end{lemme}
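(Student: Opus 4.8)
The plan is to mirror the argument of Lemma \ref{lem:lem25}, simply exchanging the roles of the space variable $x$ and the velocity variable $\omega$, so that the differentiation is now carried out with respect to $x \in \pO$ with $\omega$ held fixed. Fix $\omega_{0} \in \mathbb{S}^{d-1}$ and $x_{0} \in \widehat{\Gamma}_{+}(\omega_{0})$, and set $\tau_{0}=\tau_{-}(x_{0},\omega_{0})$ and $z_{0}=x_{0}-\tau_{0}\omega_{0}=\bxi_{s}(x_{0},\omega_{0}) \in \pO$. Just as in the proof of Lemma \ref{lem:lem25}, the continuity of $\tau_{-}$ on $\widehat{\Gamma}_{+}$ (Lemma \ref{lem:cont}) together with the continuity of $n(\cdot)$ produces open neighbourhoods $\dU(x_{0}) \subset \pO$ of $x_{0}$ and $\cU(\omega_{0}) \subset \mathbb{S}^{d-1}$ of $\omega_{0}$ on which \eqref{eq:11} and \eqref{eq:t0} hold, and the set $\mathds{W}(z_{0})=\{x-\tau_{-}(x,\omega)\omega\,;\,(x,\omega) \in \dU(x_{0}) \times \cU(\omega_{0})\}$ is an open neighbourhood of $z_{0}$ in $\pO$. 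Since $\pO$ is of class $\mathcal{C}^{1}$, after shrinking $\mathds{W}(z_{0})$ if needed, there is a $\mathcal{C}^{1}$ regular parametrization $\Psi\::\:(-1,1)^{d-1} \to \mathds{W}(z_{0})$ with $\Psi(0)=z_{0}$ and $\mathrm{Rank}\,\d\Psi(y)=d-1$ everywhere.

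Next I would again use the implicit characterization $x-\tau_{-}(x,\omega)\omega=\Psi(y)$, $\omega=\tfrac{x-\Psi(y)}{|x-\Psi(y)|}$, and introduce the very same $\mathcal{C}^{1}$ map $H(x,y)=\tfrac{x-\Psi(y)}{|x-\Psi(y)|}$ as in \eqref{eq:defH}, whose partial differential in $y$ is given by \eqref{eq:defdH} and, as established there, has rank $d-1$ at $(x_{0},0)$ — this is precisely the content of \eqref{eq:range}, which holds because $\omega_{0}\cdot n(z_{0})<0$ forces $\omega_{0} \notin \mathrm{Range}(\d\Psi(0))$. The local inverse function theorem then solves $\omega=H(x,y)$ near $(x_{0},0,\omega_{0})$ as $y=G(x,\omega)$ with $G$ of class $\mathcal{C}^{1}$ in $x$ for fixed $\omega$ and jointly continuous, together with its $x$-differential $\d_{x}G$. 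Consequently, for $\omega$ fixed near $\omega_{0}$, the map $x \mapsto \tau_{-}(x,\omega)=|x-\Psi(G(x,\omega))|$ is differentiable, and the chain rule gives
\begin{equation*}
\d_{x}\tau_{-}(x,\omega)\::\:h \in \R^{d} \longmapsto \frac{\langle x-\Psi(G(x,\omega)),\,h-\d\Psi(G(x,\omega))\,\d_{x}G(x,\omega)h\rangle}{|x-\Psi(G(x,\omega))|},
\end{equation*}
that is, $\nabla_{x}\tau_{-}(x,\omega)=\omega - \left(\d\Psi(G(x,\omega))\,\d_{x}G(x,\omega)\right)^{\!*}\omega$ after projecting onto $\omega$. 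Since $\d_{x}G(x,\omega)$ is obtained from $\d_{y}H$ and $\d_{x}H$ through the inverse function theorem and all the maps $\Psi$, $\d\Psi$, $G$, $\d_{y}H$, $\d_{x}H$ depend continuously on their arguments on the relevant neighbourhoods, the map $(x,\omega) \in \widehat{\Gamma}_{+} \mapsto \nabla_{x}\tau_{-}(x,\omega)$ is continuous, which is the assertion.

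The only genuinely new point compared with Lemma \ref{lem:lem25} — and the step I expect to require the most care — is that differentiating in $x$ rather than $\omega$ introduces an \emph{explicit} $x$-dependence in $H$ through the numerator $x-\Psi(y)$, so that $\d_{x}H(x,y)h = |x-\Psi(y)|^{-1}\mathds{P}_{\omega}h$ is no longer zero and must be tracked through the implicit differentiation; concretely, from $H(x,G(x,\omega))\equiv\omega$ one gets $\d_{x}G(x,\omega)=-\left(\d_{y}H(x,G(x,\omega))\right)^{-1}\d_{x}H(x,G(x,\omega))$, and one must verify this feeds correctly into the formula for $\nabla_{x}\tau_{-}$ and preserves continuity. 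This is a routine but slightly more bookkeeping-heavy computation than the one in Lemma \ref{lem:lem25}; everything else, including the verification of the rank condition \eqref{eq:range}, is literally the same and can be invoked verbatim.
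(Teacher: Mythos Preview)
Your proposal is correct and follows essentially the same route as the paper: both set up the same map $H(x,y)=\tfrac{x-\Psi(y)}{|x-\Psi(y)|}$, invoke the rank condition \eqref{eq:range} (identical to the one already verified in Lemma \ref{lem:lem25}), and apply the implicit function theorem to solve $\omega=H(x,y)$ as $y=\widehat{G}(\omega,x)$, yielding the differential formula \eqref{eq:dx}. The one difference worth noting is how continuity of $(x,\omega)\mapsto \d_{x}\tau_{-}(x,\omega)$ is argued: you deduce it directly from the continuity of the implicit-function-theorem data $\d_{x}G=-(\d_{y}H)^{-1}\d_{x}H$, whereas the paper instead differentiates the constraint $\omega+\tfrac{\Psi(\widehat{G}(\omega,x))-x}{|\Psi(\widehat{G}(\omega,x))-x|}=0$ with respect to $x$ and, after projecting onto $n(\bxi_{s}(x,\omega))$, eliminates $\widehat{G}$ entirely to obtain the explicit formula
\[
\d_{x}\tau_{-}(x,\omega)h=\frac{\langle h,n(\bxi_{s}(x,\omega))\rangle}{\langle \omega,n(\bxi_{s}(x,\omega))\rangle},
\]
from which continuity is immediate via the continuity of $\bxi_{s}$ and $n(\cdot)$. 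Your shortcut is perfectly valid for the lemma as stated; the paper's extra computation buys the closed-form expression (recorded in Remark \ref{nb:guo}) but is not needed for the proof itself.
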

\begin{proof} As in the previous Lemma, we fix $x_{0} \in \partial\Omega$ and $\omega_{0}\in \widehat{\Gamma}_{+}(x_{0})$ and consider an open neighbourhood $\dU(x_{0}) \times \cU(\omega_{0})$ of $(x_{0},\omega_{0})$ on which \eqref{eq:11} and \eqref{eq:t0} hold. For any $\omega \in \cU(\omega _{0})$, define 
$$\mathcal{S}_{\omega }=\left\{ x-\tau _{-}(x,\omega )\omega ;\ x\in \dU(x_{0})\right\} $$
and, with the notation of the previous Lemma, $\mathcal{S}_{\omega} \subset \mathds{W}(z_{0})$ for any $\omega \in \cU(\omega_{0})$ where $\mathds{W}(z_{0})$ is the image of the $\mathcal{C}^{1}$ function 
$$\Psi\::\: y \in (-1,1)^{d-1} \longmapsto \Psi (y)\in \mathds{W}(z_{0})$$
with $\Psi(0)=z_{0}$ and $\mathrm{Rank}(\d\Psi(y))=d-1$ for any $y \in (-1,1)^{d-1}.$ This allows to introduce, as in the previous Lemma, $\mathcal{O}_{\omega}=\Psi^{-1}(\mathcal{S}_{\omega})$ and $\mathcal{S}_{\omega}$ is parametrized by $\Psi$ (defined now on $\mathcal{O}_{\omega})$, i.e. given $(x,\omega) \in \dU(x_{0}) \times \cU(\omega_{0})$, there is a unique $y \in \mathcal{O}_{\omega}$ such that $x-\tau_{-}(x,\omega)\omega=\Psi(y)$ and \eqref{eq:tau-omega} and \eqref{eq:defH} still hold. We have seen in the proof of Lemma \ref{lem:lem25} that ${H}$  is $\mathcal{C}^{1}$ with differential $\d_{y}{H}(x,y)$ given by \eqref{eq:defdH} and depending continuously on $(x,y)\in \dU(x_{0})\times (-1,1)^{d-1}.$ In particular, as seen earlier, at $(x,y)=(x_{0},0)$ the differential $\d_{y}H$ is  given by 
$$\d_{y}{H}(x_{0},0)\::\:h \in \R^{d-1} \longmapsto  -\frac{\mathds{P}_{\omega_{0}}\left(\d\Psi(0)h\right)}{|x_{0}-\Psi(0)|}=-\frac{\mathds{P}_{\omega_{0}}\left(\d\Psi(0)h\right)}{\tau_{-}(x_{0},\omega_{0})}$$
and has ($d-1$)-dimensional range. As before, from the \emph{implicit function theorem}, there is a neighbourhood $(-\delta,\delta)^{d-1} \times \dU''(x_{0})$ of  $(0,x_{0})$ and a neighbourhood $\cU''(\omega_{0})$ of $\omega_{0}$ on which the equation $\omega =|x-\Psi(y)|^{-1}\left(x-\Psi(y)\right)$ with $(y,x) \in (-\delta,\delta)^{d-1} \times \dU''(x_{0})$
is   solved uniquely as 
$$y=\widehat{G}(\omega ,x), \qquad (\omega,x) \in \cU''(\omega_{0}) \times \dU''(x_{0})$$
where $\widehat{G}(\omega ,\cdot)$ is a $\mathcal{C}^{1}$ mapping and $(\omega,x) \in \cU''(\omega_{0}) \times \dU''(x_{0}) \mapsto \widehat{G}(\omega ,x)$ is continuous. 
It follows that the mapping $x \in \dU''(x_{0}) \mapsto \tau _{-}(x,\omega )=\left\vert \Psi (\widehat{G}(\omega ,x))-x
\right\vert $ is differentiable for any $\omega \in \cU''(\omega_{0})$ with differential given by 
\begin{equation}\label{eq:dx}
\d_{x}\tau _{-}(x,\omega )\::\: h \in \mathcal{T}_{x} \longmapsto  -\frac{\langle  \Psi (\widehat{G}(\omega ,x))-x,\d \Psi (G(\omega ,x))\d_{x}\widehat{G}(\omega ,x)h-h\rangle 
}{|\Psi (\widehat{G}(\omega ,x))-
x|}
\end{equation}
where $\mathcal{T}_{x}$ is the tangent space of $\pO$ at $x\in \partial \Omega .$ Let us prove now the continuity of $\d_{x}\tau_{-}(\cdot,\cdot)$. Because $\omega+\frac{\Psi(\widehat{G}(\omega,x))-x}{|\Psi(\widehat{G}(\omega,x))-x|}=0$, differentiating with respect to $x$ along the direction $h$ tangent at $\pO$ at $x$ yields
$$\frac{\mathds{P}_{\Psi(\widehat{G}(\omega,x))-x}\left(\d\Psi(\widehat{G}(\omega ,x))\d_{x}\widehat{G}(\omega ,x)h-h\right)}{|x-\Psi(\widehat{G}(\omega,x))|}=0
$$
i.e.
$$\mathds{P}_{\omega}\left(\d\Psi(\widehat{G}(\omega ,x))\d_{x}\widehat{G}(\omega ,x)h\right)=\mathds{P}_{\omega}h, \qquad \forall h \in \mathcal{T}_{x}$$
where we used that $\omega$ is the unit vector in the direction of $(\Psi(\widehat{G}(\omega,x))-x)$. 
This implies that
\begin{equation}\label{eq:PsiG}
\d\Psi(\widehat{G}(\omega ,x))\d_{x}\widehat{G}(\omega ,x)h=\mathds{P}_{\omega}h + \left\langle \d\Psi(\widehat{G}(\omega ,x)))\d_{x}\widehat{G}(\omega ,x)h,\omega\right\rangle \omega.\end{equation}
Since $\d\Psi(\widehat{G}(\omega ,x))\d_{x}\widehat{G}(\omega ,x)h$ is a tangent vector to $\pO$ at $\Psi(\widehat{G}(\omega ,x))=\bm{\xi}_{s}(x,\omega)=x-\tau_{-}(x,\omega)\omega$, taking the inner product of the above identity with the normal unit vector $n(\bm{\xi}_{s}(x,\omega))$ yields
$$\left\langle \d\Psi(\widehat{G}(\omega ,x))\d_{x}\widehat{G}(\omega ,x)h,\omega\right\rangle\,\left\langle \omega,n(\bm{\xi}_{s}(x,\omega))\right\rangle=-\left\langle \mathds{P}_{\omega}h,n(\bm{\xi}_{s}(x,\omega)) \right\rangle.$$
Inserting this into \eqref{eq:PsiG} and since $\omega \cdot n(\bm{\xi}_{s}(x,\omega)) \neq0$ we get
\begin{equation*} 
\d\Psi(\widehat{G}(\omega ,x))\d_{x}\widehat{G}(\omega ,x)h=\mathds{P}_{\omega}h -\frac{\left\langle \mathds{P}_{\omega}h,n(\bm{\xi}_{s}(x,\omega)) \right\rangle}{\langle \omega,n(\bm{\xi}_{s}(x,\omega))\rangle}\omega\end{equation*} 
which, plugged into \eqref{eq:dx}, yields
\begin{equation*}\begin{split}
\d_{x}\tau_{-}(x,\omega)h&=-\langle \omega,\mathds{P}_{\omega}h - \frac{\langle \mathds{P}_{\omega}h\,,\,n(\bm{\xi}_{s}(x,\omega)) \rangle}{\langle \omega\,,\,n(\bm{\xi}_{s}(x,\omega)) \rangle}\omega-h\rangle\\
&=\langle h\,;\,\omega\rangle -\frac{\langle \mathds{P}_{\omega}h\,;\,n(\bm{\xi}_{s}(x,\omega)) \rangle}{\langle \omega\,;\,n(\bm{\xi}_{s}(x,\omega)) \rangle}\end{split}\end{equation*}
i.e.
\begin{equation}\label{eq:n(z)} \d_{x}\tau_{-}(x,\omega)h=\frac{\langle h,n(\bm{\xi}_{s}(x,\omega))\rangle}{\langle \omega,n(\bm{\xi}_{s}(x,\omega))\rangle}\end{equation}
This gives directly the continuity of the mapping $(x,\omega) \mapsto \d_{x}\tau_{-}(x,\omega)$ since $\bm{\xi}_{s}$ is continuous on $\widehat{\Gamma}_{+}.$\end{proof}

\begin{nb}\phantomsection\label{nb:guo} Notice that \eqref{eq:n(z)} allows to recover the expression
$$\nabla_{x}\tau_{-}(x,\omega)=\frac{1}{\omega \cdot n(\bm{\xi}_{s}(x,\omega))}\,n(\bm{\xi}_{s}(x,\omega)), \qquad (x,\omega) \in \widehat{\Gamma}_{+}$$
which was obtained in \cite[Lemma 3]{guo03} for some special structure of $\Omega$. Moreover, using \eqref{eq:domegatau} and using the range of the differential of $\Psi(G(x,\omega))$ is the orthogonal of $\mathrm{Span}(n(G(x,\omega))$ we can prove 

$$\nabla_{\omega}\tau_{-}(x,\omega)=\frac{\tau_{-}(x,\omega)}{\omega \cdot n(\bm{\xi}_{s}(x,\omega))}\,n(\bm{\xi}_{s}(x,\omega)), \qquad (x,\omega) \in \widehat{\Gamma}_{+}$$
which, again,  is a result obtained in a special case in \cite[Lemma 3]{guo03}.
\end{nb}
\begin{proof}[Proof of Theorem \ref{theo:C1}] The above three Lemmas give directly the proof of Theorem \ref{theo:C1} for $\tau_{-}$ and $\widehat{\Gamma}_{+}$. The proof for $\tau_{+}$ and $\widehat{\Gamma}_{-}$ is done similarly. 
\end{proof}

An immediate consequence of Theorem \ref{theo:C1} is the following regularity of the ballistic flow:
\begin{cor}\label{cor:ba} The ballistic flow:
$$\bxi\::\:(x,\omega) \in \widehat{\Gamma}_{+} \longmapsto \bxi(x,\omega)=(x-\tau_{-}(x,\omega)\omega,\omega) \in \Gamma_{-}$$
is a $\mathcal{C}^{1}$ diffeomorphism from $\widehat{\Gamma}_{+}$ onto its image and 
$$\bxi^{-1}\::\:(x,\omega) \in \widehat{\Gamma}_{-} \longmapsto \bxi^{-1}(x,\omega)=(x+\tau_{+}(x,\omega)\omega,\omega) \in \Gamma_{+}$$
is a $\mathcal{C}^{1}$ diffeomorphism from $\widehat{\Gamma}_{-}$ onto its image.
\end{cor}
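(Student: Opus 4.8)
The plan is to deduce the corollary directly from Theorem \ref{theo:C1}, which has just established that $\widehat{\Gamma}_{+}$ is open in $\Gamma_{+}$ and that $\tau_{-}\colon\widehat{\Gamma}_{+}\to\R^{+}$ is of class $\mathcal{C}^{1}$ (with the symmetric statement for $\tau_{+}$ on $\widehat{\Gamma}_{-}$). First I would observe that the map $\bxi(x,\omega)=(x-\tau_{-}(x,\omega)\omega,\omega)$ is of class $\mathcal{C}^{1}$ on $\widehat{\Gamma}_{+}$: the velocity component $\omega\mapsto\omega$ is trivially smooth, and the space component $x-\tau_{-}(x,\omega)\omega$ is $\mathcal{C}^{1}$ because it is a product/difference of $\mathcal{C}^{1}$ maps on $\widehat{\Gamma}_{+}$ — here one uses the continuity of $\nabla_{x}\tau_{-}$ and $\nabla_{\omega}\tau_{-}$ from Lemmas \ref{lem:lem25} and \ref{lem:26}, together with the fact that $\pO$ being $\mathcal{C}^{1}$ means the manifold $\widehat{\Gamma}_{+}\subset\pO\times\mathbb{S}^{d-1}$ carries a $\mathcal{C}^{1}$ differentiable structure in which these computations make sense.

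Next I would record that $\bxi$ is a bijection from $\widehat{\Gamma}_{+}$ onto its image with inverse $\bxi^{-1}(y,\omega)=(y+\tau_{+}(y,\omega)\omega,\omega)$. This bijectivity is already noted in the text preceding the definition of $\widehat{\Gamma}_{\pm}$: $\bxi\colon\Gamma_{+}\cup\Gamma_{0}\to\Gamma_{-}\cup\Gamma_{0}$ is invertible, and by the very definition of $\widehat{\Gamma}_{+}$ (those $(x,\omega)\in\Gamma_{+}$ with $\bxi(x,\omega)\in\Gamma_{-}$) one has $\bxi(\widehat{\Gamma}_{+})\subset\widehat{\Gamma}_{-}$; conversely, starting from $(y,\omega)\in\widehat{\Gamma}_{-}$ one has $\bxi^{-1}(y,\omega)\in\widehat{\Gamma}_{+}$, so $\bxi$ restricts to a bijection $\widehat{\Gamma}_{+}\to\bxi(\widehat{\Gamma}_{+})$ whose set-theoretic inverse is the restriction of $\bxi^{-1}$. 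Applying Theorem \ref{theo:C1} to $\tau_{+}$ on $\widehat{\Gamma}_{-}$, the same argument as above shows $\bxi^{-1}$ is of class $\mathcal{C}^{1}$ on $\widehat{\Gamma}_{-}$, hence on the open set $\bxi(\widehat{\Gamma}_{+})$.

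Having both $\bxi$ and its inverse of class $\mathcal{C}^{1}$, the remaining point is that $\bxi(\widehat{\Gamma}_{+})$ is open: this follows since $\bxi$ is a continuous bijection with continuous inverse between the (locally Euclidean) $\mathcal{C}^{1}$-manifolds $\widehat{\Gamma}_{+}$ and $\widehat{\Gamma}_{-}$, and $\widehat{\Gamma}_{+}$ is open in $\Gamma_{+}$, so its image is a homeomorphic — indeed $\mathcal{C}^{1}$-diffeomorphic — copy sitting inside $\Gamma_{-}$; alternatively one invokes the inverse function theorem once one knows the differential of $\bxi$ is an isomorphism at every point, which is itself a consequence of having a $\mathcal{C}^{1}$ inverse. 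I do not expect any serious obstacle here: all the analytic content has been front-loaded into Theorem \ref{theo:C1} and the elementary invertibility remarks made earlier, so the corollary is essentially a bookkeeping statement. The only mild subtlety worth a sentence is to make explicit that the formula $\nabla_{x}\tau_{-}(x,\omega)=\bigl(\omega\cdot n(\bxi_{s}(x,\omega))\bigr)^{-1}n(\bxi_{s}(x,\omega))$ from Remark \ref{nb:guo} shows the Jacobian of the space component of $\bxi$ is indeed nondegenerate on $\widehat{\Gamma}_{+}$, confirming $\bxi$ is a local diffeomorphism directly, so that globality plus injectivity upgrades it to a genuine diffeomorphism onto its image.
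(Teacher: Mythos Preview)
Your proposal is correct and follows exactly the route the paper takes: the paper presents this corollary as ``an immediate consequence of Theorem \ref{theo:C1}'' with no further argument, and what you have written is simply a careful unpacking of that immediacy (the $\mathcal{C}^{1}$ regularity of $\tau_{\mp}$ on $\widehat{\Gamma}_{\pm}$ from Theorem \ref{theo:C1}, the already-noted set-theoretic invertibility of $\bxi$, and the symmetry between the $+$ and $-$ cases). If anything, your discussion of openness of the image and of the nondegeneracy via Remark \ref{nb:guo} is more thorough than what the authors deemed necessary.
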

\subsection{Further non degeneracy results} \label{sec:degen}

We introduce a local polar parametrization of the boundary which will turn useful later on. Here and everywhere in the text, $\bxi_{s}^{-1}$ denotes the position component of the inverse $\bxi$, i.e. $\bxi^{-1}_{s}=(\bxi^{-1})_{s}$ \footnote{this should not be confused with the inverse of the position component $(\bxi_{s})^{-1}$}:
\begin{propo}\label{prop:diff}
For any $x\in \pO$, there is a closed subset ${S}(x) \subset \widehat{\Gamma}_{-}(x)$
 with zero  surface Lebesgue measure $\d\sigma$ and such that the mapping 
 $$\bxi_{s}^{-1}(x,\cdot)\::\:\omega \in \widehat{\Gamma}_{-}(x) \setminus S(x) \mapsto  x+\tau_{+}(x,\omega)\omega \in \pO$$
has a differential of rank $d-1$. 
\end{propo}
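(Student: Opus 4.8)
The plan is to parametrize the boundary near the landing point $\bxi_s^{-1}(x,\omega)=x+\tau_+(x,\omega)\omega$ by a $\mathcal{C}^1$ chart and to compute the differential of $\omega\mapsto \bxi_s^{-1}(x,\omega)$ explicitly, identifying the degeneracy set $S(x)$ as a semi-algebraic (hence negligible) subset of $\widehat\Gamma_-(x)$. First I would fix $x\in\pO$ and $\omega_0\in\widehat\Gamma_-(x)$. By Corollary \ref{cor:ba} the map $\bxi^{-1}$ is $\mathcal{C}^1$ on $\widehat\Gamma_-$, so in particular $\omega\mapsto \bxi_s^{-1}(x,\omega)=x+\tau_+(x,\omega)\omega$ is $\mathcal{C}^1$ near $\omega_0$, and by the analogue of \eqref{eq:n(z)} for $\tau_+$ (see Remark \ref{nb:guo}) we have the closed-form gradient $\nabla_\omega \tau_+(x,\omega)=\tfrac{\tau_+(x,\omega)}{\omega\cdot n(\bxi_s^{-1}(x,\omega))}\,n(\bxi_s^{-1}(x,\omega))$ — note $\omega\cdot n(\bxi_s^{-1}(x,\omega))>0$ on $\widehat\Gamma_-$ since $\bxi^{-1}$ lands in $\Gamma_+$. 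Differentiating $Y(\omega):=x+\tau_+(x,\omega)\omega$ along a direction $h$ tangent to $\mathbb S^{d-1}$ at $\omega$ (i.e. $h\perp\omega$) gives
\[
\d_\omega Y(x,\omega)h=\tau_+(x,\omega)\,h+\big(\nabla_\omega\tau_+(x,\omega)\cdot h\big)\,\omega
=\tau_+(x,\omega)\Big(h+\tfrac{n(\bxi_s^{-1}(x,\omega))\cdot h}{\omega\cdot n(\bxi_s^{-1}(x,\omega))}\,\omega\Big).
\]

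Next I would read off the rank. The linear map $h\in\omega^\perp\mapsto h+c(h)\,\omega$ with $c(h)=\tfrac{n(Y)\cdot h}{\omega\cdot n(Y)}$ is injective unless there is $0\ne h\perp\omega$ with $h=-c(h)\omega$, which forces $h=0$; so $\d_\omega Y(x,\omega)$ has rank $d-1$ \emph{as a map into $\R^d$} for every $\omega\in\widehat\Gamma_-(x)$. The subtlety is that we want rank $d-1$ \emph{as a map into $\pO$}, i.e. the image of $\d_\omega Y$ should coincide with the tangent space $\mathcal{T}_{Y(\omega)}\pO$; equivalently $Y$ should be a submersion from the $(d-1)$-manifold $\widehat\Gamma_-(x)$ onto $\pO$. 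Since $\d_\omega Y(x,\omega)h$ is by construction a tangent vector to $\pO$ at $Y(\omega)$ (because $Y$ maps into $\pO$), its image is automatically contained in $\mathcal{T}_{Y(\omega)}\pO$, and by the rank count it equals $\mathcal{T}_{Y(\omega)}\pO$ precisely when it is not degenerate — but the computation above shows it is never degenerate. This seems too strong, so the genuine content must be about the \emph{parametrization} chosen: to \emph{express} this differential one uses a $\mathcal{C}^1$ chart $\Psi$ of $\pO$ near $Y(\omega_0)$ as in the proof of Lemma \ref{lem:lem25}, and the Jacobian factor $J$ relating $\d\sigma$ on the sphere to the surface measure $\pi$ on $\pO$ through $Y$ is proportional to $|\omega\cdot n(Y(\omega))|\,\tau_+(x,\omega)^{-(d-1)}$ times a geometric determinant; the set $S(x)$ is where this Jacobian vanishes.

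Therefore the real plan is: (i) use the polar parametrization recalled in Remark \ref{nb:recallSepsi}, writing $\omega=\omega(\bm\theta)$ with $\bm\theta\in U$, the chart adapted so that $\bm{e}_d(x)=-n(x)$; (ii) compute the surface Jacobian $J_Y(x,\omega(\bm\theta))$ of $\bm\theta\mapsto Y(x,\omega(\bm\theta))$ via the Cauchy–Binet formula applied to $\d_\omega Y\circ \d_{\bm\theta}\omega$; (iii) show that $J_Y$ factors as a strictly positive $\mathcal{C}^0$ function (coming from $\tau_+$, from $\omega\cdot n(Y)>0$, and from the non-degeneracy of $\d_\omega Y$ established above) multiplied by the spherical-volume density $\sin^{d-2}\theta_{d-1}\cdots\sin\theta_2$; (iv) conclude that $J_Y(x,\omega)=0$ exactly when $\bm\theta$ lies in the coordinate hyperplanes $\{\sin\theta_j=0\}$, a finite union of lower-dimensional submanifolds of $U$, whose image $S(x)$ under $\omega(\cdot)$ is closed in $\widehat\Gamma_-(x)$ and has $\d\sigma$-measure zero. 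Rank $d-1$ of the differential of $\bxi_s^{-1}(x,\cdot)$ at every $\omega\notin S(x)$ is then equivalent to $J_Y(x,\omega)>0$, which holds off $S(x)$. The main obstacle I anticipate is step (ii)–(iii): carefully justifying that the only way the surface Jacobian degenerates is through the \emph{parametrization} (the coordinate singularities of spherical coordinates) and not through an intrinsic collapse of $\d_\omega Y$, which requires combining the explicit formula for $\nabla_\omega\tau_+$ with the fact that $\widehat\Gamma_-(x)$ meets $\Gamma_+$ transversally so that $\omega\cdot n(Y)$ stays bounded away from $0$ locally. Handling the boundary of $U$ (the faces $\theta_1\in\{0,2\pi\}$, $\theta_{d-1}=\pi/2$) and the set $\Gamma_-(x)\setminus\widehat\Gamma_-(x)$ where $\bxi^{-1}$ may land in $\Gamma_0$ — already known to be $\sigma$-null by \eqref{eq:SB} — must also be folded into $S(x)$.
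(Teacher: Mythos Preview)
Your proposal is correct, and in fact your initial computation is sharper than the paper's own argument. You compute explicitly
\[
\d_\omega Y(x,\omega)h=\tau_+(x,\omega)\Big(h+\tfrac{n(Y)\cdot h}{\omega\cdot n(Y)}\,\omega\Big),\qquad h\perp\omega,
\]
and correctly observe that this is injective on $\omega^\perp$ (since $\tau_+>0$ and the $\omega$-component cannot cancel $h\in\omega^\perp$). Because $Y$ maps into $\pO$, the image lies in $\mathcal{T}_{Y}\pO$, and a dimension count gives rank $d-1$ \emph{everywhere} on $\widehat\Gamma_-(x)$. So the proposition holds with $S(x)=\varnothing$; your self-doubt (``this seems too strong'') is unfounded for the statement as written.

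The paper takes a different route: it fixes the polar chart $\bm\theta\mapsto\omega(\bm\theta)$ with $\bm e_d(x)=-n(x)$, writes $G(x,\bm\theta)=x+\tau_+(x,\omega(\bm\theta))\omega(\bm\theta)$, and shows that the projection of $\partial_{\theta_j}G$ onto $\omega^\perp$ equals $\tau_+\,\partial_{\theta_j}\omega$. Hence the $\partial_{\theta_j}G$ are independent whenever the $\partial_{\theta_j}\omega$ are, i.e.\ whenever the spherical volume element $\sin^{d-2}\theta_{d-1}\cdots\sin\theta_2$ is nonzero, and $S(x)$ is \emph{defined} to be the polar-coordinate singular set. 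This only establishes rank $d-1$ \emph{off} $S(x)$ --- a weaker conclusion than yours --- but it is exactly tailored to the downstream use in Lemma~\ref{lem:final}, where one needs the global chart $\bm\theta\mapsto Y(x,\omega(\bm\theta))$ to be a regular parametrization with Jacobian bounded below on $U\setminus U_\varepsilon$. Your steps (i)--(iv) reproduce this, so you arrive at the same place; the difference is that your intrinsic argument makes transparent that the only degeneracy is in the \emph{chart}, not in the map $\omega\mapsto Y$ itself.
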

\begin{proof} For any $x \in \pO$, we choose an orthonormal basis $\left\{\bm{e}_{1}(x),\ldots,\bm{e}_{d-1}(x),\bm{e}_{d}(x)\right\}$  -- depending continuously on $x \in\pO$ -- where
$$\bm{e}_{d}(x)=-n(x).$$
Let us write the components of $\omega \in \widehat{\Gamma}_{-}(x)$ in this basis using polar coordinates
\begin{equation}\label{eq:omeg}
\begin{cases}
\omega_1&=\sin \theta_{d-1}\ldots\sin \theta_3  \sin \theta_2 \sin \theta_1,\\
\omega_2&=\sin \theta_{d-1}\ldots\sin \theta_3  \sin \theta_2 \cos \theta_1,\\
\omega_3&=\sin \theta_{d-1}\ldots\sin \theta_3  \cos \theta_2,\\
&\vdots\\
\omega_{d-1}&=\sin\theta_{d-1}\cos\theta_{d-2},\\
\omega_{d}&=\cos \theta_{d-1}
\end{cases}
\end{equation}
with $\theta_1 \in [0,2\pi]$ and $\theta_2,\ldots,\theta_{d-1} \in [0,\pi]$. Notice that the assumption $\omega \in \widehat{\Gamma}_{-}(x)$ actually implies that $\theta_{d-1} \in [0,\pi/2).$ We will write $\bm{\theta}=(\theta_{1},\ldots,\theta_{d-2},\theta_{d-1})$ and $U:=[0,2\pi] \times [0,\pi]^{d-3}\times \left[0,\frac{\pi}{2}\right],$ so that $\omega \in \widehat{\Gamma}_{-}(x) \implies \bm{\theta} \in U$. Notice that the set $U$ is independent of $x \in \pO$. Within this frame and with the above set of coordinates, we write
\begin{multline*}
T(x,\theta_{1},\ldots,\theta_{d-2},\theta_{d-1}):=\tau_{+}(x,\omega) \qquad \text{ and } \quad \\
 G(x,\theta_{1},\ldots,\theta_{d-2},\theta_{d-1})=x+T(x,\theta_{1},\ldots,\theta_{d-2},\theta_{d-1})\omega, \qquad \forall \omega \in \widehat{\Gamma}_{-}(x).\end{multline*}
From Theorem \ref{theo:C1}, for any $x \in \pO$, the mapping $\bm{\theta} \in U \mapsto T(x,\bm{\theta})$   is of class $\mathcal{C}^{1}$. It is then clear that the set of $\omega \in \mathbb{S}^{d-1}$ given by \eqref{eq:omeg} such that the  vectors 
$$\left(\partial_{\theta_{j}}G(x,\bm{\theta})\right)_{j=1,\ldots,d-1}$$ are \emph{linearly independent} coincide with the set at which  the differential of the mapping $\omega \in \widehat{\Gamma}_{-}(x) \mapsto  x+\tau_{+}(x,\omega)\omega \in \pO$ is of full rank $d-1$. 

One has
$$\partial_{\theta_{j}}G(x,\bm{\theta})=\partial_{\theta_{j}}T(x,\bm{\theta})\omega + T(x,\bm{\theta})\partial_{\theta_{j}}\omega$$ and, because $|G(x,\bm{\theta})-x|=T(x,\bm{\theta})$, it is easy to check that 
 $\partial_{\theta_{j}}T(x,\bm{\theta})=\omega \cdot \partial_{\theta_{j}}G(x,\bm{\theta}),$ i.e.
$$\partial_{\theta_{j}}G(x,\bm{\theta})-(\partial_{\theta_{j}}G(x,\bm{\theta})\cdot \omega)\omega=T(x,\bm{\theta})\partial_{\theta_{j}}\omega.$$ 
Notice that $\partial_{\theta_{j}}G(x,\bm{\theta})-(\partial_{\theta_{j}}G(x,\bm{\theta})\cdot \omega)\omega$ is nothing but the projection of $\partial_{\theta_{j}}G(x,\bm{\theta})$ on the hyperplane $\omega^{\perp}$.
%
Recalling that $T(x,\bm{\theta}) >0$, we see that $\left(\partial_{\theta_{j}}G(x,\bm{\theta})-(\partial_{\theta_{j}}G(x,\bm{\theta})\cdot \omega)\omega\right)_{j=1,\ldots,d-1}$ are independent  if and only if $(\partial_{\theta_{j}}\omega)_{j=1,\ldots,d-1}$ are independent, or equivalently, if the Gram matrix 
$\mathcal{J}_{\bm{\theta}}(\omega)=\big(\,\partial_{\theta_{i}}\omega\,,\,\partial_{\theta_{j}}\omega\big)_{i,j}$ is not singular.  It is well known that 
\begin{equation}\label{eq:detJtheta}\mathrm{det}\left(\mathcal{J}_{\bm{\theta}}(\omega)\right)=\sin^{d-2}\theta_{d-1}\,\sin^{d-3}\theta_{d-2}\,\ldots\sin\theta_{2}.\end{equation}
In other words, if $\mathrm{det}\left(\mathcal{J}_{\bm{\theta}}(\omega)\right) \neq 0$, then $\left(\partial_{\theta_{j}}G(x,\bm{\theta})-(\partial_{\theta_{j}}G(x,\bm{\theta})\cdot \omega)\omega\right)_{j=1,\ldots,d-1}$ are independent and one deduces easily that then 
 $\left(\partial_{\theta_{j}}G(x,\bm{\theta})\right)_{j=1,\ldots,d-1}$ are also independent. We define then
 $$S(x)=\{\omega \in \widehat{\Gamma}_{-}(x)\,;\,\mathrm{det}\left(\mathcal{J}_{\bm{\theta}}(\omega)\right)=0\}$$
 the mapping $\omega \in \widehat{\Gamma}_{-} \setminus S(x) \mapsto x+\tau_{+}(x,\omega)\omega$ has a differential of rank $d-1$. It is clear that $S(x)$ is closed. Let us now prove that indeed  $S(x)$ has a zero  surface Lebesgue measure. Using then \eqref{eq:detJtheta}, we get that 
 $$\omega \in S(x) \qquad \text{  if and only if } \qquad \theta_{j} \in \{0,\pi\} \text{ for some } \:\:j=2,\ldots,d-1.$$
The conditions $\theta_{d-1}\in \{0,\pi\}$ only means $\theta_{d-1}=0$ (recall that $\theta_{d-1} \leq \pi/2$) which means that $\omega=(0,\ldots,0,1).$ Then, for $d \geq 3,$ the condition $\theta_{d-2}\in \{0,\pi\}$ means that 
$$\omega= (0,\ldots,0,\pm \sin\theta_{d-1},\cos\theta_{d-1}),$$ i.e $\omega$ belongs to some (half) unit circle of $\mathbb{S}^{d-1}$.  More generally, the condition $\theta_{d-j}\in \{0,\pi\}$ for $2\leq j \leq d-2$ describes a unit $(j-1)$-dimensional (half)-spheres of $\mathbb{S}^{d-1}$. This means that $S(x)$ can be written as 
$S(x)=\bigcup_{j=2}^{d-2}C_{j}$ where $C_{j}$ is a closed set with positive $(d-j)$--Lebesgue measure \footnote{namely $C_{j}=\{\omega \in \widehat{\Gamma}_{+}(x)\;;\;\theta_{d-j}=0 \text{ or } \theta_{d-j}=\pi\}$} $(2 \leq j \leq d-2).$ Therefore, $\sigma(S(x))=0$ (where we recall that $\d\sigma$ is the Lebesgue surface measure over $\mathbb{S}^{d-1}$) and the conclusion follows.
\end{proof}
\begin{nb}\label{nb:Sepsi}
For any $x \in \pO$ and any $\varepsilon >0$, introduce the set $S_{\varepsilon}(x)$ of all $\omega \in \widehat{\Gamma}_{-}(x) \subset \mathbb{S}^{d-1}$ whose polar coordinates $\bm{\theta}=(\theta_{1},\ldots,\theta_{d-1})$ in the basis $\{\bm{e}_{1}(x),\ldots,\bm{e}_{d}(x)\}$ are such that
$$\mathrm{det}\left(\mathcal{J}_{\bm{\theta}}(\omega)\right) \leq \varepsilon$$
where we recall that the determinant $\mathrm{det}\left(\mathcal{J}_{\bm{\theta}}(\omega)\right)$ is given by \eqref{eq:detJtheta} with $\omega=\omega(\bm{\theta})$ given by \eqref{eq:omeg}. Notice that $\mathrm{det}\left(\mathcal{J}_{\bm{\theta}}(\omega)\right)$ is actually \emph{independent} of $x \in \pO$. Then, the surface Lebesgue measure $\sigma(S_{\varepsilon}(x))$ of $S_{\varepsilon}(x)$ is given by
$$\int_{\mathbb{S}^{d-1}}\ind_{S_{\varepsilon}(x)}(\omega)\sigma(\d\omega)=\int_{U_{\varepsilon}}\mathrm{det}\left(\mathcal{J}_{\bm{\theta}}(\omega)\right)\d\bm{\theta} \leq \varepsilon\,\pi^{d-2}$$
where $U_{\varepsilon}=\{\bm{\theta}\in U\;;\;\sin^{d-2}\theta_{d-1}\,\sin^{d-3}\theta_{d-2}\,\ldots\sin\theta_{2} \leq \varepsilon\}.$ Therefore
$$\lim_{\varepsilon\to0^{+}}\sup_{x \in \pO}\sigma\left(S_{\varepsilon}(x)\right)=0.$$

\end{nb}

We have then the following result {(which is not used in the core of the paper but has its own interest):}
\begin{propo}\label{propo:Uk}
Assume that, for $\pi$-a. e. $x \in \partial\Omega$, $\mathcal{V}(x,\cdot)\::\:\Gamma_{-}(x) \to \Gamma_{+}(x)$ is a field of measurable mappings associated to a pure reflection
boundary operator as in Definition \ref{defi:reflec} and let 
\begin{equation*}
\begin{cases}
\U\::\:(x,v) \in \Gamma_{+} \cup \Gamma_{0}\longmapsto \U(x,v)&=\left(x-\tau_{-}(x,v)v,\mathcal{V}(x-\tau_{-}(x,v)v,v)\right)\\
&=(\bxi_{s}(x,v),\mathcal{V}(\bxi(x,v))) \in \Gamma_{+} \cup \Gamma_{0}.\end{cases}\end{equation*}
For any $k \in \mathbb{N}$ there exists a  subset $\bm{\gamma}_{k} \subset \Gamma_{-}$ such that:
\begin{enumerate}
\item $\bm{\gamma}_{k}$ is a closed subset of $\Gamma_{-}$ with $\mu(\bm{\gamma}_{k})=0$.
\item $\U^{-k}\circ \bm{\xi}^{-1}\left(\Gamma_{-}\setminus \bm{\gamma}_{k}\right)$ is an open subset of $\Gamma_{+}$ and 
$$ \U^{-k}\circ \bm{\xi}^{-1}\::\:\Gamma_{-}\setminus \bm{\gamma}_{k} \to \Gamma_{+}$$
is a $\mathcal{C}^{1}$ diffeomorphism from $\Gamma_{-}\setminus \bm{\gamma}_{k}$ onto its image.\end{enumerate}
\end{propo}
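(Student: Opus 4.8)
The plan is to build $\bm{\gamma}_k$ inductively in $k$, starting from the base case $k=0$ and propagating the bad sets through one application of $\mathsf{U}^{-1}$ at a time. For $k=0$ the map under consideration is simply $\bm\xi^{-1}$, and by Corollary~\ref{cor:ba} this is a $\mathcal{C}^1$ diffeomorphism from $\widehat\Gamma_-$ onto its image. So I would set $\bm\gamma_0=\Gamma_-\setminus\widehat\Gamma_-$; this is closed in $\Gamma_-$ (since $\widehat\Gamma_-$ is open by Theorem~\ref{theo:C1}) and has $\mu(\bm\gamma_0)=0$ by the identity~\eqref{eq:SB} applied with the roles of $\Gamma_+$ and $\Gamma_-$ reversed. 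Then $\bm\xi^{-1}$ restricted to $\Gamma_-\setminus\bm\gamma_0=\widehat\Gamma_-$ is exactly the $\mathcal{C}^1$ diffeomorphism of Corollary~\ref{cor:ba}, which settles the case $k=0$.

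For the inductive step, suppose $\bm\gamma_k$ has been constructed with the stated properties, and write $V_k=\mathsf{U}^{-k}\circ\bm\xi^{-1}(\Gamma_-\setminus\bm\gamma_k)\subset\Gamma_+$, an open set on which $\mathsf{U}^{-k}\circ\bm\xi^{-1}$ is a $\mathcal{C}^1$ diffeomorphism onto $V_k$. To pass to $k+1$ I need to understand $\mathsf{U}^{-1}$, i.e.\ the inverse of the map $(x,v)\mapsto(\bm\xi_s(x,v),\mathcal{V}(\bm\xi(x,v)))$. Since $\mathcal{V}(y,\cdot)\colon\Gamma_-(y)\to\Gamma_+(y)$ is a $\mathcal{C}^1$ diffeomorphism on the level of $\Gamma_-$ (hypothesis (iii) of Definition~\ref{defi:reflec}) and $\bm\xi^{-1}$ is a $\mathcal{C}^1$ diffeomorphism on $\widehat\Gamma_-$ (Corollary~\ref{cor:ba}), the composition $\mathsf{U}^{-1}=\bm\xi^{-1}\circ(\text{the map }(y,w)\mapsto(y,\mathcal{V}(y,\cdot)^{-1}w))$ is a $\mathcal{C}^1$ diffeomorphism from $\widehat\Gamma_-$ onto an open subset of $\Gamma_+$; more precisely one must track carefully the domain on which this composition is defined, which requires $\mathcal{V}^{-1}$ applied to the incoming data to land back in $\widehat\Gamma_-$. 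Let $W$ denote this open set of definition and let $\bm\delta=\Gamma_-\setminus W$, a closed set of zero $\mu$-measure (again by \eqref{eq:SB} together with the fact that $\mathcal{V}$ is measure-preserving, so that $\mathcal{V}$ and $\mathcal{V}^{-1}$ do not enlarge null sets). Then I would set
$$\bm\gamma_{k+1}=\bm\gamma_k\cup\Big((\mathsf{U}^{-k}\circ\bm\xi^{-1})\text{-preimage issues pushed back}\Big),$$
which concretely means $\bm\gamma_{k+1}=\bm\gamma_k\cup\big(\text{the set of }(x,v)\in\Gamma_-\setminus\bm\gamma_k\text{ whose image under }\mathsf{U}^{-k}\circ\bm\xi^{-1}\text{ fails to lie in }W\big)$. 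Since $\mathsf{U}^{-k}\circ\bm\xi^{-1}$ is a diffeomorphism on $\Gamma_-\setminus\bm\gamma_k$ and measure-preserving maps pull back null sets to null sets (here I would invoke that all the maps involved — $\bm\xi^{\pm1}$, $\mathcal{V}$, $\mathcal{V}^{-1}$ — are $\mu$-preserving, so $\mu(\bm\gamma_{k+1}\setminus\bm\gamma_k)=0$), the augmented set $\bm\gamma_{k+1}$ is closed with $\mu(\bm\gamma_{k+1})=0$. On $\Gamma_-\setminus\bm\gamma_{k+1}$ the map $\mathsf{U}^{-k}\circ\bm\xi^{-1}$ takes values in $W$, where $\mathsf{U}^{-1}$ is a $\mathcal{C}^1$ diffeomorphism, so $\mathsf{U}^{-(k+1)}\circ\bm\xi^{-1}=\mathsf{U}^{-1}\circ(\mathsf{U}^{-k}\circ\bm\xi^{-1})$ is a composition of $\mathcal{C}^1$ diffeomorphisms, hence a $\mathcal{C}^1$ diffeomorphism onto its (open) image. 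This closes the induction.

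The main obstacle I anticipate is the bookkeeping of domains: $\mathsf{U}$ maps $\Gamma_+\cup\Gamma_0$ into $\Gamma_+\cup\Gamma_0$ but only becomes a genuine diffeomorphism after deleting the grazing contributions, and since $\mathsf{U}$ is defined via $\bm\xi$ followed by $\mathcal{V}$, each application of $\mathsf{U}^{-1}$ can drag a new copy of "bad" velocities (those whose backward ballistic trajectory grazes $\partial\Omega$, or those mapped by $\mathcal{V}^{-1}$ outside $\widehat\Gamma_-$) into play. The key quantitative fact that makes this work is that every map in sight is $\mu$-measure-preserving, so that at each of the finitely many ($k$) stages the exceptional set picks up only a $\mu$-null closed increment; summing $k$ null sets keeps us null. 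One should also double-check that $\mathcal{V}$ being a $\mathcal{C}^1$ diffeomorphism $\Gamma_-\to\Gamma_+$ is compatible with the sectionwise formulation $\mathcal{V}(x,\cdot)$, and that the composition of $\mathcal{C}^1$ diffeomorphisms between (open subsets of) the $\mathcal{C}^1$ manifolds $\Gamma_\pm$ is again $\mathcal{C}^1$ — this is routine given that $\partial\Omega\in\mathcal{C}^1$ but deserves a sentence. No deep new idea beyond Corollary~\ref{cor:ba}, Theorem~\ref{theo:C1}, \eqref{eq:SB} and the measure-preservation of $\mathcal{V}$ is needed; the proposition is essentially a careful iteration of the already-established regularity of the ballistic flow.
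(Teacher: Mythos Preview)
Your approach is essentially the same as the paper's: both argue by induction on $k$, using Corollary~\ref{cor:ba} for the regularity of $\bm\xi^{\pm1}$, Definition~\ref{defi:reflec}(iii) for the regularity of $\mathcal{V}$, and the $\mu$-preservation of all maps involved to ensure the exceptional set stays null at each step. The only organizational difference is that the paper works \emph{forward}: it builds a decreasing sequence of open sets $\Gamma_+^{(k)}\subset\widehat\Gamma_+$ on which $\U^k$ is a $\mathcal{C}^1$ diffeomorphism (each obtained from the previous one by removing the $\U^{k-1}$-preimage of the closed null set $\Delta=\Gamma_+\setminus\widehat\Gamma_+$), and only at the very end composes with $\bm\xi$ and sets $\bm\gamma_k=\Gamma_-\setminus(\bm\xi\circ\U^k)(\Gamma_+^{(k+1)})$. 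You instead work \emph{backward} from $\Gamma_-$, enlarging $\bm\gamma_k$ at each step by the $(\U^{-k}\circ\bm\xi^{-1})$-preimage of the bad set for $\U^{-1}$. The two schemes are dual and equally valid.

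One notational slip to fix: $\U^{-1}$ has domain in $\Gamma_+$, not in $\widehat\Gamma_-$. Concretely, $\U^{-1}(y,w)=\bm\xi^{-1}\big(y,\mathcal{V}(y,\cdot)^{-1}w\big)$ for $(y,w)\in\Gamma_+$, and it is a $\mathcal{C}^1$ diffeomorphism on the open set $W=\{(y,w)\in\Gamma_+:\,(y,\mathcal{V}(y,\cdot)^{-1}w)\in\widehat\Gamma_-\}$. So your ``$\bm\delta=\Gamma_-\setminus W$'' should read $\Gamma_+\setminus W$; the pullback to $\Gamma_-$ through $\U^{-k}\circ\bm\xi^{-1}$ (which you do correctly in the next line) is what furnishes the increment $\bm\gamma_{k+1}\setminus\bm\gamma_k$. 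With that correction the argument is complete.
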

\begin{proof} We first notice that, thanks to Corollary \ref{cor:ba}, $\U\::\:\widehat{\Gamma}_{+} \to \Gamma_{+}$ is a $\mathcal{C}^{1}$ diffeomorphism from $\widehat{\Gamma}_{+}$ onto its image $\U(\widehat{\Gamma}_{+})$ which is an open set of $\Gamma_{+}.$  Let us introduce
$$\Delta:=\Gamma_{+}\setminus \widehat{\Gamma}_{+}=\left\{(x,v) \in \Gamma_{+} \cup \Gamma_{0}\,;\,\bm{\xi}(x,v) \in \Gamma_{0}\right\}$$
As already noticed in \eqref{eq:SB},
\begin{equation}\label{eq:delta}
\mu(\Delta)=0,\end{equation}
i.e. $\Delta$ is a closed set of $\Gamma_{+}$ of zero $\d\mu$-measure. Since 
$$\U\left(\Gamma_{+} \cup \Gamma_{0}\right)=\U(\Gamma_{0}) \cup \U(\widehat{\Gamma}_{+}) \cup \U(\Delta)$$
we have that $\mu\left(\U(\widehat{\Gamma}_{+})\right)=\mu\left(\U\left(\Gamma_{+} \cup \Gamma_{0}\right)\right)=\Gamma_{+}\cup \Gamma_{0}$
since $\U$ is $\mu$-preserving as a composition of the $\mu$-preserving mapping $\bm{\xi}$ and $\mathcal{V}$. Therefore
$$\mu\left(\Gamma_{+}\setminus \U(\widehat{\Gamma}_{+})\right)=0.$$
Introduce then $\Gamma_{+}^{(1)}=\widehat{\Gamma}_{+}$,
$$\Gamma_{+}^{(2)}=\left\{(x,\omega) \in \Gamma_{+}^{(1)}\,;\,\U(x,\omega) \in \widehat{\Gamma}_{+}\right\}$$
and
$$\Lambda_{1}=\Gamma_{+}^{(1)} \setminus \Gamma_{+}^{(2)}.$$
One has $\Lambda_{1}=\U^{-1}(\Delta)$ is a closed subset of $\widehat{\Gamma}_{+}$ with $\mu(\Lambda_{1})=0$. Moreover,
$$\U^{2}\::\:\Gamma_{+}^{(2)} \to \Gamma_{+}$$
is  a $\mathcal{C}^{1}$ diffeomorphism from $\Gamma_{+}^{(2)}$ onto its image. Since $\U^{2}$ is $\mu$-preserving, writing the disjoint unions
$$\Gamma_{+}^{(1)}=\Gamma_{+}^{(2)} \cup \Lambda_{1}, \qquad \U^{2}(\Gamma_{+}^{(1)})=\U^{2}(\Gamma_{+}^{(2)}) \cup \U^{2}(\Lambda_{1})$$
we see that $\mu(\U^{2}(\Gamma_{+}^{(2)}))=\mu(\U^{2}(\Gamma_{+}^{(1)}))=\mu(\U(\Gamma_{+}^{(1)}))$ and
$$\mu\left(\Gamma_{+} \setminus \U^{2}(\Gamma_{+}^{(2)})\right)=0.$$
By induction, assuming that there is $\Gamma_{+}^{(k-1)}  \subset \widehat{\Gamma}_{+}$ such that that
$$\U^{k-1}\::\:\Gamma_{+}^{(k-1)} \rightarrow \Gamma_{+}$$
is a $\mathcal{C}^{1}$ diffeomorphism from $\Gamma_{+}^{(k-1)}$ onto its image $\U^{k-1}(\Gamma_{+}^{(k-1)})$ which is of full $\mu$-measure, i.e.
$$\mu\left(\Gamma_{+}\setminus \U^{k-1}\left(\Gamma_{+}^{(k-1)}\right)\right)=0,$$
then define
\begin{equation}
\label{eq:deltak}
\Gamma_{+}^{(k)}=\{(x,\omega) \in \Gamma_{+}^{(k-1)}\;;\;\U^{k-1}(x,\omega) \in \widehat{\Gamma}_{+}\}\end{equation}
so that 
$$\Lambda_{k-1}=\Gamma_{+}^{(k)}\setminus \Gamma_{+}^{(k-1)}=\left(\U^{k-1}\right)^{-1}(\Delta)$$
is a closed subset of $\Gamma_{+}$ with $\mu(\Lambda_{k-1})=0$ while
$$\U^{k}\::\:\Gamma_{+}^{(k)}\rightarrow \Gamma_{+}$$
is a $\mathcal{C}^{1}$ diffeomorphism from $\Gamma_{+}^{(k)}$ onto its image $\U^{k}\left(\Gamma_{+}^{(k)}\right).$ As before, writing the disjoint unions
$$\Gamma_{+}^{(k-1)}=\Gamma_{+}^{(k)}\cup \Lambda_{k-1}, \qquad \U^{k}\left(\Gamma_{+}^{(k-1)}\right)=\U^{k}\left(\Gamma_{+}^{(k)}\right) \cup \U^{k}(\Lambda_{k-1})$$
we see that $\mu\left(\U^{k}\left(\Gamma_{+}^{(k)}\right)\right)=\mu\left(\U^{k}\left(\Gamma_{+}^{(k-1)}\right)\right)=\mu\left(\U^{k-1}\left(\Gamma_{+}^{(k-1)}\right)\right)$ so that
$$\mu\left(\Gamma_{+}\setminus \U^{k}\left(\Gamma_{+}^{(k)}\right)\right)=0.$$
On the other hand, according to Corollary \ref{cor:ba},
$$\bm{\xi}\::\:\widehat{\Gamma}_{+} \rightarrow \Gamma_{-}$$
is a $\mathcal{C}^{1}$ diffeomorphism from $\widehat{\Gamma}_{+}$ onto its image. Using the definition \eqref{eq:deltak} for $k+1$ we see that
$$\Lambda_{k}=\Gamma_{+}^{(k)}\setminus \Gamma_{+}^{(k+1)}=\left(\U^{k}\right)^{-1}(\Delta)$$
is a closed subset of $\Gamma_{+}$ with $\mu(\Lambda_{k})=0$ and 
$$\bm{\xi} \circ \U^{k}\::\:\Gamma_{+}^{(k+1)} \rightarrow \Gamma_{-}$$
is a $\mathcal{C}^{1}$ diffeomorphism from $\Gamma_{+}^{(k+1)}$ onto its image $(\bm{\xi} \circ \U^{k})\left(\Gamma_{+}^{(k+1)}\right)$. Arguing as before and writing
$$\Gamma_{+}^{(k)}=\Gamma_{+}^{(k+1)}\cup \Lambda_{k}, \qquad \left(\bm{\xi} \circ \U^{k}\right)\left(\Gamma_{+}^{(k)}\right)=\left(\bm{\xi} \circ \U^{k}\right)\left(\Gamma_{+}^{(k+1)}\right) \bigcup
\left(\bm{\xi} \circ \U^{k}\right)(\Lambda_{k})$$
and since $\bm{\xi} \circ \U^{k}$ is $\mu$-preserving, we have that 
$$\mu\left(\left(\bm{\xi} \circ \U^{k}\right)\left(\Gamma_{+}^{(k+1)}\right)\right)=\mu\left(\left(\bm{\xi} \circ \U^{k}\right)\left(\Gamma_{+}^{(k)}\right)\right)=\mu\left(\U^{k}\left(\Gamma_{+}^{(k)}\right)\right)$$ where we used that $\bm{\xi}$ is also $\mu$-preserving. Therefore
$$\mu\left(\Gamma_{-}\setminus \left(\left(\bm{\xi} \circ \U^{k}\right)\left(\Gamma_{+}^{(k+1)}\right)\right)\right)=0.$$
Since $\left(\bm{\xi} \circ \U^{k}\right)(\Gamma_{+}^{(k+1)})$ is an open subset of $\Gamma_{-}$ then setting
$$\bm{\gamma}_{k}:=\Gamma_{-}\setminus \left(\left(\bm{\xi} \circ \U^{k}\right)\left(\Gamma_{+}^{(k+1)}\right)\right)$$
one sees that $\bm{\gamma}_{k}$ is a closed subset of $\Gamma_{-}$ with $\mu(\bm{\gamma}_{k})=0$ and 
$$\U^{-k} \circ \bm{\xi}^{-1}\::\:\Gamma_{-}\setminus \bm{\gamma}_{k} \rightarrow \Gamma_{+}^{(k+1)}$$
is a $\mathcal{C}^{1}$ diffeomorphism from $\Gamma_{-}\setminus \bm{\gamma}_{k}$ onto its image.\end{proof}

{In a previous version of the paper \cite[Lemma A.11]{MKLR}, we claimed that, with the above notations, for any $x \in \pO$ and any $k \in \mathbb{N}$, introducing
$$\mathcal{O}_{k}(x)=\left\{\omega \in \widehat{\Gamma}_{-}(x)\;;\;\bm{\xi}^{-1}(x,\omega) \in \U^{k}(\Gamma_{+}^{(k+1)})\right\}$$
where $\Delta_{k+1}$ is defined thanks to \eqref{eq:deltak}, it holds that, for any $\omega \in \mathcal{O}_{k}(x) \setminus S(x)$,  the differential
$$\d_{\omega}\left[\U^{-k} \circ \bxi^{-1}\right]_{s}(x,\omega)$$
has rank $(d-1).$ As pointed out by an anonymous referee, this result is not true. We provide here a simple counterexample for bounce-back boundary conditions:
\begin{exe}\label{exe:bounce} Assume that $\mathcal{V}$ is associated to bounce-back boundary conditions (see Example \ref{exe:specu}), i.e.
$$\mathcal{V}(x,v)=-v \qquad \forall (x,v) \in \Gamma_{-}.$$
Then, one checks easily that
$$\bm{\xi} \circ \mathcal{U}^{k}(x,v)=\begin{cases} (x,-v) \qquad &\text{ if $k$ is odd}\\
\bm{\xi}(x,v)=(x-\tau_{-}(x,v)v,v) \qquad &\text{ if $k$ is even}\end{cases}$$
which results easily in 
$$\left[\U^{-k} \circ \bxi^{-1}\right]_{s}(x,\omega)=\begin{cases} x \qquad &\text{ if $k$ is odd}\\
\bm{\xi}_{s}^{-1}(x,\omega)=x+\tau_{+}(x,\omega)\omega \qquad &\text{ if $k$ is even}\end{cases}.$$
In particular, the rank of the differential is zero for odd $k$ while it is $d-1$ for even $k$ provided $\omega \in \widehat{\Gamma}_{+}(x) \setminus S(x)$ (see Proposition \ref{prop:diff}). We aim also to point out that, with this choice of the boundary condition and for a diffuse boundary operator $\mathsf{K}$ of the form
$$\mathsf{K}\varphi(x,v)=F(v)\int_{\Gamma_{+}(x)}\varphi(x,v')\bm{\mu}_{x}(\d v')$$
as considered in the proof of Theorem \ref{propo:weakcompact}, one has, for any $k,\ell$ odd, 
$$\mathsf{K}(\mathsf{M}_{0}\mathsf{R})^{k}\mathsf{M}_{0}\mathsf{K}(\mathsf{M}_{0}\mathsf{R})^{\ell}\mathsf{M}_{0}\mathsf{K}\varphi(x,v)=\tilde{F}(x)\mathsf{K}\varphi(x,v), \qquad \varphi \in \lp, \quad (x,v) \in \Gamma_{-}$$
where $\tilde{F}(x)=\int_{\Gamma_{+}(x)}F(-v')\bm{\mu}_{x}(\d v')$, $x \in \pO.$ In particular, one sees that such an operator is not weakly compact (see Remark \ref{nb:weaknb}).
 \end{exe}
}

\section{Reminders on partially integral stochastic semigroups}\label{sec:appB}

We collect here several results on partially integral stochastic semigroups in $L^{1}(E,\Sigma,m)$ where  $(E,\Sigma,m)$ is a given $\sigma$-finite measure space. \subsection{Partially integral stochastic semigroup}\phantomsection
\label{s:piss}

We begin with the following definition
\begin{defi}
A stochastic semigroup $\{P(t)\}_{t\ge 0}$ on the space $L^1(E,\Sigma,m)$ is
called \textit{partially integral} if there exists a measurable function 
$k\colon (0,\infty)\times E\times E\to[0,\infty)$, called a \textit{kernel},
such that for every $y\in E$ all nonnegative $f\in L^1(E,\Sigma,m)$ we have
\begin{equation}
\label{con:pit}
P(t)f(y)\ge\int_E k(t,x,y)f(x)\,m (\d x)
\end{equation}
and 
\begin{equation*}
\int_E\int_E k(t,x,y)\,m(\d x)\otimes m(\d y)>0
\end{equation*}
for some $t>0$.
\end{defi}
We have then the following (see \cite{PR})
\begin{theo}
\label{asym-th2} 
Let $\{P(t)\}_{t\ge 0}$ be a partially integral stochastic
semigroup. Assume that the semigroup $\{P(t)\}_{t\ge 0}$ has a unique
invariant probability density $f_*$. If $f_*>0$ a.e., then the semigroup
 $\{P(t)\}_{t\ge 0}$ is asymptotically stable.
\end{theo}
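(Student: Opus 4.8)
The plan is to derive asymptotic stability from the strict positivity of the unique invariant density $f_*$ in two movements: first, show that the hypotheses force $\{P(t)\}_{t\ge 0}$ to be irreducible; then, show that an irreducible partially integral stochastic semigroup possessing an invariant density must be asymptotically stable, the competing ``sweeping'' behaviour being ruled out precisely by the presence of $f_*$.

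\textbf{Step 1 (irreducibility).} First I would check that no nontrivial band $L^1(C)$ can be invariant. If $C\in\Sigma$ with $m(C)>0$ and $m(E\setminus C)>0$ were such that $\operatorname{supp}P(t)f\subseteq C$ whenever $\operatorname{supp}f\subseteq C$, then a mass-balance argument using $\|P(t)g\|_1=\|g\|_1$ forces $L^1(E\setminus C)$ to be invariant as well; hence $P(t)(f_*\ind_C)=f_*\ind_C$ and $P(t)(f_*\ind_{E\setminus C})=f_*\ind_{E\setminus C}$, and since $f_*>0$ a.e. both pieces are nonzero, producing two distinct invariant densities — contradicting uniqueness. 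Thus $\{P(t)\}_{t\ge0}$ is irreducible, and by \cite{davies} it is ergodic: $\tfrac1t\int_0^t P(s)f\,\d s\to f_*$ in $L^1$ for every density $f$.

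\textbf{Step 2 (a uniform minorisation and reduction to a lower function).} By the definition of partial integrality there are $t_0>0$ and a kernel $k(t_0,\cdot,\cdot)\ge0$ with $P(t_0)f\ge\int_E k(t_0,x,\cdot)f(x)\,m(\d x)$ and $\iint k(t_0,\cdot,\cdot)\,m\otimes m>0$. Refining the kernel by inner regularity of $m$, I would fix $A,B\in\Sigma$ of finite positive measure and $\varepsilon>0$ with $k(t_0,x,y)\ge\varepsilon$ on $A\times B$, so that
\[
P(t)f(y)=P(t_0)\big(P(t-t_0)f\big)(y)\ \ge\ \varepsilon\,\ind_B(y)\int_A P(t-t_0)f\,\d m,\qquad t\ge t_0,\ f\ge 0 .
\]
Consequently, if one can produce a constant $\varepsilon_1>0$, \emph{independent of the density $f$}, such that $\liminf_{s\to\infty}\int_A P(s)f\,\d m\ge\varepsilon_1$, then $h:=\varepsilon\varepsilon_1\ind_B$ satisfies $P(t)f\ge h$ for all $t$ large and all densities $f$; applying the Lasota--Mackey lower-function criterion (see e.g. \cite{R-b95}) to the time-$t_0$ skeleton $Q=P(t_0)$ gives discrete asymptotic stability, and the identity $\|P(nt_0+s)f-f_*\|_1=\|P(s)(Q^nf-f_*)\|_1\le\|Q^nf-f_*\|_1$ lifts it to the continuous semigroup, the limit necessarily being $f_*$.

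\textbf{Step 3 (uniform recurrence to $A$ — the crux).} The remaining, and genuinely delicate, point is the uniform lower bound $\liminf_{s\to\infty}\int_A P(s)f\,\d m\ge\varepsilon_1$. From Step 1, ergodicity gives $\limsup_{s\to\infty}\int_A P(s)f\,\d m\ge\int_A f_*\,\d m=:a_0>0$ for every density $f$; and since $\int_A P(t)f_*\,\d m=a_0$ for all $t$, the semigroup is \emph{not} sweeping from the finite-measure set $A$. The Foguel-type dichotomy for irreducible partially integral stochastic semigroups — asymptotic stability versus sweeping from sets of finite measure, cf. \cite{PR,R-b95} — then forces asymptotic stability. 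If one instead wants the bound by hand: once $P(s)f\ge\tfrac{\varepsilon a_0}{2}\ind_B$ at some time $s$ (which occurs at arbitrarily large times, by the $\limsup$ estimate and the minorisation), the mass deposited on $B$ is ``smeared'' by the flow, and irreducibility together with the minorisation force it to re-enter $A$ with a frequency controlled independently of $f$; turning this into a quantitative statement — via a renewal/covering argument reducing to finitely many ``small sets'' and a Doeblin-type estimate for $Q$ — yields the missing $\varepsilon_1$. \textbf{This quantitative recurrence step is the main obstacle}; it is exactly where partial integrality is indispensable (a pure rotation on a circle has a strictly positive invariant density yet is not asymptotically stable), whereas Steps 1--2 and the discrete-to-continuous passage are routine.
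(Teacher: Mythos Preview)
The paper does not actually prove this theorem: it is stated in Appendix~B as a reminder and simply cited from \cite{PR} (Pich\'or--Rudnicki, \emph{J.\ Math.\ Anal.\ Appl.}\ \textbf{249} (2000)). So there is no ``paper's own proof'' to compare with, and your attempt should be judged on its own.

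Your Steps~1 and~2 are essentially correct (with a small wrinkle: in Step~1 the claim that the complementary ideal $L^1(E\setminus C)$ is invariant is false in general for stochastic operators; what you really need, and what your mass-balance argument does yield, is only that $P(t)(f_*\ind_{E\setminus C})=f_*\ind_{E\setminus C}$, which suffices for the contradiction).

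The genuine gap is Step~3. You yourself flag it as ``the main obstacle'', and indeed you do not close it. Appealing to the ``Foguel-type dichotomy for irreducible partially integral stochastic semigroups'' from \cite{PR,R-b95} is circular here: that dichotomy (asymptotic stability versus sweeping) is precisely the substance of the theorem you are asked to prove, and \cite{PR} is the reference the paper gives for Theorem~\ref{asym-th2}. Your alternative ``by hand'' sketch --- smearing mass from $B$, renewal/covering, Doeblin-type estimates --- is too vague to constitute a proof; the hard point is exactly to turn the \emph{ergodic} (Ces\`aro) recurrence to $A$ into a \emph{uniform-in-time} lower bound $\liminf_{s\to\infty}\int_A P(s)f\,\d m\ge\varepsilon_1$, and nothing you write bridges that gap. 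In short: good roadmap, but the crux is not proved, and the shortcut you take amounts to invoking the very result in question.
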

Let  $\mathcal P(t,y,B)$ be a probability transition function for the semigroup
$\{P(t)\}_{t\ge 0}$, i.e.
\[
\int\limits_B  P(t)f(y)\, m(\d y)=
\int\limits_{E}
\mathcal P(t,x,B) f(x)\,m(\d x)
\]
for all $f\in L^1(E,\Sigma,m)$, $B\in \Sigma$ and $t\ge 0$.
Then inequality (\ref{con:pit}) can be rewritten as
\[
\mathcal P(t,x,\d y)\ge k(t,x,y)\,m(\d y).
\]

\subsection{Sweeping property}
\label{app:sweeping}

We define now the sweeping property for stochastic semigroups:
\begin{defi}
A stochastic semigroup $\{P(t)\}_{t\ge 0}$ on the space $L^1(E,\Sigma,m)$ is
sweeping from a set $A$ if 
\[
\lim_{t\to\infty}\int_A P(t)f(x)\,m(\d x) =0
\]
for each density $f$.  

If moreover $(E,\rho)$ is a 
metric space and $\Sigma=\mathcal B(E)$ is the
$\sigma$--algebra of Borel subsets of $E$, a partially integral semigroup $\{P(t)\}_{t\geq0}$ with kernel $k(t,\xi,z)$ is said to satisfy the property $(K)$ on $E$ if the following holds:
\begin{itemize}
\item[$(K)$] for every $\xi_0\in E$ there exist $\varepsilon >0, t>0$,
and a measurable function 
$\eta\colon E \to [0,\infty)$ such that 
$$\int_{E} \eta(\xi)m(\d \xi) >0$$ and
\begin{equation*}
\label{w-eta}
 k(t,\xi,z)\ge \eta(z)\mathbf 1_{B(\xi_0,\varepsilon)}(\xi), \qquad (\xi,z) \in E \times E,
\end{equation*}
\end{itemize}
where $B(\xi_0,\varepsilon)=\{\xi\in E\colon \rho(\xi,\xi_0)<\varepsilon\}$. \end{defi}
 
 We have the following which is a simple  consequence of a general result concerning asymptotic decomposition of stochastic semigroups (see \cite[Corollary 2]{PR-JMMA2016}):
 \begin{theo}
\label{c:sweep2}
Let $\{P(t)\}_{t\ge0}$ be a stochastic semigroup  on  $L^1(E,\Sigma,m)$, where 
$E$ is a separable metric space,  $\Sigma=\mathcal B(E)$, and $m$ is a $\sigma$-finite measure on $(E,\Sigma)$.
Assume that $\{P(t)\}_{t\ge0}$ has the property {\rm (K)} and has no invariant density.
Then $\{P(t)\}_{t\ge0}$ is sweeping from all compact sets. 
\end{theo}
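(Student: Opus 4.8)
The plan is to derive Theorem~\ref{c:sweep2} directly from the general asymptotic decomposition theory for partially integral stochastic semigroups of Pich\'{o}r and Rudnicki, namely from \cite[Corollary 2]{PR-JMMA2016}. That result furnishes a Foguel-type alternative: for a partially integral \emph{stochastic} semigroup $\{P(t)\}_{t\ge0}$ on $L^1(E,\Sigma,m)$, with $E$ separable metric, $\Sigma=\mathcal B(E)$ and $m$ $\sigma$-finite, satisfying property $(K)$, \emph{either} $\{P(t)\}_{t\ge0}$ possesses an invariant density, \emph{or} it is sweeping from every compact subset of $E$. Thus the proof reduces to verifying that our standing hypotheses are precisely those of that corollary, after which the assumption that no invariant density exists forces the second horn of the alternative.

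First I would check that property $(K)$ supplies exactly the data on which \cite[Corollary 2]{PR-JMMA2016} operates. By definition $(K)$ presupposes that the semigroup is partially integral with a kernel $k(t,\xi,z)$, and it attaches to each $\xi_0\in E$ a triple $\varepsilon_{\xi_0}>0$, $t_{\xi_0}>0$, $\eta_{\xi_0}\ge0$ with $\int_E\eta_{\xi_0}(z)\,m(\d z)>0$ and $k(t_{\xi_0},\xi,z)\ge\eta_{\xi_0}(z)\mathbf 1_{B(\xi_0,\varepsilon_{\xi_0})}(\xi)$. The balls $\{B(\xi_0,\varepsilon_{\xi_0})\}_{\xi_0\in E}$ cover $E$; since $E$ is separable metric, hence Lindel\"{o}f, one extracts a countable subcover $\{B(\xi_n,\varepsilon_n)\}_{n\ge1}$, and the family $(\xi_n,\varepsilon_n,t_n,\eta_n)$ is the countable system of local lower bounds required by the quoted statement. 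Moreover, since $\{P(t)\}_{t\ge0}$ is stochastic, no mass is lost, so the dichotomy applies without a defect term.

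Then I would invoke the decomposition behind \cite[Corollary 2]{PR-JMMA2016}: the space $L^1(E,\Sigma,m)$ splits into two $\{P(t)\}_{t\ge0}$-invariant bands, on the first of which the dynamics is conservative and --- by partial integrality together with property $(K)$ --- a nonzero component in that band forces the existence of an invariant density, while on the second the semigroup is sweeping from all compact sets. Since by hypothesis there is no invariant density, the first band is trivial, the whole of $L^1(E,\Sigma,m)$ coincides with the second band, and $\{P(t)\}_{t\ge0}$ is sweeping from all compact subsets of $E$, which is the assertion.

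The one point that genuinely needs care is this bookkeeping, i.e. matching the present hypotheses to those of \cite{PR-JMMA2016}; the substantive work --- building the band decomposition, showing the conservative band carries an invariant density, and proving that property $(K)$ upgrades ``sweeping'' to ``sweeping from compact sets'' --- is the content of \cite{PR-JMMA2016} and I would not reproduce it. Were a self-contained argument wanted, the main obstacle would be the last step: for a compact $A\subset E$ and a density $f$, one must prove $\int_A P(t)f(x)\,m(\d x)\to0$ as $t\to\infty$, which one does by contradiction. If $\int_A P(t_j)f(x)\,m(\d x)\ge c>0$ along some $t_j\to+\infty$, then covering $A$ by finitely many balls $B(\xi_n,\varepsilon_n)$ and using the lower bounds $k(t_n,\cdot,\cdot)\ge\eta_n(\cdot)\mathbf 1_{B(\xi_n,\varepsilon_n)}$ together with partial integrality yields, via a Dunford--Pettis / weak-$L^1$ compactness argument applied on suitable sub-level sets built from the $\eta_n$, a nonzero invariant density as a weak limit point of the Ces\`{a}ro averages $\frac{1}{t}\int_0^t P(s)f\,\d s$, contradicting the hypothesis and completing the proof.
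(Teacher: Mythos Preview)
Your proposal is correct and matches the paper's own treatment: the paper does not give an independent proof of this theorem but records it in the appendix as ``a simple consequence of a general result concerning asymptotic decomposition of stochastic semigroups (see \cite[Corollary 2]{PR-JMMA2016})'', which is precisely the source you invoke and whose hypotheses you verify.
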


\subsection{Foguel alternative}
\label{app:sweeping-psi} 
If a stochastic semigroup has no invariant density but we are able to find 
a subinvariant function $f_*>0$, then we can precisely point out all sets having the sweeping property \cite{R-b95}.  We start with some general description.
  
Let a stochastic semigroup $\{P(t)\}_{t\geq 0}$ be given and assume that
this semigroup is partially integral.   
If the kernel $k(t,x,y)$ satisfies 
\begin{equation*}
\int_{E}\int_0^{\infty }k(t,x,y)\,\d t\,m(\d x)>0\quad \text{$y$ \thinspace --\thinspace a.e.},
\end{equation*}%
then $\{P(t)\}_{t\geq 0}$ is called a \textit{pre-Harris semigroup}. 
In particular, if a semigroup is partially integral and irreducible 
then it is pre-Harris semigroup. 
The following condition plays a crucial role in studying sweeping.

\begin{description}
\item[(KT)] There exists a measurable function $f_*$ such that: 
$0<f_*<\infty $ a.e., $P(t)f_*\le f_*$ for $t\ge 0$, $f_*\notin L^1$ and 
$\int_A f_*(x) \,m(\d x)<\infty$.
\end{description}

In \textbf{(KT)} we have written $P(t)f_{\ast }$ for a
non-integrable function. We can use such notation because any substochastic
operator $P$ may be extended beyond the space $L^1$ (see \cite{Foguel} Chap.
I). If $f$ is an arbitrary non-negative measurable function, then we define 
$Pf$ as a pointwise limit of the sequence $Pf_{n}$, where $(f_{n})$ is any
monotonic sequence of non-negative functions from $L^1$ pointwise convergent
to $f$ almost everywhere.

\begin{theo}[\protect\cite{R-b95}, Corollary 3]\phantomsection
\label{KT}
 Let $\{P(t)\}_{t\ge 0}$ be a pre-Harris stochastic semigroup
which has no invariant density. Assume that the semigroup $\{P(t)\}_{t\ge 0}$
and a set $A\in\Sigma$ satisfy condition $\mathbf{(KT)}$. Then the semigroup 
$\{P(t)\}_{t\ge 0}$ is sweeping with respect to~$A$.
\end{theo}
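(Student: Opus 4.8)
The plan is to reduce Theorem~\ref{KT} to the classical Foguel alternative for Harris operators by a ground-state (Doob $h$-)transform built from the subinvariant function $f_\ast$ furnished by condition $\mathbf{(KT)}$. Since $0<f_\ast<\infty$ $m$-a.e. and $m$ is $\sigma$-finite, the measure $\nu(\d x):=f_\ast(x)\,m(\d x)$ is $\sigma$-finite, and it is \emph{infinite} because $f_\ast\notin L^1(m)$. I would set
\[
Q(t)h:=\frac{1}{f_\ast}\,P(t)\big(f_\ast h\big),\qquad h\in L^1(E,\Sigma,\nu),\ t\ge 0 .
\]
Using $P(t)f_\ast\le f_\ast$, the positivity of $P(t)$, and the fact that $P(t)$ is stochastic on $L^1(m)$, one checks in a few lines that $\{Q(t)\}_{t\ge 0}$ is a positive $C_0$-semigroup on $L^1(E,\Sigma,\nu)$ with $Q(t)\mathbf 1\le \mathbf 1$ and $\int_E Q(t)h\,\d\nu=\int_E h\,\d\nu$ for nonnegative $h$; hence $\{Q(t)\}_{t\ge 0}$ is \emph{stochastic} on $L^1(E,\Sigma,\nu)$. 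Moreover, if $k(t,x,y)$ realizes the partial integrality of $\{P(t)\}$, then $\widetilde k(t,x,y):=f_\ast(y)^{-1}k(t,x,y)$ is a kernel for $\{Q(t)\}$ relative to $\nu$, and a change of reference measure shows that $\{Q(t)\}$ inherits the pre-Harris property from $\{P(t)\}$.

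Next I would observe that $\{Q(t)\}$ has \emph{no} invariant density in $L^1(\nu)$: if $h_\ast\ge 0$, $h_\ast\in L^1(\nu)$, $h_\ast\not\equiv 0$, and $Q(t)h_\ast=h_\ast$ for all $t$, then $g_\ast:=f_\ast h_\ast\in L^1(m)$ is nonnegative, nonzero and satisfies $P(t)g_\ast=g_\ast$, so $g_\ast/\|g_\ast\|$ would be an invariant density for $\{P(t)\}$, contradicting the hypothesis. Now I would invoke the Foguel alternative for pre-Harris stochastic semigroups on $\sigma$-finite spaces: such a semigroup either has a (unique, positive) invariant density — and is then asymptotically stable — or it is sweeping from every set of finite reference measure. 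Since $\{Q(t)\}$ has no invariant density and $\nu(A)=\int_A f_\ast\,\d m<\infty$ by $\mathbf{(KT)}$, this gives $\lim_{t\to\infty}\int_A Q(t)h\,\d\nu=0$ for every density $h\in L^1(\nu)$.

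Finally I would transfer back to $\{P(t)\}$. Given any density $f\in L^1(m)$, the function $h:=f/f_\ast$ is nonnegative with $\int_E h\,\d\nu=\int_E f\,\d m=1$, hence a density in $L^1(\nu)$, and
\[
\int_A P(t)f\,\d m=\int_A P(t)\big(f_\ast h\big)\,\d m=\int_A f_\ast\,Q(t)h\,\d m=\int_A Q(t)h\,\d\nu\longrightarrow 0
\]
as $t\to\infty$. Therefore $\{P(t)\}_{t\ge 0}$ is sweeping with respect to $A$, which is the claim.

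The main obstacle is the Foguel-alternative step: the ground-state transform and the transfer of properties are routine bookkeeping, but the fact that non-existence of an invariant density forces a pre-Harris stochastic semigroup to be sweeping from all sets of finite reference measure is the genuine ergodic-theoretic input (Hopf's conservative/dissipative decomposition of the Harris resolvent $R_1=\int_0^\infty e^{-t}P(t)\,\d t$, the Chacon--Ornstein ratio ergodic theorem, and Lin's sweeping criterion). This is exactly why Theorem~\ref{KT} is quoted from \cite{R-b95} rather than reproved here; a self-contained proof would have to carry out that decomposition in detail.
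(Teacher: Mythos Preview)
The paper does not prove Theorem~\ref{KT}; it is quoted verbatim from \cite[Corollary~3]{R-b95} as a black box, so there is no in-paper argument to compare against. Your Doob $h$-transform reduction is the standard and correct route: the map $h\mapsto f_\ast^{-1}P(t)(f_\ast h)$ does produce a stochastic $C_0$-semigroup on $L^1(\nu)$ with $\nu=f_\ast m$, the pre-Harris property and the absence of an invariant density transfer exactly as you say, and the back-transfer identity $\int_A P(t)f\,\d m=\int_A Q(t)(f/f_\ast)\,\d\nu$ is clean.

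The honest caveat you raise is the right one, and it is worth stating sharply: after the transform, the constant function $\mathbf 1$ plays the role of $f_\ast$, so what you have done is reduce the general $\mathbf{(KT)}$ condition to the special case $f_\ast\equiv 1$ on an infinite $\sigma$-finite space. That special case is \emph{still} Theorem~\ref{KT}, just with a trivial subinvariant function; the substantive ergodic content (Hopf decomposition of the resolvent, Chacon--Ornstein, Lin's sweeping criterion) has not been bypassed. So your proposal is a correct normalization rather than an independent proof, and you are right that a self-contained argument would have to reproduce the machinery of \cite{R-b95}.
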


\end{document}